\newtheorem{theorem}{Theorem}[section]
\newtheorem{lemma}[theorem]{Lemma}
\newtheorem{corollary}[theorem]{Corollary}
\newtheorem{proposition}[theorem]{Proposition}
\newtheorem{notation}[theorem]{Notation}
\theoremstyle{definition}
\newtheorem{definition}[theorem]{Definition}
\theoremstyle{remark}
\newtheorem{remark}[theorem]{Remark}
\numberwithin{equation}{section}
\begin{document}

\title[Monodromy of rank 2 twisted Hitchin systems]{Monodromy of rank 2 twisted Hitchin systems and real character varieties}
\author{David Baraglia and Laura P. Schaposnik}

\address{School of Mathematical Sciences, The University of Adelaide, Adelaide SA 5005, Australia}
\email{david.baraglia@adelaide.edu.au}
\address{Department of Mathematics, University of Illinois at Urbana-Champaign, IL 61801, USA}
\email{schapos@illinois.edu}

\begin{abstract}
We introduce a new approach for computing the monodromy of the Hitchin map and use this to completely determine the monodromy for the moduli spaces of $L$-twisted $G$-Higgs bundles, for the groups $G = GL(2,\mathbb{C}),SL(2,\mathbb{C})$ and $PSL(2,\mathbb{C})$. We also determine the twisted Chern class of the regular locus, which obstructs the existence of a section of the moduli space of $L$-twisted Higgs bundles of rank $2$ and degree $deg(L)+1$. By counting orbits of the monodromy action with $\mathbb{Z}_2$-coefficients, we obtain in a unified manner the number of components of the character varieties for the real groups $G = GL(2,\mathbb{R}), SL(2,\mathbb{R}), PGL(2,\mathbb{R}), PSL(2,\mathbb{R})$, as well as the number of components of the $Sp(4,\mathbb{R})$-character variety with maximal Toledo invariant. We also use our results for $GL(2,\mathbb{R})$ to compute the monodromy of the $SO(2,2)$ Hitchin map and determine the components of the $SO(2,2)$ character variety.
\end{abstract}
\thanks{This work is supported by the Australian Research Council Discovery Project DP110103745.}

\subjclass[2010]{Primary 14H60 53C07; Secondary 14H70, 53M12}

\date{\today}



\maketitle


\section{Introduction}
In this paper, we introduce a new approach for computing the monodromy of the Hitchin system. Our results apply to the Hitchin fibrations of the groups $SL(2,\mathbb{C}), GL(2,\mathbb{C})$ and $PSL(2,\mathbb{C})$ and for {\em twisted Higgs bundles}, i.e. pairs $(E,\Phi)$ where the Higgs field $\Phi$ is valued in an arbitrary line bundle $L$ instead of the canonical bundle. The methods we develop here yield a number of new results concerning the topology of the regular locus of the Hitchin fibration. We summarise the main ideas of the paper below.\\

Let $\Sigma$ be a compact Riemann surface of genus $g > 1$ and $L$ a line bundle on $\Sigma$ such that either $L$ is the canonical bundle or $deg(L) > 2g-2$. We let $\mathcal{M}(r,d,L)$ be the moduli space of $L$-twisted Higgs bundles of rank $r$ and degree $d$ \cite{nit}. In \textsection \ref{sec:review}, we recall the Hitchin fibration and the construction of spectral data for twisted Higgs bundles. As with untwisted Higgs bundles, the Hitchin fibration is a map $h : \mathcal{M}(r,d,L) \to \mathcal{A}(r,L) = \bigoplus_{i=1}^r H^0(\Sigma , L^i )$ obtained by taking the characteristic polynomial of the Higgs field. We let $\mathcal{A}_{\rm reg}(r,L)$ denote the regular locus, the open subset of the base over which the fibres of the Hitchin fibration are non-singular. As recalled in \textsection \ref{secschf}, the non-singular fibres are abelian varieties. We shall denote by $\mathcal{M}_{\rm reg}(r,d,L)$  the points of $\mathcal{M}(r,d,L)$ lying over $\mathcal{A}_{\rm reg}(r,L)$, so that $\mathcal{M}_{\rm reg}(r,d,L) \to \mathcal{A}_{\rm reg}(r,L)$ is a non-singular torus bundle.\\

In \textsection\ref{sec:affine} we study the regular locus $\mathcal{M}_{\rm reg}(r,d,L)$, and  show in Theorem \ref{thmaffine1} that it has an affine structure, meaning that its transitions functions are composed of linear endomorphisms of the torus together with translations. As a consequence of the affine structure, the topology of the regular locus is completely determined by two invariants; the {\em monodromy}, describing the linear component of the transition functions and the {\em twisted Chern class}, describing the translational component. These invariants are calculated in \textsection \ref{secmths}.\\

In addition to the moduli space $\mathcal{M}(r,d,L)$ of $L$-twisted $GL(r,\mathbb{C})$-Higgs bundles, we also consider the $SL(r,\mathbb{C})$ and $PSL(r,\mathbb{C})$ counterparts, namely the moduli space $\check{\mathcal{M}}(r,D,L)$ of $L$-twisted Higgs bundles of rank $r$ and determinant $D$, and the moduli space $\hat{\mathcal{M}}(r,d,L)$ of $L$-twisted $PSL(r,\mathbb{C})$-Higgs bundles of rank $r$ and degree $d$. We study the associated Hitchin fibrations $\check{h} : \check{\mathcal{M}}(r,D,L) \to \mathcal{A}^0(r,L)$, $\hat{h} : \hat{\mathcal{M}}(r,d,L) \to \mathcal{A}^0(r,L)$, where $\mathcal{A}^0(r,L) = \bigoplus_{i=2}^r H^0(\Sigma , L^i)$, and show that the regular loci of these fibrations are again affine. Theorem \ref{thmaffine2} describes the precise relation between the monodromy and twisted Chern classes of the $GL(r,\mathbb{C})$, $SL(r,\mathbb{C})$ and $PSL(r,\mathbb{C})$-moduli spaces.\\

In \textsection \ref{secmths}, we compute the monodromy and twisted Chern classes of the $GL(2,\mathbb{C})$, $SL(2,\mathbb{C})$ and $PSL(2,\mathbb{C})$-moduli spaces. Henceforth, we restrict attention to the $r=2$ case and omit $r$ from our notation. Further, the trace of the Higgs field plays no part in the monodromy and twisted Chern class, so we may restrict to trace-free Higgs fields without loss of generality. We let $\mathcal{M}^0(d,L)$ denote the moduli space of trace-free $GL(2,\mathbb{C})$-Higgs bundles. Thus we have three moduli spaces $\mathcal{M}^0(d,L)$, $\check{\mathcal{M}}(D,L)$ and $\hat{\mathcal{M}}(d,L)$, all of which fibre over $\mathcal{A}^0(L) = H^0(\Sigma , L^2)$. Fix a basepoint $a_0 \in \mathcal{A}^0_{\rm reg}(L)$ and let $\pi : S \to \Sigma$ be the associated spectral curve (see \textsection \ref{secschf}). The monodromy of the $GL(2,\mathbb{C})$-Hitchin system $h : \mathcal{M}^0_{\rm reg}(d,L) \to \mathcal{A}^0_{\rm reg}(L)$ is the Gauss-Manin local system $R^1 h_* \mathbb{Z}$, describing the cohomology of the non-singular fibres. This is equivalent to a representation $\rho : \pi_1( \mathcal{A}_{\rm reg}(r,L) , a_0 ) \to Aut( \Lambda_S )$, where $\Lambda_S := H^1(S , \mathbb{Z})$. We also have monodromy representations $\check{\rho},\hat{\rho}$ corresponding to the $SL(2,\mathbb{C})$ and $PSL(2,\mathbb{C})$-moduli spaces, but these can be deduced from the $GL(2,\mathbb{C})$ case, so we focus attention on $\rho$.\\

To describe the monodromy representation, we find generators for $\pi_1( \mathcal{A}^0_{\rm reg}(L) , a_0 )$ in \textsection \ref{secfgc}, and compute $\rho$ on these generators in \textsection \ref{sectmr}. The regular locus of $\mathcal{A}^0(L) = H^0(\Sigma , L^2)$ coincides with the sections of $L^2$ having only simple zeros. Set $l = deg(L)$, let $\widetilde{S}^{2l} \Sigma$ be the space of positive divisors of degree $2l$ having only simple zeros and let $\tilde{\alpha} : \widetilde{S}^{2l} \Sigma \to Jac_{2l}(\Sigma)$ be the Abel-Jacobi map. Then $\mathcal{A}^0_{\rm reg}$ is a $\mathbb{C}^*$-bundle over $\tilde{\alpha}^{-1}(L^2)$. This gives a sequence
\begin{equation*}\xymatrix{
\pi_1( \mathbb{C}^* ) \ar[r] & \pi_1( \mathcal{A}^0_{\rm reg}(L) , a_0 ) \ar[r] & Br_{2l}(\Sigma , a_0 ) \ar[r]^-{\tilde{\alpha}_*} & H_1( \Sigma , \mathbb{Z} ) \ar[r] & 1,
}
\end{equation*}
where $Br_{2l}(\Sigma , a_0) = \pi_1( \widetilde{S}^{2l} \Sigma , a_0)$ is the $2l$-th braid group of $\Sigma$ \cite{bir}. It follows from Proposition \ref{propexact1} that this is an exact sequence of groups. Proposition \ref{proploopmono} shows that the monodromy action of the generator of $\pi_1(\mathbb{C}^*) = \mathbb{Z}$ acts as $\sigma^* : \Lambda_S \to \Lambda_S$, where $\sigma : S \to S$ is the sheet-swapping involution of the double cover $S \to \Sigma$. Thus it remains to find generators for $ker( \tilde{\alpha}_* )$, to lift these to $\pi_1( \mathcal{A}^0_{\rm reg}(L) , a_0 )$ and determine their monodromy action.\\

Let $b_1, \dots , b_{2l} \in \Sigma$ be the zeros of $a_0$. The spectral curve $\pi : S \to \Sigma$ associated to $a_0$ is a branched double cover, where $b_1 , \dots , b_{2l}$ are the branch points. Let $\gamma : [0,1] \to \Sigma$ be an embedded path from $b_i$ to $b_j$, $i \neq j,$ such that $\gamma$ does not meet the other branch points. From $\gamma$, we obtain a braid $s_\gamma \in Br_{2l}(\Sigma , a_0)$ by exchanging $b_i$ and $b_j$ around opposite sides of $\gamma$, while keeping all other points fixed. We call such a braid a {\em swap}. We show in Theorem \ref{thmswap} that $ker( \tilde{\alpha}_* )$ is generated by swaps. In \textsection \ref{secfgc}, we describe a lifting procedure which lifts a swap $s_\gamma$ to an element $\tilde{s}_\gamma \in \pi_1( \mathcal{A}^0_{\rm reg}(L) , a_0 )$. The central result of this paper, Theorem \ref{thmswapmono}, is a simple description of the monodromy action of $\rho( \tilde{s}_\gamma )$. Note that since $\gamma$ is an embedded path in $\Sigma$ joining two branch points, we have that the pre-image $l_\gamma = \pi^{-1}(\gamma)$ under $\pi$ is an embedded loop in $S$.
\begin{theorem}
The monodromy action of $\rho( \tilde{s}_\gamma )$ is the automorphism of $\Lambda_S = H^1(S , \mathbb{Z})$ induced by a Dehn twist of $S$ around $l_\gamma$. Let $c_\gamma \in H^1(S , \mathbb{Z})$ be the Poincar\'e dual of the homology class of $l_\gamma$. Then $\rho( \tilde{s}_\gamma )$ acts on $H^1( S , \mathbb{Z})$ as a Picard-Lefschetz transformation:
\begin{equation*}
\rho( \tilde{s}_\gamma ) x = x + \langle c_\gamma , x \rangle c_\gamma.
\end{equation*}
\end{theorem}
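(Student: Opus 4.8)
The plan is to realise the swap $s_\gamma$ as a small loop encircling the discriminant locus of the Hitchin base and then to invoke the classical Picard--Lefschetz theorem. Write $\mathcal{D} \subset \mathcal{A}^0(L) = H^0(\Sigma, L^2)$ for the discriminant, i.e.\ the sections possessing a zero of multiplicity $\geq 2$, so that $\mathcal{A}^0_{\rm reg}(L) = \mathcal{A}^0(L) \setminus \mathcal{D}$. First I would construct a path of sections $a_s \in \mathcal{A}^0(L)$, $s \in [0,1]$, starting at $a_0$, which drags the two branch points $b_i, b_j$ towards one another along $\gamma$ while keeping the remaining zeros fixed, so that all zeros stay simple for $s<1$ and $a_1$ has its only non-simple zero an ordinary double point at the collision point on $\gamma$ (such a path is easily written down since $\gamma$ is embedded and avoids the other branch points, and lifting the corresponding path of divisors through the $\mathbb{C}^*$-bundle $\mathcal{A}^0_{\rm reg}(L) \to \tilde{\alpha}^{-1}(L^2)$ shows it exists already at the level of sections). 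Over this path the spectral curves degenerate from $S$ to the curve $S_1$, which acquires a single node (locally $w^2 = z^2$) over the double zero; moreover this is an \emph{ordinary} degeneration: $a_1$ is a smooth point of $\mathcal{D}$ and the family meets $\mathcal{D}$ transversally there, because in suitable local coordinates near the collision the two colliding simple zeros are the two roots of $z^2 - s$ with $s$ a local defining equation of $\mathcal{D}$. The vanishing cycle of this degeneration is by construction the circle over the shrinking sub-arc of $\gamma$, which, as that sub-arc is carried back to all of $\gamma$, is isotopic in $S$ to $l_\gamma = \pi^{-1}(\gamma)$.

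Next I would identify $\rho(\tilde{s}_\gamma)$ with the monodromy of this Lefschetz degeneration. By the explicit lifting procedure of \textsection\ref{secfgc}, the lift $\tilde{s}_\gamma$ is homotopic in $\mathcal{A}^0_{\rm reg}(L)$ to the loop running along the path $a_s$ to the vicinity of $a_1$, once around a small meridian of $\mathcal{D}$ transverse to it at $a_1$, and back. The point requiring care is the $\mathbb{C}^*$-coordinate of the lift: along the swap only the behaviour of the section near $\gamma$ changes, and the $\mathbb{C}^*$-datum is detected by the section far from $\gamma$ (for instance by its value at a fixed point away from $\gamma$), so it returns to itself without winding; hence $\tilde{s}_\gamma$ is this encircling loop on the nose rather than a translate of it by a power of the generator of $\pi_1(\mathbb{C}^*)$, whose separate contribution is the $\sigma^*$ of Proposition \ref{proploopmono}.

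With this identification the result follows from the local Picard--Lefschetz theorem for a nodal degeneration of curves: the geometric monodromy around a transverse meridian of a single node is a Dehn twist about the vanishing cycle, and its action on $H^1$ of the smooth fibre is, for a vanishing cycle $\ell$ with Poincar\'e dual $c$, the map $x \mapsto x + \langle c, x\rangle c$ on $H^1(S,\mathbb{Z})$. Specialising to $\ell = l_\gamma$ and $c = c_\gamma$ yields both assertions. No sign ambiguity arises: $x \mapsto x + \langle c_\gamma, x\rangle c_\gamma$ is unchanged under $c_\gamma \mapsto -c_\gamma$, so it is independent of the orientation of $l_\gamma$, and by skew-symmetry of $\langle\cdot,\cdot\rangle$ it coincides with the usual form $x \mapsto x - \langle x, c_\gamma\rangle c_\gamma$ of the Picard--Lefschetz transformation.

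I expect the main obstacle to be the second step: matching the abstractly defined lift $\tilde{s}_\gamma$ from \textsection\ref{secfgc} with the geometric meridian of $\mathcal{D}$, and in particular checking that the $\mathbb{C}^*$-component contributes no winding and that the loop is genuinely transverse to $\mathcal{D}$ at a smooth point with vanishing cycle exactly $l_\gamma$. Once it is shown that the degeneration is a single ordinary node traversed once --- equivalently, that $s_\gamma$ is the half-twist generator of the braid group, its image in the symmetric group being the transposition $(i\,j)$ rather than the identity --- the exponent of the Dehn twist is forced to be $\pm 1$ and the classical computation concludes. As an alternative to passing through the global discriminant, one can argue purely locally: write $S_t$ (for $t$ along $\tilde{s}_\gamma$) as a fixed surface glued along two boundary circles to a family of annuli $\{w^2 = z^2 - t\}$ over a disc neighbourhood of $\gamma$, and verify by hand that the clutching monodromy of this annulus family is a single Dehn twist about its core circle $l_\gamma$.
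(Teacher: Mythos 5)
Your strategy --- realise the lifted swap as a meridian of the discriminant at an ordinary double point and invoke the local Picard--Lefschetz theorem --- rests on exactly the same classical input as the paper's proof, but the first step fails as stated. A path $a_s \in H^0(\Sigma,L^2)$ which ``drags $b_i,b_j$ towards one another along $\gamma$ while keeping the remaining zeros fixed'' does not exist in general: the divisor of a section of $L^2$ must lie in the linear system $|L^2|$, and as the two points move along $\gamma$ the class $[b_i(s)]+[b_j(s)]+\sum_{k\neq i,j}[b_k]$ traces a non-constant path in $Jac_{2l}(\Sigma)$, so the divisor leaves the fibre $\tilde{\alpha}^{-1}(L^2)$. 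Your parenthetical justification is circular for precisely this reason --- there is no ``corresponding path of divisors'' in $\tilde{\alpha}^{-1}(L^2)$ to lift through the $\mathbb{C}^*$-bundle. (This constraint is exactly why a swap is a priori only an element of $\ker(\tilde{\alpha}_*)\subset Br_{2l}(\Sigma,b_o)$ rather than a literal loop of divisors of sections of $L^2$.) The gap is repairable: either allow the remaining $2l-2$ zeros to move inside small disjoint discs while $b_i,b_j$ collide, or, as the paper does, enlarge the family to branched double covers over arbitrary reduced divisors via $\widetilde{Z}_l \to \widetilde{V}_{2l}$ and use Proposition \ref{propmonofact} to transfer the monodromy computation to a space where the naive degeneration path does exist. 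But as written, the Lefschetz degeneration you analyse does not take place inside $\mathcal{A}^0(L)$.

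Beyond this, the comparison with the paper's argument is worth noting. The paper never approaches the discriminant of $H^0(\Sigma,L^2)$ at all: it localises the swap to a disc $e(D^2)$ containing the branch points and the arc $\gamma$, observes that the double cover is unchanged outside the disc, and quotes the classical fact that a half-twist of two branch points of a double cover of the disc acts as a Dehn twist about the cycle over the connecting arc. That is essentially your closing ``alternative'' argument (gluing a family of annuli $w^2=z^2-t$ into a fixed surface), which is both correct and avoids the linear-equivalence obstruction entirely; it also settles the $\mathbb{C}^*$-winding issue cleanly, since the canonical lift of \textsection\ref{secfgc} is built from a section over the contractible space $S^2(D^2)$ and hence leaves the section unchanged outside the disc, so no extra power of $\tau$ (i.e.\ of $\sigma^*$) is introduced. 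Your heuristic of tracking the value of the section at a basepoint away from $\gamma$ captures the right idea but would need to be tied to that specific lifting procedure to count as a proof.
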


This gives us a complete description of the monodromy of rank $2$ twisted Hitchin systems. A system of generators for the monodromy group of the $SL(2,\mathbb{C})$-Hitchin fibration, in the untwisted case, had previously been computed by Copeland \cite{cop} for hyperelliptic Riemann surfaces and was applied in \cite{sch0,sch1} to determine the monodromy in the $SL(2,\mathbb{R})$ case. Copeland's method was combinatorial, relating the problem to computations involving a certain associated graph. The results of this paper are proved independently of \cite{cop} and \cite{sch0,sch1}, by different techniques. Moreover, our approach yields a different set of generators for the monodromy group compared with \cite{cop}, greatly facilitating the monodromy computations of subsequent sections of the paper. It should also be emphasised that while the $GL(2,\mathbb{C})$-monodromy completely determines the $SL(2,\mathbb{C})$-monodromy, the converse is not true. Thus even in the case of untwisted Higgs bundles, our computations yield new results.\\

In \textsection \ref{sectcc}, we proceed to determine the twisted Chern class, again for $r=2$. Even for the case of untwisted Higgs bundles, these have never previously been computed. From Theorem \ref{thmaffine1}, the twisted Chern class of $\mathcal{M}^0(d,L)$ depends only on the value of $d \; ( {\rm mod} \; 2)$. When $d = deg(L) \; ( {\rm mod} \; 2)$, the twisted Chern class is zero, which is most easily seen by noting that the Hitchin section maps into the degree $d = deg(L)$ component. Let $c \in H^2( \mathcal{A}^0_{\rm reg}(L) , \Lambda_S )$ denote the twisted Chern class of $\mathcal{M}^0(d,L)$, where $d = deg(L) + 1 \; ( {\rm mod} \; 2)$. Let $\Lambda_S[2] = \Lambda_S \otimes \mathbb{Z}_2 = H^1( S , \mathbb{Z}_2)$. We show that $c$ is the coboundary of a class $\beta \in H^1( \mathcal{A}^0_{\rm reg}(L) , \Lambda_S[2] )$. Such a cohomology class is represented by a map $\beta : \pi_1( \mathcal{A}^0_{\rm reg}(L) , a_0 ) \to H^1( S , \mathbb{Z}_2)$ satisfying the cocycle condition $\beta(gh) = \beta(g) + \rho(g) \beta(h)$. The second key result of this paper, Theorem \ref{thmbeta}, is a description of this cocycle on the generators of $\pi_1( \mathcal{A}^0_{\rm reg}(L) , a_0 )$:

\begin{theorem}
Let $\tau$ be the loop in $\mathcal{A}_{\rm reg}^0(L)$ generated by the $\mathbb{C}^*$-action. Then $\beta(\tau) = 0$. Let $\tilde{s}_\gamma \in \pi_1( \mathcal{A}_{\rm reg}^0(L) , a_0)$ be a lift of a swap of $b_i,b_j$ along the path $\gamma$. Then
\begin{equation*}
\beta( \tilde{s}_\gamma ) = \begin{cases} 0 & \text{if } 1 \notin \{i,j\}, \\  
c_\gamma & \text{if } 1 \in \{i,j\}. \end{cases}
\end{equation*}
\end{theorem}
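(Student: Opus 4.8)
The plan is to compute the twisted Chern class $c$ by making the affine structure of Theorem~\ref{thmaffine1} explicit over each generating loop, and then reading off the class $\beta$ that realizes $c$ as a coboundary. Recall that $c$ arises because, unlike in the $d = \deg(L) \pmod 2$ case, there is no global section of $\mathcal{M}^0(d,L) \to \mathcal{A}^0_{\rm reg}(L)$ when $d = \deg(L)+1 \pmod 2$; the obstruction is precisely that the natural affine torsor has no global basepoint, and the translational part of its transition functions determines $c$. First I would fix a convenient local trivialization of $\mathcal{M}^0(d,L)$ over contractible neighborhoods by choosing, over each such patch, a line bundle of the appropriate degree on the (locally constant family of) spectral curves $S$; the Prym-type construction of \textsection\ref{secschf} identifies the fibre with a torsor over $\mathrm{Prym}(S/\Sigma)$, and the relevant data is the degree-$d$ part of the line bundle supported at the branch points. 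The cocycle $\beta$ measures how these basepoints differ, modulo $2$, as one goes around a loop.

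The computation then splits according to the two types of generators identified in the introduction. For the loop $\tau$ generated by the $\mathbb{C}^*$-action, the spectral curve $S$ and its branch divisor are literally fixed (the $\mathbb{C}^*$ only rescales $a_0$, hence acts on $S$ through the involution $\sigma$ but does not move branch points), so a basepoint line bundle can be chosen $\mathbb{C}^*$-equivariantly; thus $\beta(\tau) = 0$. This is the easy half. For a lifted swap $\tilde{s}_\gamma$ exchanging $b_i$ and $b_j$ along $\gamma$, the monodromy is the Picard--Lefschetz transformation of Theorem~\ref{thmswapmono}, a Dehn twist about $l_\gamma = \pi^{-1}(\gamma)$. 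I would track a chosen basepoint line bundle through the braid motion: the degree-$d$ structure near the branch points gets transported, and the discrepancy between the transported basepoint and the original one, measured in $\Lambda_S[2] = H^1(S,\mathbb{Z}_2)$, is what we must evaluate. The key geometric input is the lifting procedure of \textsection\ref{secfgc}: the lift $\tilde{s}_\gamma$ is not canonical, and the choice of lift precisely determines whether the square root of the relevant degree-one twist is picked up at $b_i$, at $b_j$, or neither. The asymmetry in the statement — $\beta(\tilde{s}_\gamma)$ being $0$ unless the distinguished branch point $b_1$ is one of $b_i, b_j$ — must come from having normalized the basepoint line bundle relative to $b_1$ (or equivalently, relative to the Hitchin-section trivialization which lives in the $d = \deg(L)$ component and is pinned down at one marked point).

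The main obstacle, I expect, is pinning down this last sign/parity bookkeeping rigorously: one must show that as $b_i, b_j$ are swapped, the degree-$d$ part of the basepoint line bundle changes by $\mathcal{O}(b_i - b_j)$ or a square root thereof, and then identify the corresponding class in $H^1(S,\mathbb{Z}_2)$ with $c_\gamma$ — the reduction mod $2$ of the Poincaré dual of $l_\gamma$ — exactly when $b_1 \in \{i,j\}$. This requires care with the exact sequence
\begin{equation*}
0 \to \mathrm{Prym}(S/\Sigma)[2] \to \mathrm{Jac}(S)[2] \to \mathrm{Jac}(\Sigma)[2] \to 0
\end{equation*}
and with identifying the $2$-torsion point $\mathcal{O}_S(l_\gamma)^{1/2}$-type class that a half-twist produces; the Picard--Lefschetz formula and the identification of $c_\gamma$ as the mod $2$ reduction of $\mathrm{PD}[l_\gamma]$ should make this tractable once set up correctly. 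A consistency check is that $\beta$ must satisfy the cocycle condition $\beta(gh) = \beta(g) + \rho(g)\beta(h)$ with $\rho$ the monodromy of Theorem~\ref{thmswapmono}; verifying this on products of swaps (using $\rho(\tilde{s}_\gamma)c_\gamma = -c_\gamma$, i.e.\ $= c_\gamma$ mod $2$) both validates the formula and, combined with connectivity of the relevant configuration space, shows it suffices to check the formula on the chosen generators.
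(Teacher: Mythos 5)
Your strategy coincides with the paper's: fix the origin $\Theta = \mathcal{O}(u_1)$ of the torsor, normalized at the distinguished branch point $b_1$, represent $\beta$ by the cocycle $g \mapsto g\cdot\Theta - \Theta$, and evaluate it by transporting $\Theta$ around each generating loop. Your treatment of $\tau$ is essentially the paper's (the monodromy is $\sigma^*$ and $\mathcal{O}(u_1)$ is $\sigma$-invariant because $u_1$ is a ramification point), and the case $1 \notin \{i,j\}$ is likewise immediate since $u_1$ does not move. However, the case $1 \in \{i,j\}$ --- which is the entire content of the theorem --- is exactly the step you defer as "the main obstacle," and your proposal does not supply the two ingredients needed to close it. First, as $b_1$ travels along $\gamma$ to $b_j$, the transported bundle $\mathcal{O}(u_1(t))$ leaves the fixed torsor $\Lambda_S^1[2] = \{M \mid M^2 = \pi^*(A)\}$ (with $A = \mathcal{O}(b_1)$ fixed), so it must be corrected by $\pi^*(\Gamma(t)^*)$, where $\Gamma(t)$ is the continuously varying square root in $Jac(\Sigma)$ of $\mathcal{O}(\gamma(t) - b_1)$ with $\Gamma(0) = \mathcal{O}$; this correction is what makes the final answer a $2$-torsion class rather than the non-torsion $\mathcal{O}(u_j - u_1)$. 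Second, one must actually identify $\mathcal{O}(u_j - u_1)\otimes\pi^*(\Gamma(1)^*)$ with $c_\gamma$; the paper does this by pairing against an arbitrary $\omega \in H^1(S,\mathbb{Z})$ and computing $\int_{\gamma_1}\omega - \tfrac{1}{2}\int_{\gamma}\pi_*\omega = \tfrac{1}{2}\bigl(\int_{\gamma_1}\omega - \int_{\gamma_2}\omega\bigr) = \tfrac{1}{2}\int_{l_\gamma}\omega$ modulo $\mathbb{Z}$, where $\gamma_1,\gamma_2$ are the two lifts of $\gamma$ to $S$. Your proposed consistency check via the cocycle condition cannot substitute for this: it constrains $\beta$ on products only once its values on generators are known, and it is satisfied equally by the (incorrect) guess $\beta(\tilde{s}_\gamma) = 0$ for all swaps. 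So the approach is right, but the proof as written establishes only the two easy cases.
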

We are also able to compute corresponding classes $\check{\beta},\hat{\beta}$ for the $SL(2,\mathbb{C})$ and $PSL(2,\mathbb{C})$-moduli spaces. As the results are similar to the $GL(2,\mathbb{C})$-case, we leave the details to \textsection \ref{sectcc}.\\

In Sections \textsection \ref{secmonoact}, \textsection \ref{secmonoact2}, \textsection \ref{secmonoact3}, we give explicit descriptions of the monodromy representation taken with $\mathbb{Z}_2$-coefficients. The reason for our interest in $\mathbb{Z}_2$-coefficients is the fact that points of order $2$ in the fibres of the $GL(2,\mathbb{C})$-Hitchin system correspond to $GL(2,\mathbb{R})$-Higgs bundles. Similar statements hold in the $SL(2,\mathbb{C})$ and $PSL(2,\mathbb{C})$ cases. The main result is Theorem \ref{thmmonogp}, which fully describes the group of monodromy transformations on $\Lambda_S[2] = H^1(S, \mathbb{Z}_2)$. To describe this result, let $\mathbb{Z}_2 B$ be the $\mathbb{Z}_2$-vector spaces with basis given by the set $B = \{b_1 , b_2 , \dots , b_{2l} \}$. Let $(( \; , \; ))$ be the bilinear form on $\mathbb{Z}_2 B$ given by $(( b_i , b_j )) = 1$ if $i = j$ and $0$ otherwise. Let $b_o = b_1 + b_2 + \dots + b_{2l}$ and set $W = (b_o)^\perp/(b_o)$. Note that $(( \; , \; ))$ induces a pairing on $W$ which will also be denoted by $(( \; , \; ))$. We will use $\Lambda_\Sigma[2]$ to denote $H^1(\Sigma , \mathbb{Z}_2)$ and we use $\langle \; , \; \rangle$ to denote the Weil pairings on $\Lambda_S[2]$ and $\Lambda_\Sigma[2]$. Then according to Proposition \ref{propsplit1}, we have an identification
\begin{equation*}
\Lambda_S[2] = \Lambda_\Sigma[2] \oplus W \oplus \Lambda_\Sigma[2],
\end{equation*}
under which the Weil pairing on $\Lambda_S[2]$ is given by:
\begin{equation*}
\langle ( a , b , c) , (a' , b' , c' ) \rangle = \langle a , c' \rangle + (( b , b' )) + \langle c , a' \rangle.
\end{equation*}
When $l = deg(L)$ is even, we introduce a quadratic refinement $q : \Lambda_S[2] \to \mathbb{Z}_2$ of $\langle \; , \; \rangle$ given by 
\begin{equation*}
q(a,b,c) = \langle a , c \rangle + q_W(b),
\end{equation*}
where $q_W : W \to \mathbb{Z}_2$ is the unique quadratic refinement of $(( \; , \; ))$ on $W$ for which $q_W(b_i+b_j) = 1$ for all $1 \le i < j \le 2l$. From Lemma \ref{lemphi0} and Proposition \ref{propqref}, we have that the function $q + l/2$ is the mod $2$ index on $\Lambda_S[2] = H^1(S,\mathbb{Z})$ associated to a naturally defined spin structure on $S$. We may now give the statement of Theorem \ref{thmmonogp}:

\begin{theorem}
Let $G \subseteq GL( \Lambda_{S}[2] )$ be the group generated by the monodromy action of $\rho$ on $\Lambda_{S}[2]$. Then $G$ is isomorphic to a semi-direct product $G = S_{2l} \ltimes H$ of the symmetric group $S_{2l}$ and the group $H$ described below. The symmetric group $S_{2l}$ acts on $W$ through permutations of the set $B$. Let $K$ be the subgroup of elements of $GL( \Lambda_S[2])$ of the form:
\begin{equation*}
\left[ \begin{matrix} I_{2g} & A & B \\ 0 & I & A^t \\ 0 & 0 & I_{2g} \end{matrix} \right],
\end{equation*}
where $A : W \to \Lambda_\Sigma[2]$, $B : \Lambda_\Sigma[2] \to \Lambda_\Sigma[2]$,  and $A^t : \Lambda_\Sigma[2] \to W$ is the adjoint of $A$, so $\langle Ab , c \rangle = (( b , A^tc ))$. Then:
\begin{enumerate}
\item{If $l$ is odd then $H$ is the subgroup of $K$ preserving the intersection form $\langle \; , \; \rangle$, or equivalently, the elements of $K$ satisfying: 
\begin{equation*}
\langle Bc , c' \rangle + \langle Bc' , c \rangle + \langle A^tc , A^t c' \rangle = 0.
\end{equation*}
}
\item{If $l$ is even then $H$ is the subgroup of $K$ preserving the quadratic refinement $q$ of $\langle \; , \; \rangle$, or equivalently, the elements of $K$ satisfying:
\begin{equation*}
\langle Bc , c \rangle + q_W( A^t c ) = 0.
\end{equation*}
}
\end{enumerate}
\end{theorem}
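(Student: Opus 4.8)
The plan is to move everything into the fixed model $\Lambda_S[2]=\Lambda_\Sigma[2]\oplus W\oplus\Lambda_\Sigma[2]$ of Proposition~\ref{propsplit1} and then compute, by $\mathbb{F}_2$--linear algebra, the group generated by the explicit monodromy transformations provided by Theorem~\ref{thmswapmono} and Proposition~\ref{proploopmono}. First I would record these generators in the model. Using the geometric description of the sheet--swap $\sigma$ one checks that $\sigma^{*}$ fixes $\Lambda_\Sigma[2]\oplus 0\oplus 0=\pi^{*}H^{1}(\Sigma,\mathbb{Z}_2)$ pointwise and acts trivially on the summand $W$. A swap $\tilde s_\gamma$ of $b_i,b_j$ acts by the transvection $t_{c_\gamma}(x)=x+\langle c_\gamma,x\rangle c_\gamma$, where the mod~$2$ reduction of $c_\gamma$ has the shape $(a_\gamma,\,b_i+b_j,\,0)$: the $W$--component is the vanishing cycle $b_i+b_j$, the third component vanishes because $\pi_{*}[l_\gamma]=0$ (the arc $\gamma$ is contractible in $\Sigma$), and $a_\gamma\in\Lambda_\Sigma[2]$ depends on the chosen arc and lift. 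The one genuinely topological ingredient I would isolate as a preliminary lemma is that, as $\gamma$ and the lift vary, the classes $a_\gamma$ span $\Lambda_\Sigma[2]$; this follows from how pre-images of arcs realise $1$--cycles on the double cover $S$.

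For the inclusion $G\subseteq S_{2l}\ltimes H$, put $U_0=\Lambda_\Sigma[2]\oplus 0\oplus 0$ and $U_1=\Lambda_\Sigma[2]\oplus W\oplus 0$; a short computation with the Weil pairing gives $U_1^{\perp}=U_0\subseteq U_1$. Since every $c_\gamma$ lies in $U_1$, each transvection $t_{c_\gamma}$ fixes $U_0$ pointwise and preserves $U_1$, as does $\sigma^{*}$; and the map induced on $U_1/U_0\cong W$ is the transvection by $b_i+b_j$, i.e.\ the transposition $(ij)$, for $t_{c_\gamma}$, and the identity for $\sigma^{*}$. Hence each generator belongs to the subgroup of isometries of $\langle\ ,\ \rangle$ (resp., when $l$ is even, of $q$, which by Lemma~\ref{lemphi0} and Proposition~\ref{propqref} is preserved by the whole monodromy because the spin structure on $S$ is constructed naturally) that preserve $U_0\subseteq U_1$ and act on $U_1/U_0$ through a permutation of $B$. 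Writing such an isometry as a permutation of $B$ followed by its unipotent part — trivial on $U_0$, on $U_1/U_0$ and on $\Lambda_S[2]/U_1$ — exhibits it as $P_\sigma\cdot k$ with $k\in K$, and the requirement that $k$ be an isometry is precisely the defining relation of $H$ in case (1) (resp.\ (2)). So $G\subseteq S_{2l}\ltimes H$; moreover $S_{2l}$ normalises $K$ and preserves that relation, $H=G\cap K$ has $S_{2l}\cap K=1$, and $G=S_{2l}\cdot H$ once the reverse inclusion is known, which gives the asserted semidirect product.

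The reverse inclusion reduces to showing $H\subseteq G$: granted this, from $t_{c_\gamma}=P_{(ij)}\cdot k$ with $k\in H\subseteq G$ one gets $P_{(ij)}\in G$, hence $S_{2l}\subseteq G$ and $S_{2l}\ltimes H\subseteq G$. To obtain $H\subseteq G$ I would manufacture the elementary unipotents of $K$ from products of pairs of transvections. For two swaps of the same pair $b_i,b_j$ with classes $c=(a,w,0)$, $c'=(a',w,0)$ (so $\langle c,c'\rangle=((w,w))=0$ with $w=b_i+b_j$), the involution $t_c t_{c'}$ lands in $K$ with top row governed by $a+a'$; letting $a,a'$ vary — here the spanning lemma is used — and combining with the analogous products for overlapping pairs (which realise the adjacent transpositions of $W$, hence all of $S_{2l}$ acting on $W$, and their conjugates), one assembles a generating family of elementary unipotents of $K$ that lies in $G$. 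The final step is the purely group--theoretic claim that these elementary unipotents generate all of $H$.

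I expect the main obstacle to be exactly this last step, intertwined with the spanning lemma: one must simultaneously pin down which transvections actually occur (the range of $a_\gamma$) and show that their products sweep out $H$ precisely, neither overshooting into the full isometry group nor stopping short of $H$. The even--$l$ case is the most demanding, since there $H$ is cut out inside $K$ by the \emph{quadratic} condition $\langle Bc,c\rangle+q_W(A^{t}c)=0$ rather than a linear one, so the counting of available elementary elements must be done with some care.
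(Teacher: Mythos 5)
Your plan follows the same overall route as the paper: express the generators in the splitting of Proposition \ref{propsplit1}, prove the containment $G\subseteq S_{2l}\ltimes H$ by showing the generators preserve the flag $U_0\subset U_1$ and the form (resp.\ $q$), and then try to realise enough unipotents as products of transvections to get the reverse containment. The upper bound as you describe it is essentially complete and correct. The problem is that the entire lower bound rests on two statements you have only gestured at, and as formulated they are not strong enough. First, your ``spanning lemma'' is weaker than what is needed: the argument requires that for \emph{each fixed} pair $b_i,b_j$, \emph{every} $a\in\Lambda_\Sigma[2]$ occurs as the $\Lambda_\Sigma[2]$--component of some vanishing cycle $c_\gamma=(a,b_i+b_j,0)$, not merely that the occurring $a_\gamma$ span. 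This is Proposition \ref{propjoin} of the paper, proved by representing any class of $H_1(\Sigma,\mathbb{Z}_2)$ by an embedded loop (transitivity of the mapping class group on $H^1(\Sigma,\mathbb{Z}_2)\setminus\{0\}$) and modifying the arc accordingly. With only a spanning statement, the set of transvections at your disposal is not pinned down and the subsequent generation argument cannot be carried out.

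Second, the ``purely group--theoretic claim that these elementary unipotents generate all of $H$'' is exactly where the dichotomy between $l$ odd and $l$ even is decided, and your products of pairs of transvections with a common $W$--component $b_i+b_j$ only produce the transformations $A_{ij}^x$ of (\ref{equaijx}) and hence, via $A_{ij}^xA_{ij}^yA_{ij}^{x+y}=M^{x,y}$, the pure--$B$ unipotents with $B$ symmetric and \emph{even}. When $l$ is odd, $H$ also contains the elements $N^x$ with $B(a)=\langle x,a\rangle x$, which do not preserve $q$ and cannot be reached this way; the paper obtains them from the global relation $b_{12}+b_{34}+\dots+b_{2l-1,2l}=0$ in $W$ via the identity $A_{12}^xA_{34}^x\cdots A_{2l-1,2l}^x=N^x$ (Lemma \ref{lemb1}). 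Without this matching trick your construction stops short of $H$ precisely in the odd case (contrary to your expectation that the even case is the harder one; there, invariance of $q$ already prevents overshooting). You have correctly located the obstacle, but the proof is not complete until these two ingredients --- the exact realisability statement of Proposition \ref{propjoin} and the generation argument of Lemma \ref{lemb1}, including the perfect--matching identity --- are supplied.
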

Sections \textsection \ref{secmonoact2} and \textsection \ref{secmonoact3} consider the monodromy action on some closely related representations, relevant to our study of real Higgs bundles in the later sections of the paper.\\

In \textsection \ref{secrthbm} we consider moduli spaces of $L$-twisted Higgs bundles corresponding to the real groups $GL(2,\mathbb{R})$, $SL(2,\mathbb{R})$, $PGL(2,\mathbb{R})$ and $PSL(2,\mathbb{R})$ and study the monodromy of the associated Hitchin fibrations. For these groups, the non-singular fibres of the Hitchin fibration are affine spaces over certain $\mathbb{Z}_2$-vector spaces. The regular loci of these real moduli spaces are thus certain covering spaces of $\mathcal{A}^0_{\rm reg}(L)$. Using spectral data, we give in Proposition \ref{proprsd} a precise description of the fibres. This allows us to describe the regular loci in terms of the monodromy representation of $\rho$ with $\mathbb{Z}_2$-coefficients, as studied in Sections \textsection \ref{secmonoact}, \textsection \ref{secmonoact2}, \textsection \ref{secmonoact3}. Assocated to Higgs bundles for a real group are certain topological invariants which can be used to distinguish connected components of the moduli spaces. Proposition \ref{propspecinv} gives a description of these invariants in terms of spectral data, hence in terms of monodromy representations.\\

In \textsection \ref{seccrcv}, we use our monodromy calculations to compute the number of connected components of the moduli space of $L$-twisted real Higgs bundles for the groups $GL(2,\mathbb{R})$, $SL(2,\mathbb{R})$, $PGL(2,\mathbb{R})$ and $PSL(2,\mathbb{R})$. We introduce the notion of maximal components for $L$-twisted Higgs bundles, generalising the notion of maximal representations to the $L$-twisted setting. We determine the number of maximal components in Corollary \ref{cormax1}. We show in Proposition \ref{propconnreg} that every connected component of these moduli spaces meets the regular locus. Hence the number of orbits of the monodromy gives an upper bound for the number of connected components of the moduli space. On the other hand we have a lower bound on the number of components given by counting the number of maxmal components plus the number of possible values for the topological invariants of non-maximal components. We show in Theorem \ref{thmcomponents} that these numbers coincide and thus give the number of connected components, which are:
\begin{theorem}
Suppose that $L = K$ or $l = deg(L) > 2g-2$ and that $l$ is even. The number of connected components of the $L$-twisted real Higgs bundle moduli spaces are as follows:
\begin{enumerate}
\item{$3.2^{2g} + (l-4)/2$ for $GL(2,\mathbb{R})$}
\item{$2.2^{2g}+(l-1)$ for $SL(2,\mathbb{R})$}
\item{$2^{2g}+l/2$ for $PGL(2,\mathbb{R})$ of degree $0$ and $2^{2g}+l/2-1$ for $PGL(2,\mathbb{R})$ of degree $1$}
\item{$l+1$ for $PSL(2,\mathbb{R})$ of degree $0$ and $l$ for $PSL(2,\mathbb{R})$ of degree $1$.}
\end{enumerate}
\end{theorem}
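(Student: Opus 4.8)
The plan is to reduce the computation of the number of connected components of each real moduli space to counting orbits of the monodromy action on the relevant $\mathbb{Z}_2$-spectral data, and then to close the gap between the resulting upper bound and the lower bound coming from topological invariants and maximal components. Concretely, by Proposition \ref{propconnreg} every connected component of the relevant real moduli space meets the regular locus, so the components of the regular locus surject onto the components of the full moduli space. By Proposition \ref{proprsd}, the regular locus of each real moduli space is, fibrewise, an affine space over a $\mathbb{Z}_2$-vector space built from $\Lambda_S[2] = H^1(S,\mathbb{Z}_2)$ (for $GL(2,\mathbb{R})$ and $PGL(2,\mathbb{R})$ the fibre is a torsor for a subgroup of $\Lambda_S[2]$ cut out by the quadratic refinement $q$; for $SL(2,\mathbb{R})$ and $PSL(2,\mathbb{R})$ one uses the $SL$/$PSL$ analogues described in \textsection\ref{secmonoact2}, \textsection\ref{secmonoact3} together with the cocycle $\beta$ of Theorem \ref{thmbeta}, which records the twisted Chern obstruction). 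Hence the number of components of the regular locus equals the number of orbits of the monodromy group $G$ (together with its translational part governed by $\beta$) acting on the appropriate affine space, and this gives an upper bound for the number of components of the moduli space.

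The key input for the orbit count is Theorem \ref{thmmonogp}, which identifies $G = S_{2l} \ltimes H$ and, for $l$ even, pins down the invariant quadratic refinement $q = \langle a,c\rangle + q_W(b)$ on $\Lambda_S[2] = \Lambda_\Sigma[2] \oplus W \oplus \Lambda_\Sigma[2]$. The first step is therefore a purely group-theoretic orbit computation: classify the orbits of $G$ on the level sets $q^{-1}(0)$ and $q^{-1}(1)$ (respectively, on the analogous spaces in the $SL$, $PGL$, $PSL$ cases). The Arf invariant of $q$ restricted to an orbit, together with the residual action of $S_{2l}$ on $W$ by permutations of $B$, will be the separating invariants; one expects the orbits to be indexed by a discrete invariant taking roughly $l/2$ values (coming from the $W$-component and the combinatorics of $q_W(b_i+b_j)=1$) times a factor of order $2^{2g}$ or a small constant (coming from the $\Lambda_\Sigma[2]$-components and the Arf invariant), which is exactly the shape of the four stated answers. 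The translational twist by $\beta$ from Theorem \ref{thmbeta} — vanishing on $\tau$ and on swaps not involving $b_1$, equal to $c_\gamma$ on swaps involving $b_1$ — must be folded in for $SL$ and $PSL$; this is what produces the $+l$ versus $+l/2$ type discrepancies and the degree-dependence in items (3) and (4).

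Having obtained the upper bound, the second step is to produce a matching lower bound. Here one invokes Proposition \ref{propspecinv}, which expresses the topological invariants distinguishing components of the real moduli space (Toledo-type invariants, Stiefel–Whitney classes, etc.) in terms of spectral data, hence as functions that are constant on monodromy orbits; these account for the non-maximal components. To these one adds the maximal components, whose count is given by Corollary \ref{cormax1}. The claim is that (number of maximal components) $+$ (number of admissible values of the topological invariants on non-maximal components) equals the orbit count from the first step. Verifying this equality case by case, for each of $GL(2,\mathbb{R})$, $SL(2,\mathbb{R})$, $PGL(2,\mathbb{R})$ (degrees $0$ and $1$) and $PSL(2,\mathbb{R})$ (degrees $0$ and $1$), and reconciling it with the explicit numbers $3\cdot 2^{2g}+(l-4)/2$, $2\cdot 2^{2g}+(l-1)$, $2^{2g}+l/2$ (resp.\ $2^{2g}+l/2-1$), $l+1$ (resp.\ $l$), completes the proof.

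The main obstacle I expect is the orbit computation for $G = S_{2l}\ltimes H$ on the quadratic-form level sets: the interaction between the permutation action on $W$ (whose quadratic refinement $q_W$ is the non-standard one with $q_W(b_i+b_j)=1$) and the unipotent part $H$ with its off-diagonal blocks $A,B$ is delicate, and one must be careful that the residual discrete invariant is genuinely a complete invariant of the orbits and not merely a partial one. A secondary subtlety is bookkeeping the $SL$/$PSL$ versus $GL$/$PGL$ differences: the central $\mathbb{Z}_2$ (respectively the choice of square root $D^{1/2}$ and the $\beta$-twist) shifts the relevant affine space and its stabilisers, and keeping the degree-$0$ versus degree-$1$ distinction straight — which is where the "$-1$" corrections in (3) and (4) enter — requires care with the Hitchin-section normalisation recalled in the introduction.
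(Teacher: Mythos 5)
Your strategy is exactly the paper's: use Proposition \ref{propconnreg} to reduce to the regular locus, identify the fibres via Proposition \ref{proprsd} and Remark \ref{remfibres}, count monodromy orbits using Theorem \ref{thmmonogp} to get an upper bound, and match it against the lower bound from Corollary \ref{cormax1} plus the topological invariants of Proposition \ref{propspecinv}. However, the substantive content of the proof --- the actual orbit count --- is missing. You write that one ``expects the orbits to be indexed by a discrete invariant taking roughly $l/2$ values \dots times a factor of order $2^{2g}$ or a small constant, which is exactly the shape of the four stated answers''; this is pattern-matching against the known answer, not a derivation. The computation is not automatic: for example, in the $GL(2,\mathbb{R})$ case one must show that for $(a,b,c)\in\Lambda_S[2]$ with $c\neq 0$ the unipotent part $H$ lets one first kill $b$ and then normalise $a$ to one of exactly two values per $c$, distinguished by $q(a,0,c)=\langle a,c\rangle$, using that every even symmetric $F$ with a prescribed value $Fc$ is realised via Lemma \ref{lemb1}; while for $c=0$ one kills $a$ and is left with the $S_{2l}$-orbits on $W$, giving $l/2$ classes, for a total of $2^{2g}+2(2^{2g}-1)+l/2$. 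Without carrying this out (and its analogues in the other three cases, where the counts $l-1$ versus $l/2$ versus $l+1$ versus $l$ come from whether one works in $(\mathbb{Z}_2 B)^{\rm ev}$, its quotient by $b_o$, or the odd affine translates) the asserted equality of upper and lower bounds is unverified, and with it the theorem.

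Two smaller inaccuracies in your setup: the regular fibre for $GL(2,\mathbb{R})$ is all of $\Lambda_S[2]$, not a torsor ``cut out by'' the quadratic refinement $q$ --- $q$ is a monodromy-invariant function used to separate orbits, not a defining equation of the fibre. And the cocycle $\check{\beta}$/affine twist is not needed for $SL(2,\mathbb{R})$, whose fibre $\widetilde{\Lambda}_P[2]$ is a linear representation; the affine structure enters only in the degree-$1$ cases of $PGL(2,\mathbb{R})$ and $PSL(2,\mathbb{R})$ (the spaces $W^1$ and $(\mathbb{Z}_2 B)^{\rm odd}$). The $+(l-1)$ versus $+l/2$ discrepancy between $SL(2,\mathbb{R})$ and $PGL(2,\mathbb{R})$ is caused by the quotient by $b_o$, not by $\beta$.
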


Let $Rep(G)$ denote the character variety of reductive representations of $\pi_1(\Sigma)$ in $G$ (see \textsection \ref{secrcv}). The non-abelian Hodge correspondence gives homeomorphisms between character varieties of reductive groups and certain moduli spaces of untwisted Higgs bundles. Applying Theorem \ref{thmcomponents}, we immediately have:
\begin{corollary}
For the following real character varieties, the number of connected components are:
\begin{enumerate}
\item{$3.2^{2g}+g-3$ for $Rep(GL(2,\mathbb{R}))$}
\item{$2.2^{2g}+2g-3$ for $Rep(SL(2,\mathbb{R}))$}
\item{$2^{2g}+g-1$ for $Rep_0(PGL(2,\mathbb{R}))$ and $2^{2g}+g-2$ for $Rep_1(PGL(2,\mathbb{R}))$}
\item{$2g-1$ for $Rep_0(PSL(2,\mathbb{R}))$ and $2g-2$ for $Rep_1(PSL(2,\mathbb{R}))$.}
\end{enumerate}
\end{corollary}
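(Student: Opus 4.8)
The plan is to obtain the corollary as an immediate specialisation of Theorem \ref{thmcomponents} to the untwisted case, using the non-abelian Hodge correspondence. First I would recall that for each of the real reductive groups $G = GL(2,\mathbb{R}), SL(2,\mathbb{R}), PGL(2,\mathbb{R}), PSL(2,\mathbb{R})$, the non-abelian Hodge correspondence (Corlette, Donaldson, Hitchin, Simpson, together with the extensions to $G$-Higgs bundles for real $G$ due to Garc\'ia-Prada, Gothen and Mundet i Riera) gives a homeomorphism between the character variety $Rep(G)$ of reductive representations $\pi_1(\Sigma) \to G$ and the moduli space of polystable $L$-twisted real Higgs bundles with $L = K$ the canonical bundle; in the $PGL$ and $PSL$ cases this homeomorphism is degree-preserving, so it carries $Rep_i(G)$ onto the degree-$i$ piece of the corresponding Higgs moduli space. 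Hence the component counts for these character varieties coincide with the component counts of the $K$-twisted real Higgs bundle moduli spaces appearing in Theorem \ref{thmcomponents}.

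Second, since $L = K$ we have $l = deg(K) = 2g-2$, which is even and satisfies the hypothesis ``$L = K$ or $l > 2g-2$'' of Theorem \ref{thmcomponents}. Substituting $l = 2g-2$ into each formula there gives: for $GL(2,\mathbb{R})$, $3 \cdot 2^{2g} + (l-4)/2 = 3 \cdot 2^{2g} + g - 3$; for $SL(2,\mathbb{R})$, $2 \cdot 2^{2g} + (l-1) = 2 \cdot 2^{2g} + 2g - 3$; for $PGL(2,\mathbb{R})$, $2^{2g} + l/2 = 2^{2g} + g - 1$ in degree $0$ and $2^{2g} + l/2 - 1 = 2^{2g} + g - 2$ in degree $1$; and for $PSL(2,\mathbb{R})$, $l + 1 = 2g - 1$ in degree $0$ and $l = 2g - 2$ in degree $1$. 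These are precisely the asserted values, so the corollary follows.

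The genuinely substantive work is already contained in Theorem \ref{thmcomponents} and the monodromy computations feeding into it; the only point in the present argument requiring care — and hence the main, rather mild, obstacle — is to check that the topological bookkeeping is consistent across the non-abelian Hodge correspondence: that the degree invariant used to decompose the $PGL(2,\mathbb{R})$ and $PSL(2,\mathbb{R})$ Higgs bundle moduli spaces corresponds to the obstruction class in $H^2(\Sigma, \pi_1(G)) \cong \mathbb{Z}_2$ that decomposes $Rep(G)$, and that no components are merged or lost in the passage. Since the correspondence respects these invariants, this verification is routine.
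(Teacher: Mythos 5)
Your proposal is correct and is exactly the paper's argument: the corollary follows from Theorem \ref{thmcomponents} by specialising to $L=K$ (so $l=2g-2$) and invoking the non-abelian Hodge correspondence for real groups (Proposition \ref{proprealnah}), which identifies $Rep(G)$ (resp. $Rep_d(G)$) with the corresponding $K$-twisted moduli space preserving the degree invariant. The arithmetic substitutions all check out.
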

The number of components for $Rep(SL(2,\mathbb{R}))$ and $Rep(PSL(2,\mathbb{R}))$ were obtained by Goldman in \cite{goldm2} and the number of components of $Rep(PSL(2,\mathbb{R}) )$ by Xia in \cite{xia1,xia2}. To the best of the authors' knowledge, the number of components for $Rep(GL(2,\mathbb{R}))$ has not previously appeared in the literature.\\

In a similar manner, we have a correspondence between representations of $Sp(4,\mathbb{R})$ with maximal Toledo invariant and $K^2$-twisted $GL(2,\mathbb{R})$-Higgs bundles. This immediately gives a new proof of the following:
\begin{corollary}
The number of components of $Rep_{2g-2}(Sp(4,\mathbb{R}))$ is given by $3.2^{2g}+2g-4$.
\end{corollary}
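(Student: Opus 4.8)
The plan is to combine the correspondence between maximal $Sp(4,\mathbb{R})$-representations and $K^2$-twisted $GL(2,\mathbb{R})$-Higgs bundles with Theorem \ref{thmcomponents}. First I would recall the non-abelian Hodge correspondence, which provides a homeomorphism between $Rep_{2g-2}(Sp(4,\mathbb{R}))$ and the moduli space of polystable $Sp(4,\mathbb{R})$-Higgs bundles of maximal Toledo invariant; this in particular induces a bijection on the sets of connected components. Then I would invoke the \emph{Cayley correspondence} of Garc\'ia-Prada, Gothen and Mundet i Riera: for a maximal $Sp(4,\mathbb{R})$-Higgs bundle $(V,\beta,\gamma)$ the map $\gamma$ is necessarily an isomorphism, and the resulting data is equivalent to that of a $K^2$-twisted $GL(2,\mathbb{R})$-Higgs bundle. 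This yields an isomorphism, hence a homeomorphism, between the moduli space of maximal $Sp(4,\mathbb{R})$-Higgs bundles and the moduli space of $K^2$-twisted $GL(2,\mathbb{R})$-Higgs bundles, so these two spaces have the same number of connected components.

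It then remains to compute that number using Theorem \ref{thmcomponents}, applied with $L = K^2$. The hypotheses hold: since $g > 1$ we have $l := \deg(K^2) = 4g-4 > 2g-2$, and $l$ is even. Part (1) of that theorem gives
\begin{equation*}
3 \cdot 2^{2g} + \frac{l-4}{2} \;=\; 3 \cdot 2^{2g} + \frac{(4g-4)-4}{2} \;=\; 3 \cdot 2^{2g} + 2g - 4
\end{equation*}
for the number of connected components of the $K^2$-twisted $GL(2,\mathbb{R})$-Higgs bundle moduli space, and chaining the two homeomorphisms above produces the stated count for $Rep_{2g-2}(Sp(4,\mathbb{R}))$.

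The arithmetic here is immediate, so the whole weight of the argument rests on the two correspondences, both of which are standard in the literature. The only genuine care needed is in matching conventions: confirming that the maximal value of the Toledo invariant corresponds exactly to the degree condition forcing $\gamma$ to be invertible, that the Cayley partner is a $GL(2,\mathbb{R})$-Higgs bundle twisted precisely by $K^2$ (and not, say, by $K$ together with an auxiliary quadratic structure), and that the Cayley correspondence is a bijection on the full moduli space rather than on a single stratum. With those identifications pinned down the corollary follows with no further input.
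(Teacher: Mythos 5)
Your proposal is correct and follows the same route as the paper: the Cayley correspondence of Garc\'ia-Prada--Gothen--Mundet i Riera identifies $Rep_{2g-2}(Sp(4,\mathbb{R}))$ with ${}^{\mathbb{R}}\mathcal{M}(K^2)$, and Theorem \ref{thmcomponents}(1) with $l = 4g-4$ gives $3\cdot 2^{2g} + 2g - 4$. Your extra care about conventions (why $\gamma$ is invertible, why the twist is exactly $K^2$) is sound but not spelled out in the paper either, which simply cites \cite{ggm} for the correspondence.
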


Finally, in \textsection \ref{secso22} we apply our results on the monodromy for $GL(2,\mathbb{R})$ Higgs bundles to determine the monodromy of the $SO(2,2)$-Hitchin fibration. In particular, this allows us to compute the number of components of the character variety $Rep( SO(2,2) )$, by counting orbits of the monodromy:
\begin{corollary}
The number of components of $Rep(SO(2,2))$ is given by $6.2^{2g} + 4g^2 - 6g - 3$.
\end{corollary}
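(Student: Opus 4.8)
The plan is to run the same machine as in \textsection\ref{seccrcv}: exhibit the regular locus of the $SO(2,2)$-Hitchin fibration as a covering of an explicit Hitchin base, read off its monodromy from the $\mathbb{Z}_2$-coefficient representation $\rho$ of \textsection\ref{secmonoact}, show every connected component of the moduli space meets the regular locus, and count monodromy orbits. For the first and crucial reduction I would use the accidental isomorphism $\mathfrak{so}(4,\mathbb{C})\cong\mathfrak{sl}(2,\mathbb{C})\oplus\mathfrak{sl}(2,\mathbb{C})$, together with the identification $\mathbb{R}^{2,2}\cong\mathbb{R}^2\otimes_{\mathbb{R}}\mathbb{R}^2$ in which the form is the product of the two area forms, to show that the $SO(2,2)$-Hitchin fibration over its regular locus is governed by pairs $(\mathcal{V}_1,\phi_1)$, $(\mathcal{V}_2,\phi_2)$ of $GL(2,\mathbb{R})$-Higgs bundles whose determinant line bundles coincide (both are automatically $2$-torsion, since they are orthogonal), the Higgs field on $\mathcal{V}_1\otimes\mathcal{V}_2$ being $\phi_1\otimes 1+1\otimes\phi_2$. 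In particular the $SO(2,2)$-Hitchin base is $H^0(\Sigma,K^2)\oplus H^0(\Sigma,K^2)$, each summand a copy of the trace-free $GL(2,\mathbb{R})$-base $\mathcal{A}^0(K)=H^0(\Sigma,K^2)$; the rank-$4$ spectral curve is $\{(x^2-a_1)(x^2-a_2)=0\}$, and I would identify the regular locus as the pairs $(a_1,a_2)$ for which $a_1$ and $a_2$ each have simple zeros, have no common zero, and satisfy $a_1\neq a_2$ everywhere. Over this locus $S=S_{a_1}\sqcup S_{a_2}$, and by the real spectral data of Proposition \ref{proprsd} applied to the two factors, the fibres are affine spaces over $\mathbb{Z}_2$-vector spaces assembled from $\Lambda_{S_{a_1}}[2]$ and $\Lambda_{S_{a_2}}[2]$, coupled by the ``equal determinant'' constraint.

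For the monodromy, $\pi_1$ of the $SO(2,2)$-regular base is generated by swaps among the zeros of $a_1$, swaps among the zeros of $a_2$, the two $\mathbb{C}^*$-loops, and --- because the locus $\{a_1=a_2\}$ has been removed --- loops braiding a zero of $a_1$ around a zero of $a_2$. On the first three families the monodromy is simply the product of two copies of the $GL(2,\mathbb{R})$-monodromy already determined by Theorems \ref{thmswapmono} and \ref{thmbeta} (equivalently Theorem \ref{thmmonogp}). The one new ingredient is the $\{a_1=a_2\}$-braiding: near such a collision the rank-$4$ spectral curve develops a pair of nodes, where the sheets $x^2=a_1$ and $x^2=a_2$ meet at $x=\pm\sqrt{a_1}$, and I expect this generator to act by the corresponding pair of Picard--Lefschetz transformations, which mix the two factors $\Lambda_{S_{a_1}}[2]$ and $\Lambda_{S_{a_2}}[2]$. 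Together these give a complete, if intricate, description of the $SO(2,2)$-monodromy group.

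It then remains to count. As in Proposition \ref{propconnreg} I would check that every connected component of the $SO(2,2)$-Higgs bundle moduli space meets the regular locus, so that --- away from the exceptional maximal-Toledo components, handled separately as in Corollary \ref{cormax1} --- the number of components equals the number of monodromy orbits on the relevant fibre-set. Counting these orbits, matching the count from below using the topological invariants of $SO(2,2)$-Higgs bundles (the Stiefel--Whitney and degree invariants of the two orthogonal factors, subject to the identification of their first Stiefel--Whitney classes forced by the $SO_0$-structure) together with the number of maximal components, and substituting $l=\deg K=2g-2$, should yield $6\cdot 2^{2g}+4g^2-6g-3$; the final statement follows by the non-abelian Hodge homeomorphism between $Rep(SO(2,2))$ and the moduli of $SO(2,2)$-Higgs bundles.

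The main obstacle will be this orbit count. Two points are delicate. First, one must pin down the correct complete list of topological invariants separating the monodromy orbits and carefully track the constraint that the two classes $w_1\in H^1(\Sigma,\mathbb{Z}_2)$ must coincide --- this is what collapses a naive $2^{4g}$ down to the observed $2^{2g}$. Second, one must incorporate the new $\{a_1=a_2\}$-braiding generator, which has no counterpart in any single $GL(2,\mathbb{R})$-calculation and is what produces the mixed polynomial term $4g^2-6g-3$ in place of a bare product of the two one-factor counts. As in the other real-group cases, a further subtlety is producing the matching lower bound on the number of components via Proposition \ref{propspecinv}-type invariants, so that the orbit count is exactly attained.
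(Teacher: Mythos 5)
Your overall strategy (spectral data, monodromy orbits, Proposition \ref{propconnreg}-type connectivity, non-abelian Hodge) matches the paper's, and the idea of exploiting $\mathfrak{so}(4,\mathbb{C})\cong\mathfrak{sl}(2,\mathbb{C})\oplus\mathfrak{sl}(2,\mathbb{C})$ is legitimate (the paper's closing remark points to exactly this route via \cite{steve}). But the geometric set-up at the heart of your argument is wrong in two ways that break the rest. First, the rank-$4$ spectral curve of $\Phi=\phi_1\otimes 1+1\otimes\phi_2$ is \emph{not} the disjoint union $S_{a_1}\sqcup S_{a_2}$: its eigenvalues are $\pm\mu_1\pm\mu_2$, so the characteristic polynomial is $\lambda^4+a_2\lambda^2+p^2$ with $a_2=2(q_1+q_2)$, $p=q_1-q_2$ (here $q_i=\det\phi_i$), and this curve is always singular, with nodes over the zeros of $p$. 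The correct smooth object is the quotient $\overline{S}\subset K^2$ cut out by $y^2+a_2y+p^2=0$, a rank-$2$, $K^2$-twisted spectral curve whose branch divisor is the union of the zero sets $B_1,B_2$ of $c_1=a_2-2p$ and $c_2=a_2+2p$ (i.e.\ of $4q_2$ and $4q_1$); the fibre over a regular point is simply $\Lambda_{\overline{S}}[2]$, not a pair of $\Lambda_{S_{a_i}}[2]$'s glued along an equal-determinant condition (the dimensions do not even match). Second, your regular locus is misidentified: the condition ``$a_1\neq a_2$ everywhere'' is vacuously unsatisfiable, since $a_1-a_2$ is a holomorphic section of $K^2$ of positive degree and therefore always vanishes. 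Regularity is instead the condition that $c_1,c_2$ have simple zeros and no common zero; the zeros of $p$ (your locus $\{a_1=a_2\}$) are unavoidable and harmless. Consequently the ``new braiding generator'' you introduce, and to which you attribute the term $4g^2-6g-3$, does not exist. That term in fact arises purely from counting orbits of $S_{4g-4}\times S_{4g-4}$ (swaps within $B_1$ and within $B_2$ are the only admissible swaps) on $(\mathbb{Z}_2 B)^{\rm ev}/(b_o)$: orbits are labelled by pairs $(m_1,m_2)$ with $m_1+m_2$ even, modulo $(m_1,m_2)\sim(4g-4-m_1,4g-4-m_2)$, giving $4g^2-6g+3$ classes.

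There is also a structural gap independent of the above: even with the correct generators, exhibiting liftable loops only bounds the monodromy group from \emph{below}. The paper must separately bound it from above, by showing that any monodromy transformation preserves $\overline{\pi}^*$ and $\overline{\pi}_*$, the partition $B=B_1\sqcup B_2$, the Weil pairing, the quadratic refinement $q$, and the distinguished class $(0,b_1+\cdots+b_{4g-4},0)$ (which corresponds to the tensor product of two maximal $SL(2,\mathbb{R})$-Higgs bundles and is therefore monodromy-invariant), and then checking that the admissible swaps already generate the full group cut out by these constraints. Your proposal contains no upper-bound argument, and the final orbit count is deferred rather than carried out, so the identity $6\cdot 2^{2g}+4g^2-6g-3$ is not actually established.
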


\section{Review of the Hitchin system}\label{sec:review}

\subsection{Twisted Higgs bundles}\label{secthb}

Let $\Sigma$ be a compact Riemann surface of genus $g > 1$ and let $L$ be a line bundle on $\Sigma$. An $L$-twisted {\em Higgs bundle} is a pair $(E,\Phi$), where $E$ is a holomorphic vector bundle and $\Phi$ is a holomorphic section of $End(E) \otimes L$, called the {\em Higgs field}. The case where $L$ is the canonical bundle $K := T^*\Sigma$, corresponds to the usual definition of Higgs bundles as defined by Hitchin and Simpson \cite{hit1,hit2,sim1,sim2}. One can define notions of stability and $S$-equivalence for twisted Higgs bundles in exactly the same way as for ordinary Higgs bundles. We let $\mathcal{M}(r,d,L)$ denote the moduli space of $S$-equivalence classes of semi-stable $L$-twisted Higgs bundles $(E,\Phi)$, where $E$ has rank $r$ and degree $d$. Nitsure constructed $\mathcal{M}(r,d,L)$ as a quasi-projective complex algebraic variety \cite{nit}.\\

Let $l = deg(L)$ be the degree of $L$. Throughout we will assume that either $L = K$ or $l > 2g-2$. Under these conditions, the dimension of $\mathcal{M}(r,d,L)$ is $r^2 l + 1 + dim( H^1(\Sigma , L ) )$ \cite[Proposition 7.1]{nit}. We let $\mathcal{M}^0(r,d,L)$ be the subvariety of $\mathcal{M}(r,d,L)$ consisting of pairs $(E,\Phi)$ with trace-free Higgs field. Any $\Phi$ can be written in the form $\Phi = \Phi_0 + \frac{\mu}{r} Id$, where $\Phi_0$ is trace-free and $\mu = tr(\Phi) \in H^0(\Sigma , L)$. Thus we have an identification $\mathcal{M}(r,d,L) \simeq \mathcal{M}^0(r,d,L) \times H^0(\Sigma , L)$. It follows by Riemann-Roch that the dimension of $\mathcal{M}^0(r,d,L)$ is $(r^2-1)l +g$.\\

For a line bundle $D$ of degree $d$, we let $\check{\mathcal{M}}(r,D,L) \subseteq \mathcal{M}^0(r,d,L)$ be the subvariety of pairs $(E,\Phi)$ where $\Phi$ is trace-free and $\det(E) = D$. The dimension of $\check{\mathcal{M}}(r,D,L)$ is $(r^2-1)l$. For any line bundle $M$, the tensor product $(E,\Phi) \mapsto (E \otimes M , \Phi \otimes Id )$ defines an isomorphism $\otimes M : \check{\mathcal{M}}(r,D,L) \to \check{\mathcal{M}}(r,D \otimes M^r,L)$. This shows that as an algebraic variety $\check{\mathcal{M}}(r,D,L)$ depends on $D$ only through the value of $d = deg(D)$ modulo $r$.\\

We say that two trace-free $L$-twisted Higgs bundles $(E,\Phi) , (E' ,\Phi')$ are projectively equivalent if $(E,\Phi)$ is isomorphic to $(E' \otimes A , \Phi' \otimes Id )$ for some line bundle $A$. In this paper we define an $L$-twisted $PGL(r,\mathbb{C})$-Higgs bundle to be the projective equivalence class of a trace-free $L$-twisted Higgs bundle. Note that such an equivalence class $[ (E,\Phi) ]$ has a well-defined degree $d = deg(E)$ modulo $r$. Let $D$ be a fixed line bundle of degree $d$. Then every $L$-twisted $PGL(r,\mathbb{C})$-Higgs bundle of degree $d$ has a representative $(E,\Phi)$ for which $det(E) = D$. This representative is unique up to the tensor product action of $\Lambda_\Sigma[r] := Jac(\Sigma)[r]$, the group of line bundles on $\Sigma$ of order $r$. We let $\hat{\mathcal{M}}(r,d,L)$ denote the moduli space of $S$-equivalence classes of $L$-twisted $PGL(r,\mathbb{C})$-Higgs bundles of degree $d$. This may either be viewed as the quotient of $\mathcal{M}^0(r,d,L)$ by the action of $Jac(\Sigma)$ or as the quotient of $\check{\mathcal{M}}(r,D,L)$ by the finite group $\Lambda_\Sigma[r]$. Clearly $\hat{\mathcal{M}}(r,d,L)$ has dimension $(r^2-1)l$.

\subsection{Spectral curves and the Hitchin fibration}\label{secschf}

Set $\mathcal{A}(r,L) = H^0( \Sigma , L) \oplus H^0(\Sigma , L^2) \oplus \dots \oplus H^0(\Sigma , L^r)$. As with ordinary Higgs bundles, taking coefficients of the characteristic polynomial of $\Phi$ gives a map $h : \mathcal{M}(r,d,L) \to \mathcal{A}(r,L)$ called the {\em Hitchin map} or {\em Hitchin fibration} \cite{hit2}. More precisely if $(E,\Phi) \in \mathcal{M}(r,d,L)$, then we set $h(E,\Phi) = (a_1,a_2, \dots , a_r)$, where the characteristic polynomial of $\Phi$ is:
\begin{equation*}
det(\lambda - \Phi ) = \lambda^r + a_1 \lambda^{r-1} + \dots + a_r.
\end{equation*}
Thus $a_j \in H^0(\Sigma , L^j)$ is given by $a_j = (-1)^j Tr ( \wedge^j \Phi : \wedge^j E \to \wedge^j E \otimes L^j )$. Note that since $a_1 = - Tr( \Phi)$, we find that $h$ sends $\mathcal{M}^0(r,d,L)$ to the subspace $\mathcal{A}^0(r,L) = H^0( \Sigma , L^2) \oplus H^0(\Sigma , L^3) \oplus \dots \oplus H^0(\Sigma , L^r)$. Similarly we have Hitchin maps $\check{h} : \check{\mathcal{M}}(r,D,L) \to \mathcal{A}^0(r,L)$ and $\hat{h} : \hat{\mathcal{M}}(r,d,L) \to \mathcal{A}^0(r,L)$.\\

There is an action $\theta : H^0( \Sigma , L ) \times \mathcal{M}(r,d,L) \to \mathcal{M}(r,d,L)$ of $H^0(\Sigma , L)$ on $\mathcal{M}(r,d,L)$ given by $\theta( \mu , (E, \Phi) ) = (E , \Phi - (\mu/r) Id )$ and a corresponding action $\theta_{\mathcal{A}} : H^0( \Sigma , L) \times \mathcal{A}(r,L) \to \mathcal{A}(r,L)$ of the form $\theta( \mu , (a_1 , a_2 , \dots , a_r) ) = (a'_1 , a'_2 , \dots , a'_r)$, where $(a'_1  , \dots , a'_r)$ is determined by
\begin{equation*}
( \lambda + \mu/r)^r + a_1 ( \lambda + \mu/r)^{r-1} + \dots + a_r = \lambda^r + a'_1 \lambda^{r-1} + \dots + a'_r,
\end{equation*}
in particular, $a'_1 = a_1 + \mu$. The Hitchin map intertwines the two actions. It is clear that the map $f : H^0(\Sigma , L) \oplus \mathcal{A}^0(r,L) \to \mathcal{A}(r,L)$ given by $f(\mu , a) = \theta_{\mathcal{A}}(\mu , a)$ is an isomorphism of complex algebraic varieties. Define $p : \mathcal{A}(r,L) \to \mathcal{A}^0(r,L)$ by $p(a) = p_2( f^{-1}(a) )$, where $p_2 : H^0(\Sigma , L) \oplus \mathcal{A}^0(r,L) \to \mathcal{A}^0(r,L)$ is the projection to the second factor. Then $\mathcal{M}(r,d,L) \to \mathcal{A}(r,L)$ may be identified with the pullback of $\mathcal{M}^0(r,d,L) \to \mathcal{A}^0(r,L)$ under the map $p$. This will allow us to mostly consider $\mathcal{M}^0(r,d,L)$ instead of the larger space $\mathcal{M}(r,d,L)$.\\

Under our assumptions on $L$, the generic fibre of the Hitchin fibration is an abelian variety. To see this, we recall the contruction of spectral curves from \cite{hit2,bnr}. Let $a = (a_1 , a_2 , a_3 , \dots , a_r) \in \mathcal{A}(r,L)$. We let $\pi : L \to \Sigma$ denote the projection from the total space of $L$ to $\Sigma$ and let $\lambda$ denote the tautological section of $\pi^*(L)$. Define $s_a \in H^0(K , \pi^*(L^r))$ by:
\begin{equation}
s_a = \lambda^r + \pi^*(a_1) \lambda^{r-1} + \dots + \pi^*(a_r).
\end{equation}
The zero set $S_a \subset L$ of $s_a$ is called the {\em spectral curve} associated to $a$. Our assumptions on $L$ together with Bertini's theorem implies that $S_a$ is smooth for generic points in $\mathcal{A}(r,L)$. Let $\mathcal{A}_{\rm reg}(r,L)$ denote the Zariski open subset of points of $\mathcal{A}(r,L)$ for which the corresponding spectral curve is smooth and let $\mathcal{M}_{\rm reg}(r,d,L)$ denote the points of $\mathcal{M}(r,d,L)$ lying over $\mathcal{A}_{\rm reg}(r,L)$. Similarly define $\mathcal{A}^0_{\rm reg}(r,L) \subset \mathcal{A}^0(r,L)$ and corresponding open subsets $\mathcal{M}^0_{\rm reg}(r,d,L),\check{\mathcal{M}}_{\rm reg}(r,D,L),\hat{\mathcal{M}}_{\rm reg}(r,d,L)$. To simplify notation we will write $S$ for the spectral curve whenever the point $a \in \mathcal{A}(r,L)$ is understood. We then denote the restriction of $\pi$ to $S$ simply as $\pi$. For any $a \in \mathcal{A}_{\rm reg}(r,L)$, we have thus constructed a degree $r$ branched cover $\pi : S \to \Sigma$.\\

The fibres of the Hitchin system may be described in terms of certain line bundles on $S$ as follows. Given a line bundle $M$ on $S$, consider the rank $r$ vector bundle $E = \pi_*(M)$. The tautological section $\lambda$ defines a map $\lambda : M \to M \otimes \pi^* L$, which pushes down to a map $\Phi : E \to E \otimes L$, giving an $L$-twisted Higgs bundle pair $(E,\Phi)$. As in \cite{bnr}, one finds that the characteristic polynomial is $s_a$, so that $(E,\Phi)$ lies in the fibre of the Hitchin map over $a$. Conversely any $L$-twisted Higgs bundle $(E,\Phi)$ with characteristic polynomial $a$ corresponds to some line bundle $M$ on $S$ \cite{bnr}.\\

Let $K_S$ denote the canonical bundle of $S$. By the adjunction formula we have $K_S \cong \pi^*(K \otimes L^{r-1})$. It follows that for any line bundle $M$ on $S$ we have:
\begin{equation*}
det( \pi_*(M) ) = Nm( M ) \otimes L^{-r(r-1)/2},
\end{equation*}
where $Nm : Pic(S) \to Pic(\Sigma)$ is the norm map. Let $\tilde{d} := d + lr(r-1)/2$ and let $Jac_{\tilde{d}}(S)$ be the degree $\tilde{d}$ line bundles on $S$. For $M \in Jac_{\tilde{d}}(S)$ it follows that $E = \pi_*(M)$ has degree $d$. By the discussion above, the correspondence $M \mapsto (E,\Phi)$ identifies the fibre of $\mathcal{M}(r,d,L)$ over $a \in \mathcal{A}_{\rm reg}(r,L)$ with $Jac_{\tilde{d}}(S)$, which is a torsor over $Jac(S)$. In a similar manner, the fibre of $\check{\mathcal{M}}(r,D,L)$ over $a$ may be identified with $\{ M \in Jac_{\tilde{d}}(S) \; | \; Nm(M) = D \otimes L^{r(r-1)/2} \}$. This is a torsor over the Prym variety:
\begin{equation*}
Prym(S,\Sigma) := \{ M \in Jac(S) \; | \; Nm(M) = \mathcal{O} \}.
\end{equation*}

The fibre of $\hat{\mathcal{M}}(r,d,L)$ over $a$ may be identified with the quotient of $Jac_{\tilde{d}}(S)$ under the tensor product action of $\pi^*( Jac(\Sigma) )$. This is a torsor over the abelian variety:
\begin{equation*}
\hat{ Prym}(S,\Sigma) := Jac(S)/ \pi^*( Jac(\Sigma)) \simeq Prym(S,\Sigma) / \Lambda_\Sigma[r],
\end{equation*}
which is the dual abelian variety of $Prym(S,\Sigma)$.\\

In this paper we are mainly concerned with the case $r=2$. In this case the spectral curve $\pi : S \to \Sigma$ is a branched double cover, so there is a naturally defined involution $\sigma : S \to S$ which exchanges the two sheets of the cover. Let $\sigma^* : Pic(S) \to Pic(S)$ be the pullback. By considering the action of $\sigma$ on divisors, it is clear that for any $M \in Pic(S)$, one has
\begin{equation}\label{equnorm1}
\sigma^*(M) \otimes M = \pi^*( Nm(M) ).
\end{equation}
In particular, we have $Prym(S,\Sigma) = \{ M \in Pic(S) \; | \; \sigma^*(M) = M^* \}$.

\section{Affine structure of the regular locus}\label{sec:affine}

\subsection{Affine torus bundles}

Let $\Lambda$ be a rank $n$ lattice, $\mathfrak{t} := \Lambda \otimes_{\mathbb{Z}} \mathbb{R}$ and $T := \mathfrak{t}/\Lambda$. Let $Aut(T)$ be the automorphism group of the Lie group $T$. We define ${\rm Aff}(T)$, the group of affine transformations of $T$ to be the semi-direct product ${\rm Aff}(T) = Aut(T) \ltimes T$ which acts on $T$ by affine transformations:
\begin{equation*}
(g,s)t = g(t)s,
\end{equation*}
where $(g,s) \in Aut(T) \ltimes T$, $t \in T$. An {\em affine torus bundle} over a topological space $B$, is a locally trivial torus bundle $f : X \to B$ with structure group ${\rm Aff}(T)$. Equivalently, $X$ is the bundle $X = P \times_{{\rm Aff}(T)} T$ associated to a principal ${\rm Aff}(T)$-bundle $P \to B$.\\

If $P \to B$ is a principal ${\rm Aff}(T)$-bundle, then the quotient $P/T$ of $P$ by the subgroup $T \subset {\rm Aff}(T)$ is a principal ${\rm Aff}(T)/T = Aut(T)$-bundle. Since $Aut(T)$ is discrete, such bundles correspond to representations $\rho : \pi_1( B , b_o ) \to Aut(T)$. Given such a representation $\rho$, we let $\Lambda_\rho$ be the local system associated to $\rho$ through the action of $Aut(T)$ on the lattice $\Lambda = H_1( T , \mathbb{Z} )$. Lifts of the principal $Aut(T)$-bundle associated to $\rho$ to a principal ${\rm Aff}(T)$-bundle are classified by $H^2( B , \Lambda_\rho)$. In this way, we obtain the following classification (see \cite{bar1t,bar2t}):

\begin{proposition}
Affine torus bundles on a locally contractible, paracompact space $B$ are in bijection with equivalence classes of pairs $(\rho , c )$, where:
\begin{itemize}
\item[(1)]{$\rho$ is a representation $\rho : \pi_1(B,b_o) \to Aut(T)$, called the monodromy.}
\item[(2)]{$c$ is a class in $H^2( B , \Lambda_\rho)$, called the twisted Chern class.}
\end{itemize}
Two pairs $(\rho_1 , c_1 ), (\rho_2 , c_2 )$ are equivalent if there is an isomorphism $\phi : \Lambda_{\rho_1} \to \Lambda_{\rho_2}$ of local systems for which $\phi(c_1) = c_2$.
\end{proposition}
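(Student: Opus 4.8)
\emph{Proof strategy.} The plan is to flesh out the reduction already indicated before the statement: classify affine torus bundles as principal $\mathrm{Aff}(T)$-bundles, extract the monodromy $\rho$ by pushing forward along $q:\mathrm{Aff}(T)\to Aut(T)$, and then establish the only substantial point, namely that for fixed $\rho$ the lifts of the associated $Aut(T)$-bundle to an $\mathrm{Aff}(T)$-bundle are classified by $H^2(B,\Lambda_\rho)$, by a direct \v Cech computation. Since $\mathrm{Aff}(T)=Aut(T)\ltimes T$ acts faithfully on $T$, an affine torus bundle is, up to isomorphism, the same as a principal $\mathrm{Aff}(T)$-bundle $P\to B$, so it suffices to classify the latter. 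Passing to $P/T$ yields a principal $Aut(T)$-bundle $P_\rho\to B$; as $Aut(T)$ is discrete and $B$ is connected and locally contractible, such bundles are classified by conjugacy classes of homomorphisms $\rho:\pi_1(B,b_o)\to Aut(T)$, and since the natural map $Aut(T)\to Aut(\Lambda)$ is an isomorphism, the local system $\Lambda_\rho$ remembers exactly the conjugacy class of $\rho$. This produces $\rho$; it remains to classify the $\mathrm{Aff}(T)$-bundles $P$ with $q_*P\cong P_\rho$.

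For this I would take a sufficiently fine open cover $\{U_i\}$ trivialising $P_\rho$ with locally constant transition functions $g_{ij}:U_{ij}\to Aut(T)$. A lift $P$ then has transition functions $\tilde g_{ij}=(g_{ij},t_{ij})$ with continuous $t_{ij}:U_{ij}\to T$, and, using the product $(g,s)(h,u)=(gh,\,g(u)+s)$ in $\mathrm{Aff}(T)$, the cocycle identity $\tilde g_{ij}\tilde g_{jk}=\tilde g_{ik}$ is equivalent to $t_{ik}=g_{ij}(t_{jk})+t_{ij}$, i.e. $\{t_{ij}\}$ is a twisted \v Cech $1$-cocycle with values in the sheaf $\underline T_\rho$ of continuous sections of $P_\rho\times_{Aut(T)}T$. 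Replacing the trivialisation of $P$ over $U_i$ by $(1,u_i)$ alters $\{t_{ij}\}$ by the coboundary of $\{u_i\}$, so lifts of a fixed $P_\rho$, up to equivalence, are in bijection with $H^1(B,\underline T_\rho)$ (passing to the limit over refinements, or using that \v Cech and sheaf cohomology agree here). The fibrewise exponential sequence $0\to\Lambda_\rho\to\underline{\mathfrak t}_\rho\to\underline T_\rho\to 0$, with $\underline{\mathfrak t}_\rho$ the sheaf of continuous sections of $P_\rho\times_{Aut(T)}\mathfrak t$, together with softness of $\underline{\mathfrak t}_\rho$ on the paracompact space $B$ (so $H^{\ge 1}(B,\underline{\mathfrak t}_\rho)=0$), gives a connecting isomorphism $H^1(B,\underline T_\rho)\cong H^2(B,\Lambda_\rho)$. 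The split lift $g\mapsto(g,0)$ maps to $0$; in general one defines the twisted Chern class $c=c(P)$ to be the image of $P$ under this bijection. Because the extension splits there is no primary obstruction, so every class in $H^2(B,\Lambda_\rho)$ is realised.

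Finally I would assemble the two invariants. An isomorphism of affine torus bundles $X_1\cong X_2$ is an isomorphism $P_1\cong P_2$, which induces an isomorphism $P_{\rho_1}\cong P_{\rho_2}$, hence an isomorphism of local systems $\phi:\Lambda_{\rho_1}\to\Lambda_{\rho_2}$, and naturality of the connecting homomorphism forces $\phi(c_1)=c_2$. Conversely, since $Aut(T)=Aut(\Lambda)$, an isomorphism $\phi:\Lambda_{\rho_1}\to\Lambda_{\rho_2}$ is the same as an isomorphism $P_{\rho_1}\cong P_{\rho_2}$ of $Aut(T)$-bundles, and the same cocycle bookkeeping shows it lifts to an isomorphism $P_1\cong P_2$ exactly when $\phi$ carries $c_1$ to $c_2$. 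Surjectivity is immediate: from $(\rho,c)$ one builds $P_\rho$ and then the lift with class $c$. Hence affine torus bundles up to isomorphism correspond bijectively to pairs $(\rho,c)$ up to the stated equivalence.

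The step I expect to be the main obstacle is the middle one: checking that lifts of a fixed $Aut(T)$-bundle are classified \emph{bijectively} (not merely that an obstruction vanishes) by $H^1(B,\underline T_\rho)\cong H^2(B,\Lambda_\rho)$, keeping the twisting by $\rho$ consistent throughout, and identifying how the automorphisms of $P_\rho$ act on $H^2(B,\Lambda_\rho)$ so that the resulting quotient matches precisely the equivalence relation on pairs in the statement. The hypothesis on $B$ enters exactly in the appeal to paracompactness to kill the higher cohomology of the fine sheaf $\underline{\mathfrak t}_\rho$.
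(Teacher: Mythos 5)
Your argument is correct and follows essentially the same route the paper takes: the paper only sketches the classification in the paragraph preceding the proposition (quotient by $T$ to get a principal $Aut(T)$-bundle, hence $\rho$; classify lifts to an ${\rm Aff}(T)$-bundle by $H^2(B,\Lambda_\rho)$) and otherwise defers to the references \cite{bar1t,bar2t}, and your \v{C}ech computation with the exponential sequence $0\to\Lambda_\rho\to\underline{\mathfrak t}_\rho\to\underline T_\rho\to 0$ is the standard way to make that sketch precise. The points you flag as potential obstacles (bijectivity via the split lift as basepoint, and matching automorphisms of $P_\rho$ with the stated equivalence on pairs) are handled correctly.
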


\begin{remark}
Let $f : X \to B$ be the affine torus bundle associated to $(\rho , c)$.
\begin{itemize}
\item[(1)]{The local system $\Lambda_\rho$ can be more intrinsically defined as the dual of the Gauss-Manin local system $R^1f_* \mathbb{Z}$, i.e. $\Lambda_\rho = Hom( R^1 f_* \mathbb{Z} , \mathbb{Z} )$.}
\item[(2)]{The twisted Chern class is the obstruction to the existence of a section $s : B \to X$.}
\end{itemize}
\end{remark}

\subsection{Affine structure of the Hitchin system}

Fix an integer $r \ge 2$ and a degree $l$ line bundle $L$ with $l > 2g-2$ or $L = K$. Fix a basepoint $a_0 \in \mathcal{A}_{\rm reg}(r,L)$ with spectral curve $\pi : S \to \Sigma$. Let $\Lambda_S := H^1(S , \mathbb{Z}) \simeq H^1( Jac(S) , \mathbb{Z})$ and let $\Lambda_\Sigma := H^1( \Sigma , \mathbb{Z} ) \simeq H^1( Jac(\Sigma) , \mathbb{Z})$. We let $\langle \, , \, \rangle$ denote the intersection forms on $\Lambda_S$ and $\Lambda_\Sigma$. The pullback and norm maps $\pi^* : Jac(\Sigma) \to Jac(S)$, $Nm : Jac(S) \to Jac(\Sigma)$ induce pullback and pushforward maps in cohomology $\pi^* : \Lambda_\Sigma \to \Lambda_S$ and $\pi_* : \Lambda_S \to \Lambda_\Sigma$ with $\pi_* \pi^*(x) = rx$. Set $\Lambda_P := H^1( Prym(S,\Sigma) , \mathbb{Z} )$. 

\begin{proposition}\label{propprymcohom}
We have $\Lambda_P = ker( \pi_* : \Lambda_S \to \Lambda_\Sigma )$.
\end{proposition}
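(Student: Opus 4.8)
The plan is to use the standard exact sequence relating the Jacobian of $S$, the Jacobian of $\Sigma$ and the Prym variety, and then pass to $H^1$. Concretely, the pullback map $\pi^* : Jac(\Sigma) \to Jac(S)$ and the norm map $Nm : Jac(S) \to Jac(\Sigma)$ satisfy $Nm \circ \pi^* = [r]$ (multiplication by $r$), since $\pi$ has degree $r$. In the case $r = 2$ that concerns us, $\pi^*$ is injective up to $2$-torsion and we have an isogeny decomposition $Jac(S) \sim \pi^*(Jac(\Sigma)) \times Prym(S,\Sigma)$; more precisely, the addition map $\pi^*(Jac(\Sigma)) \times Prym(S,\Sigma) \to Jac(S)$ is surjective with finite kernel. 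First I would record that $Prym(S,\Sigma)$ is by definition the connected component of the identity of $\ker(Nm)$, so that $Nm : Jac(S) \to Jac(\Sigma)$ restricted to $Prym(S,\Sigma)$ is trivial, giving $\pi_* = 0$ on $\Lambda_P = H^1(Prym(S,\Sigma),\mathbb{Z})$ (here I use that $\pi_*$ on cohomology is induced by $Nm$ on Jacobians). This shows $\Lambda_P \subseteq \ker(\pi_* : \Lambda_S \to \Lambda_\Sigma)$, at least after identifying $\Lambda_P$ with its image under the inclusion $Prym(S,\Sigma) \hookrightarrow Jac(S)$.

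Next I would establish the reverse inclusion and the fact that the inclusion is an equality rather than merely a finite-index containment, by a rank count together with a torsion-freeness argument. On $H^1(-,\mathbb{Q})$ the isogeny decomposition gives $\Lambda_S \otimes \mathbb{Q} = \pi^*(\Lambda_\Sigma \otimes \mathbb{Q}) \oplus (\Lambda_P \otimes \mathbb{Q})$, and $\pi_*$ is (a nonzero multiple of) the projection onto the first summand, so $\ker(\pi_*) \otimes \mathbb{Q} = \Lambda_P \otimes \mathbb{Q}$. Thus $\Lambda_P$ and $\ker(\pi_* : \Lambda_S \to \Lambda_\Sigma)$ are two sublattices of $\Lambda_S$ of the same rank, with $\Lambda_P \subseteq \ker(\pi_*)$. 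To upgrade this to equality I would argue that $\ker(\pi_*)$ is \emph{saturated} in $\Lambda_S$ (i.e. $\Lambda_S / \ker(\pi_*)$ is torsion-free, because it embeds in $\Lambda_\Sigma$ via $\pi_*$, which is torsion-free as it is a subgroup of $H^1$ of a manifold), and that $\Lambda_P$ is also saturated — the quotient $\Lambda_S / \Lambda_P$ is identified with $H^1(\pi^*Jac(\Sigma),\mathbb{Z})$-type data and is again a subgroup of the free abelian group $H^1$ of a torus, hence torsion-free. Two saturated sublattices of the same rank, one contained in the other, must coincide.

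The cleanest route, and the one I would actually write, is probably to avoid the isogeny bookkeeping and instead use the Gysin/transfer sequence directly: for the double cover $\pi : S \to \Sigma$ one has $\pi_* \pi^* = 2$ and a short exact sequence of the form $0 \to \pi^* \Lambda_\Sigma \to \Lambda_S \to \Lambda_P \to 0$ (this is essentially the statement that $H^1(S,\mathbb{Z})$ splits up to isogeny into the invariant and anti-invariant parts of $\sigma^*$), from which $\ker(\pi_*)$ is computed directly. The main obstacle I anticipate is precisely the torsion/saturation point: identifying $H^1(Prym(S,\Sigma),\mathbb{Z})$ with an \emph{integral} sublattice of $H^1(Jac(S),\mathbb{Z})=H^1(S,\mathbb{Z})$ in a way compatible with $\pi_*$ requires care, since the natural map $Prym \to Jac(S)$ is only an immersion with finite (2-torsion) kernel, not a closed immersion with torsion-free cokernel on the nose. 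I would handle this by working with $H_1$ of the real tori $Prym$ and $Jac(S)$, where the relevant lattices are literally $\Lambda_P = H_1(Prym,\mathbb{Z})$ and $\Lambda_S = H_1(Jac(S),\mathbb{Z})$, observing that $H_1(Prym,\mathbb{Z}) \to H_1(Jac(S),\mathbb{Z})$ is injective with image exactly $\ker(Nm_* : \Lambda_S \to \Lambda_\Sigma) = \ker(\pi_*)$, and then dualizing back to $H^1$; injectivity of $H_1(Prym,\mathbb{Z}) \to H_1(Jac(S),\mathbb{Z})$ follows because a finite covering map of tori is injective on $H_1$ with torsion cokernel, so after identifying the abstract lattice $\Lambda_P$ with its image there is nothing left to prove.
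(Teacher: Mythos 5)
Your main argument is correct, but it takes a more laborious route than the paper. The paper simply observes that the norm map fits into a short exact sequence of abelian varieties $1 \to Prym(S,\Sigma) \to Jac(S) \to Jac(\Sigma) \to 1$ (the Prym is connected because the double cover $S \to \Sigma$ is ramified, and $Nm$ is surjective since $Nm \circ \pi^* = 2$), applies the homotopy long exact sequence — which, using $\pi_2(Jac(\Sigma)) = 0$ and $\pi_0(Prym) = 0$, gives $H_1(Prym,\mathbb{Z}) = \ker\bigl(\pi_* : H_1(S,\mathbb{Z}) \to H_1(\Sigma,\mathbb{Z})\bigr)$ \emph{integrally} in one stroke — and then invokes Poincar\'e duality. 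Your isogeny-decomposition-plus-rank-count-plus-saturation argument reaches the same conclusion, and your instinct to work in $H_1$ of the tori and dualize at the end is exactly the paper's Poincar\'e duality step; but all of the saturation bookkeeping you anticipate is rendered unnecessary by the exactness of the homotopy sequence at $\pi_1(Jac(S))$, which identifies image with kernel on the nose.

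Two slips in your asides are worth correcting. First, your ``cleanest route'' sequence $0 \to \pi^*\Lambda_\Sigma \to \Lambda_S \to \Lambda_P \to 0$ is the wrong sequence: the quotient $\Lambda_S/\pi^*\Lambda_\Sigma$ is $\Lambda_P^*$ (this is precisely the paper's remark that $R^1\check{h}_*\mathbb{Z}$ and $R^1\hat{h}_*\mathbb{Z}$ are dual), whereas the sequence you want is $0 \to \Lambda_P \to \Lambda_S \xrightarrow{\pi_*} \Lambda_\Sigma \to 0$. Second, the inclusion $Prym(S,\Sigma) \hookrightarrow Jac(S)$ is a closed immersion of a connected subtorus, not ``an immersion with finite $2$-torsion kernel'' and not a finite covering map; the map with potentially finite kernel is $\pi^* : Jac(\Sigma) \to Jac(S)$. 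This matters because a finite covering of tori is injective on $H_1$ with \emph{torsion} cokernel, which would not give the saturation your equality argument needs; what saves you is that a subtorus inclusion $T' \subseteq T$ gives $H_1(T',\mathbb{Z}) = H_1(T,\mathbb{Z}) \cap (V' \subseteq V)$, a primitive sublattice, so the cokernel is in fact torsion-free and your two-saturated-lattices argument goes through.
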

\begin{proof}
Applying the homotopy long exact sequence to the short exact sequence of abelian varieties 
\begin{equation*}\xymatrix{
1 \ar[r] & Prym(S,\Sigma) \ar[r] & Jac(S) \ar[r]^-{ Nm } & Jac(\Sigma) \ar[r] & 1,
}
\end{equation*}
shows that $H_1( Prym(S , \Sigma) , \mathbb{Z} ) = ker( \pi_* : H_1(S ,\mathbb{Z}) \to H_1( \Sigma , \mathbb{Z} ) )$. The proposition follows by applying Poincar\'e duality.
\end{proof}

We will identify $\Lambda_S$ with the local system on $\mathcal{A}_{\rm reg}(r,L)$ given by $\Lambda_S = R^1h_* \mathbb{Z}$. By restriction we will also regard $\Lambda_S$ as a local system on $\mathcal{A}^0_{\rm reg}(r,L)$. In a similar manner we identify $\Lambda_P$ with the local system on $\mathcal{A}_{\rm reg}^0(r,L)$ given by $\Lambda_P = R^1 \check{h}_* \mathbb{Z}$ and we view $\Lambda_\Sigma$ as a trivial local system. Note that the intersection forms on $\Lambda_S,\Lambda_\Sigma$ and the pullback and pushforward maps $\pi^*,\pi_*$ are all defined at the level of local systems. 

\begin{remark}
Since $\Lambda_P = ker( \pi_* )$, the dual local system is given by $\Lambda_P^* = \Lambda_S/ \pi^*(\Lambda_\Sigma)$. But this is precisely $R^1 \hat{h}_* \mathbb{Z}$. So the local systems $R^1 \check{h}_* \mathbb{Z}$ and $R^1 \hat{h}_* \mathbb{Z}$ are dual to each other.
\end{remark}

\begin{theorem}\label{thmaffine1}
For any integer $d$, we have:
\begin{enumerate}
\item{The Hitchin fibrations $\mathcal{M}_{\rm reg}(r,d,L) \to \mathcal{A}_{\rm reg}(r,L)$, $\mathcal{M}^0_{\rm reg}(r,d,L) \to \mathcal{A}^0_{\rm reg}(r,L)$ are affine torus bundles.}
\item{The monodromy representation $\rho : \pi_1(\mathcal{A}^0_{\rm reg}(r,L) , a_0) \to Aut( Jac(S) )$ of $\mathcal{M}^0_{\rm reg}(r,d,L)$ is the same for each value of $d \in \mathbb{Z}$.}
\item{Let $c \in H^2( \mathcal{A}^0_{\rm reg}(r,L) , \Lambda_S  )$ be the twisted Chern class of $\mathcal{M}^0_{\rm reg}(r,1-lr(r-1)/2,L)$. Then $\mathcal{M}^0_{\rm reg}(r,d,L)$ has twisted Chern class $\tilde{d}c$, where $\tilde{d} = d + lr(r-1)/2$.}
\item{$c$ is $r$-torsion, i.e. $rc = 0$.}
\end{enumerate}
\end{theorem}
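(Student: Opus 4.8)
The plan is to analyze the Hitchin fibration $\mathcal{M}_{\rm reg}(r,d,L) \to \mathcal{A}_{\rm reg}(r,L)$ fibrewise, using the spectral data description already recalled in \textsection\ref{secschf}, and then descend to the trace-free version. For the affine structure (1), the key point is that over $\mathcal{A}_{\rm reg}(r,L)$ the fibre over $a$ is the torsor $Jac_{\tilde d}(S_a)$ over $Jac(S_a)$; the family of Jacobians $Jac(S_a)$ forms a torus bundle whose structure group is naturally reduced to $Aut(Jac(S_a)) \ltimes Jac(S_a)$ because the transition data (coming from the variation of the curve $S_a$) act by group automorphisms of the Jacobian composed with translations. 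Concretely, I would trivialise locally over contractible opens $U \subset \mathcal{A}_{\rm reg}(r,L)$, identify $Jac(S_a) = H^1(S_a,\mathcal{O})/H^1(S_a,\mathbb{Z})$, and observe that on overlaps the gluing is by a locally constant element of $\mathrm{Aut}(H^1(S_a,\mathbb{Z}))$ (the monodromy of $R^1h_*\mathbb{Z}$) together with a translation recording how the torsor structure on $Jac_{\tilde d}$ varies. This exhibits the structure group as ${\rm Aff}(T)$ with $T = $ the fibre of $Jac(S)$, giving (1) for the $GL$-space; the trace-free case follows by restriction along $\mathcal{A}^0_{\rm reg} \hookrightarrow \mathcal{A}_{\rm reg}$, or via the identification of $\mathcal{M}(r,d,L)$ with the pullback of $\mathcal{M}^0(r,d,L)$ under $p$ established in \textsection\ref{secschf}.

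For (2), the monodromy $\rho$ is by definition the representation classifying the $Aut(T)$-bundle $P/T$, which is $R^1h_*\mathbb{Z}$ (equivalently its dual, $\Lambda_S$ as a local system); this local system depends only on the family of spectral curves $S_a$, not on $d$, since changing $d$ only changes which torsor $Jac_{\tilde d}(S_a)$ over $Jac(S_a)$ we take as fibre, not the underlying torus bundle of Jacobians. So all the $\mathcal{M}^0_{\rm reg}(r,d,L)$ share the same monodromy. For (3), I would use the $H^0(\Sigma, L)$-action and the tensor-product structure: the difference between the twisted Chern classes of $\mathcal{M}^0_{\rm reg}(r,d,L)$ and $\mathcal{M}^0_{\rm reg}(r,d',L)$ is measured by the class in $H^2$ of the "difference of torsors" $Jac_{\tilde d}(S) - Jac_{\tilde d'}(S)$, and this difference is represented by the fibrewise translation by $(\tilde d - \tilde d')$ times a base point. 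More precisely, fix the reference degree $d_0 = 1 - lr(r-1)/2$, so $\tilde d_0 = 1$; then $Jac_{\tilde d}(S)$ is obtained from $Jac_1(S)$ by the $\tilde d$-fold "addition" map, and the obstruction class transforms linearly, giving twisted Chern class $\tilde d \, c$ where $c$ is the class of $\mathcal{M}^0_{\rm reg}(r,d_0,L)$. The cleanest way to make "the obstruction multiplies by $\tilde d$" precise is to note that $Jac_{\tilde d}(S) \cong Jac_1(S) \times_{Jac(S)} \cdots$, i.e. the torsor of degree $\tilde d$ line bundles is $\tilde d$ times the torsor of degree $1$ line bundles in the group of $Jac(S)$-torsors over the base, and the twisted Chern class is a homomorphism from that torsor group to $H^2(\mathcal{A}^0_{\rm reg}, \Lambda_S)$.

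For (4), the fact that $c$ is $r$-torsion should come from the existence of a natural multisection, or from the norm/pushforward map: the assignment $M \mapsto M \otimes \pi^*(N)$ for suitable $N$, or rather the observation that $rc$ is the twisted Chern class of a Hitchin fibration admitting a section. Specifically, $\tilde d = 1$ for $d = d_0$, and $r c = $ the twisted Chern class of $\mathcal{M}^0_{\rm reg}(r, d_1, L)$ where $\tilde d_1 = r$, i.e. $d_1 = r - lr(r-1)/2$; but a line bundle of degree $r$ on $S$ of the form $\pi^*(\text{pt})$ exists — more usefully, for $\tilde d$ a multiple of $r$ one can write $Jac_{\tilde d}(S)$ has a canonical point $\pi^*(\text{line bundle of degree } \tilde d/r \text{ on } \Sigma)$ after twisting, giving a global section of the corresponding Hitchin fibration and hence vanishing twisted Chern class. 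I expect the main obstacle to be part (3): making rigorous the claim that the twisted Chern class is \emph{linear} in $\tilde d$ — one must set up carefully the group structure on "$Jac(S)$-torsors over $\mathcal{A}^0_{\rm reg}$ fibred compatibly with the family" and check that $\mathcal{M}^0_{\rm reg}(r,d,L)$ corresponds to $\tilde d$ times a fixed generator, and that the twisted Chern class is additive for this operation. Once (3) is in place, (4) is the easy special case where $\tilde d \equiv 0 \pmod r$ produces a section via pullback from $\Sigma$.
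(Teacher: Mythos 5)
Your proposal is correct and follows essentially the same route as the paper: spectral data identifies the fibres with $Jac_{\tilde d}(S)$, the monodromy is the degree-independent local system $R^1h_*\mathbb{Z}$, the twisted Chern class is linear in $\tilde d$, and $r$-torsion follows from the existence of a section on the degree-divisible-by-$r$ component (equivalently, from the isomorphism given by tensoring with a degree-one line bundle on $\Sigma$). The one point you rightly flag as delicate --- linearity of the twisted Chern class in $\tilde d$ --- the paper handles by a device you may find cleaner than setting up the group of $Jac(S)$-torsors: it treats the whole Picard bundle $\bigcup_d \mathcal{M}^0_{\rm reg}(r,d,L)$ as a bundle of groups with fibre $T^n\times\mathbb{Z}$, observes that the degree-preserving automorphisms of $T^n\times\mathbb{Z}$ are exactly ${\rm Aff}(T^n)$ acting on the component $T^n\times\{\tilde d\}$ by $(g,s)(t,\tilde d)=(g(t)s^{\tilde d},\tilde d)$, and reads off the factor $\tilde d$ directly from this formula.
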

\begin{proof}
We will give the proofs for $\mathcal{M}^0_{\rm reg}(r,d,L)$, the case of $\mathcal{M}_{\rm reg}(r,d,L)$ being essentially the same. Consider the union:
\begin{equation*}
\mathcal{M}^0_{\rm reg}(r,L) = \cup_{d \in \mathbb{Z} } \mathcal{M}^0_{\rm reg}(r,d,L).
\end{equation*}
Then $\mathcal{M}^0_{\rm reg}(r,L)$ is a bundle of groups with fibre $Pic(S) \simeq Jac(S) \times \mathbb{Z}$. Let $N$ be a line bundle on $S$ of degree $1$. This gives an explicit isomorphism $Jac(S) \times \mathbb{Z} \to Pic(S)$ sending $( A , m )$ to $A \otimes N^m$. As in Section \ref{secschf}, we set $\tilde{d} = d + lr(r-1)/2$. Then the component $\mathcal{M}^0_{\rm reg}(r,d,L)$ of $\mathcal{M}^0_{\rm reg}(r,L)$ corresponds to the component $Jac_{\tilde{d}}(S) = Jac(S) \times \{ \tilde{d} \}$ of the fibre. Let $n = 2g_S$ and let $T^n$ be a rank $n$ torus. Then $\mathcal{M}^0_{\rm reg}(r,L)$ is a bundle of groups with fibres isomorphic to $T^n \times \mathbb{Z}$. Let $Aut(T^n \times \mathbb{Z})$ be the automorphism group of $T^n \times \mathbb{Z}$ and let $p_2 : T^n \times \mathbb{Z} \to \mathbb{Z}$ be the projection to the second factor. We let $Aut^+(T^n \times \mathbb{Z})$ be those automorphisms $\phi : T^n \times \mathbb{Z} \to T^n \times \mathbb{Z}$ preserving $p_2$, i.e. $p_2 \circ \phi = p_2$. Then clearly the transtion functions for $\mathcal{M}^0_{\rm reg}(r,L)$ are valued in $Aut^+(T^n \times \mathbb{Z} )$ because we have a well-defined degree $d$.\\

Next we observe that there is an isomorphism ${\rm Aff}( T^n ) \simeq Aut^+( T^n \times \mathbb{Z} )$ given as follows: let $(g,s) \in Aut(T^n) \ltimes T = {\rm Aff}(T^n)$. Then we let $(g,s)$ act as an automorphism of $T^n \times \mathbb{Z}$ by: $(g,s)(  t , m ) = ( g(t) s^m , m )$, where $(t,m) \in T^n \times \mathbb{Z}$. Note that this is an automorphism of $T^n \times \mathbb{Z}$ preserving $p_2$ and that every such automorphism is of this form. Note also that $(g,s)$ acts on the component $T^n \times \{ 1 \}$ by the affine action $(g,s)( t , 1) = (g(t)s , 1 )$. This shows that each component $\mathcal{M}^0_{\rm reg}(r,d,L)$ is an affine torus bundle and that the mondromy is independent of $d$.\\

Let $c \in H^2( \mathcal{A}^0_{\rm reg}(r,L) , \Lambda_S  )$ be the twisted Chern class of $\mathcal{M}^0_{\rm reg}(r,1-lr(r-1)/2,L)$. Then $c$ is the twisted Chern class of the affine torus bundle associated to the component $T^n \times \{1 \} \subset T^n \times \mathbb{Z}$. Since $(g,s)$ acts on the component $T^n \times \{\tilde{d}\}$ by $(g,s)(t,\tilde{d}) = (g(t) s^{\tilde{d}} , \tilde{d})$, we see that the twisted Chern class of the affine torus bundle $\mathcal{M}^0_{\rm reg}(r,d,L)$ is $\tilde{d} c$.\\

Finally, let $A$ be a line bundle on $\Sigma$ of degree $1$. Tensoring by $A$ gives an isomorphism of affine torus bundle $\mathcal{M}^0_{\rm reg}(r,d,L) \simeq \mathcal{M}^0_{\rm reg}(r,d+r,L)$ for any $d \in \mathbb{Z}$. Comparing twisted Chern classes, we see that $r c = 0$.
\end{proof}

We have similar results for the $SL(r,\mathbb{C})$ and $PSL(r,\mathbb{C})$ moduli spaces:
\begin{theorem}\label{thmaffine2}
Let $D$ be a line bundle of degree $d$.
\begin{enumerate}
\item{The Hitchin fibrations $\check{\mathcal{M}}_{\rm reg}(r,D,L) \to \mathcal{A}^0_{\rm reg}(r,L)$, $\hat{\mathcal{M}}_{\rm reg}(r,d,L) \to \mathcal{A}^0_{\rm reg}(r,L)$ are affine torus bundles.}
\item{The monodromy representations $\check{\rho} : \pi_1(\mathcal{A}^0_{\rm reg}(r,L) , a_0) \to Aut( Prym(S,\Sigma) )$ and \linebreak $\hat{\rho} : \pi_1(\mathcal{A}^0_{\rm reg}(r,L) ,a_0) \to Aut( \hat{Prym}(S,\Sigma) )$ of $\check{\mathcal{M}}_{\rm reg}(r,D,L)$ and $\hat{\mathcal{M}}_{\rm reg}(r,d,L)$ are independent of $d \in \mathbb{Z}$.}
\item{The representations $\check{\rho},\hat{\rho}$ are duals.}
\item{$\Lambda_P \subset \Lambda_S$ is preserved by $\rho$ and the restriction of $\rho$ to $\Lambda_P$ is $\check{\rho}$.}
\item{$\Lambda_\Sigma \subset \Lambda_S$ is preserved by $\rho$ and $\hat{\rho}$ is the induced representation on $\Lambda_S/\Lambda_\Sigma \simeq \Lambda_P^*$.}
\item{Let $\check{c} \in H^2( \mathcal{A}^0_{\rm reg}(r,L) , \Lambda_P  )$ be the twisted Chern class of $\check{\mathcal{M}}_{\rm reg}(r,N,L)$, where $N$ has degree $1-lr(r-1)/2$. Then $\check{c}$ is independent of $N$ and for any line bundle $M$ of degree $d$, $\check{\mathcal{M}}_{\rm reg}(r,M,L)$ has twisted Chern class $\tilde{d}\check{c}$.}
\item{$\check{c}$ is $r$-torsion, i.e. $r\check{c} = 0$.}
\item{$\check{c}$ maps to $c$ under the natural map $H^2( \mathcal{A}^0_{\rm reg}(r,L) , \Lambda_P ) \to H^2( \mathcal{A}^0_{\rm reg}(r,L) , \Lambda_S )$.}
\item{Let $\hat{c} \in H^2( \mathcal{A}^0_{\rm reg}(r,L) , \Lambda_P^*  )$ be the twisted Chern class of $\hat{\mathcal{M}}_{\rm reg}(r,1-lr(r-1)/2,L)$. Then $\hat{\mathcal{M}}_{\rm reg}(r,d,L)$ has twisted Chern class $\tilde{d}\hat{c}$.}
\item{$\hat{c}$ is the image of $c$ under the map $H^2( \mathcal{A}^0_{\rm reg}(r,L) , \Lambda_S) \to H^2( \mathcal{A}^0_{\rm reg}(r,L) , \Lambda_P^* )$ induced by $\Lambda_S \to \Lambda_S/\Lambda_\Sigma \simeq \Lambda_P^*$.}
\end{enumerate}
\end{theorem}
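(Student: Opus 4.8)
I would prove this by bootstrapping off Theorem \ref{thmaffine1}: realise the $SL$-fibration as a sub-torus-bundle and the $PSL$-fibration as a fibrewise quotient of the $GL$-fibration, and transport the monodromy and twisted Chern class through these operations. The starting point is that, as noted in \textsection\ref{sec:affine}, the maps $\pi_* : \Lambda_S \to \Lambda_\Sigma$ and $\pi^* : \Lambda_\Sigma \to \Lambda_S$ (equivalently $Nm$ and $\pi^*$ on Jacobians) are defined at the level of local systems over $\mathcal{A}^0_{\rm reg}(r,L)$, and $\pi^*$ is injective since $\pi_*\pi^* = r$. Hence the monodromy $\rho$ of $\mathcal{M}^0_{\rm reg}(r,d,L)$ preserves both $\Lambda_P = \ker(\pi_*)$ (Proposition \ref{propprymcohom}) and $\pi^*\Lambda_\Sigma$, which gives the ``preserved by $\rho$'' clauses of (4) and (5); item (3) is then the duality of $R^1\check{h}_*\mathbb{Z} = \Lambda_P$ and $R^1\hat{h}_*\mathbb{Z} = \Lambda_S/\pi^*\Lambda_\Sigma = \Lambda_P^*$ recorded in the Remark preceding Theorem \ref{thmaffine1}.

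For the $SL$-fibration, recall from \textsection\ref{secschf} that the fibre of $\check{\mathcal{M}}(r,D,L)$ over $a$ is the coset $\{ M \in Jac_{\tilde{d}}(S) : Nm(M) = D \otimes L^{r(r-1)/2} \}$ of $Prym(S,\Sigma)$ inside the fibre $Jac_{\tilde{d}}(S)$ of $\mathcal{M}^0(r,\deg D,L)$. Since $\rho$ preserves $\Lambda_P$, the structure group of $\check{\mathcal{M}}_{\rm reg}(r,D,L) \subset \mathcal{M}^0_{\rm reg}(r,\deg D,L)$ reduces to ${\rm Aff}(Prym(S,\Sigma))$, so this is an affine torus bundle with monodromy $\check{\rho} = \rho|_{\Lambda_P}$, independent of $d$; this gives (1), (2) and (4). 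Surjectivity of $Nm$ yields an identification $\mathcal{M}^0_{\rm reg}(r,\deg D,L) = \check{\mathcal{M}}_{\rm reg}(r,D,L) \times_{Prym(S,\Sigma)} Jac(S)$, so by functoriality the twisted Chern class $\tilde{d}c$ of $\mathcal{M}^0_{\rm reg}(r,\deg D,L)$ is the image of that of $\check{\mathcal{M}}_{\rm reg}(r,D,L)$ under $H^2(\mathcal{A}^0_{\rm reg}(r,L),\Lambda_P) \to H^2(\mathcal{A}^0_{\rm reg}(r,L),\Lambda_S)$; the case $\tilde{d} = 1$ is (8). To get the scaling (6) I would run the bundle-of-groups argument from the proof of Theorem \ref{thmaffine1}: with $N$ of degree $1 - lr(r-1)/2$ and $M_0 \in Pic(S)$ chosen so that $Nm(M_0) = N \otimes L^{r(r-1)/2}$, the union $\bigcup_{k \in \mathbb{Z}} \{ M \in Pic(S) : Nm(M) = (N \otimes L^{r(r-1)/2})^{\otimes k} \}$ over $\mathcal{A}^0_{\rm reg}(r,L)$ is a bundle of groups with fibre $Prym(S,\Sigma) \times \mathbb{Z}$ (via $(P,k) \mapsto P \otimes M_0^{\otimes k}$), whose $k$-th component is $\check{\mathcal{M}}_{\rm reg}(r, N^{\otimes k} \otimes L^{(k-1)r(r-1)/2}, L)$, of degree $k - lr(r-1)/2$ and hence with $\tilde{d} = k$; the isomorphism ${\rm Aff}(Prym(S,\Sigma)) \simeq Aut^+(Prym(S,\Sigma) \times \mathbb{Z})$ used there then shows this component has twisted Chern class $k\check{c}$. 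Independence of the chosen $N$ --- more precisely, that the twisted Chern class of $\check{\mathcal{M}}_{\rm reg}(r,D,L)$ depends on $D$ only through $\deg D$ --- follows by tensoring with a degree-$0$ line bundle $B$ and using $r$-divisibility of $Jac(\Sigma)$, which gives $\check{\mathcal{M}}(r,D,L) \cong \check{\mathcal{M}}(r, D \otimes B^r, L)$ compatibly with the Hitchin maps; combining the last two sentences gives (6). Taking $\deg B = 1$ instead yields $\check{\mathcal{M}}_{\rm reg}(r,M,L) \cong \check{\mathcal{M}}_{\rm reg}(r, M \otimes B^r, L)$ over the same base, and comparing twisted Chern classes $\tilde{d}\check{c} = (\tilde{d} + r)\check{c}$ gives (7).

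For the $PSL$-fibration, $\hat{\mathcal{M}}_{\rm reg}(r,d,L)$ is the fibrewise quotient $\mathcal{M}^0_{\rm reg}(r,d,L)/\pi^*Jac(\Sigma)$ by the subtorus $\pi^*Jac(\Sigma) \subset Jac(S)$, which $\rho$ preserves; hence it is an affine torus bundle whose structure group is the image of that of $\mathcal{M}^0_{\rm reg}(r,d,L)$ under ${\rm Aff}(Jac(S)) \to {\rm Aff}(\hat{Prym}(S,\Sigma))$, giving (1), (2) with $\hat{\rho}$ the representation induced on $\Lambda_S/\pi^*\Lambda_\Sigma \simeq \Lambda_P^*$, which is (5). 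By the same functoriality, the twisted Chern class of $\hat{\mathcal{M}}_{\rm reg}(r,d,L)$ is the image of $\tilde{d}c$ under $H^2(\mathcal{A}^0_{\rm reg}(r,L),\Lambda_S) \to H^2(\mathcal{A}^0_{\rm reg}(r,L),\Lambda_S/\pi^*\Lambda_\Sigma) \simeq H^2(\mathcal{A}^0_{\rm reg}(r,L),\Lambda_P^*)$, that is, $\tilde{d}$ times the image $\hat{c}$ of $c$; the case $\tilde{d} = 1$ is (10) and the general case is (9) (a bundle-of-groups argument with fibre $\hat{Prym}(S,\Sigma) \times \mathbb{Z}$ also proves (9) directly).

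The step I expect to be the main obstacle is the bundle-of-groups argument behind (6): one must choose the family of target determinants so that its members exhaust $\check{\mathcal{M}}_{\rm reg}(r,D,L)$ over all values of $\deg D$ and so that the resulting $\mathbb{Z}$-grading matches $\tilde{d}$, and then verify that the union is genuinely a bundle of groups with fibre $Prym(S,\Sigma) \times \mathbb{Z}$ --- not merely a bundle of torsors --- carrying structure group $Aut^+(Prym(S,\Sigma) \times \mathbb{Z})$. Everything else reduces to functoriality of the classification of affine torus bundles recalled in \textsection\ref{sec:affine} --- namely that inclusion of a subtorus, and quotient by a monodromy-invariant subtorus, of the fibre torus induce on the twisted Chern class the stated maps on $H^2$ --- which can equally be read off from the \v{C}ech description of the twisted Chern class as an obstruction cocycle.
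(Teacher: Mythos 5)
Your proposal is correct and follows essentially the same route as the paper, whose proof of Theorem \ref{thmaffine2} simply defers items (1), (2), (6), (7), (9) to the bundle-of-groups argument of Theorem \ref{thmaffine1} and derives (3), (4), (5), (8), (10) from the duality of $Prym(S,\Sigma)$ and $\hat{Prym}(S,\Sigma)$, the inclusion $Prym(S,\Sigma) \subset Jac(S)$, and the identification $\hat{Prym}(S,\Sigma) \simeq Jac(S)/\pi^*(Jac(\Sigma))$. Your expansion — in particular the explicit choice of the family of determinants $N^{\otimes k} \otimes L^{(k-1)r(r-1)/2}$ so that the $\mathbb{Z}$-grading of the fibre $Prym(S,\Sigma)\times\mathbb{Z}$ matches $\tilde{d}$ — correctly fills in the details the paper leaves implicit.
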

\begin{proof}
Items 1), 2), 6), 7) and 9) are proved as in Theorem \ref{thmaffine1}. Item 3) follows since $Prym(S,\Sigma)$ and $\hat{Prym}(S,\Sigma)$ are dual abelian varieties. Items 4) and 8) follow from the natural inclusion $Prym(S,\Sigma) \subset Jac(S)$. Lastly, items 5) and 10) follow from the natural inclusion $\pi^* : Jac(\Sigma) \to Jac(S)$ and the identification $\hat{Prym}(S,\Sigma) \simeq Jac(S)/\pi^*(Jac(\Sigma))$.
\end{proof}

\section{Monodromy of twisted Hitchin systems}\label{secmths}

\subsection{Fundamental group calculations}\label{secfgc}

Henceforth we will consider exclusively the case of $L$-twisted rank $2$ Higgs bundles. To simplify notation we omit the $r$ and $L$ labels on the moduli spaces and Hitchin base. In particular, we have $\mathcal{A}^0 = H^0( \Sigma , L^2 )$. For a line bundle $N$, we let $H^0(\Sigma , N)^{\rm simp}$ be the space of sections of $N$ having only simple zeros. A point $a_2 \in H^0( \Sigma , L^2)$ defines a smooth spectral curve if and only if $a_2$ has only simple zeros, thus $\mathcal{A}^0_{\rm reg} = H^0( \Sigma , L^2)^{\rm simp}$. If $N$ is such that $H^0(\Sigma , N) \neq \{ 0\}$, we let $\mathbb{P}( H^0( \Sigma , N)^{\rm simp} )$ denote the image of $H^0(\Sigma , N)^{\rm simp}$ under the quotient map $H^0(\Sigma , N) \setminus \{0\} \to \mathbb{P}(H^0(\Sigma , N))$. Similarly, we write $\mathbb{P}( \mathcal{A}_{\rm reg}^0 )$ for $\mathbb{P}( H^0( \Sigma , L^2)^{\rm simp} )$.\\

Let $S^n \Sigma$ be the space of unordered $n$-tuples of points in $\Sigma$ and let $\alpha : S^n \Sigma \to Jac_n(\Sigma)$ be the Abel-Jacobi map sending a divisor $b_o$ to the corresponding line bundle $[b_o]$. We let $\widetilde{S}^n \Sigma \subseteq S^n \Sigma$ be those divisors consisting of distinct points and let $\tilde{\alpha} : \widetilde{S}^n \Sigma \to Jac_n(\Sigma)$ be the restriction of $\alpha$ to $\widetilde{S}^n \Sigma$. The fibre of $\tilde{\alpha}$ over $N \in Jac_n(\Sigma)$ is then $\mathbb{P}( H^0(\Sigma , N)^{\rm simp} )$. The fundamental group $\pi_1( \widetilde{S}^n \Sigma , b_o )$ is called the {\em $n$-th braid group of $\Sigma$} and will be denoted as $Br_n(\Sigma , b_o)$ \cite{bir}. The Abel-Jacobi map $\tilde{\alpha} : \widetilde{S}^n \Sigma \to Jac_n(\Sigma)$ induces a homomorphism $\tilde{\alpha}_* : Br_n(\Sigma , b_o) \to H_1( \Sigma , \mathbb{Z} ) \simeq \pi_1( Jac_k(\Sigma) , [b_o] )$. We then have:

\begin{proposition}[\cite{doli}]\label{propexact1}
Let $N$ be a line bundle of degree $n > 2g-2$, let $a_0 \in H^0( \Sigma , N)^{\rm simp}$ and let $b_o$ be the divisor of $a_0$. We have that $\tilde{\alpha} : \widetilde{S}^n \Sigma \to Jac_n(\Sigma)$ is a Serre fibration. In particular, we have an isomorphism
\begin{equation*}
\pi_1( \mathbb{P}(H^0(\Sigma , N)^{\rm simp}) , b_o ) \simeq ker( \tilde{\alpha}_* : Br_n(\Sigma , b_o ) \to H_1( \Sigma , \mathbb{Z} ) ). 
\end{equation*}
\end{proposition}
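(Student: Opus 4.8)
The plan is to deduce this from the long exact homotopy sequence of the Serre fibration $\tilde{\alpha} : \widetilde{S}^n \Sigma \to Jac_n(\Sigma)$, whose fibre over $N$ is $\mathbb{P}(H^0(\Sigma,N)^{\rm simp})$. First I would establish that $\tilde{\alpha}$ is a Serre fibration; for $n > 2g-2$ the sheaf cohomology $H^1(\Sigma, N) = 0$ is forced, so $h^0(\Sigma, N) = n - g + 1$ is constant as $N$ varies, which means the spaces $H^0(\Sigma, N)$ fit together into an algebraic vector bundle $\mathcal{V} \to Jac_n(\Sigma)$ of rank $n-g+1$ (the direct image of a Poincar\'e line bundle). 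Inside the associated projective bundle $\mathbb{P}(\mathcal{V})$, the locus of sections with a repeated zero is a divisor cut out algebraically (a discriminant), so its complement is a Zariski-open subbundle; the map sending a section to its zero divisor identifies this open subbundle with $\widetilde{S}^n\Sigma$ over $Jac_n(\Sigma)$. A Zariski-locally trivial fibre bundle with fibre $\mathbb{P}(H^0(\Sigma,N)^{\rm simp})$ is in particular a Serre fibration, which is the first assertion. (This is the content attributed to Dolgachev, reference \cite{doli}.)

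Next I would write down the tail of the homotopy long exact sequence:
\begin{equation*}
\pi_2( Jac_n(\Sigma) ) \to \pi_1( \mathbb{P}(H^0(\Sigma,N)^{\rm simp}), b_o ) \to \pi_1( \widetilde{S}^n\Sigma, b_o ) \xrightarrow{\tilde{\alpha}_*} \pi_1( Jac_n(\Sigma), [b_o] ) \to \pi_0( \mathbb{P}(H^0(\Sigma,N)^{\rm simp}) ).
\end{equation*}
Since $Jac_n(\Sigma)$ is a torus of real dimension $2g$, we have $\pi_2(Jac_n(\Sigma)) = 0$ and $\pi_1(Jac_n(\Sigma), [b_o]) \simeq H_1(\Sigma,\mathbb{Z}) \simeq \mathbb{Z}^{2g}$. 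Also $\widetilde{S}^n\Sigma$ is connected and $\tilde{\alpha}$ is surjective (again because $h^0(\Sigma, N) > 0$ for every $N$ and a generic section has simple zeros, by Bertini), so the fibre $\mathbb{P}(H^0(\Sigma,N)^{\rm simp})$ is nonempty; I also need it to be connected, i.e. $\pi_0(\mathbb{P}(H^0(\Sigma,N)^{\rm simp})) = *$. Connectivity of $\mathbb{P}(H^0(\Sigma,N)^{\rm simp})$ holds because it is the complement of a proper Zariski-closed subset (the discriminant) in the irreducible variety $\mathbb{P}(H^0(\Sigma,N)) \simeq \mathbb{P}^{n-g}$, hence is irreducible, hence connected. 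With both ends vanishing appropriately the sequence collapses to the short exact sequence $1 \to \pi_1(\mathbb{P}(H^0(\Sigma,N)^{\rm simp}), b_o) \to Br_n(\Sigma, b_o) \xrightarrow{\tilde{\alpha}_*} H_1(\Sigma,\mathbb{Z}) \to 1$, and in particular $\pi_1(\mathbb{P}(H^0(\Sigma,N)^{\rm simp}), b_o) \simeq \ker(\tilde{\alpha}_*)$, as claimed.

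The main obstacle is the first step: verifying carefully that $\tilde{\alpha}$ is genuinely a (Serre, or even locally trivial) fibration rather than merely a surjective submersion with connected fibres. One must check that the discriminant locus in $\mathbb{P}(\mathcal{V})$ really does form a subbundle so that its complement $\widetilde{S}^n\Sigma$ is a fibre bundle over $Jac_n(\Sigma)$ in the Zariski (hence also the locally-trivial-in-the-analytic-topology) sense — equivalently, that the zero-divisor map $\widetilde{S}^n\Sigma \to Jac_n(\Sigma)$ is a smooth proper-over-nothing morphism that is locally trivial, which is exactly where the hypothesis $n > 2g - 2$ (equivalently $H^1(\Sigma, N) = 0$ for all $N$ of degree $n$, so $h^0$ is locally constant) is used. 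This is precisely the point covered by the cited result of Dolgachev, so I would invoke \cite{doli} for it and concentrate the remaining work on the straightforward homotopy-exact-sequence bookkeeping above. Everything after the fibration statement — the vanishing of $\pi_2$ of a torus, connectedness of the base and fibre, surjectivity of $\tilde{\alpha}_*$ via Bertini — is routine.
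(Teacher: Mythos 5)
Your proposal matches the paper's treatment: the paper offers no proof beyond the citation to Dolgachev--Libgober for the fibration statement (and indeed constructs the same projective bundle $\mathbb{P}(V_n) \to Jac_n(\Sigma)$ via the Poincar\'e bundle in the surrounding text), and the passage from the Serre fibration to the identification of $\pi_1$ of the fibre with $\ker(\tilde{\alpha}_*)$ is exactly the long-exact-sequence bookkeeping you carry out, using $\pi_2$ of the torus vanishing and connectedness of the discriminant complement. The one caution is that your phrase ``Zariski-open subbundle'' overstates what is automatic --- local trivializations of $\mathbb{P}(V_n)$ need not preserve the discriminant, so the local triviality of $\tilde{\alpha}$ over the complement is genuinely a theorem (via an isotopy-lemma or Dolgachev--Libgober type argument) rather than a formality --- but you correctly flag this yourself and defer it to \cite{doli}, just as the paper does.
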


Write the divisor $b_o \in \widetilde{S}^n \Sigma$ as $b_o = b_1 + b_2 + \dots + b_n$, where the $b_i$ are distinct points in $\Sigma$. Suppose that $\gamma : [0,1] \to \Sigma$ is an embedded path joining $b_i = \gamma(0)$ to $b_j = \gamma(1)$, where $i \neq j$ and such that $\gamma$ meets no other point of $b_o$. When necessary, we shall write $\gamma$ with subscripts $\gamma_{ij}$, to indicate the endpoints. Let $D^2$ be the unit disc in $\mathbb{R}^2$. Choose an orientation preserving embedding $e : D^2 \to \Sigma$ such that $\gamma(t) = e( t-1/2 , 0)$ and such that $e(D^2)$ contains no other points of the divisor $b_o$. Next we define modified curves $\gamma^+,\gamma^-$, by setting $\gamma^+(t) := e( t-1/2 , \sin( \pi t) )$ and $\gamma^-(t) := e( 1/2 - t , -\sin(\pi t) )$. This defines a loop $p_\gamma(t)$ in $\widetilde{S}^n\Sigma$ based at $b_o$ by setting $p_\gamma(t) = b_1(t) + b_2(t) + \dots + b_{2l}(t)$, where $b_i(t) = \gamma^+(t)$, $b_j(t) = \gamma^-(t)$ and $b_k(t) = b_k$ for $k \neq i,j$, see Figure \ref{figswap}. The homotopy class $s_\gamma := [p_\gamma]$ of $p_\gamma(t)$ in $Br_n(\Sigma , b_o)$ clearly depends only on the choice of path $\gamma$. We call $s_\gamma$ the {\em swap associated to $\gamma$}. An element of $Br_n(\Sigma , b_o)$ of this form will be called a {\em swap of $b_i$ and $b_j$}, or simply a {\em swap}. Note that the swaps associated to $\gamma$ and $\gamma^{-1}$ are the same element of $Br_n(\Sigma , b_o)$.\\

\begin{figure}[h]
        \centering

\epsfig{file=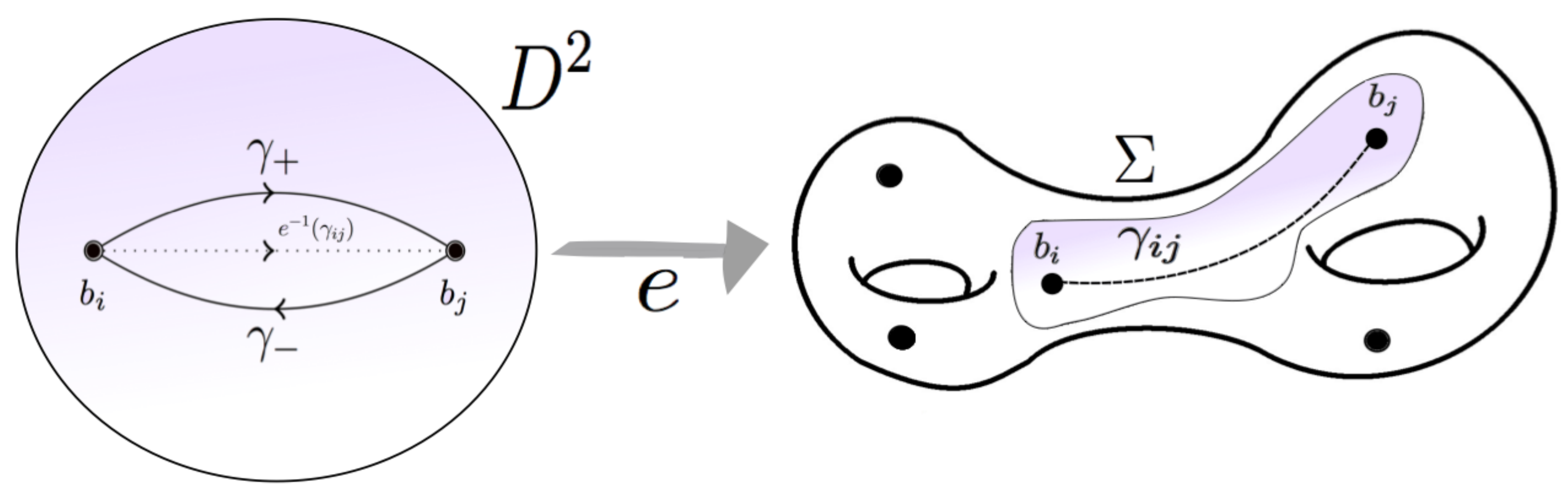, width=0.7\textwidth}

\caption{\small{A swap of $b_i$ and $b_j$ along the path $\gamma_{ij}$.}}
\label{figswap}
\end{figure}

\begin{theorem}\label{thmswap}
Suppose that $n \ge 4g-2$, or $n \ge 4g-4$ and $g > 2$. Then the kernel of $\tilde{\alpha}_* : Br_n(\Sigma , b_o) \to H_1(\Sigma , \mathbb{Z})$ is the subgroup of $Br( \Sigma , b_o)$ generated by swaps.
\end{theorem}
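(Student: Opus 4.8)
The plan is to combine Proposition \ref{propexact1} with a Zariski--van Kampen type argument for a discriminant complement. Writing $N = [b_o]$, a line bundle of degree $n$, the numerical hypotheses give $n > 2g-2$, so Proposition \ref{propexact1} applies and identifies $\ker(\tilde\alpha_*)$ with $\pi_1\big(\mathbb{P}(H^0(\Sigma,N)^{\rm simp}),\,b_o\big)$. The starting point is the observation that $\mathbb{P}(H^0(\Sigma,N)^{\rm simp})$ is precisely the complement $\mathbb{P}^m \setminus \mathfrak{D}$, where $\mathbb{P}^m := \mathbb{P}(H^0(\Sigma,N))$, $m = h^0(N)-1$, and $\mathfrak{D}$ is the discriminant, i.e. the locus of non-reduced divisors in the linear system $|N|$. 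So the first step is a piece of projective geometry: show that under the hypotheses $\mathfrak{D}$ is an irreducible hypersurface whose generic member has exactly one double point and $n-2$ simple zeros, all distinct, and that the remaining ``bad'' locus in $\mathfrak D$ (divisors with a triple point, or two double points, and so on, together with $\operatorname{Sing}(\mathfrak D)$) has codimension at least two in $\mathbb{P}^m$. This is where the bounds $n \ge 4g-2$, or $n \ge 4g-4$ with $g > 2$, are used: via Riemann--Roch they make $|N|$ base-point-free and force all the linear subsystems $|N - D|$, for effective $D$ of degree up to about $2g$, to be non-special of the expected dimension, which is exactly the input for the dimension counts above (for instance $\mathfrak D$ is the image of the irreducible incidence variety $\{(p,D'): p \in \Sigma,\ D' \in |N - 2p|\}$ under $(p,D') \mapsto 2p+D'$).

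The second step is the standard fact that for a reduced divisor $D$ in a smooth, simply connected projective variety $X$, the group $\pi_1(X \setminus D)$ is the normal closure of the meridian loops of the irreducible components of $D$. Indeed, since $\pi_1(X) = 1$ any loop in $X \setminus D$ bounds a disc in $X$; perturbing the disc into general position with respect to the stratification of $D$ (its interior then misses all strata of $D$ of codimension $\ge 2$ in $X$ and meets $D^{\rm reg}$ transversally in finitely many points), the loop becomes a product of conjugates of small meridians around $D^{\rm reg}$. Applied with $X = \mathbb{P}^m$ and $D = \mathfrak D$, and using that $\mathfrak D$ is irreducible from Step 1, this shows $\pi_1(\mathbb{P}^m \setminus \mathfrak D)$ is the normal closure of a single meridian $\mu$ of $\mathfrak D$.

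The third step matches meridians with swaps. At a generic point of $\mathfrak D$, corresponding to a divisor $2p + q_1 + \dots + q_{n-2}$ with $p, q_1, \dots, q_{n-2}$ pairwise distinct, a $2$-disc transverse to $\mathfrak D$ is modelled near $p$ by the family of divisors $\{z : z^2 = -\epsilon\}$ with the $q_i$ held fixed, and going once around $|\epsilon| = \mathrm{const}$ exchanges the two zeros near $p$ by a half-turn: this is exactly the defining motion of a swap $s_\gamma$ along a short embedded arc $\gamma$ at $p$. Conversely, for an arbitrary embedded arc $\gamma$ from $b_i$ to $b_j$ avoiding the other points, an ambient isotopy of $\Sigma$ sliding $b_j$ along $\gamma$ until it is close to $b_i$ exhibits $s_\gamma$ as conjugate in $Br_n(\Sigma,b_o)$ to a short swap, hence to a meridian of $\mathfrak D$; and conjugating a swap $s_\gamma$ by any braid $\beta$ again yields a swap, namely $s_{\gamma'}$ for the embedded arc $\gamma'$ obtained by transporting $\gamma$ under an ambient isotopy realising $\beta$. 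Thus the set of swaps is closed under conjugation, so its normal closure equals the subgroup it generates, and this subgroup equals the normal closure of $\mu$. Combining the three steps, $\ker(\tilde\alpha_*) = \pi_1(\mathbb{P}^m \setminus \mathfrak D) = \langle\text{swaps}\rangle$.

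The step I expect to be the main obstacle is the first one: determining precisely which non-speciality statements for the systems $|N-D|$ are required, deducing them from the stated bounds on $n$ and $g$ via Riemann--Roch, and checking that the singular and higher-multiplicity strata of $\mathfrak D$ genuinely have codimension $\ge 2$ so that they are invisible to $\pi_1$ of the complement. A secondary technical point is making the local computation in Step 3 rigorous, i.e. verifying that the analytic meridian of $\mathfrak D$ is the same element of $Br_n(\Sigma,b_o)$ as the combinatorially defined swap $s_\gamma$, and justifying the ambient-isotopy manipulations used for conjugacy of swaps.
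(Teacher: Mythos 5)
Your argument is sound, but it is a genuinely different route from the paper's. The paper's entire proof is two sentences: after the trivial observation that swaps lie in $\ker(\tilde\alpha_*)$, it invokes a result of Copeland (arXiv:0409461) together with Walker's Corollary 4.7 to conclude that this kernel is generated by swaps; no discriminant geometry appears. Your proposal instead gives a self-contained Zariski--van Kampen argument in the spirit of the Dolgachev--Libgober reference already used for Proposition \ref{propexact1}: identify the fibre of $\tilde\alpha$ with $\mathbb{P}^m\setminus\mathfrak{D}$, show $\mathfrak{D}$ is an irreducible hypersurface with bad locus of codimension $\ge 2$, conclude that $\pi_1$ is normally generated by one meridian, and identify meridians with swaps via the local model $z^2=-\epsilon$ (which is the same Picard--Lefschetz computation the paper uses later in Theorem \ref{thmswapmono}). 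The dimension counts you flag as the main obstacle do go through under the stated bounds --- the only delicate case is $n=4g-4$, $g=3$, where $\deg(N-2p-2q)=2g-2$ can be special, but the locus where this happens is cut down enough in $(p,q)$ that the two-double-point stratum still has codimension $\ge 2$. One small logical remark: you do not actually need the (true, but slightly fussier) claim that every swap is conjugate to the chosen meridian $\mu$; for the nontrivial inclusion $\ker(\tilde\alpha_*)\subseteq\langle\text{swaps}\rangle$ it suffices that $\mu$ is a swap and that $Br_n$-conjugates of swaps are swaps, since then every $\pi_1(\mathbb{P}^m\setminus\mathfrak{D})$-conjugate of $\mu^{\pm1}$ already lies in the subgroup generated by swaps. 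What your approach buys is independence from the external group-theoretic results of Copeland and Walker; what the paper's citation buys is brevity and avoidance of the stratification analysis of $\mathfrak{D}$.
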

\begin{proof}
Clearly any swap lies in the kernel of $\tilde{\alpha}_*$, so we only need to show that the kernel of $\tilde{\alpha}_*$ may be generated by swaps. This follows easily from a result of Copeland \cite{cop0} and Walker \cite[Corollary 4.7]{walk}.
\end{proof}

Fix a point $p \in \Sigma$ and let $\mathcal{L}_n \to \Sigma \times Jac_n(\Sigma)$ be the Poincar\'e bundle of degree $k$ normalised with respect to $p$ \cite[Proposition 11.3.2]{bila}. This is the unique line bundle $\mathcal{L}_n$ on $\Sigma \times Jac_n(\Sigma)$ satisfying:
\begin{enumerate}
\item{$\mathcal{L}_n|_{\Sigma \times \{N\} } \simeq N$, for all $N \in Jac_n(\Sigma)$,}
\item{$\mathcal{L}_n|_{\{ p \} \times Jac_n(\Sigma)}$ is trivial.}
\end{enumerate}
For $n > 2g-2$, we obtain a vector bundle $q : V_n \to Jac_n(\Sigma)$ by letting the fibre of $V_n$ over $N \in Jac_n(\Sigma)$ be $H^0(\Sigma \times \{N\} , \mathcal{L}_n|_{\Sigma \times \{N\} } ) \simeq H^0( \Sigma , N)$. Taking divisors gives an isomorphism $\mathbb{P}(V_n) \simeq S^n \Sigma$, under which the Abel-Jacobi map is simply the projection $\mathbb{P}(V_n) \to Jac_n(\Sigma)$. This shows that $\alpha : S^n \Sigma \to Jac_n(\Sigma)$ is a locally trivial projective bundle, which moreover lifts to a vector bundle. Let $\widetilde{V}_n$ be the points in $V_n$ lying over $\widetilde{S}^n \Sigma$. This is a principal $\mathbb{C}^*$-bundle $\tilde{q} : \widetilde{V}_n \to \widetilde{S}^n \Sigma$. The fibre of $\widetilde{V}_n$ over $N \in Jac_n(\Sigma)$ is precisely $H^0(\Sigma , N )^{\rm simp}$.

\begin{proposition}\label{propexact2}
Let $a_0 \in H^0(\Sigma , N)^{\rm simp}$, where $N \in Jac_n(\Sigma)$ and $n > 2g-2$. Then we have an exact sequence:
\begin{equation*}\xymatrix{
\pi_1( H^0(\Sigma , N)^{\rm simp} , a_0 ) \ar[r]^-{i_*} & \pi_1( \widetilde{V}_n , a_0 ) \ar[rr]^-{\tilde{\alpha}_* \circ \tilde{q}_*} & & \pi_1( Jac_n(\Sigma) , N ) \ar[r] & 1,
}
\end{equation*}
where $i : H^0(\Sigma , N)^{\rm simp} \to \widetilde{V}_n$ is the inclusion map.
\end{proposition}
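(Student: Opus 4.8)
The plan is to exhibit the composite map $\tilde{\alpha}\circ\tilde{q}\colon\widetilde{V}_n\to Jac_n(\Sigma)$ as a Serre fibration with connected fibre, and then read the exact sequence off the long exact sequence of homotopy groups.

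First I would assemble the fibration. The map $\tilde{q}\colon\widetilde{V}_n\to\widetilde{S}^n\Sigma$ is a principal $\mathbb{C}^*$-bundle, hence a locally trivial fibre bundle and in particular a Serre fibration; by Proposition \ref{propexact1} the Abel--Jacobi map $\tilde{\alpha}\colon\widetilde{S}^n\Sigma\to Jac_n(\Sigma)$ is a Serre fibration as well (here we use $n>2g-2$). Since a composition of two Serre fibrations is again a Serre fibration — given a lifting problem over $Jac_n(\Sigma)$, lift the homotopy first through $\tilde{\alpha}$ and then through $\tilde{q}$ — the composite $\tilde{\alpha}\circ\tilde{q}$ is a Serre fibration. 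Its fibre over $N$ is $\tilde{q}^{-1}\bigl(\tilde{\alpha}^{-1}(N)\bigr)=\tilde{q}^{-1}\bigl(\mathbb{P}(H^0(\Sigma,N)^{\rm simp})\bigr)$, which is exactly the set of nonzero sections of $N$ with simple zeros, i.e.\ $H^0(\Sigma,N)^{\rm simp}$; under this identification the fibre inclusion is the map $i$ of the statement, and $a_0$ lies in the fibre over $N$.

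Next I would verify that this fibre is connected. Sections of $N$ having a zero of multiplicity at least two form a Zariski-closed subset $Z\subset H^0(\Sigma,N)$ (cut out by a discriminant), and $Z$ is a \emph{proper} subset since $a_0\notin Z$ by hypothesis; as the complement of a proper algebraic subset of an irreducible affine variety is path-connected, $H^0(\Sigma,N)^{\rm simp}=H^0(\Sigma,N)\setminus Z$ is connected, so $\pi_0$ of the fibre is trivial. Finally, the long exact homotopy sequence of the fibration $H^0(\Sigma,N)^{\rm simp}\hookrightarrow\widetilde{V}_n\xrightarrow{\tilde{\alpha}\circ\tilde{q}}Jac_n(\Sigma)$ based at $a_0$ reads
\[
\cdots\to\pi_1\bigl(H^0(\Sigma,N)^{\rm simp},a_0\bigr)\xrightarrow{i_*}\pi_1(\widetilde{V}_n,a_0)\xrightarrow{\tilde{\alpha}_*\tilde{q}_*}\pi_1\bigl(Jac_n(\Sigma),N\bigr)\xrightarrow{\partial}\pi_0\bigl(H^0(\Sigma,N)^{\rm simp}\bigr);
\]
connectedness of the fibre makes the last term trivial and $\partial$ the zero map, so exactness at $\pi_1(Jac_n(\Sigma),N)$ gives surjectivity of $\tilde{\alpha}_*\circ\tilde{q}_*$, while exactness at $\pi_1(\widetilde{V}_n,a_0)$ is the remaining assertion.

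There is no real obstacle here: the two points that deserve a word of justification are that a composition of Serre fibrations is again a Serre fibration, and that $H^0(\Sigma,N)^{\rm simp}$ is connected — the latter because it is the complement of a proper algebraic subset of an affine space, properness being guaranteed by the given basepoint $a_0$.
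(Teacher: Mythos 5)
Your argument is correct, but it is organised differently from the paper's. The paper proves this by writing down the commutative $3\times 3$ diagram whose columns are the exact sequences of the two principal $\mathbb{C}^*$-bundles $H^0(\Sigma,N)^{\rm simp}\to\mathbb{P}(H^0(\Sigma,N)^{\rm simp})$ and $\widetilde{V}_n\to\widetilde{S}^n\Sigma$, whose bottom row is the exact sequence of Proposition \ref{propexact1}, and then deducing exactness of the middle row by a diagram chase; it never needs to know that the composite $\tilde{\alpha}\circ\tilde{q}$ is itself a fibration, nor that its fibre is connected. You instead compose the two fibrations, identify the fibre of $\tilde{\alpha}\circ\tilde{q}$ over $N$ with $H^0(\Sigma,N)^{\rm simp}$, and read the statement off the long exact homotopy sequence, paying for this with two auxiliary checks: that a composite of Serre fibrations is a Serre fibration (your lifting argument is in the right order), and that $H^0(\Sigma,N)^{\rm simp}$ is connected, which you correctly get as the complement of the proper Zariski-closed discriminant locus in the affine space $H^0(\Sigma,N)$ (closedness using properness of $\Sigma$ to project the incidence variety $\{(s,x):s(x)=0,\,ds(x)=0\}$). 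Your route is more self-contained and yields the extra geometric fact that $\widetilde{V}_n\to Jac_n(\Sigma)$ is a fibration with connected fibre; the paper's route is shorter given that Proposition \ref{propexact1} has already packaged the bottom row, and avoids the connectedness verification entirely. Both uses of the hypothesis $n>2g-2$ enter only through Proposition \ref{propexact1}.
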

\begin{proof}
The commutative diagram
\begin{equation*}\xymatrix{
\mathbb{C}^* \ar@{=}[r] \ar[d] & \mathbb{C}^* \ar[d] & & \\
H^0(\Sigma , N)^{\rm simp} \ar[r]^-{i} \ar[d] & \widetilde{V}_n \ar[d]^-{\tilde{q}} \ar[rr]^-{\tilde{\alpha} \circ  \tilde{q}} & & Jac_n(\Sigma) \ar@{=}[d] \\
\mathbb{P}(H^0(\Sigma , N)^{\rm simp}) \ar[r]^-{i} & \widetilde{S}^n \Sigma \ar[rr]^-{\tilde{\alpha} } & & Jac_n(\Sigma)
}
\end{equation*}
gives rise to a commutative diagram of fundamental groups with exact columns:
\begin{equation*}\xymatrix{
\mathbb{Z} \ar@{=}[r] \ar[d] & \mathbb{Z} \ar[d] & & & \\
\pi_1(H^0(\Sigma , N)^{\rm simp},a_0) \ar[r]^-{i_*} \ar[d] & \pi_1(\widetilde{V}_n,a_0) \ar[d]^-{\tilde{q}} \ar[rr]^-{\widetilde{\alpha}_* \circ \tilde{q}_*} & & \pi_1(Jac_n(\Sigma),N) \ar@{=}[d] \ar[r] & 1 \\
\pi_1(\mathbb{P}(H^0(\Sigma , N)^{\rm simp}),b_o) \ar[d] \ar[r]^-{i_*} & \pi_1(\widetilde{S}^n \Sigma,b_o) \ar[d] \ar[rr]^-{\tilde{\alpha}_* } & & \pi_1(Jac_n(\Sigma),N) 
\ar[r] & 1 \\
1 & 1 & & &
}
\end{equation*}
By Proposition \ref{propexact1}, the third row of this diagram is exact. From this, exactness of the second row follows.
\end{proof}

By Proposition \ref{propexact1}, a swap gives an element in $\pi_1( \mathbb{P}(H^0(\Sigma , N)^{\rm simp}) , b_o )$. We now give a canonical procedure for lifting this to a loop in $\widetilde{V}_n$. Consider the swap of associated to a path $\gamma$ from $b_i$ to $b_j$ as in Figure \ref{figswap}. Let $e : D^2 \to \Sigma$ be an oriented embedding such that $\gamma(t) = e( t-1/2 , 0)$ and such that $e(D^2)$ contains no other points of the divisor $b_o$. Let $S^2( D^2)$ be the symmetric product of $D^2$. There is an induced map $i : S^2(D^2) \to S^n \Sigma$ sending a pair $u,v \in D^2$ to the divisor $e(u) + e(v) + \sum_{k \neq i,j} b_k$. In particular, $p_\gamma(t) = i( (t-1/2 , \sin(\pi t) ) , (1/2 - t , -\sin(\pi t) ) )$. Let $V'_n$ be $V_n$ with the zero section removed. The projection $q : V'_n \to S^n \Sigma$ is a principal $\mathbb{C}^*$-bundle. The pullback $i^*( V'_n)$ is then a principal $\mathbb{C}^*$-bundle over the contractible space $S^2( D^2)$ and thus admits a section, i.e. a map $s : S^2(D^2) \to V'_n$ such that $q \circ s = i$. We can also choose $s$ such that $s( (-1/2 , 0) , (1/2 , 0 ) ) = a_0$. Now let $\tilde{p}_\gamma (t) := s( (t-1/2 , \sin(\pi t) ) , (1/2 - t , -\sin(\pi t) ) )$. This is a lift of $p_\gamma$ to a loop in $\widetilde{V}_n$ based at $a_0$. It is clear that the homotopy class of the lift $[ \tilde{p}_\gamma ] \in \pi_1( \widetilde{V}_n , a_0 )$ is independent of the embedding $e$ and section $s$. From Proposition \ref{propexact2}, the class $\tilde{s}_\gamma := [ \tilde{p}_\gamma ]$ lies in the image of $i_* : \pi_1( H^0(\Sigma , N)^{\rm simp} , a_0 ) \to \pi_1( \widetilde{V}_n , a_0 )$. While this does not uniquely determine a lift of $[ p_\gamma]$ to a class in $\pi_1( H^0(\Sigma , N)^{\rm simp} , a_0 )$, it is sufficient for monodromy computations, as we will see that the monodromy representation of the Hitchin system factors through $i_*$.\\

We now consider the case where $n = 2l$ and $N = L^2$, so that $a_0 \in H^0( \Sigma , L^2)^{\rm simp} = \mathcal{A}_{\rm reg}^0$. The projection $\mathcal{A}_{\rm reg}^0 \to \mathbb{P}(\mathcal{A}_{\rm reg}^0)$ is a principal $\mathbb{C}^*$-bundle, so gives an exact sequence:
\begin{equation}\label{equcstarexact}
\pi_1( \mathbb{C}^* , a_0 ) \to \pi_1( \mathcal{A}_{\rm reg}^0 , a_0 ) \to \pi_1( \mathbb{P}(\mathcal{A}_{\rm reg}^0 ) , b_o ) \to 1.
\end{equation}
We then have:
\begin{proposition}\label{propgenerate}
The group $\pi_1( \mathcal{A}_{\rm reg}^0 , a_0)$ is generated by the loop given by the $\mathbb{C}^*$-action on $\mathcal{A}_{\rm reg}^0$ together with lifts of swaps.
\end{proposition}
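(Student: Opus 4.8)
The plan is to read the statement off from the homotopy exact sequence \eqref{equcstarexact} of the principal $\mathbb{C}^*$-bundle $j : \mathcal{A}_{\rm reg}^0 \to \mathbb{P}(\mathcal{A}_{\rm reg}^0)$, combined with Proposition \ref{propexact1} and Theorem \ref{thmswap}. First I would note that exactness of \eqref{equcstarexact} at the middle term says that $j_* : \pi_1(\mathcal{A}_{\rm reg}^0 , a_0) \to \pi_1(\mathbb{P}(\mathcal{A}_{\rm reg}^0) , b_o)$ is surjective with kernel equal to the image of $\pi_1(\mathbb{C}^*) \cong \mathbb{Z}$. Since the image of a homomorphism from $\mathbb{Z}$ is cyclic, generated by the image of a generator, and a generator of $\pi_1(\mathbb{C}^*)$ (the fibre through $a_0$ being $\mathbb{C}^* \cdot a_0$) maps to the loop $\tau$ given by the $\mathbb{C}^*$-action on $\mathcal{A}_{\rm reg}^0$, we conclude $\ker j_* = \langle \tau \rangle$.

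Next I would pin down a generating set for the quotient $\pi_1(\mathbb{P}(\mathcal{A}_{\rm reg}^0) , b_o)$. Applying Proposition \ref{propexact1} with $N = L^2$ and $n = 2l$ (its hypothesis $n > 2g-2$ holds since $n = 2l \geq 4g-4 > 2g-2$ for $g > 1$) identifies $\pi_1(\mathbb{P}(\mathcal{A}_{\rm reg}^0) , b_o)$ with $\ker(\tilde{\alpha}_* : Br_{2l}(\Sigma , b_o) \to H_1(\Sigma , \mathbb{Z}))$. I would then check that the degree bounds of Theorem \ref{thmswap} are met: if $l > 2g-2$ then $l \geq 2g-1$, so $2l \geq 4g-2$; and if $L = K$ then $2l = 4g-4$. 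Theorem \ref{thmswap} then gives that this kernel, hence $\pi_1(\mathbb{P}(\mathcal{A}_{\rm reg}^0) , b_o)$, is generated by the swaps $s_\gamma$.

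Finally I would assemble the two facts. For each swap $s_\gamma$ choose some lift $\tilde{s}_\gamma \in \pi_1(\mathcal{A}_{\rm reg}^0 , a_0)$ with $j_*(\tilde{s}_\gamma) = s_\gamma$; this is possible because $j_*$ is onto, and the canonical lifting procedure of Section \ref{secfgc} furnishes a specific such $\tilde{s}_\gamma$, though any lift would do. Given an arbitrary class $x \in \pi_1(\mathcal{A}_{\rm reg}^0 , a_0)$, write $j_*(x)$ as a word in the $s_\gamma^{\pm 1}$ and let $w$ be the corresponding word in the $\tilde{s}_\gamma^{\pm 1}$; then $j_*(x w^{-1}) = 1$, so $x w^{-1} \in \ker j_* = \langle \tau \rangle$, and hence $x$ lies in the subgroup generated by $\tau$ together with the $\tilde{s}_\gamma$. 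This proves the proposition.

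I do not anticipate a genuine obstacle here: the argument is essentially bookkeeping built on \eqref{equcstarexact}, Proposition \ref{propexact1}, and Theorem \ref{thmswap}. The only points needing attention are verifying the numerical hypotheses of Theorem \ref{thmswap} in the relevant range of degrees (and, if one insists on full generality, treating the borderline case $g = 2$, $L = K$ separately), and observing that the conclusion does not depend on which lift of each swap is used.
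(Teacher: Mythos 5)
Your main line is exactly the paper's: read generation off the exact sequence \eqref{equcstarexact} of the $\mathbb{C}^*$-bundle, identify $\pi_1(\mathbb{P}(\mathcal{A}^0_{\rm reg}),b_o)$ with $\ker(\tilde\alpha_*)$ via Proposition \ref{propexact1} (the paper routes the lifting bookkeeping through Proposition \ref{propexact2}, but the content is the same), and invoke Theorem \ref{thmswap}; the observation that any choice of lift of each swap suffices is correct. However, there is a genuine gap in the case $L=K$, $g=2$, which lies squarely within the scope of the proposition (the standing hypotheses are $g>1$ and $L=K$ or $\deg L>2g-2$; $L=K$, $g=2$ is the classical untwisted setting in genus $2$). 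For $L=K$ one has $n=2l=4g-4$, and Theorem \ref{thmswap} with $n=4g-4$ requires $g>2$; when $g=2$ neither alternative hypothesis holds, since $n=4<6=4g-2$. So your appeal to Theorem \ref{thmswap} fails precisely there. You flag this as a borderline case to be ``treated separately if one insists on full generality,'' but it is not optional, and filling it is not a routine numerical check.

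The paper's treatment of this case occupies half of its proof. In genus $2$ the curve is hyperelliptic and every quadratic differential is invariant under the hyperelliptic involution, so the zero divisor of $a\in H^0(\Sigma,K^2)$ is the preimage under $f:\Sigma\to\mathbb{P}^1$ of two points of $\mathbb{C}\setminus\{x_1,\dots,x_5\}$, the plane minus the five finite branch points. This identifies $\mathbb{P}(\mathcal{A}^0_{\rm reg})$ with $\widetilde{S}^2(\mathbb{C}\setminus\{x_1,\dots,x_5\})$, whose fundamental group is the braid group $Br_2$ of the five-punctured plane; one then checks that its standard generators (the elementary interchange of the two points, and the loops carrying one point around each puncture $x_i$) all correspond upstairs on $\Sigma$ to swaps or products of swaps, using that each $x_i$ is a branch point of $f$. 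Your proposal needs this argument, or some substitute for it, to be complete.
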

\begin{proof}
By Proposition \ref{propexact2} and the exact sequence (\ref{equcstarexact}), it is enough to show that $\pi_1( \mathbb{P}(\mathcal{A}_{\rm reg}^0 ) , b_o )$ is generated by swaps. Suppose that $deg(L) > deg(K)$ or that $L = K$ and $g > 3$. Then we have $2l \ge 4g-2$ or $2l = 4g-4$ and $g > 3$ and the result follows by Theorem \ref{thmswap}.\\

It remains only to show that $\pi_1( \mathbb{P}(\mathcal{A}_{\rm reg}^0 ) , b_o )$ is generated by swaps when $L = K$ and $g = 2$. In this case, $\Sigma$ is a hyperelliptic curve, so there is a map $f : \Sigma \to \mathbb{P}^1$ such that $f$ is a branched double cover with $6$ branch points. We may identify $\mathbb{P}^1$ with $\mathbb{C} \cup \{ \infty \}$ and take $\infty$ to be one of the branch points, so there are $5$ other branch points $x_1, \dots ,x_5 \in \mathbb{C}$. Let $\iota : \Sigma \to \Sigma$ be the hyperelliptic involution. Then as $g=2$, all elements of $H^0( \Sigma , K^2 )$ are fixed by $\iota$. Thus any $a \in H^0(\Sigma , K^2)$ has zero set given as the pre-image under $f$ of two distinct points $u,v \in \mathbb{C} \setminus \{ x_1 , \dots , x_5 \}$. Let $\mathbb{C}_5 = \mathbb{C} \setminus \{ x_1 , \dots , x_5 \}$ denote the plane with the $5$ points $x_1,\dots , x_5$ removed. Then $\mathbb{P}(\mathcal{A}_{\rm reg}^0)$ is naturally identified with $\widetilde{S}^2 \mathbb{C}_5$. Thus $\pi_1( \mathbb{P}(\mathcal{A}_{\rm reg}^0 ) , b_o ) \simeq Br_2( \mathbb{C}_5 )$ is the $2$nd braid group of the plane with $5$ points removed (see also \cite[Theorem 5.1]{cop}). It remains to show that $Br_2( \mathbb{C}_5 )$ may be generated by elements corresponding to swaps.\\

Let $u,v \in \mathbb{C}_5$ be the two points in $\mathbb{C}$ corresponding to the zeros of $a_0$. We have that $Br_2( \mathbb{C}_5)$ is generated by $\sigma_1 , l_1 , \dots , l_5$ where $\sigma_1$ is the braid given by a swap of $u,v$ within an embedded disc containing $u,v$ but not the points $x_1, \dots , x_5$ and $l_i$ is the braid in which $u$ moves around a loop encircling $x_i$ while $v$ is held fixed. Clearly $\sigma_1$ corresponds to a product of two swaps in $\pi_1( \mathbb{P}(\mathcal{A}_{\rm reg}^0 ) , b_o )$ (the swaps of the pre-images of $u$ and $v$). Consider the braid $l_i$. Let $\mu_i$ be an embedded loop based at $u$ going around $x_i$ but not around $x_j$ for $j \neq i$. Then $l_i$ is the braid which moves $u$ along $\mu_i$ while $v$ is fixed. Now observe that since $x_i$ is a branch point of $\Sigma \to \mathbb{P}^1$, we have that the pre-image $f^{-1}(\mu_i)$ is an embedded path in $\Sigma$ joining the two points in $f^{-1}(u)$ and one easily finds that $l_i$ corresponds to a swap of these points along $f^{-1}(\mu_i)$.
\end{proof}

\subsection{The monodromy representation}\label{sectmr}

\begin{definition}\label{deftau}
We let $\tau : [0,1] \to \mathcal{A}_{\rm reg}^0$ be the loop in $\mathcal{A}_{\rm reg}^0$ generated by the $\mathbb{C}^*$-action, namely $\tau(t) = e^{2\pi i t} a_0$.
\end{definition}

\begin{proposition}\label{proploopmono}
The monodromy action of $\rho( \tau ) \in Aut( Jac(S) )$ is given by the pullback $\sigma^* : Jac(S) \to Jac(S)$, where $\sigma$ is the sheet swapping involution of the double cover $\pi : S \to \Sigma$.
\end{proposition}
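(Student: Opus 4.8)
\emph{Strategy.} The plan is to compute $\rho(\tau)$ by explicitly trivialising the Hitchin fibration over the loop $\tau$ and reading off the resulting clutching map. The geometric input is the behaviour of the spectral curve along $\tau$: for $t\in[0,1]$ the curve $S_t$ attached to $\tau(t)=e^{2\pi i t}a_0$ sits in the total space of $L$ as the zero locus of $\lambda^2+e^{2\pi i t}\pi^*(a_0)$, and the map $\psi_t\colon S_0\to S_t$, $\psi_t(x,\lambda)=(x,e^{\pi i t}\lambda)$, is a biholomorphism over $\Sigma$ depending smoothly on $t$, with $\psi_0=\mathrm{id}_S$ and $\psi_1=\sigma$. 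Thus, although $S_1=S_0=S$, the identification of $S$ with the final curve of the family produced by the $\mathbb{C}^*$-action is not the identity but the sheet-swapping involution $\sigma$ — this half-turn/square-root phenomenon is what will produce the answer. Equivalently, one can note that the $\mathbb{C}^*$-action $s\cdot a_2=s^2a_2$ on $\mathcal{A}^0$ lifts to $s\cdot(E,\Phi)=(E,s\Phi)$ on $\mathcal{M}^0(d,L)$, so $\tau$ lifts along this action to the \emph{path} $t\mapsto(E,e^{\pi i t}\Phi)$ from $(E,\Phi)$ to $(E,-\Phi)$.

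\emph{Turning this into the monodromy.} By the spectral correspondence of \textsection\ref{secschf}, the fibre of $\mathcal{M}^0_{\rm reg}(d,L)$ over $\tau(t)$ is $Jac_{\tilde d}(S_t)$, and the isomorphisms $\psi_t$ induce pushforward isomorphisms $(\psi_t^{-1})^*\colon Jac_{\tilde d}(S_0)\to Jac_{\tilde d}(S_t)$, which I would assemble into a trivialisation of the restriction of the Hitchin fibration to $\tau([0,1])$. Comparing this trivialisation at the two ends over the single fibre $\mathcal{M}^0_{a_0}$ yields the clutching map $(\psi_1^{-1})^*=\sigma^*$ (using $\sigma^2=\mathrm{id}$), which is a group automorphism of $Jac(S)$; since $\sigma^*$ is an involution the direction in which $\tau$ is traversed is immaterial. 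As $\rho(\tau)\in Aut(Jac(S))$ is precisely the class of this clutching map, this gives $\rho(\tau)=\sigma^*$. The same conclusion also follows from the lifted path above: if $(E,\Phi)$ corresponds to $M\in Jac_{\tilde d}(S)$, then $(E,-\Phi)$ corresponds to $\sigma^* M$, because negating $\Phi$ negates the tautological section $\lambda$, which is exactly the effect of pulling a line bundle on $S$ back by $\sigma$ (recall $\pi\circ\sigma=\pi$, so $\pi_*(\sigma^* M)=\pi_*(M)=E$), so the return map on the fibre is $M\mapsto\sigma^* M$.

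\emph{Main obstacle.} The only step requiring genuine care is the claim that the maps $\psi_t$ \emph{trivialise} the Hitchin fibration over $\tau$, i.e. that $t\mapsto(\psi_t^{-1})^*$ is the monodromy of an honest continuously varying family and not merely a pointwise collection of isomorphisms of Jacobians. The cleanest way to secure this is to form the family of curves over $\mathbb{C}^*$ whose fibre over $s$ is the zero locus of $\lambda^2+s^2\pi^*(a_0)$, trivialised over $\mathbb{C}^*$ by $(x,\lambda)\mapsto(x,s\lambda)$, run the relative spectral construction over this base, and observe that the whole package is holomorphic in $s$; restricting to the circle $|s|=1$ and parametrising $s=e^{\pi i t}$ recovers $\tau$ as $t$ runs from $0$ to $1$, and the return map is $\psi_1=\sigma$. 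Once this is in place the identification $\rho(\tau)=\sigma^*$ is immediate.
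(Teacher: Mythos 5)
Your proposal is correct and is essentially the paper's own argument: the authors likewise follow $\lambda_t = e^{\pi i t}\lambda_0$ along the family $S_t = \{\lambda^2 + e^{2\pi i t}a_0 = 0\}$ and observe that at $t=1$ the return map is $\lambda \mapsto -\lambda$, i.e.\ the sheet-swapping involution $\sigma$, whence $\rho(\tau) = \sigma^*$. Your extra care about assembling the $\psi_t$ into a genuine trivialisation, and the alternative check via the lift $t \mapsto (E, e^{\pi i t}\Phi)$ with $(E,-\Phi)$ corresponding to $\sigma^*M$, are both sound but only make explicit what the paper leaves implicit.
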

\begin{proof}
Let $S_t$ be the spectral curve associated to $\tau(t) = e^{2\pi i t} a_0$, given by $S_t = \{ \lambda \in L \; | \; \lambda^2 + e^{2 \pi i t}a_0 = 0 \}$. Now if $\lambda_0 \in L$ is such that $\lambda_0^2 + a_0 = 0$, then setting $\lambda_t = e^{\pi i t} \lambda_0$, we have $\lambda_t^2 + e^{2\pi i t}a_0 = 0$. When $t = 1$, we get $\lambda_1 = -\lambda_0$ and so the monodromy around $\tau$ acts on $S = S_0$ by $\lambda \mapsto -\lambda$. This is exactly the sheet swapping involution $\sigma$.
\end{proof}


It remains to determine the monodromy for lifts of swaps. For this it is convenient to map $\mathcal{A}_{\rm reg}^0$ into a larger family of branched double covers of $\Sigma$.\\

Let $sq : Jac_l(\Sigma) \to Jac_{2l}(\Sigma)$ be the squaring map $sq(L) = L^2$. We define spaces $Y_l,Z_l$ by the following pullback diagrams:
\begin{equation*}\xymatrix{
Y_l \ar[r]^-{\alpha'} \ar[d]^-{p} & Jac_l(\Sigma) \ar[d]^-{sq}  & & Z_l \ar[r]^-{q'} \ar[d]^-{p'} & Y_l \ar[d]^-{p} \\
S^{2l}\Sigma \ar[r]^-{\alpha} & Jac_{2l}(\Sigma) & & V'_{2l} \ar[r]^-{q} & S^{2l}\Sigma
}
\end{equation*}
where $V'_{2l}$ is $V_{2l}$ with the zero section removed. Let $\widetilde{Z}_{l} = (p')^{-1}( \widetilde{V}_{2l} )$ and $\widetilde{Y}_l = p^{-1}( \widetilde{S}^{2l} \Sigma )$, giving a similar pair of commutative squares:
\begin{equation*}\xymatrix{
\widetilde{Z}_{l} \ar[r]^-{\tilde{q}'} \ar[d]^-{\tilde{p}'} & \widetilde{Y}_{l} \ar[r]^-{\tilde{\alpha}'} \ar[d]^-{\tilde{p}} & Jac_l(\Sigma) \ar[d]^-{sq} \\
\widetilde{V}_{2l} \ar[r]^-{\tilde{q}} & \widetilde{S}^{2l} \Sigma \ar[r]^-{\tilde{\alpha}} & Jac_{2l}(\Sigma)
}
\end{equation*}   
A point $z \in \widetilde{Z}_l$ is given by a degree $l$ line bundle $M$ and an element $s \in H^0(\Sigma , M^2 )^{\rm simp}$. We therefore have a natural inclusion $\iota : \mathcal{A}_{\rm reg}^0 \hookrightarrow \widetilde{Z}_l$. To any $z \in \widetilde{Z}_l$ we associate a branched double cover $S_z := \{ y \in M \; | \; y^2 + s = 0 \}$. Letting $z$ vary we obtain a family $\widetilde{\mathbb{S}}_l$ of branched double covers with a commutative diagram
\begin{equation*}\xymatrix{
& \widetilde{\mathbb{S}}_l \ar[dl]_-{v} \ar[d]^-{w} \\
\Sigma \times \widetilde{Z}_l \ar[r]^-{p_2} & \widetilde{Z}_l
}
\end{equation*}
Such that for each $z \in \widetilde{Z}_l$, the fibre of $w$ over $z$ is the branched double cover $S_z$ and $v|_{S_z}$ is the covering map $S_z \to \Sigma$. Using the natural identification $Aut( H^1( S , \mathbb{Z} ) ) = Aut( Jac(S) )$, we obtain a representation $\rho_{\widetilde{Z}_l} : \pi_1( \widetilde{Z}_l , z_0) \to Aut( Jac(S) )$. Noting that the family of spectral curves over $\mathcal{A}_{\rm reg}^0$ is the pullback of $w : \widetilde{\mathbb{S}}_l \to \widetilde{Z}_l$ under $\iota$, we obtain:
\begin{proposition}\label{propmonofact}
We have an equality $\rho = \rho_{\widetilde{Z}_l} \circ \iota_*$.
\end{proposition}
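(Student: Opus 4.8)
The statement $\rho = \rho_{\widetilde{Z}_l} \circ \iota_*$ is essentially a functoriality assertion: monodromy representations arising from a family pull back along maps of the base. The plan is to spell out exactly what the two sides mean and then invoke naturality of the Gauss--Manin (or, equivalently here, the first cohomology) local system under pullback of families. First I would recall that $\rho : \pi_1(\mathcal{A}^0_{\rm reg}, a_0) \to Aut(Jac(S))$ is defined, via the identification $Aut(H^1(S,\mathbb{Z})) = Aut(Jac(S))$, as the monodromy of the local system $R^1 h_* \mathbb{Z}$ on $\mathcal{A}^0_{\rm reg}$; concretely, $h : \mathcal{M}^0_{\rm reg}(d,L) \to \mathcal{A}^0_{\rm reg}$ has fibres $Jac_{\tilde d}(S_a)$, which are torsors over $Jac(S_a) = H^1(S_a,\mathcal{O})/H^1(S_a,\mathbb{Z})$, and the monodromy of $R^1 h_* \mathbb{Z}$ is built from the family of spectral curves $a \mapsto S_a$ alone. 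Similarly $\rho_{\widetilde{Z}_l}$ is the monodromy of the family $w : \widetilde{\mathbb{S}}_l \to \widetilde{Z}_l$ of branched double covers, that is, of the local system $R^1 w_* \mathbb{Z}$ over $\widetilde{Z}_l$.

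The key geometric input, already recorded in the excerpt just before the statement, is that the family of spectral curves over $\mathcal{A}^0_{\rm reg}$ is the pullback of $w : \widetilde{\mathbb{S}}_l \to \widetilde{Z}_l$ along $\iota : \mathcal{A}^0_{\rm reg} \hookrightarrow \widetilde{Z}_l$. I would verify this identification explicitly: a point of $\widetilde{Z}_l$ is a pair $(M, s)$ with $M \in Jac_l(\Sigma)$ and $s \in H^0(\Sigma, M^2)^{\rm simp}$, the associated cover is $S_{(M,s)} = \{ y \in M : y^2 + s = 0 \}$, and $\iota$ sends $a_2 \in H^0(\Sigma, L^2)^{\rm simp}$ to the pair $(L, a_2)$; under this the cover $S_{(L,a_2)}$ is precisely the spectral curve $S_{a_2} \subset L$ from Section \ref{secschf}. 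Hence $\iota^*(\widetilde{\mathbb{S}}_l) $ is the family of spectral curves, fibrewise isomorphically and compatibly with the maps to $\Sigma$.

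Given this, the proof is a one-line naturality argument: for a fibre bundle $w : E \to B$ and a continuous map $f : B' \to B$, the local system $R^1(f^*w)_* \mathbb{Z}$ on $B'$ is canonically isomorphic to $f^* (R^1 w_* \mathbb{Z})$, and therefore its monodromy representation $\pi_1(B', b'_0) \to Aut(H^1(\mathrm{fibre}))$ equals the composition of $f_* : \pi_1(B',b'_0) \to \pi_1(B, f(b'_0))$ with the monodromy of $R^1 w_* \mathbb{Z}$. Applying this with $f = \iota$, $w : \widetilde{\mathbb{S}}_l \to \widetilde{Z}_l$, and noting that $h : \mathcal{M}^0_{\rm reg}(d,L) \to \mathcal{A}^0_{\rm reg}$ has monodromy $\rho$ depending only on the family of Jacobians of the spectral curves --- equivalently only on $R^1$ of the family of spectral curves, since $Aut(Jac(S)) = Aut(H^1(S,\mathbb{Z}))$ and the Hodge/complex structure is transported along with the topology --- we conclude $\rho = \rho_{\widetilde{Z}_l} \circ \iota_*$.

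I do not expect any serious obstacle here; the only point requiring a little care is the bookkeeping that the monodromy on $Jac(S)$ induced from the torus-bundle $h$ genuinely coincides with the monodromy on $H^1(S,\mathbb{Z})$ of the curve family, i.e. that one may legitimately replace the family $\mathcal{M}^0_{\rm reg}(d,L) \to \mathcal{A}^0_{\rm reg}$ by the underlying family of spectral curves when computing $\rho$. This is exactly the content of identifying $R^1 h_* \mathbb{Z}$ (the dual of the lattice local system of Theorem \ref{thmaffine1}) with $R^1$ of the spectral curve family, which is immediate from the fibrewise description $h^{-1}(a) \cong Jac_{\tilde d}(S_a)$ together with $H_1(Jac(S_a),\mathbb{Z}) = H_1(S_a,\mathbb{Z})$. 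Once that identification is in hand, both $\rho$ and $\rho_{\widetilde{Z}_l}$ are manifestly the monodromy of $R^1$ of (respectively) the pulled-back and the ambient spectral-curve family, and the equality follows from functoriality of $R^1$ under pullback.
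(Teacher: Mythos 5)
Your argument is correct and is exactly the one the paper intends: the proposition is stated as an immediate consequence of the observation that the spectral-curve family over $\mathcal{A}^0_{\rm reg}$ is the pullback of $w : \widetilde{\mathbb{S}}_l \to \widetilde{Z}_l$ along $\iota$, together with functoriality of the Gauss--Manin local system $R^1$ under pullback. Your additional bookkeeping remark identifying the monodromy of the torus bundle $h$ with that of the underlying curve family via $H_1(Jac(S_a),\mathbb{Z}) = H_1(S_a,\mathbb{Z})$ is a correct and welcome elaboration of what the paper leaves implicit.
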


Since $\tilde{p}' : \widetilde{Z}_{l} \to \widetilde{V}_{2l}$ is a covering space, we get an injection $\tilde{p}'_* : \pi_1( \widetilde{Z}_{l} , z_0 ) \to \pi_1(\widetilde{V}_{2l} , a_0)$. Combined with Proposition \ref{propmonofact}, we have a commutative diagram:
\begin{equation*}\xymatrix{
& Aut( Jac(S) ) \\
\pi_1( \mathcal{A}_{\rm reg}^0 , a_0) \ar[ur]^-{\rho} \ar[r]^-{\iota_*} \ar[dr]^-{i_*} & \pi_1( \widetilde{Z}_{l} , z_0 ) \ar[u]^-{\rho_{\widetilde{Z}_l} }
\ar[d]^-{\tilde{p}'_*} \\
& \pi_1( \widetilde{V}_{2l} , a_0 )
}
\end{equation*}

Recall from Section \ref{secfgc} that to a path $\gamma$ joining $b_i$ to $b_j$ we obtain a swap $s_\gamma \in Br_{2l}(\Sigma , b_o)$ and that we have a canonical lift $\tilde{s}_\gamma \in \pi_1( \widetilde{V}_{2l} , a_0)$ lying in the image of $\tilde{p}'_*$. Injectivity of $\tilde{p}'_* : \pi_1( \widetilde{Z}_{l} , z_0 ) \to \pi_1(\widetilde{V}_{2l} , a_0)$, implies that there is a well-defined monodromy action $\rho_{\widetilde{Z}_l}( \tilde{s}_\gamma ) \in Aut( Jac(S) )$. Moreover, if $s'_\gamma \in \pi_1( \mathcal{A}_{\rm reg}^0 , a_0)$ is any lift of $\tilde{s}_\gamma$ to a class in $\pi_1( \mathcal{A}_{\rm reg}^0 , a_0)$, then $\rho( s'_\gamma ) = \rho_{\widetilde{Z}_l}( \tilde{s}_\gamma )$. Therefore it remains only determine the element $\rho_{\widetilde{Z}_l}( \tilde{s}_\gamma ) \in Aut( Jac(S) )$ associated to $\gamma$.

\begin{theorem}\label{thmswapmono}
Let $l_\gamma$ be the embedded loop in $S$ given by the preimage $\pi^{-1}(\gamma)$ of $\gamma$. The monodromy action $\rho_{\widetilde{Z}_l}( \tilde{s}_\gamma ) \in Aut( H^1(S,\mathbb{Z}) )$, is the automorphism of $H^1(S,\mathbb{Z})$ induced by a Dehn twist of $S$ around $l_\gamma$.
\end{theorem}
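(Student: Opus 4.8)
The plan is to analyze the family of spectral curves in a neighbourhood of the loop $\tilde{s}_\gamma$ and identify it with a standard local model for a Lefschetz-type degeneration. First I would use the lifting construction from Section \ref{secfgc}: the swap $s_\gamma$ is supported in an embedded disc $e(D^2) \subset \Sigma$ containing the two branch points $b_i,b_j$ and no others, and the lift $\tilde{p}_\gamma(t)$ is obtained from a section $s : S^2(D^2) \to V'_{2l}$ of the Poincar\'e $\mathbb{C}^*$-bundle. Over this disc the family $\widetilde{\mathbb{S}}_l$ restricts to a family of double covers of $D^2$ branched over two moving points $e(u(t)), e(v(t))$ (all the other $2l-2$ branch points, and hence the topology of $S$ away from a small annulus, staying fixed). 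The double cover of a disc branched over two points is itself an annulus; letting the two branch points collide and separate is exactly the standard picture of a Dehn twist. So the local computation reduces to: the monodromy of the family of annuli double-covering $D^2$ branched over $\{u,v\}$, as $u$ and $v$ swap positions, is a Dehn twist around the core circle of the annulus, and that core circle is precisely $\pi^{-1}(\gamma) = l_\gamma$.

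The key steps, in order, would be: (1) restrict the universal family $w:\widetilde{\mathbb{S}}_l \to \widetilde{Z}_l$ (equivalently its pullback along $\iota$) to the image of the loop $\tilde{s}_\gamma$, and use the fact that away from $e(D^2)$ the double cover is canonically trivialized along this loop, so the monodromy is supported in $\pi^{-1}(e(D^2))$; (2) set up the local model: identify $e(D^2)$ with the unit disc, parametrize the two branch points by a path that exchanges them through a half-rotation (matching $\gamma^+,\gamma^-$), and present the double cover $y^2 = (x - u(t))(x - v(t))$ (up to the nowhere-vanishing factor coming from the rest of the divisor and the choice of section $s$, which does not affect the topology) as a family of annuli; (3) compute directly that as $t$ runs from $0$ to $1$ the identification of the fibre annulus with itself is an honest Dehn twist about the core — this is a classical fact (the braid generator $\sigma_1$ acts on the double cover as a Dehn twist; see e.g. the theory of the hyperelliptic mapping class group), and one can check it by tracking a properly embedded arc in the annulus; (4) globalize: the Dehn twist about $l_\gamma$ is a mapping class of $S$ supported near $l_\gamma$, and the above shows $\rho_{\widetilde{Z}_l}(\tilde{s}_\gamma)$ is induced by exactly this mapping class, hence acts on $H^1(S,\mathbb{Z})$ as claimed.

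Two points require care. One is orientation/sign bookkeeping: whether the swap induces a positive or negative Dehn twist, and correspondingly the sign in the Picard--Lefschetz formula; this should be pinned down by the chosen orientation of the embedding $e$ and the convention that $\gamma$ and $\gamma^{-1}$ give the same swap (which is consistent since a Dehn twist and its inverse about the same curve both appear, and in fact $\langle c_\gamma, x\rangle c_\gamma$ is insensitive to the sign of $c_\gamma$). The other is checking that the contribution from the section $s$ and from the other branch points is genuinely negligible: because $s$ is a section of a $\mathbb{C}^*$-bundle over the \emph{contractible} space $S^2(D^2)$ and the remaining branch points are constant along the loop, the restricted family over the loop is isomorphic, as a family of topological double covers, to the standard local model glued trivially to a fixed piece — so no extra monodromy is introduced.

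The main obstacle I expect is step (3) combined with the globalization in step (4): making rigorous the passage from "the braid $\sigma_1$ acting on the local branched double cover of a disc" to "a Dehn twist of the closed surface $S$ along the specified embedded loop $l_\gamma$", with the correct identification of the vanishing cycle. Concretely, one must verify that the embedded loop $l_\gamma = \pi^{-1}(\gamma)$ really is the core of the annular piece $\pi^{-1}(e(D^2))$ and that the mapping class obtained is supported in an annular neighbourhood of it — i.e. that the monodromy is trivial outside, not merely that it is a Dehn twist times something supported outside. This is where one genuinely uses that the complement $\Sigma \setminus e(D^2)$ carries a constant configuration of branch points along the loop, so that the double cover there, and any chosen trivialization, is honestly constant; everything else is the classical local computation.
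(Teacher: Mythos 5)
Your proposal is correct and follows essentially the same route as the paper: localize the family over the loop to the preimage of a disc where the branch points move (the paper takes a disc containing all $2l$ branch points, you take one containing only $b_i,b_j$, but the reduction is identical), invoke the classical Picard--Lefschetz/braid-group fact that a half-twist of two branch points of a double cover induces a Dehn twist about the core $\pi^{-1}(\gamma)=l_\gamma$ of the covering annulus, and extend by the identity since the monodromy fixes the boundary and the cover is constant outside. Your extra care about the $\mathbb{C}^*$-section over the contractible $S^2(D^2)$ and about the sign of the twist being immaterial for the transvection $x\mapsto x+\langle c_\gamma,x\rangle c_\gamma$ fills in details the paper leaves to the citations.
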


\begin{notation}\label{not1}
We use $l_\gamma$ to denote the loop in $S$ associated to $\gamma$. Note that the homology class $[l_\gamma] \in H_1( S ,\mathbb{Z})$ satisfies $\pi_* [l_\gamma] = 0$. Let $c_\gamma \in H^1( S , \mathbb{Z} ) = \Lambda_S$ denote the Poincar\'e dual class. Then $c_\gamma \in \Lambda_P$. A Dehn twist of $S$ around $l_\gamma$ acts on $H^1(S , \mathbb{Z})$ as a Picard-Lefschetz transformation. Thus the monodromy action of the loop associated to $\gamma$ is:
\begin{equation}\label{equpl1}
\gamma \cdot x := \rho_{\widetilde{Z}_l}( \tilde{s}_\gamma )x = x + \langle c_\gamma , x \rangle c_\gamma.
\end{equation}
Such a transformation is also referred to as a {\em symplectic transvection}. Note that the isotopy class of a Dehn twist around $\gamma$ depends only on the isotopy class of the embedded loop $l_\gamma$, and does not depend on a choice of orientation of $l_\gamma$. Recall from Definition \ref{deftau}, that $\tau$ is the loop in $\mathcal{A}_{\rm reg}^0$ generated by the $\mathbb{C}^*$-action. We will write $\tau \cdot x$ for the monodromy action of $\rho(\tau)$ on $x$. Proposition \ref{proploopmono} and Equation (\ref{equnorm1}) give:
\begin{equation}\label{equloopmono}
\tau \cdot x = \sigma^*(x) = -x + \pi^*( \pi_*(x)).
\end{equation}
Note that since $\sigma( l_\gamma) = l_\gamma$, the action of $\tau$ commutes with the action of $\gamma$. This can be also checked directly from (\ref{equpl1})-(\ref{equloopmono}) using $\pi_* ( c_\gamma) = 0$.
\end{notation}
\begin{proof}[Proof of Theorem \ref{thmswapmono}:]
Let $\gamma$ be an embedded path in $\Sigma$ joining branch points $b_i,b_j$ and avoiding all other branch points. As in Figure \ref{figswap}, choose an embedding $e : D^2 \to \Sigma$ of the unit disc $D^2$ into $\Sigma$ containing all branch points $b_1, \dots , b_{2l}$ as well as the path $\gamma$. The swap associated to $\gamma$ defines a loop $b(t)$ based at $b_o$ in the space of degree $2l$ divisors with simple zeros contained in $e(D^2)$. Let $\pi_t : S_t \to \Sigma$ for $t \in [0,1]$ be the resulting family of branched double covers of $\Sigma$. Clearly no change is made to the double cover outside of the image $e(D^2)$, so the problem reduces to understanding the family $S_t |_{\pi^{-1}_t( e(D^2) )}$ of branched covers of the disc $D^2$. It is well-known from Picard-Lefschetz theory \cite{agzv} (see also \cite{chm}) that the monodromy is described by a Dehn twist of $S|_{\pi^{-1}(e(D^2))}$ around the cycle $l_\gamma$. This acts trivially on the boundary of $S|_{\pi^{-1}(e(D^2))}$ and so extends to give a Dehn twist of $S$ around $l_\gamma$.
\end{proof}

\subsection{Twisted Chern class}\label{sectcc}

Let $\Lambda_S[2] = \Lambda_S \otimes_{\mathbb{Z}} \mathbb{Z}_2$ and similarly define $\Lambda_\Sigma[2], \Lambda_P[2]$. The local systems $\Lambda_S[2] , \Lambda_P[2]$ can be thought of as bundles of groups over $\mathcal{A}_{\rm reg}^0$, with fibres the points of order $2$ in $Jac(S)$ and $Prym(S,\Sigma)$ respectively. More generally, for $k \in \mathbb{Z}$, let $A$ be a fixed degree $k$ line bundle on $\Sigma$ and define
\begin{equation*}
\begin{aligned}
\Lambda_S^k[2] &= \{ M \in Jac_k(S) \; | \; M^2 = \pi^*(A) \; \}, \\
\Lambda_P^k[2] &= \{ M \in Jac_k(S) \; | \; \sigma(M) = M, \; M^2 = \pi^*(A) \; \}.
\end{aligned}
\end{equation*}
Then $\Lambda_S^k[2]$ may be thought of as a bundle of $\Lambda_S[2]$-torsors over $\mathcal{A}_{\rm reg}^0$ and similarly $\Lambda_P^k[2]$ as a bundle of $\Lambda_P[2]$-torsors. Note also that $\Lambda_S^k[2],\Lambda_P^k[2]$ are up to isomorphism independent of the choice of degree $k$ line bundle $A$. The $\Lambda_P[2]$-torsor $\Lambda_P^1[2]$ is classified by a class $\check{\beta} \in H^1( \mathcal{A}_{\rm reg}^0 , \Lambda_P[2] )$. Similarly $\Lambda_S^1[2]$ is classified by a class $\beta \in H^1( \mathcal{A}_{\rm reg}^0 , \Lambda_S[2] )$. The inclusion $\Lambda_P^1[2] \to \Lambda_S^1[2]$ shows that $\check{\beta}$ maps to $\beta$ under the natural map $H^1( \mathcal{A}_{\rm reg}^0 , \Lambda_P[2] ) \to H^1( \mathcal{A}_{\rm reg}^0 , \Lambda_S[2] )$.

\begin{proposition}
Let $A$ be a degree $1$ line bundle and let $\check{c} \in H^2( \mathcal{A}^0_{\rm reg} , \Lambda_P )$ be the twisted Chern class of $\check{\mathcal{M}}_{\rm reg}(A L^*)$, as in Theorem \ref{thmaffine2}. Then $\check{c}$ is the image of $\check{\beta}$ under the coboundary map $\delta : H^1( \mathcal{A}^0_{\rm reg} , \Lambda_P[2] ) \to H^2( \mathcal{A}^0_{\rm reg} , \Lambda_P )$ associated to $\Lambda_P \buildrel 2 \over \longrightarrow \Lambda_P \longrightarrow \Lambda_P[2]$.
\end{proposition}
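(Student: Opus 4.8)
The plan is to realise $\check{c}$ as the image, under a connecting isomorphism, of the class of $\check{\mathcal{M}}_{\rm reg}(AL^*)$ viewed as a torsor over the bundle of Prym varieties, then to identify that torsor with the one obtained from $\Lambda_P^1[2]$ by extending the structure group along the inclusion of $2$-torsion points, and finally to invoke naturality of connecting homomorphisms. The geometric heart of the argument is the simple observation that $\Lambda_P^1[2]$ sits inside the fibres of $\check{\mathcal{M}}_{\rm reg}(AL^*)$.

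Write $X = \mathcal{A}^0_{\rm reg}$ and let $\mathcal{P} \to X$ be the bundle of Prym varieties, i.e. the Hitchin fibration $\check{\mathcal{M}}_{\rm reg}(L^*) \to X$ whose fibre over $a$ is $\{M \in Jac_0(S) \mid Nm(M) = \mathcal{O}\} = Prym(S,\Sigma)$, equipped with its natural group structure. Let $\underline{\mathcal{P}}$ be its sheaf of continuous sections; it sits in a short exact sequence of sheaves $0 \to \Lambda_P \to \mathcal{V}_P \to \underline{\mathcal{P}} \to 0$, where $\mathcal{V}_P$ is the sheaf of continuous sections of the flat real vector bundle $\Lambda_P \otimes_{\mathbb{Z}} \mathbb{R}$. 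Since $\mathcal{V}_P$ is soft, hence acyclic, the connecting map $\partial \colon H^1(X,\underline{\mathcal{P}}) \to H^2(X,\Lambda_P)$ is an isomorphism, and in the classification of affine torus bundles (cf. \cite{bar1t,bar2t}) it is precisely the map carrying the class of a $\mathcal{P}$-torsor bundle to its twisted Chern class. Now $\check{\mathcal{M}}_{\rm reg}(AL^*)$ has fibre $\{M \in Jac_1(S) \mid Nm(M) = A\}$, a torsor over $Prym(S,\Sigma)$, and its twisted Chern class is $\check{c}$ by Theorem \ref{thmaffine2}; hence $\check{c} = \partial([\check{\mathcal{M}}_{\rm reg}(AL^*)])$.

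The next step is to compute $[\check{\mathcal{M}}_{\rm reg}(AL^*)] \in H^1(X,\underline{\mathcal{P}})$. I claim $\Lambda_P^1[2]$ is a sub-bundle of torsors of $\check{\mathcal{M}}_{\rm reg}(AL^*)$: if $M \in Jac_1(S)$ satisfies $\sigma^*M = M$ and $M^2 = \pi^*A$, then by \eqref{equnorm1} we have $\pi^*(Nm(M)) = \sigma^*(M) \otimes M = M^2 = \pi^*A$, and since $\pi^* \colon Jac(\Sigma) \to Jac(S)$ is injective for a branched double cover this forces $Nm(M) = A$, so $\Lambda_P^1[2]$ lies fibrewise inside $\{M \mid Nm(M) = A\}$. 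Moreover $\Lambda_P^1[2]$ is a torsor under $Prym(S,\Sigma)[2] = \Lambda_P[2]$, viewed as the subgroup of $2$-torsion points of $\mathcal{P}$, and this action is the restriction of the $\mathcal{P}$-action on $\check{\mathcal{M}}_{\rm reg}(AL^*)$. Forming contracted products fibrewise therefore yields an isomorphism $\mathcal{P} \times^{\Lambda_P[2]} \Lambda_P^1[2] \cong \check{\mathcal{M}}_{\rm reg}(AL^*)$ of $\mathcal{P}$-torsor bundles, which in cohomology reads $[\check{\mathcal{M}}_{\rm reg}(AL^*)] = \iota_*(\check{\beta})$, where $\iota_* \colon H^1(X,\Lambda_P[2]) \to H^1(X,\underline{\mathcal{P}})$ is induced by the inclusion $\iota \colon \Lambda_P[2] \hookrightarrow \underline{\mathcal{P}}$ of $2$-torsion points and $\check{\beta}$ classifies $\Lambda_P^1[2]$.

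Finally, the inclusion $\iota$ extends to a morphism of short exact sequences of sheaves, from $0 \to \Lambda_P \buildrel 2 \over \longrightarrow \Lambda_P \longrightarrow \Lambda_P[2] \to 0$ to $0 \to \Lambda_P \longrightarrow \mathcal{V}_P \longrightarrow \underline{\mathcal{P}} \to 0$, with left vertical map the identity, middle vertical map $x \mapsto \tfrac12 x$, and right vertical map $\iota$; both squares commute since a $2$-torsion point of $\mathcal{P}$ is represented by $\tfrac12 x$ for $x \in \Lambda_P$. By naturality of connecting homomorphisms, $\partial \circ \iota_* = \delta$, where $\delta$ is exactly the coboundary associated to $\Lambda_P \buildrel 2 \over \longrightarrow \Lambda_P \longrightarrow \Lambda_P[2]$. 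Combining the three steps, $\check{c} = \partial([\check{\mathcal{M}}_{\rm reg}(AL^*)]) = \partial(\iota_* \check{\beta}) = \delta(\check{\beta})$. The main obstacle is the globalisation in the previous paragraph: verifying that the fibrewise inclusion and contracted-product identification hold as statements about bundles over $X$ and translate into the cocycle identity $[\check{\mathcal{M}}_{\rm reg}(AL^*)] = \iota_*\check{\beta}$; concretely this amounts to choosing a common trivialising cover, using the $\Lambda_P[2]$-valued transition cocycle $\{\epsilon_{ij}\}$ of $\Lambda_P^1[2]$ as a transition cocycle for $\check{\mathcal{M}}_{\rm reg}(AL^*)$, lifting each $\epsilon_{ij}$ to $\tfrac12 \ell_{ij} \in \mathcal{V}_P$ with $\ell_{ij} \in \Lambda_P$, and checking that $\tfrac12(\delta\ell)_{ijk}$ simultaneously represents $\partial([\check{\mathcal{M}}_{\rm reg}(AL^*)])$ and $\delta(\check{\beta})$. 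One should also record that $\Lambda_P^1[2]$ is non-empty: for instance, taking $A = \mathcal{O}(b_i)$ with $b_i$ a branch point, the line bundle $\mathcal{O}(q_i)$, where $\pi^{-1}(b_i) = \{q_i\}$, is a $\sigma$-invariant square root of $\pi^*A$.
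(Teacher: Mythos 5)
Your proof is correct and follows the same route as the paper: the paper's argument is precisely the observation that $\Lambda_P^1[2]$ includes naturally into the fibres of $\check{\mathcal{M}}_{\rm reg}(AL^*)$ compatibly with the inclusion of $\Lambda_P[2]$ into the Prym bundle, from which the coboundary identity follows. You have simply made explicit the homological bookkeeping (the exponential-type sequence for the Prym bundle, the contracted product, and naturality of connecting maps) that the paper leaves implicit.
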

\begin{proof}
This follows by simply observing that there is a natural inclusion $\Lambda_P^1[S] \subset \check{\mathcal{M}}_{\rm reg}(AL^*)$ compatible with the inclusion $\Lambda_P[2] \subset \check{\mathcal{M}}_{\rm reg}(L^*)$.
\end{proof}

Next, we proceed to give a description of the class $\check{\beta}$. Let $a_0 \in \mathcal{A}_{\rm reg}^0$ be the basepoint with spectral curve $\pi : S \to \Sigma$, and $b_o = b_1 + b_2 + \dots + b_{2l}$ the divisor of $a_0$. Let $u_k \in S$ be the ramification point lying over $b_k \in \Sigma$. As shown in Theorem \ref{thmaffine2}, the twisted Chern class $\check{c}$ of $\check{\mathcal{M}}(AL^*)$ is independent of the choice of $A \in Jac_1(\Sigma)$. A convenient choice will be to take $A = \mathcal{O}(p)$, where $p$ is a branch point. Without loss of generality, we may take $p = b_1$. Then $\Theta := \mathcal{O}(u_1) \in Jac_1(S)$ satisfies $Nm( \Theta ) = \mathcal{O}(b_1) = A$ and $\Theta^2 = \pi^*( A )$, hence $\Theta \in \Lambda_P^1[2]$.\\

A representative for $\check{\beta}$ is a map $\check{\beta} : \pi_1(\mathcal{A}_{\rm reg}^0 , a_0 ) \to \Lambda_P[2]$ satisfying the cocycle condition $\check{\beta}( gh ) = \check{\beta}(g) + g \cdot \check{\beta}(h)$. Our choice of origin $\Theta$ gives us a particular representative by setting $\check{\beta}(g) = g \cdot \Theta - \Theta$. Clearly $\check{\beta}(g)$ satisfies the cocycle condition and is valued in $\Lambda_P[2]$ because the monodromy action preserves $\pi^*$ and $Nm$. Next we determine the value of $\check{\beta}$ on the generators of $\pi_1(\mathcal{A}_{\rm reg}^0 , a_0)$ given in Proposition \ref{propgenerate}:
\begin{theorem}\label{thmbeta}
Let $\tau$ be the loop in $\mathcal{A}_{\rm reg}^0$ given as in Definition \ref{deftau}, then $\check{\beta}(\tau) = 0$. Let $\tilde{s}_\gamma \in \pi_1( \mathcal{A}_{\rm reg}^0 , a_0)$ be a lift of a swap of $b_i,b_j$ along the path $\gamma$. Then:
\begin{equation*}
\check{\beta}( \tilde{s}_\gamma ) = \begin{cases} 0 & \text{if } 1 \notin \{i,j\}, \\  
c_\gamma & \text{if } 1 \in \{i,j\}, \end{cases}
\end{equation*}
where $c_\gamma$ is defined as in Notation \ref{not1}.
\end{theorem}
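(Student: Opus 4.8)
The plan is to compute the cocycle $\check{\beta}(g) = g\cdot\Theta - \Theta$, with $\Theta = \mathcal{O}(u_1)$, on the two types of generators of $\pi_1(\mathcal{A}^0_{\rm reg}, a_0)$ supplied by Proposition \ref{propgenerate}: the loop $\tau$ and the lifts $\tilde{s}_\gamma$ of swaps. The value on $\tau$ is immediate: by Proposition \ref{proploopmono} the monodromy $\rho(\tau)$ is the sheet-swap $\sigma^*$, and since $u_1$ is a ramification point of $\pi$ it is fixed by $\sigma$, so $\sigma^*\Theta = \mathcal{O}(\sigma(u_1)) = \Theta$ and $\check{\beta}(\tau) = 0$. (Equivalently, the loop $\tau$ merely rescales $a_0$ and so does not move the branch divisor.)

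For a lift $\tilde{s}_\gamma$ of a swap of $b_i,b_j$ along $\gamma$ I would compute the parallel transport of $\Theta$ through the loop explicitly. Since $\Lambda_P[2] = Prym(S,\Sigma)[2]$ is finite, $\Lambda_P^1[2]$ is a finite covering of $\mathcal{A}^0_{\rm reg}$ and parallel transport is the unique continuous lift of the loop starting at $\Theta$. Let $b_1(t)$ be the branch point issuing from $b_1$ as we traverse $\tilde{s}_\gamma$, let $u_1(t)\in S_{a(t)}$ be the ramification point over $b_1(t)$, and let $N_t\in Jac(\Sigma)$ be the continuous family of square roots of $\mathcal{O}_\Sigma(b_1 - b_1(t))$ with $N_0 = \mathcal{O}$ (well defined because the squaring map on $Jac(\Sigma)$ is a covering). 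Then $t\mapsto \mathcal{O}(u_1(t))\otimes\pi^*N_t$ is a continuous lift of $\tilde{s}_\gamma$ starting at $\Theta$ and lying in $\Lambda_P^1[2]$, the factor $\pi^*N_t$ being exactly the correction needed to keep the square equal to the fixed reference bundle $A = \mathcal{O}(b_1)$. If $1\notin\{i,j\}$ then $b_1(t)\equiv b_1$, hence $u_1(t)\equiv u_1$ and $N_t\equiv\mathcal{O}$, so the lift is constant and $\check{\beta}(\tilde{s}_\gamma) = 0$. If $1\in\{i,j\}$, say $i=1$, then $b_1(t)$ travels along $\gamma$ and ends at $b_j$, so $u_1(1) = u_j$ and $N_1$ is a square root of $\mathcal{O}_\Sigma(b_1 - b_j)$, giving
\begin{equation*}
\check{\beta}(\tilde{s}_\gamma) = \mathcal{O}(u_j - u_1)\otimes\pi^*N_1.
\end{equation*}
Using $2u_k = \pi^*(b_k)$ for a branch point $b_k$, one checks this class is $\sigma$-invariant of order two, so it lies in $\Lambda_P[2]$.

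The remaining, and in my view hardest, step is to identify $\mathcal{O}(u_j - u_1)\otimes\pi^*N_1$ with the mod $2$ reduction of $c_\gamma = \mathrm{PD}[l_\gamma]$ inside $\Lambda_P[2]\subset Jac(S)[2] = H^1(S,\mathbb{Z}_2)$. I would argue by pairing: the Weil pairing on $Jac(S)[2]$ corresponds under Poincar\'e duality to the mod $2$ intersection form, so by non-degeneracy it suffices to show that $\mathcal{O}(u_j - u_1)\otimes\pi^*N_1$ pairs with an arbitrary $2$-torsion bundle $M' = \mathcal{O}(D')$ exactly as $[l_\gamma]$ intersects the $\mathbb{Z}_2$-homology class dual to $M'$. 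Writing $N_1 = \mathcal{O}_\Sigma(\delta)$ with $2\delta = (\phi) + b_1 - b_j$, one has $2(u_j - u_1 + \pi^*\delta) = (\phi\circ\pi)$, so Weil reciprocity turns the pairing into an evaluation concentrated near $\gamma$ and $l_\gamma$, which can be carried out in the local disc model used in the proof of Theorem \ref{thmswapmono}; alternatively one may argue purely topologically, showing that the unramified double cover of $S$ classified by $\mathcal{O}(u_j - u_1)\otimes\pi^*N_1$ agrees with the one classified by the homomorphism $\pi_1(S)\to\mathbb{Z}_2$ recording intersection number with $l_\gamma$. Independence of the answer from the auxiliary choices (the embedding $e$, the lift, the divisor $\delta$) follows as in Theorem \ref{thmswapmono}. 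Finally, the analogous formulas for $\beta$ and for the $PSL(2,\mathbb{C})$-class $\hat{\beta}$ are obtained by pushing $\check{\beta}$ forward along $\Lambda_P[2]\hookrightarrow\Lambda_S[2]$ and $\Lambda_S[2]\twoheadrightarrow\Lambda_S[2]/\Lambda_\Sigma[2]$.
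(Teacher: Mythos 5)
Your proposal is correct and follows essentially the same route as the paper: compute $\check{\beta}(\tau)$ from $\sigma(u_1)=u_1$, and compute $\check{\beta}(\tilde s_\gamma)$ as the endpoint of the unique lift of the loop to the finite cover $\Lambda_P^1[2]\to\mathcal{A}^0_{\rm reg}$, which is constant when $1\notin\{i,j\}$ and equals $\mathcal{O}(u_j-u_1)\otimes\pi^*N_1$ (your $N_1$ is the paper's $\Gamma(1)^*$) when $1\in\{i,j\}$. The only divergence is in the final identification with $c_\gamma$, which you flag as hardest and leave as a sketch via Weil reciprocity or a double-cover comparison: the paper instead closes with a short direct period computation, viewing $\check{\beta}(\gamma)$ in $\tfrac12 H_1(S,\mathbb{Z})/H_1(S,\mathbb{Z})$ and pairing with an integral closed $1$-form $\omega$ to get
\begin{equation*}
\int_{\gamma_1}\omega-\tfrac12\int_{\gamma}\pi_*\omega=\tfrac12\Bigl(\int_{\gamma_1}\omega-\int_{\gamma_2}\omega\Bigr)=\tfrac12\int_{l_\gamma}\omega \; ({\rm mod}\;\mathbb{Z}),
\end{equation*}
where $\gamma_1,\gamma_2$ are the two lifts of $\gamma$ to $S$; this is more elementary than either of your proposed finishes and you may want to adopt it.
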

\begin{proof}
Consider first the loop $\tau \in \pi_1(\mathcal{A}_{\rm reg}^0 , a_0)$. By Proposition $\ref{proploopmono}$ the action of $\tau$ on $Jac(S)$ was the map induced by the involution $\sigma : S \to S$. More generally, this applies with $Jac_d(S)$ in place of $Jac(S)$ and so we have:
\begin{equation*}
\check{\beta}(\tau) = \sigma^*(\Theta) - \Theta = \sigma^*( \mathcal{O}(u_1) ) - \mathcal{O}(u_1) = \mathcal{O}( \sigma(u_1) - u_1 ) = 0,
\end{equation*}
since $u_1$, being a ramification point, satisfies $\sigma(u_1) = u_1$.\\

Now consider the lift $\tilde{s}_\gamma \in \pi_1( \mathcal{A}_{\rm reg}^0 , a_0)$ of a swap along the path $\gamma$. We will denote $\check{\beta}( \tilde{s}_\gamma )$ more simply as $\check{\beta}(\gamma)$. We will approach the computation of $\check{\beta}(\gamma)$ by interpreting it in terms of monodromy of the covering space $\Lambda_P^1[2] \to \mathcal{A}_{\rm reg}^0$. Consider $\tilde{s}_\gamma$ as a loop in $\mathcal{A}_{\rm reg}^0$ based at $a_0$. Let $q : [0,1] \to \Lambda_P^1[2]$ be the unique lift of $\tilde{s}_\gamma$ to a path in $\Lambda_P^1[2]$ with $q(0) = \Theta$. Then $q(1) = \check{\beta}(\gamma) q(0)$. Suppose that $\gamma$ is a path from $b_i$ to $b_j$. There are three cases to consider: (i) $1 \notin \{ i,j \}$, (ii) $i=1$ and (iii) $j=1$.\\

Case (i): Here $b_1$ is a zero of $\tilde{s}_\gamma(t)$ for all $t$. Let $u_1(t)$ be the corresponding ramification point. Then $q(t) = \mathcal{O}( u_1(t) )$ and $q(1) = q(0)$, since $u_1(1) = u_1(0)$. So $\check{\beta}(\gamma) = 0$ in this case.\\

Case (ii): In this case $\gamma$ starts at $b_1 = b_i$. As $t$ varies the zeros of $\tilde{s}_\gamma$ move continuously and in particular, $b_1$ moves along $\gamma$. Let $u_1(t)$ be the corresponding ramification point. Then since $u_1(t)$ is the ramification point over $\gamma(t)$, we have $u_1(0) = u_1$, $u_1(1) = u_j$. Let $\Gamma : [0,1] \to Jac(\Sigma)$ be the unique path in $Jac(\Sigma)$ satisfying $\Gamma(0) = \mathcal{O}$ and $\Gamma(t)^2 = \mathcal{O}( \gamma(t) - x_1 )$. Then $q(t) = \mathcal{O}(u_1(t) ) \otimes \pi^*( \Gamma(t)^* )$. Therefore
\begin{equation*}
\check{\beta}(\gamma) = q(1) \otimes q(0)^* = \mathcal{O}( u_j - u_1) \otimes \pi^*( \Gamma(1)^* ).
\end{equation*}
In order to determine $\check{\beta}$ as an element of $\Lambda_P[2] \simeq H_1( S , \mathbb{Z}_2)$, we will evaluate $\check{\beta}$ on an arbitrary element $\omega \in H^1( S ,\mathbb{Z}_2)$. We can view $\omega$ as the mod $2$ reduction of a class in $H^1(S , \mathbb{Z})$, a closed $1$-form on $S$ with integral periods. We view $\check{\beta}(\gamma)$ as an element of $\frac{1}{2} H_1(S , \mathbb{Z} ) / H_1(S,\mathbb{Z})$ so that the pairing $\langle \check{\beta}(\gamma) , \omega \rangle$ is an element of $\mathbb{Z}_2 \simeq \frac{1}{2}\mathbb{Z} / \mathbb{Z} \subset \mathbb{R}/\mathbb{Z}$. Let $\gamma_1,\gamma_2$ be the two paths in $S$ from $u_1$ to $u_j$ lying over $\gamma$. Then:
\begin{equation*}
\begin{aligned}
\langle \check{\beta}(\gamma) , \omega \rangle &= \int_{\gamma_1} \omega - \langle \Gamma(1) , \pi_* \omega \rangle \; ({\rm mod} \; \mathbb{Z}) \\
&= \int_{\gamma_1} \omega - \frac{1}{2} \int_{\gamma} \pi_* \omega \; ({\rm mod} \; \mathbb{Z}) \\
&= \int_{\gamma_1} \omega - \frac{1}{2} \int_{\gamma_1} \omega - \frac{1}{2} \int_{\gamma_2} \omega \; ({\rm mod} \; \mathbb{Z}) \\
&= \frac{1}{2} \left( \int_{\gamma_1} \omega - \int_{\gamma_2} \omega \right) \; ({\rm mod} \; \mathbb{Z}) \\
&= \frac{1}{2} \int_{l_\gamma} \omega \; ({\rm mod} \; \mathbb{Z}).
\end{aligned}
\end{equation*}
In other words, we have shown that $\check{\beta}(\gamma) = c_\gamma$.\\

Case (iii): This case is similar to the previous case, except that we should replace $\gamma(t)$ with $\gamma(1-t)$. We again obtain $\check{\beta}(\gamma) = c_\gamma$, which is to be expected as we have already established that the monodromy does not depend on the orientation of $\gamma$.
\end{proof}

\subsection{Monodromy action on $\Lambda_P[2]$ and $\Lambda_S[2]$}\label{secmonoact}

Let $B = \{ b_1 , b_2 , \dots , b_{2l} \}$ be the set of branch points and $\mathbb{Z}_2 B$ the $\mathbb{Z}_2$-vector space with basis $b_1, \dots , b_{2l}$. Let $s : \mathbb{Z}_2 B \to \mathbb{Z}_2$ be the linear map with $s(b_i) = 1$ for all $i$. Let $(\mathbb{Z}_2 B)^{\rm ev}$ denote the kernel of $s$. By abuse of notation we will let $b_o$ denote the element $b_o = b_1 + b_2 + \dots + b_{2l} \in (\mathbb{Z}_2 B)^{\rm ev}$.\\

Recall that $\Lambda_P[2] = \Lambda_P \otimes_{\mathbb{Z}} \mathbb{Z}_2$, which can be naturally identified with the points of order $2$ in $Prym(S,\Sigma)$. Thus an element of $\Lambda_P[2]$ is a line bundle $M \in Jac(S)$ such that $M^2 = \mathcal{O}$ and $\sigma^*(M) \simeq M$. Alternatively, we may think of $M$ as a $\mathbb{Z}_2$-local system together with an isomorphism $\tilde{\sigma} : M \to M$ of $\mathbb{Z}_2$-local systems, which covers $\sigma$. Note that for such an $M$, the isomorphism $\tilde{\sigma}$ is only unique up to an overall sign change $\tilde{\sigma} \mapsto -\tilde{\sigma}$. If $u_i$ is the ramification point over $b_i$ then $\tilde{\sigma}$ sends $M_{u_i}$ to itself, acting either as $1$ or $-1$. Let $\epsilon_i \in \mathbb{Z}_2$ be defined such that $\tilde{\sigma}$ acts on $M_{u_i}$ by $(-1)^{\epsilon_i}$. The pair $(M , \tilde{\sigma})$ determines an element $\epsilon(M,\tilde{\sigma}) = \epsilon_1 b_1 + \dots +\epsilon_{2l} b_{2l} \in \mathbb{Z}_2 B$. In fact, $\epsilon(M,\tilde{\sigma})$ is valued in $(\mathbb{Z}_2 B)^{\rm ev}$. To see this we note that the restriction of $M$ to $S \setminus \{ u_1 , \dots , u_{2l} \}$ descends to a local system $M'$ on $\Sigma \setminus \{ b_1 , \dots , b_{2l} \}$. Let $\partial_i$ be the class in $H_1( \Sigma \setminus \{b_1 , \dots , b_{2l} \} , \mathbb{Z}_2 )$ given by a cycle around $b_i$. The holonomy of $M'$ around $\partial_i$ is $\epsilon_i$, but $\partial_1 + \dots + \partial_{2l} = 0$ and hence $\epsilon_1 + \dots + \epsilon_{2l} = 0$. It is clear that $\epsilon(M , -\tilde{\sigma}) = \epsilon(M , \tilde{\sigma}) + b_o$ and hence the image of $\epsilon(M , \tilde{\sigma})$ in $(\mathbb{Z}_2 B)^{\rm ev}/ (b_o)$ depends only on $M$ and not on the choice of isomorphism $\tilde{\sigma}$. This gives a well defined map $\epsilon : \Lambda_P[2] \to (\mathbb{Z}_2 B)^{\rm ev}/ (b_o)$.
\begin{proposition}\label{propprymexact1}
We have a short exact sequence:
\begin{equation}\label{equshortexactprym1}
\xymatrix{
0 \ar[r] & \Lambda_\Sigma[2] \ar[r]^-{\pi^*} & \Lambda_P[2] \ar[r]^-{\epsilon} &  (\mathbb{Z}_2 B)^{\rm ev}/ (b_o) \ar[r] & 0.
}
\end{equation}
\end{proposition}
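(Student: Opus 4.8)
The plan is to establish the three assertions --- injectivity of $\pi^*$, exactness at $\Lambda_P[2]$, and surjectivity of $\epsilon$ --- by working systematically with the description of $\Lambda_P[2]$ as pairs $(M,\tilde{\sigma})$ (a $\mathbb{Z}_2$-local system $M$ on $S$ together with a lift $\tilde{\sigma}$ of $\sigma$, unique up to sign), and with the equivariant descent operation already used in the discussion preceding the proposition. Write $S^\circ = S \setminus \{u_1, \dots, u_{2l}\}$ and $\Sigma^\circ = \Sigma \setminus \{b_1, \dots, b_{2l}\}$, so that $\pi^\circ : S^\circ \to \Sigma^\circ$ is an unramified connected double cover (recall that $S$ is connected since $2l>0$, and that the $u_i$ are exactly the fixed points of $\sigma$). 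The dictionary is: a pair $(M,\tilde{\sigma})$ restricts to a $\langle\sigma\rangle$-equivariant local system on $S^\circ$, whose quotient $M' := (M|_{S^\circ})/\tilde{\sigma}$ is a $\mathbb{Z}_2$-local system on $\Sigma^\circ$ with $(\pi^\circ)^*M' \cong M|_{S^\circ}$ and --- as recorded in the text --- with holonomy $\epsilon_i$ about $b_i$; conversely, any $\mathbb{Z}_2$-local system on $\Sigma^\circ$ arises this way, since its pullback to $S^\circ$ has holonomy a square, hence trivial, about each $u_i$, and so extends uniquely over the $u_i$ to give an element of $\Lambda_P[2]$. I would first make this correspondence precise, together with its compatibility with the sign ambiguity: replacing $\tilde{\sigma}$ by $-\tilde{\sigma}$ twists $M'$ by the $\mathbb{Z}_2$-local system classifying the double cover, thereby adding $b_o$ to $\epsilon(M,\tilde{\sigma})$.

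Granting the dictionary, injectivity of $\pi^*$ is immediate: if $\pi^*N$ is trivial for some $N \in \Lambda_\Sigma[2]$, then the homomorphism $\pi_1(\Sigma^\circ) \to \mathbb{Z}_2$ representing $N|_{\Sigma^\circ}$ kills $\pi^\circ_*\pi_1(S^\circ)$, hence is either $0$ or the surjection $\psi:\pi_1(\Sigma^\circ)\to\mathbb{Z}_2$ classifying the cover; the latter is excluded since $N$ extends over the branch points while the (ramified) cover does not, so $\psi$ is nontrivial on the loops $\delta_i$ whereas our homomorphism is not. Hence $N|_{\Sigma^\circ}$, and therefore $N$, is trivial, using that $H^1(\Sigma,\mathbb{Z}_2)\to H^1(\Sigma^\circ,\mathbb{Z}_2)$ is injective.

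For exactness at $\Lambda_P[2]$: if $M = \pi^*N$, then the canonical identification $\sigma^*\pi^*N = \pi^*N$ gives a lift $\tilde{\sigma}$ acting as the identity on each stalk $M_{u_i}$, so all $\epsilon_i = 0$ and $\epsilon(M) = 0$. Conversely, if $\epsilon(M) = 0$ then, after replacing $\tilde{\sigma}$ by $-\tilde{\sigma}$ if necessary, all $\epsilon_i = 0$; hence $M'$ has trivial holonomy about every $b_i$, so it extends to a $\mathbb{Z}_2$-local system $\overline{M}$ on $\Sigma$, and since $\pi^*\overline{M}$ and $M$ agree on the dense open $S^\circ$ (and $H^1(S,\mathbb{Z}_2)\to H^1(S^\circ,\mathbb{Z}_2)$ is injective) we get $M = \pi^*\overline{M} \in \mathrm{im}(\pi^*)$ with $\overline{M} \in \Lambda_\Sigma[2]$. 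Finally, for surjectivity of $\epsilon$: given $\sum_i\epsilon_i b_i \in (\mathbb{Z}_2 B)^{\rm ev}$, there is a homomorphism $\pi_1(\Sigma^\circ)\to\mathbb{Z}_2$ sending $\delta_i \mapsto \epsilon_i$, because the only relation among the puncture classes $\delta_i$ in $H_1(\Sigma^\circ,\mathbb{Z}_2)$ is $\sum_i\delta_i = 0$, which $\sum_i\epsilon_i = 0$ respects; pulling back the resulting local system and extending over the $u_i$ yields $M \in \Lambda_P[2]$ with $\epsilon(M)$ the class of $\sum_i\epsilon_i b_i$. Alternatively, once the first two parts are established, surjectivity is forced by the dimension count $\dim_{\mathbb{Z}_2}\Lambda_P[2] = 2\dim_{\mathbb{C}} Prym(S,\Sigma) = 2(g-1+l)$, via Riemann--Hurwitz $g_S = 2g-1+l$, which equals $2g + (2l-2) = \dim_{\mathbb{Z}_2}\Lambda_\Sigma[2] + \dim_{\mathbb{Z}_2}\big((\mathbb{Z}_2 B)^{\rm ev}/(b_o)\big)$.

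The step I expect to be the main obstacle is organizational rather than computational: setting up the descent/extension correspondence rigorously and checking that ``descend then extend over $\Sigma$'' and ``pull back from $\Sigma$'' are mutually inverse on the relevant subsets, while tracking the sign ambiguity of $\tilde{\sigma}$ (which is precisely why the target of $\epsilon$ is $(\mathbb{Z}_2 B)^{\rm ev}/(b_o)$ rather than $(\mathbb{Z}_2 B)^{\rm ev}$) and the triviality-over-a-puncture criterion for $\mathbb{Z}_2$-local systems. Connectedness of $S$ is used to ensure $\tilde{\sigma}^2 = \mathrm{id}$ and that the sign of $\tilde{\sigma}$ is globally well-defined; no genuinely hard computation is involved once the $z\mapsto z^2$ local model underlying the text's holonomy claim is granted.
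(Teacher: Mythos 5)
Your proposal is correct and follows the same route as the paper: the paper's proof consists precisely of identifying $\Lambda_P[2]$ with the group of flat $\mathbb{Z}_2$-local systems on $\Sigma \setminus \{b_1,\dots,b_{2l}\}$ modulo the local system classifying the double cover (which accounts for the sign ambiguity in $\tilde{\sigma}$ and the quotient by $b_o$), and then asserts that the exact sequence "easily follows". Your write-up simply supplies the details of that last step --- injectivity, middle exactness, and surjectivity via the puncture holonomies (or the dimension count) --- all of which check out.
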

\begin{proof}
As in the discussion above, we may view $\Lambda_P[2]$ as the group of flat $\mathbb{Z}_2$-local systems on $\Sigma \setminus \{ b_1 , \dots , b_{2l} \}$, modulo the unique non-trivial $\mathbb{Z}_2$-local system corresponding to the double cover $S \setminus \{ u_1 , \dots , u_{2l} \} \to \Sigma \setminus \{ b_1 , \dots , b_{2l} \}$. From this description the result easily follows.
\end{proof}

\begin{proposition}\label{propjoin}
Let $\gamma$ be a path joining distinct branch points $b_i,b_j$ and let $c_\gamma \in \Lambda_P[2]$ the corresponding cycle in $S$. Then
\begin{equation*}
\epsilon( c_\gamma ) = b_i + b_j.
\end{equation*}
Conversely if $c$ is any element of $\Lambda_P[2]$ with $\epsilon(c) = b_i + b_j$, then there exists an embedded path $\gamma$ from $b_i$ to $b_j$ for which $c = c_\gamma$.
\end{proposition}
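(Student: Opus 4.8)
The plan is to prove both halves by a direct topological computation of the map $\epsilon$ applied to the class $c_\gamma=\mathrm{PD}_S[l_\gamma]$, using the description of $\Lambda_P[2]$ from Proposition \ref{propprymexact1} as $\mathbb{Z}_2$-local systems on $\Sigma\setminus B$ modulo the class $\eta$ of the double cover $S\setminus\{u_1,\dots,u_{2l}\}\to\Sigma\setminus B$, under which $\epsilon(M)_k$ is the holonomy of the descended local system $M'$ around the puncture $b_k$. The observation to exploit is that for a small embedded loop $\mu_{kl}$ in $\Sigma\setminus B$ homologous to $\partial_k+\partial_l$, the monodromy of $S\setminus\{u_i\}\to\Sigma\setminus B$ around $\mu_{kl}$ is trivial (a product of two transpositions), so $\mu_{kl}$ lifts to a degree-one loop $\tilde\mu_{kl}$; moreover the preimage of the disc $\mu_{kl}$ bounds is an annulus (a double cover of a disc branched at two points has Euler characteristic $0$), of which $\tilde\mu_{kl}$ is an essential (core) curve. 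Since $\pi^*\eta=0$ (pulling back the cover to $S\setminus\{u_i\}$ splits it), the descent ambiguity disappears upstairs, giving the clean identity $\epsilon(M)_k+\epsilon(M)_l=\langle M,[\tilde\mu_{kl}]\rangle\bmod 2$ for every $M\in\Lambda_P[2]$.

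For the first statement I would feed $M=c_\gamma$ into this identity, turning the right-hand side into the mod $2$ intersection number $[l_\gamma]\cdot[\tilde\mu_{kl}]$. If $\{k,l\}\cap\{i,j\}=\emptyset$, choose the disc disjoint from $\gamma$: then $l_\gamma$ is disjoint from the annulus and the intersection number vanishes, so $\epsilon(c_\gamma)_k$ has a common value $e$ on all indices outside $\{i,j\}$. If one index is $i$ (say) and the other is $l\notin\{i,j\}$, take the disc to be a thin regular neighbourhood of an embedded arc from $b_i$ to $b_l$ that issues from $b_i$ along $\gamma$; then $\gamma$ meets the disc in a single sub-arc from $b_i$ to its boundary, so $l_\gamma$ meets the annulus $\pi^{-1}(D)$ in an arc through the ramification point $u_i$ joining its two boundary circles, which crosses a core an odd number of times. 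Hence $\epsilon(c_\gamma)_i+\epsilon(c_\gamma)_l=1$, forcing $\epsilon(c_\gamma)_i=\epsilon(c_\gamma)_j=1+e$, and summing yields
\[
\epsilon(c_\gamma)=(1+e)(b_i+b_j)+e\sum_{k\neq i,j}b_k=b_i+b_j+e\,b_o\equiv b_i+b_j \ \text{ in }\ (\mathbb{Z}_2 B)^{\rm ev}/(b_o).
\]

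For the converse, fix any embedded path $\gamma_0$ from $b_i$ to $b_j$ avoiding the remaining branch points; by the first part $\epsilon(c_{\gamma_0})=b_i+b_j=\epsilon(c)$, so by the exact sequence of Proposition \ref{propprymexact1} we get $c=c_{\gamma_0}+\pi^*(d)$ for a unique $d\in\Lambda_\Sigma[2]=H^1(\Sigma,\mathbb{Z}_2)$. It then suffices to realise $\pi^*(d)$ by modifying $\gamma_0$. Represent the Poincaré dual of $d$ by a system of embedded loops in $\Sigma\setminus B$, and, after a general-position isotopy, arrange that they are pairwise disjoint and each meets the (successively updated) path transversally in a single interior point. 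Band-summing $\gamma_0$ with these loops keeps it an embedded path from $b_i$ to $b_j$ in $\Sigma\setminus B$: band-summing with a loop $\delta$ alters $\gamma_0$ by a cycle homologous to $\delta$ and supported away from $B$, hence alters $\pi^{-1}(\gamma_0)$ by a cycle homologous to $\pi^{-1}(\delta)$, and $\mathrm{PD}_S[\pi^{-1}\delta]=\pi^*\mathrm{PD}_\Sigma[\delta]$ by the projection formula $\int_S\pi^*\alpha\cup\beta=\int_\Sigma\alpha\cup\pi_*\beta$. Taking Poincaré duals, the resulting path $\gamma$ satisfies $c_\gamma=c_{\gamma_0}+\pi^* d=c$.

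The main obstacle is the first statement: pinning down the exact normalisation of $\epsilon$ — verifying that $\epsilon(M)_k$ really is the holonomy around $b_k$ of the descended local system, and that the descent ambiguity $\pi^*\eta$ vanishes on $S\setminus\{u_i\}$ — so that the formula $\epsilon(M)_k+\epsilon(M)_l=\langle M,[\tilde\mu_{kl}]\rangle$ is literally correct, and then carrying out the mod $2$ intersection bookkeeping on the annulus (which spanning arc or essential loop hits which core, with the right parity). The converse is softer: the only care needed is to keep the band-summed path embedded and clear of $B$ while still sweeping out all of $\pi^*\Lambda_\Sigma[2]$, which is routine general position.
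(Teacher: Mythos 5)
Your proof is correct and follows essentially the same route as the paper's: both halves reduce to computing $\epsilon_k+\epsilon_m$ of $c_\gamma$ as the mod $2$ intersection number of $l_\gamma$ with a cycle in $S$ attached to the pair $\{b_k,b_m\}$ (the paper uses the preimage $l_{\gamma_{km}}$ of an arc and the equivariant lift $\tilde{\sigma}$, where you use the lifted boundary circle $\tilde{\mu}_{km}$ and the descended local system on $\Sigma\setminus B$, but these cycles are homologous in the annulus $\pi^{-1}(D)$ and the computations coincide). For the converse the paper only records the key reduction --- that every class in $H_1(\Sigma,\mathbb{Z}_2)$ is represented by an embedded loop, via transitivity of $Sp(2g,\mathbb{Z}_2)$ --- and leaves the modification of $\gamma_0$ implicit; your band-sum argument correctly fills in that step.
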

\begin{proof}
Recall that $c_\gamma$ is the Poincar\'e dual of the cycle $l_\gamma \in H_1(S , \mathbb{Z}_2)$ which is obtained as the pre-image of $\gamma$ under $\pi : S \to \Sigma$. Thus if we view $c_\gamma$ as a certain $\mathbb{Z}_2$-local system on $S$ then the holonomy of $c_\gamma$ around a cycle $l$ in $S$ coincides with the intersection pairing of $l$ with $l_\gamma$. Let $\gamma_{km}$ be a path in $\Sigma$ joining two branch points $b_k$, $b_m$ and let $l_{\gamma_{km}}$ be the pre-image of $\gamma_{km}$ in $S$. We will assume that $\gamma_{km}$ has been chosen so that it is an embedded path in $\Sigma$ from $b_k$ to $b_m$ which avoids all other branch points. Then the intersection of $l_\gamma$ with $l_{\gamma_{km}}$ is the number of elements common to the sets $\{ i , j \}$ and $\{ k , m \}$, taken modulo $2$. On the other hand, we know that there is a lift of $\sigma$ to an involution $\tilde{\sigma}$ of the local system $c_\gamma$. Then $\epsilon(c_\gamma) = \epsilon_1 b_1 + \dots + \epsilon_{2l} b_{2l}$, where $\tilde{\sigma}$ acts on the fibre over $u_i$ as $(-1)^{\epsilon_i}$. The pre-image of $\gamma_{km}$ in $S$ consists of two paths $\gamma_1,\gamma_2$ from $u_k$ to $u_m$. Using $\tilde{\sigma}$ to compare parallel translation along these paths, we see that the holonomy around $l_{\gamma_{km}}$ is $(-1)^{\epsilon_k + \epsilon_m}$. This proves that $\epsilon( c_\gamma ) = b_i + b_j$.\\

To prove the converse it is sufficient to show that any class $a \in H_1(\Sigma , \mathbb{Z}_2 )$ may be represented by an embedded loop. Clearly we can restrict to the case $a \neq 0$. Now we observe that the mapping class group of $\Sigma$ acts on $H^1( \Sigma , \mathbb{Z}_2)$ as the group $Sp(2g , \mathbb{Z}_2)$ and this group acts transitively on $H^1( \Sigma , \mathbb{Z}_2)\setminus \{0\}$. Thus it is enough to find a single class $a \in H^1( \Sigma , \mathbb{Z}_2) \setminus \{0\}$ which can be represented as an embedded loop, which is certainly possible.
\end{proof}

Define a non-degenerate symmetric bilinear form $(( \; , \; )) : \mathbb{Z}_2 B \otimes \mathbb{Z}_2 B \to \mathbb{Z}_2$ by setting $(( b_i , b_j )) = 0$ if $i \neq j$ and $(( b_i , b_i )) = 1$. Note that $(\mathbb{Z}_2 B)^{\rm ev} = ( b_o )^{\perp}$ is the orthogonal complement of $b_o$, so that the restriction of $(( \, , \, ))$ to $(\mathbb{Z}_2 B)^{\rm ev}/ ( b_o ) = ( b_o )^{\perp} / ( b_o )$ is non-degenerate. Note also that $b_o$ is a characteristic for $(( \, , \, ))$, i.e. $((x, x)) = ((x , b_o))$ for any $x \in \mathbb{Z}_2 B$. The induced form on $(\mathbb{Z}_2 B)^{\rm ev}/ ( b_o )$ is thus even, i.e. $((x,x)) = 0$ for any $x \in (\mathbb{Z}_2 B)^{\rm ev}/ ( b_o )$. The subspace $\Lambda_\Sigma[2] \subset \Lambda_P[2]$ is completely null with respect to the restriction of the intersection form $\langle \, , \, \rangle$ to $\Lambda_P[2]$. Moreover, we have:

\begin{proposition}
The restriction of $\langle \, , \, \rangle$ to $\Lambda_P[2]$ is given by the pullback of $(( \, , \, ))$ under the map $\epsilon : \Lambda_P[2] \to (\mathbb{Z}_2 B)^{\rm ev}/(b_o)$. That is:
\begin{equation*}
\langle ( x_1 , y_1 ) , (x_2 , y_2 ) \rangle = ((y_1 , y_2)),
\end{equation*}
for all $(x_1,y_1),(x_2,y_2) \in \Lambda_P[2] \simeq \Lambda_\Sigma[2] \oplus (\mathbb{Z}_2 B)^{\rm ev} / (b_o)$.
\end{proposition}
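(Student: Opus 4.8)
The plan is to reduce the statement to the mod~$2$ intersection-number computation already performed inside the proof of Proposition \ref{propjoin}. Both sides of the asserted identity are symmetric $\mathbb{Z}_2$-bilinear forms on $\Lambda_P[2]$, so it suffices to verify the equality on a spanning set. By Proposition \ref{propprymexact1} we have $\ker(\epsilon) = \pi^*\Lambda_\Sigma[2]$, and the quotient $(\mathbb{Z}_2 B)^{\rm ev}/(b_o)$ is spanned by the classes $b_i + b_j$, $i \neq j$. By Proposition \ref{propjoin}, each $b_i + b_j$ equals $\epsilon(c_\gamma)$ for an embedded path $\gamma = \gamma_{ij}$ joining $b_i$ to $b_j$ and avoiding the other branch points; hence $\Lambda_P[2]$ is spanned by $\pi^*\Lambda_\Sigma[2]$ together with the transvection classes $c_\gamma$ of Notation \ref{not1}, and it is enough to evaluate $\langle\,,\,\rangle$ on pairs drawn from these.

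First I would dispose of the pairs involving $\pi^*\Lambda_\Sigma[2]$. For $a,a' \in \Lambda_\Sigma[2]$ the projection formula gives $\langle \pi^* a , \pi^* a' \rangle = \langle a , \pi_* \pi^* a' \rangle$, which vanishes since $\pi_* \pi^* = 2$ (this re-proves the remark that $\Lambda_\Sigma[2]$ is null). For a path $\gamma$, Notation \ref{not1} records that $\pi_*[l_\gamma] = 0$, so $\langle \pi^* a , c_\gamma \rangle = \langle a , \pi_* c_\gamma \rangle = 0$. Thus the restriction of $\langle\,,\,\rangle$ to $\Lambda_P[2]$ annihilates $\ker(\epsilon)$ in either argument, hence factors through $\epsilon \times \epsilon$; the pullback of $((\,,\,))$ trivially does as well. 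Consequently the two forms agree on all of $\Lambda_P[2]$ as soon as they agree on the spanning classes $b_i + b_j$ of the quotient, and moreover for this it is enough to evaluate $\langle c_\gamma , c_{\gamma'}\rangle$ for a \emph{single} choice of $\gamma_{ij}$ and $\gamma_{km}$ with the prescribed endpoints.

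It then remains to compute $\langle c_{\gamma_{ij}} , c_{\gamma_{km}}\rangle$. Since $c_\gamma$ is Poincaré dual to the embedded loop $l_\gamma = \pi^{-1}(\gamma) \subset S$, this pairing is the mod~$2$ intersection number $l_{\gamma_{ij}} \cdot l_{\gamma_{km}}$ in $S$. Choosing the paths $\gamma_{ij}, \gamma_{km}$ in general position in $\Sigma$ (embedded, avoiding the other branch points, and meeting only at branch points common to their endpoint sets), their preimages in $S$ are transverse and intersect exactly in the ramification points $u_n$ with $n \in \{i,j\} \cap \{k,m\}$ --- one transverse point for a single common endpoint, two for a coincident endpoint pair --- which is precisely the count carried out in the proof of Proposition \ref{propjoin}. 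Therefore $\langle c_{\gamma_{ij}} , c_{\gamma_{km}}\rangle = \#(\{i,j\}\cap\{k,m\}) \bmod 2 = (( b_i + b_j , b_k + b_m )) = (( \epsilon(c_{\gamma_{ij}}) , \epsilon(c_{\gamma_{km}}) ))$. Combining the three cases gives $\langle (x_1,y_1),(x_2,y_2)\rangle = (( y_1 , y_2 ))$.

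The only non-formal ingredient is this transverse count for the loops $l_\gamma$ near a shared ramification point, and the matching of $2 \equiv 0$ for a doubly shared endpoint with $((b_i+b_j , b_i+b_j)) = 0$; since $\langle\,,\,\rangle$ is a homological invariant one is free to pick the general-position representatives, and the computation is already contained in the proof of Proposition \ref{propjoin}, so here it is only a matter of invoking it. I expect this to be the only point requiring care; the rest is formal manipulation with the projection formula and the exact sequence \eqref{equshortexactprym1}.
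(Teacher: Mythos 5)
Your proposal is correct and follows essentially the same route as the paper: span $\Lambda_P[2]$ by $\pi^*\Lambda_\Sigma[2]$ together with the classes $c_\gamma$, check that $\pi^*\Lambda_\Sigma[2]$ lies in the radical of the restricted form, and reduce the remaining case $\langle c_\gamma, c_{\gamma'}\rangle$ to the mod $2$ count of common endpoints already established in the proof of Proposition \ref{propjoin}. Your explicit use of the projection formula to kill the $\pi^*\Lambda_\Sigma[2]$ terms, and the observation that both forms factor through $\epsilon\times\epsilon$ so one representative path per class suffices, merely make explicit what the paper leaves implicit.
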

\begin{proof}
By Proposition \ref{propjoin} and (\ref{equshortexactprym1}), we see that $\Lambda_P[2]$ is spanned by the image of $\Lambda_\Sigma[2]$ together with elements of the form $c_\gamma$, where $\gamma$ is an embedded path joining two branch points. Since $\Lambda_\Sigma[2]$ is completely null with respect to the restriction of $\langle \; , \; \rangle$ to $\Lambda_P[2]$, we just need to verify the proposition for a pair $c_\gamma , c_{\gamma'}$. However this has already been done in the proof of Proposition \ref{propjoin}, where it was shown that if $\gamma$ joins $b_i$ to $b_j$ and $\gamma'$ joins $b_k$ to $b_l$, then $\langle c_\gamma , c_{\gamma'} \rangle$ is the number of elements common to $\{ i,j\}$ and $\{k,l\}$, taken modulo $2$. This is the same as $(( b_i + b_j , b_k + b_l )) = (( \epsilon(c_\gamma) , \epsilon(c_{\gamma'}) ))$.
\end{proof}

From Proposition \ref{propprymcohom} we have a short exact sequence:
\begin{equation}\label{equshortexactprym2}
\xymatrix{
0 \ar[r] & \Lambda_P[2] \ar[r] & \Lambda_S[2] \ar[r]^-{\pi_*} & \Lambda_\Sigma[2] \ar[r] & 0.
}
\end{equation}

\begin{proposition}\label{propsplit1}
Choose a splitting $\Lambda_P[2] = \Lambda_\Sigma[2] \oplus (\mathbb{Z}_2 B)^{\rm ev}/(b_o)$ of (\ref{equshortexactprym1}). Then there exists a splitting of (\ref{equshortexactprym2}), such that under the resulting identifications
\begin{equation*}
\Lambda_S[2] \simeq \Lambda_P[2] \oplus \Lambda_\Sigma[2] \simeq \Lambda_\Sigma[2] \oplus (\mathbb{Z}_2 B)^{\rm ev}/(b_o) \oplus \Lambda_\Sigma[2]
\end{equation*}
given by these splittings, the intersection form on $\Lambda_S[2]$ is given by:
\begin{equation}\label{equintpairing}
\langle ( a , b , c) , (a' , b' , c' ) \rangle = \langle a , c' \rangle + (( b , b' )) + \langle c , a' \rangle
\end{equation}
for all $a,a',c,c' \in \Lambda_\Sigma[2]$, $b,b' \in (\mathbb{Z}_2 B)^{\rm ev}/(b_o)$.
\end{proposition}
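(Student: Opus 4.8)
The plan is to obtain the desired splitting by a ``choose a complementary Lagrangian'' argument, the only non-formal inputs being the description already established of $\langle\,,\,\rangle$ on $\Lambda_P[2]$, the adjointness relation $\langle \pi^* x , y \rangle = \langle x , \pi_* y \rangle$ between pullback and pushforward, and the identity $\pi_* \pi^* = 2$ (so $\pi_* \pi^* = 0$ with $\mathbb{Z}_2$-coefficients). Throughout write $V = \Lambda_S[2]$, carrying its non-degenerate alternating Weil pairing, $P = \Lambda_P[2] \subseteq V$, and $A = \pi^*(\Lambda_\Sigma[2]) \subseteq P$; using the chosen splitting of (\ref{equshortexactprym1}) we regard $W := (\mathbb{Z}_2 B)^{\rm ev}/(b_o)$ as a subspace of $P$, so that $P = A \oplus W$.

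First I would record the orthogonality facts. Since $\langle\,,\,\rangle|_P$ is the pullback of $((\,,\,))$ along $\epsilon$ and $\ker \epsilon = A$ by (\ref{equshortexactprym1}), the radical $P \cap P^\perp$ of $\langle\,,\,\rangle|_P$ equals $A$; in particular $A$ is isotropic, $A \subseteq P^\perp$, $A \perp W$, and $\langle\,,\,\rangle|_W = ((\,,\,))$, which is non-degenerate and alternating on $W$. As $\langle\,,\,\rangle$ is non-degenerate on $V$ and $\dim V/P = \dim \Lambda_\Sigma[2] = \dim A = 2g$ (by (\ref{equshortexactprym1}) and (\ref{equshortexactprym2})), a dimension count upgrades $A \subseteq P^\perp$ to $A = P^\perp$. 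Finally, the adjointness relation gives $\langle \pi^* a , y \rangle = \langle a , \pi_* y \rangle$ for $a \in \Lambda_\Sigma[2]$ and $y \in V$, which will supply the off-diagonal block of the intersection form.

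Next I would construct the complement. As $\langle\,,\,\rangle$ is non-degenerate on $W$, there is an orthogonal decomposition $V = W \oplus W^\perp$ with $W^\perp$ non-degenerate symplectic; and $A \subseteq W^\perp$ is isotropic of dimension $2g = \tfrac12 \dim W^\perp$ (since $\dim W^\perp = \dim V - \dim W = \dim V - (\dim P - \dim A) = 4g$), hence Lagrangian in $W^\perp$. Choose a complementary Lagrangian $C \subseteq W^\perp$, so that $W^\perp = A \oplus C$, $C$ is isotropic, and $\langle\,,\,\rangle$ restricts to a perfect pairing $A \times C \to \mathbb{Z}_2$. Set $\Lambda' := C$. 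Then $V = W \oplus A \oplus C = P \oplus C$, so $\pi_*$ restricts to an isomorphism from $C$ onto $\Lambda_\Sigma[2]$, and $C$ is the image of a splitting of (\ref{equshortexactprym2}). Combining this with the chosen splitting of (\ref{equshortexactprym1}) and the isomorphism $\pi^* : \Lambda_\Sigma[2] \to A$ yields the stated identification $\Lambda_S[2] \simeq \Lambda_\Sigma[2] \oplus W \oplus \Lambda_\Sigma[2]$, realising the three summands as $A$, $W$ and $C$.

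The verification of (\ref{equintpairing}) is then a block computation: $\langle A , W \rangle = 0$ and $\langle W , C \rangle = 0$ since $A, C \subseteq W^\perp$; $A$ and $C$ are isotropic; $\langle\,,\,\rangle|_W = ((\,,\,))$; and for $a \in \Lambda_\Sigma[2]$ and $y \in C$ the cross term $\langle \pi^* a , y \rangle = \langle a , \pi_* y \rangle$ is, under the identifications above, precisely $\langle a , c \rangle$. Assembling these reproduces (\ref{equintpairing}). No real obstacle arises: the one step demanding attention is pinning down $A = P^\perp$, and hence the Lagrangian position of $A$ inside $W^\perp$; after that the argument is the textbook construction of a complementary Lagrangian together with bookkeeping of the blocks.
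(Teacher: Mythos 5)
Your proposal is correct and follows essentially the same route as the paper: split off $W$ orthogonally, observe that $\pi^*(\Lambda_\Sigma[2])$ is Lagrangian in $W^\perp$, and choose an isotropic complement (the paper constructs this complementary Lagrangian explicitly by correcting an arbitrary splitting with a symmetric even endomorphism $F$, which is exactly the standard existence argument you cite). The block verification of (\ref{equintpairing}) via $\langle \pi^*a, y\rangle = \langle a, \pi_* y\rangle$ matches the paper's as well.
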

\begin{proof}
For notational convenience, set $W = (\mathbb{Z}_2 B)^{\rm ev}/(b_o)$. Choose a splitting of (\ref{equshortexactprym1}), so $\Lambda_P[2] = \Lambda_\Sigma[2] \oplus W$ and we may regard $W$ as a subspace of $\Lambda_S[2]$. The restriction of the intersection form to $W$ is the bilinear form $(( \, , \, ))$, which is non-degenerate. Thus we have an orthogonal splitting $\Lambda_S[2] = W \oplus W^\perp$. The restriction $\pi_*|_{W^\perp} : W^\perp \to \Lambda_\Sigma[2]$ is surjective because $W \subset \Lambda_P[2] = ker(\pi_*)$. The kernel of $\pi_*|_{W^\perp}$ is $ker(\pi_*) \cap W^\perp = \Lambda_P[2] \cap W^\perp = \pi^*(\Lambda_\Sigma[2])$. So we have a short exact sequence:
\begin{equation}\label{equshortexactprym3}
\xymatrix{
0 \ar[r] & \Lambda_\Sigma[2] \ar[r]^-{\pi^*} & W^\perp \ar[r]^-{ \pi_* |_{W^\perp} } & \Lambda_\Sigma[2] \ar[r] & 0.
}
\end{equation}
Let $\iota : \Lambda_\Sigma[2] \to W^\perp$ be a splitting of (\ref{equshortexactprym3}). We say that $\iota$ is an isotropic splitting if the image of $\iota$ is isotropic in $W^\perp$. We claim that an isotropic splitting exists. Indeed, let $\iota$ be any choice of splitting and let $\beta : \Lambda_\Sigma[2] \otimes \Lambda_\Sigma[2] \to \mathbb{Z}_2$ be given by $\beta(a,b) = \langle \iota a , \iota b \rangle$. This is an even symmetric bilinear form on $\Lambda_\Sigma[2]$. Let $F$ be an endomorphism of $\Lambda_\Sigma[2]$. We obtain a new splitting $\iota' : \Lambda_\Sigma[2] \to W^\perp$ by setting $\iota'(a) = \iota(a) + \pi^*( Fa )$. We then find:
\begin{equation*}
\begin{aligned}
\langle \iota'(a) , \iota'(b) \rangle &= \langle \iota(a) + \pi^*(Fa) , \iota(b) + \pi^*( Fb) \rangle \\
&= \beta(a,b) + \langle \iota(a) , \pi^*( Fb) \rangle + \langle \pi^*( Fa ) , \iota(b) \rangle \\
&= \beta(a,b) + \langle \pi_* \iota(a) , Fb \rangle + \langle Fa , \pi_* \iota(b) \rangle \\
&= \beta(a,b) + \langle a , Fb \rangle + \langle Fa , b \rangle.
\end{aligned}
\end{equation*}
Now, since $\beta$ is even and symmetric we can find an $F$ such that $\langle \iota'(a) , \iota'(b) \rangle$ vanishes for all $a,b \in \Lambda_\Sigma[2]$, i.e. $\iota'$ is an isotropic splitting.\\

Given an isotropic splitting $\iota : \Lambda_\Sigma[2] \to W^\perp$, we obtain a splitting of (\ref{equshortexactprym2}) by composing with the inclusion $W^\perp \to \Lambda_S[2]$. Under this splitting, equation (\ref{equintpairing}) follows easily. The only term that needs checking is $\langle ( a , 0 , 0 ) , (0 , 0 , c' ) \rangle$, but this is $\langle \pi^* (a) , \iota(c') \rangle = \langle a , \pi_* \iota(c') \rangle = \langle a , c' \rangle$.
\end{proof}

For the rest of this section we will assume that splittings have been chosen as in Proposition \ref{propsplit1}, so that $\Lambda_S[2] = \Lambda_\Sigma[2] \oplus (\mathbb{Z}_2 B)^{\rm ev}/(b_o) \oplus \Lambda_\Sigma[2]$ with $\pi^*(a) = (a,0,0)$, $\pi_*(a,b,c) = c$ and with intersection pairing given as in Equation (\ref{equintpairing}). We again let $W = (\mathbb{Z}_2 B)^{\rm ev}/(b_o)$. For $1 \le i < j \le 2l$ we let $b_{ij} = b_i + b_j$. Then $b_{12},b_{23},\dots , b_{2l-1,2l}$ span $W$ and are subject to one relation $b_{12} + b_{34} + b_{56} + \dots + b_{2l-1,2l} = 0$. We now proceed to work out the monodromy action on $\Lambda_P[2]$ and $\Lambda_S[2]$.\\

Given an element $c \in \Lambda_S[2]$, we let $s_c$ denote the corresponding Picard-Lefschetz transformation $s_c(x) = x + \langle c , x \rangle c$. By Propositions \ref{propgenerate}, \ref{proploopmono}, \ref{propjoin} and Theorem \ref{thmswapmono} we have that the monodromy action of $\rho$ on $\Lambda_S[2]$ is generated by the involution $\sigma$ together with Picard-Lefschetz transformations $s_c$, where $c$ is any element of $\Lambda_P[2]$ of the form $c = (a , b_{ij} , 0)$, with $a \in \Lambda_\Sigma[2]$ and $1 \le i < j \le 2l$. Consider first those $c$ of the form $c = (0,b_{ij} , 0)$. To simplify notation, we also let $s_{ij}$ denote $s_{(0,b_{ij} , 0)}$. Given a permutation $\omega \in S_{2l}$, we let $\omega$ act on $B = \{ b_1 , b_2 , \dots , b_{2l} \}$ by $\omega(b_i) = b_{\omega(i)}$ and extend this action linearly to $\mathbb{Z}_2 B$. The action preserves $(\mathbb{Z}_2 B)^{\rm ev}$ and descends to $W = (\mathbb{Z}_2 B)^{\rm ev}/(b_o)$. We now find that $s_{ij}$ has the form:
\begin{equation}\label{equtransposition}
s_{ij} = \left[ \begin{matrix} I_{2g} & 0 & 0 \\ 0 & \sigma_{ij} & 0 \\ 0 & 0 & I_{2g} \end{matrix} \right],
\end{equation}
where $\sigma_{ij} \in S_{2l}$ denotes the transposition of $i$ and $j$. Next we define linear transformations $A_{ij}^x$ by $A_{ij}^x = s_{b_{ij}} s_{ b_{ij} + x}$. Then:
\begin{equation}\label{equaijx}
A_{ij}^x = \left[ \begin{matrix} I_{2g} & L_{ij}^x & S^x \\ 0 & I & (L_{ij}^x)^t \\ 0 & 0 & I_{2g} \end{matrix} \right],
\end{equation}
where $L_{ij}^x : W \to \Lambda_\Sigma[2]$ is given by $L_{ij}^x(b) = (( b_{ij} , b))x$, $(L_{ij}^x)^t : \Lambda_\Sigma[2] \to W$ is the adjoint map, $(L_{ij}^x)^t a = \langle x,a \rangle b_{ij}$ and $S^x : \Lambda_\Sigma[2] \to \Lambda_\Sigma[2]$ is given by $S^x(a) = \langle x, a \rangle x$. The monodromy action on $\Lambda_S[2]$ is generated by $\sigma$, the $s_{ij}$ and the $A_{ij}^x$.\\

Suppose that $l = deg(L)$ is even. In this case we define a quadratic refinement $q_W$ of $(( \; , \; ))$ on $W$, i.e. a function $q_W : W \to \mathbb{Z}_2$ satisfying $q_W(a+b) = q_W(a) + q_W(b) + ((a,b))$. The function $q_W$ is given by $q_W(b) = k$ where $b = b_{i_1} + b_{i_2} + \dots + b_{i_{2k}}$ and $i_1,i_2,\dots, i_{2k}$ are distinct. We may then define a quadratic refinement $q$ of $\langle \; , \; \rangle$ on $\Lambda_S[2]$ by setting $q(a,b,c) = \langle a , c\rangle + q_W(b)$.

\begin{lemma}\label{lemrefine}
Suppose that $l$ is even, so that $q : \Lambda_S[2] \to \mathbb{Z}_2$ is defined. In this case, the monodromy action on $\Lambda_S[2]$ preserves $q$.
\end{lemma}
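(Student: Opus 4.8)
The plan is to verify that each generator of the monodromy group preserves $q$, using the fact (Proposition \ref{propgenerate}, Proposition \ref{proploopmono}, Proposition \ref{propjoin}, Theorem \ref{thmswapmono}) that the monodromy action on $\Lambda_S[2]$ is generated by the involution $\sigma$ together with Picard–Lefschetz transformations $s_c$ with $c = (a, b_{ij}, 0) \in \Lambda_P[2]$. It therefore suffices to prove: (i) $q \circ \sigma^* = q$, and (ii) $q \circ s_c = q$ for every $c$ of this form. Since the $s_{ij}$ and the $A_{ij}^x$ already generate the image of all the $s_c$ together with $\sigma$ (as noted after Equation (\ref{equaijx})), in practice I would check $q$ is preserved by $\sigma$, by the transpositions $s_{ij}$, and by the transvections $A_{ij}^x$.

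**Key steps.** First, $\sigma^*$ acts on $\Lambda_S[2] = \Lambda_\Sigma[2] \oplus W \oplus \Lambda_\Sigma[2]$ by (\ref{equloopmono}); in the chosen splitting it fixes $W$ (the $W$-part comes from cycles invariant under $\sigma$ up to orientation) and acts on the two $\Lambda_\Sigma[2]$-factors by swapping them up to the map $\pi^* \pi_*$. One checks directly from $q(a,b,c) = \langle a,c\rangle + q_W(b)$ that $\langle \; , \; \rangle$ is symmetric so the $\langle a, c\rangle$ term is $\sigma^*$-invariant, and $q_W(b)$ is untouched; hence $q \circ \sigma^* = q$. Second, for the Picard–Lefschetz transformation $s_c$ with $c \in \Lambda_P[2]$, I would use the general identity
\begin{equation*}
q(s_c x) = q(x) + \langle c, x\rangle q(c),
\end{equation*}
valid for any quadratic refinement $q$ of $\langle \; , \; \rangle$ (this follows from $q(x + y) = q(x) + q(y) + \langle x, y\rangle$ applied to $y = \langle c, x\rangle c$, using $\langle c, c\rangle = 0$ since we are in the symplectic-over-$\mathbb{Z}_2$ setting and $q(\langle c,x\rangle c) = \langle c,x\rangle^2 q(c) = \langle c, x\rangle q(c)$). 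Thus $s_c$ preserves $q$ if and only if $q(c) = 0$. So the whole lemma reduces to the single computation: $q(c) = 0$ for all $c = (a, b_{ij}, 0)$ with $a \in \Lambda_\Sigma[2]$ and $i \neq j$. But $q(a, b_{ij}, 0) = \langle a, 0\rangle + q_W(b_{ij}) = q_W(b_{ij})$, and by definition of $q_W$ (namely $q_W(b_{i_1} + \dots + b_{i_{2k}}) = k$ for distinct indices) we have $q_W(b_i + b_j) = 1$.

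**The apparent obstacle.** At first glance this gives $q(c) = 1 \neq 0$, which would say $s_c$ does \emph{not} preserve $q$ — so the genuine content of the lemma, and the step I expect to be the crux, is that the transvection associated to a swap is not $s_c$ for a single such $c$ but must be analysed correctly. The resolution is that the monodromy generators one actually uses are of two types: the transpositions $s_{ij}$ of (\ref{equtransposition}), which are products $s_{c}s_{c'}$ with $c,c'$ differing by an element of $W$ whose $q_W$-values cancel (equivalently, one checks directly from (\ref{equtransposition}) that $s_{ij}$ permutes the basis $\{b_k\}$ of $\mathbb{Z}_2 B$ and hence preserves $q_W$, while fixing the $\Lambda_\Sigma[2]$-factors, so it preserves $q$); and the transvections $A_{ij}^x = s_{b_{ij}} s_{b_{ij}+x}$ of (\ref{equaijx}), for which one computes $q(A_{ij}^x(a,b,c))$ explicitly from the matrix form. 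For the latter, writing $A_{ij}^x(a,b,c) = (a + L_{ij}^x b + S^x c,\; b + (L_{ij}^x)^t c,\; c)$ and expanding $q$ of this using bilinearity of $\langle \; , \; \rangle$ and the quadratic identity for $q_W$, the cross terms involving $L_{ij}^x$, $(L_{ij}^x)^t$ and $S^x$ must cancel; the key algebraic input is the adjunction relation $\langle L_{ij}^x b, c\rangle = (( b, (L_{ij}^x)^t c))$ together with $S^x(c) = \langle x,c\rangle x$ and $q_W(b_{ij}) = 1$, and the cancellation is exactly the reason the formula for $S^x$ has the precise coefficient it does. This matrix computation is the main — though routine — obstacle; once it is done, invariance of $q$ under all generators, and hence under the whole monodromy group, follows.
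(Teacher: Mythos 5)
There is a genuine error in your key identity, and it propagates into the rest of the argument. For a quadratic refinement with $q(x+y)=q(x)+q(y)+\langle x,y\rangle$, setting $y=\langle x,c\rangle c$ gives
\begin{equation*}
q(s_c x) = q(x) + q(\langle x,c\rangle c) + \langle x, \langle x,c\rangle c\rangle = q(x) + \langle x,c\rangle q(c) + \langle x,c\rangle^2 = q(x) + \langle x,c\rangle\bigl(q(c)+1\bigr).
\end{equation*}
You dropped the cross term $\langle x,\langle x,c\rangle c\rangle = \langle x,c\rangle^2 = \langle x,c\rangle$ (your appeal to $\langle c,c\rangle=0$ is irrelevant here), and so arrived at the criterion ``$s_c$ preserves $q$ iff $q(c)=0$.'' The correct criterion is the opposite: $s_c$ preserves $q$ iff $q(c)=1$. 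Consequently the ``apparent obstacle'' you identify is not an obstacle at all: the computation $q(a,b_{ij},0)=q_W(b_{ij})=1$ is precisely the condition needed, and the lemma follows immediately for every transvection $s_{(a,b_{ij},0)}$ and for $\sigma$ (whose invariance you argue correctly). This is exactly the paper's proof.

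Your attempted workaround is therefore unnecessary, and parts of it are wrong as stated. The $s_{ij}$ of (\ref{equtransposition}) is by definition the single transvection $s_{(0,b_{ij},0)}$, not a product $s_c s_{c'}$ with cancelling $q_W$-values; with your (incorrect) identity it would fail to preserve $q$, since $q(0,b_{ij},0)=1$. Moreover the individual transvections $s_{(a,b_{ij},0)}$ with $a\neq 0$ are themselves monodromy elements (they are the images of lifted swaps, by Theorem \ref{thmswapmono} and Proposition \ref{propjoin}, and in any case $s_{(x,b_{ij},0)}=s_{ij}A_{ij}^x$), so no choice of generating set can avoid the need for them to preserve $q$; if your criterion were right the lemma would simply be false. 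The direct matrix verification for $\sigma$, $s_{ij}$ and $A_{ij}^x$ that you sketch could be completed and is not circular, but the clean route is to fix the identity above and observe $q(c)=1$ for all the relevant vanishing cycles.
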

\begin{proof}
By (\ref{equnorm1}), we have $\sigma(x) = -x + \pi^* \pi_*(x) = x + \pi^* \pi_*(x)$, for all $x \in \Lambda_S[2]$. Thus $\sigma( a,b,c) = (a+c,b,c)$. We find $q( \sigma(a,b,c)) = q(a+c,b,c) = \langle a+c , c \rangle + q_W(b) = \langle a,c \rangle + q_W(b) = q(a,b,c)$, so $q$ is $\sigma$ invariant. It remains to show that $q$ is invariant under the Picard-Lefschetz transformations $s_{(a,b_{ij},0)}$. More generally, let $c \in \Lambda_S[2]$ and consider the Picard-Lefschetz transformation $s_c(x) = x + \langle x,c\rangle c$. Then
\begin{equation*}
\begin{aligned}
q( s_c(x) ) &= q( x + \langle x , c \rangle c) \\
&= q(x) + \langle x , c \rangle q(c) + \langle x , c \rangle^2 \\
&= q(x) + \langle x , c \rangle ( q(c) + 1).
\end{aligned}
\end{equation*}
Thus $s_c$ preserves $q$ if and only if $q(c) = 1$. Now if $c = (a , b_{ij} , 0)$, we have $q(c) = q_W( b_{ij} ) = 1$, so $q$ is preserved by these transformations.
\end{proof}

\begin{lemma}\label{lemb1}
Let $B : \Lambda_\Sigma[2] \to \Lambda_\Sigma[2]$ be symmetric, i.e. $\langle Bx,y \rangle = \langle x , By \rangle$. If $l$ is even we further assume $B$ is even, i.e. $\langle Bx, x \rangle = 0$ for all $x$. Then the matrix
\begin{equation}\label{equb1}
\left[ \begin{matrix} I_{2g} & 0 & B \\ 0 & I & 0 \\ 0 & 0 & I_{2g} \end{matrix} \right],
\end{equation}
is realised by products of the $A_{ij}^x$ matrices.
\end{lemma}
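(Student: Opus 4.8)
The plan is to work entirely inside the group $K$, writing a general element as a pair $(A,B)$ with $A\colon W\to\Lambda_\Sigma[2]$ and $B\colon\Lambda_\Sigma[2]\to\Lambda_\Sigma[2]$, so that by \eqref{equaijx} each generator is $A_{ij}^x=(L_{ij}^x,S^x)$. First I would record the group law in $K$,
\begin{equation*}
(A_1,B_1)(A_2,B_2)=\big(A_1+A_2,\ B_1+B_2+A_1A_2^t\big),
\end{equation*}
so that by induction $(A_1,B_1)\cdots(A_n,B_n)=\big(\textstyle\sum_iA_i,\ \sum_iB_i+\sum_{i<j}A_iA_j^t\big)$, and over $\mathbb{Z}_2$ one has $(A,B)^{-1}=(A,B+AA^t)$. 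Since $((b_{ij},b_{ij}))=0$ we get $L_{ij}^x(L_{ij}^x)^t=0$, so every $A_{ij}^x$ is an involution; moreover the subset $\{(0,B)\}$ is central, with $(0,B_1)(0,B_2)=(0,B_1+B_2)$, so the set $\mathcal{B}$ of all $B$ realised by products of the $A_{ij}^x$ is a $\mathbb{Z}_2$-subspace of the linear maps $\Lambda_\Sigma[2]\to\Lambda_\Sigma[2]$. Using the nondegenerate pairing $\langle\,,\,\rangle$ on $\Lambda_\Sigma[2]$ to identify a map $B$ with the bilinear form $(a,a')\mapsto\langle Ba,a'\rangle$, it then suffices to prove that $\mathcal{B}$ contains a spanning set for the symmetric (resp.\ symmetric even) maps.

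For the even symmetric maps I would use the product $(A_{12}^xA_{23}^y)^2=[A_{12}^x,A_{23}^y]$ of two generators sharing exactly one index (there are at least three branch points). Writing $T^{u,v}$ for the rank-one map $a\mapsto\langle u,a\rangle v$ and using $L_{12}^x(L_{23}^y)^t=((b_{12},b_{23}))\,T^{y,x}=T^{y,x}$, the product formula above gives
\begin{equation*}
[A_{12}^x,A_{23}^y]=\big(0,\ T^{x,y}+T^{y,x}\big).
\end{equation*}
The form attached to $T^{x,y}+T^{y,x}$ is $(a,a')\mapsto\langle x,a\rangle\langle y,a'\rangle+\langle y,a\rangle\langle x,a'\rangle$, which is symmetric and even; and since $\langle\,,\,\rangle$ is nondegenerate, $\langle x,\cdot\rangle$ and $\langle y,\cdot\rangle$ run over the whole dual space as $x,y$ vary, so these symmetrised products span the space of symmetric even (equivalently, alternating) bilinear forms. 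This settles the lemma when $l$ is even, where only even $B$ are required.

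When $l$ is odd I would realise the remaining non-even maps $S^x=T^{x,x}$ via the product over a perfect matching of the branch points, $A_{12}^xA_{34}^x\cdots A_{2l-1,2l}^x$. The pairs $\{2i-1,2i\}$ are pairwise disjoint, so $((b_{2i-1,2i},b_{2j-1,2j}))=0$ for $i\neq j$ and every cross term $L_{2i-1,2i}^x(L_{2j-1,2j}^x)^t$ in the product formula vanishes; and since $\sum_i b_{2i-1,2i}=b_o=0$ in $W$ one also has $\sum_i L_{2i-1,2i}^x=0$. Hence
\begin{equation*}
A_{12}^xA_{34}^x\cdots A_{2l-1,2l}^x=\big(0,\ l\cdot S^x\big)=(0,S^x),
\end{equation*}
the last step because $l$ is odd. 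The maps $S^x=T^{x,x}$, for $x\in\Lambda_\Sigma[2]$, span all symmetric maps: in a $\langle\,,\,\rangle$-dual basis they include all $v_i^*\otimes v_i^*$ and, from $(v_i^*+v_j^*)^{\otimes 2}=v_i^*\otimes v_i^*+v_j^*\otimes v_j^*+(v_i^*\otimes v_j^*+v_j^*\otimes v_i^*)$, all $v_i^*\otimes v_j^*+v_j^*\otimes v_i^*$, which together form a basis. Thus every symmetric $B$ lies in $\mathcal{B}$, completing the odd case.

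The block-triangular bookkeeping and the two standard spanning facts for symmetric and alternating forms are routine; the one step that needs care is the collapse of the perfect-matching product in the odd case, which uses simultaneously the vanishing of $((b_{ij},b_{kl}))$ for disjoint pairs $\{i,j\},\{k,l\}$ and the relation $\sum_i b_{2i-1,2i}=b_o=0$ in $W$ to reduce the whole product to $l\cdot S^x$. This is also precisely where the parity of $l$ enters, matching the hypothesis of the lemma.
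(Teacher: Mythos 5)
Your proof is correct and follows essentially the same route as the paper: realise a spanning set of symmetric (resp.\ symmetric even) maps $B$ by products of the $A_{ij}^x$ whose total $W\to\Lambda_\Sigma[2]$ component cancels, using the perfect-matching product $A_{12}^xA_{34}^x\cdots A_{2l-1,2l}^x$ together with the relation $b_{12}+b_{34}+\dots+b_{2l-1,2l}=0$ in $W$ for the odd case. The only cosmetic difference is that you obtain the rank-two maps $a\mapsto\langle x,a\rangle y+\langle y,a\rangle x$ as commutators $[A_{12}^x,A_{23}^y]$, whereas the paper's proof uses the triple product $A_{ij}^xA_{ij}^yA_{ij}^{x+y}$ with a single index pair; both identities in fact appear in the paper (the commutator one in the remark following Theorem \ref{thmmonogp}).
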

\begin{proof}
Let $S^2( \Lambda_\Sigma[2])$ denote the space of symmetric bilinear endomorphisms of $\Lambda_\Sigma[2]$ and $S^{2,ev}( \Lambda_\Sigma[2])$ the space of symmetric, even endomorphisms. For any $x,y \in \Lambda_\Sigma[2]$, define symmetric endomorphisms $B^{x,y}$ and $B^x$ by:
\begin{equation*}
\begin{aligned}
B^{x,y}(a) = \langle x , a \rangle y + \langle y , a \rangle x, & & B^x(a) = \langle x , a \rangle x.
\end{aligned}
\end{equation*}
Note that $S^{2,ev}(\Lambda_\Sigma[2])$ is spanned by the $B^{x,y}$ and $S^2( \Lambda_\Sigma[2] )$ is spanned by the $B^{x,y}$ and $B^x$. Next, define endomorphisms $M^{x,y},N^x$ by:
\begin{equation*}
\begin{aligned}
M^{x,y} = \left[ \begin{matrix} I_{2g} & 0 & B^{x,y} \\ 0 & I & 0 \\ 0 & 0 & I_{2g} \end{matrix} \right], & & N^{x} = \left[ \begin{matrix} I_{2g} & 0 & B^x \\ 0 & I & 0 \\ 0 & 0 & I_{2g} \end{matrix} \right].
\end{aligned}
\end{equation*}
By (\ref{equaijx}), we find that $M^{x,y} = A_{ij}^x A_{ij}^y A_{ij}^{x+y}$, for any $i \neq j$. This proves the result in the case that $l$ is even. Now suppose that $l$ is odd and for any $x$, consider $\hat{N}^x = A_{12}^x A_{34}^x \dots A_{2l-1, 2l}^x$. Since $b_{12} + b_{34} + \dots + b_{2l-1, 2l} = 0$, it is not hard to see that $\hat{N}^x = N^x$ and this proves the result in the case that $l$ is odd.
\end{proof}

\begin{remark}
Note that in the case that $l$ is even, we have $\hat{N}^x = I$.
\end{remark}

\begin{proposition}
Let $G \subseteq GL( \Lambda_S[2] )$ be the group generated by the monodromy action of $\rho$ on $\Lambda_S[2]$. Then $G$ is isomorphic to a semi-direct product $G = S_{2l} \ltimes H$ of the symmetric group $S_{2l}$, generated by the elements $\{ s_{ij} \; | \; i < j \; \}$ given in (\ref{equtransposition}) and the group $H$ generated by the transformations $\{ A_{ij}^x \; | \; i < j, \; x \in \Lambda_\Sigma[2] \; \}$ given in (\ref{equaijx}). The action of $\omega \in S_{2l}$ on $H$ is given by $\omega( A_{ij}^x ) = A_{\omega(i) \omega(j)}^x$.
\end{proposition}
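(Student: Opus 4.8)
The plan is to realise $G$ as an internal semidirect product $\langle s_{ij}\rangle\ltimes H$ with first factor isomorphic to $S_{2l}$. The first task is to check that $\langle s_{ij}\rangle$ and $H$ together generate $G$. By the discussion preceding the proposition, $G$ is generated by $\sigma$, the transpositions $s_{ij}$, and the transformations $A_{ij}^x$; and I would observe that $\sigma$ already lies in $H$. Indeed, by (\ref{equnorm1}) the involution $\sigma$ acts as $(a,b,c)\mapsto(a+c,b,c)$, which is the matrix (\ref{equb1}) with $B$ the identity endomorphism of $\Lambda_\Sigma[2]$; since the Weil pairing is alternating this $B$ is even and symmetric, so Lemma \ref{lemb1} writes $\sigma$ as a product of the $A_{ij}^x$. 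Hence $G=\langle s_{ij},\,A_{ij}^x\rangle$. I would also record the structural fact that $G$ is generated by Picard--Lefschetz transformations: each $s_{ij}=s_{(0,b_{ij},0)}$ is one, and each $A_{ij}^x=s_{b_{ij}}s_{b_{ij}+x}$ is a product of two. Since every Picard--Lefschetz transformation preserves $\langle\,,\,\rangle$, every element of $G$ is an isometry of $\langle\,,\,\rangle$, so for all $g\in G$ and $c\in\Lambda_S[2]$ one has the conjugation identity $g\,s_c\,g^{-1}=s_{gc}$. This identity drives the rest of the argument.

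Next I would compute the conjugation action of $\langle s_{ij}\rangle$ on $H$. For $\omega\in S_{2l}$ write $s_\omega\in\langle s_{ij}\rangle$ for the corresponding product of the $s_{ij}$; by (\ref{equtransposition}), in the splitting $\Lambda_\Sigma[2]\oplus W\oplus\Lambda_\Sigma[2]$ the element $s_\omega$ is the block matrix $\mathrm{diag}(I_{2g},\omega,I_{2g})$, acting as the identity on each $\Lambda_\Sigma[2]$-summand and as $\omega$ on $W$. Then the conjugation identity gives $s_\omega A_{ij}^x s_\omega^{-1}=s_{s_\omega(0,b_{ij},0)}\,s_{s_\omega(x,b_{ij},0)}$, and since $s_\omega$ sends $(0,b_{ij},0)\mapsto(0,b_{\omega(i)}+b_{\omega(j)},0)$ and $(x,b_{ij},0)\mapsto(x,b_{\omega(i)}+b_{\omega(j)},0)$ (because $\omega$ fixes $\Lambda_\Sigma[2]$ and $\omega(b_i+b_j)=b_{\omega(i)}+b_{\omega(j)}$), this equals $A_{\omega(i)\omega(j)}^x$. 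Hence each $s_{ij}$ normalises $H$; as $H$ is normalised by its own elements and $G=\langle s_{ij},A_{ij}^x\rangle$, it follows that $H\trianglelefteq G$, that $G=\langle s_{ij}\rangle\cdot H$, and that the induced action of $S_{2l}$ on $H$ is exactly $\omega\cdot A_{ij}^x=A_{\omega(i)\omega(j)}^x$.

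It then remains to prove the product is semidirect and to identify $\langle s_{ij}\rangle$ with $S_{2l}$. For the semidirect part: by (\ref{equaijx}) each $A_{ij}^x$ lies in the group $K$ of Theorem \ref{thmmonogp}, hence so does every element of $H$, and in particular every element of $H$ acts as the identity on $W$; since $s_\omega$ acts on $W$ as $\omega$, the condition $s_\omega\in H$ forces $\omega=\mathrm{id}$, i.e.\ $s_\omega=\mathrm{id}$, so $\langle s_{ij}\rangle\cap H=\{1\}$. For the identification: the assignment $\mathrm{diag}(I,\omega,I)\mapsto\omega$ is an injective homomorphism from $\langle s_{ij}\rangle$ onto the subgroup of $GL(W)$ generated by the transpositions $\sigma_{ij}$, which is the image of the permutation representation of $S_{2l}$ on $W=(\mathbb{Z}_2 B)^{\rm ev}/(b_o)$; this representation is faithful, so $\langle s_{ij}\rangle\cong S_{2l}$. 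Combining all of this gives $G\cong S_{2l}\ltimes H$ with the stated action. The one genuinely non-formal input is the conjugation identity $g s_c g^{-1}=s_{gc}$ and its use above; the point needing care is the faithfulness of the $S_{2l}$-action on $W$, which is standard once $2l\ge 5$ and is the sole delicate issue in the exceptional low-genus case $2l=4$ (i.e.\ $L=K$, $g=2$).
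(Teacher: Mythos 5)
Your proof is correct and follows essentially the same route as the paper: both arguments reduce to (a) the conjugation relation $s_{ij} A_{kl}^x s_{ij}^{-1} = A_{\sigma_{ij}(k)\sigma_{ij}(l)}^x$ and (b) expressing $\sigma$, which acts as $(a,b,c)\mapsto(a+c,b,c)$, as a product of the $A_{ij}^x$ via Lemma \ref{lemb1} with $B=I$ symmetric and even — you merely make explicit the trivial-intersection check and the identity $g s_c g^{-1}=s_{gc}$ that the paper leaves implicit. Your closing caveat about $2l=4$ is well taken: there the permutation representation of $S_4$ on $W=(\mathbb{Z}_2 B)^{\rm ev}/(b_o)$ has kernel the Klein four-group, so $\langle s_{ij}\rangle\cong S_3$ rather than $S_4$, an edge case (occurring only for $L=K$, $g=2$) that the paper's assertion ``clearly the $s_{ij}$ generate the symmetric group $S_{2l}$'' silently elides.
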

\begin{proof}
The monodromy action is generated by $\sigma$ together with the Picard-Lefschetz transformations of the form $s_{(x,b_{ij},0)}$. Thus $G$ is generated by the $s_{ij}$, the $A_{ij}^x$ and $\sigma$. Clearly the $s_{ij}$ generate the symmetric group $S_{2l}$. If $\sigma_{ij} \in S_{2l}$ denotes the transposition of $i$ and $j$ then it is clear that $s_{ij} A_{kl}^x s_{ij}^{-1} = A_{k'l'}^x$, where $k' = \sigma_{ij}(k)$, $l' = \sigma_{ij}(l)$. The proposition will follow if we can show that the action of $\sigma$ can be expressed as a product of $A_{ij}^x$ terms. Recall that $\sigma( a,b,c ) = (a+c,b,c)$. The result now follows from Lemma \ref{lemb1}, since the identity $I$ is symmetric and even.
\end{proof}

\begin{theorem}\label{thmmonogp}
Let $K$ be the subgroup of elements of $GL( \Lambda_S[2])$ of the form:
\begin{equation}\label{equsubgroup1}
\left[ \begin{matrix} I_{2g} & A & B \\ 0 & I & A^t \\ 0 & 0 & I_{2g} \end{matrix} \right],
\end{equation}
where $A : W \to \Lambda_\Sigma[2]$, $B : \Lambda_\Sigma[2] \to \Lambda_\Sigma[2]$,  and $A^t : \Lambda_\Sigma[2] \to W$ is the adjoint of $A$, so $\langle Ab , c \rangle = (( b , A^tc ))$. Recall that $H$ is the subgroup of $GL(\Lambda_S[2])$ generated by the $A_{ij}^x$. We have:
\begin{enumerate}
\item{If $l$ is odd then $H$ is the subgroup of $K$ preserving the intersection form $\langle \; , \; \rangle$, or equivalently, the elements of $K$ satisfying: 
\begin{equation*}
\langle Bc , c' \rangle + \langle Bc' , c \rangle + \langle A^tc , A^t c' \rangle = 0.
\end{equation*}
}
\item{If $l$ is even then $H$ is the subgroup of $K$ preserving the quadratic refinement $q$ of $\langle \; , \; \rangle$, or equivalently, the elements of $K$ satisfying:
\begin{equation*}
\langle Bc , c \rangle + q_W( A^t c ) = 0.
\end{equation*}
}
\end{enumerate}
\end{theorem}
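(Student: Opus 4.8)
The plan is to prove the two inclusions separately: $H$ is contained in the stated subgroup of $K$, and conversely. First, though, I would record the group law on $K$. Write $g(A,B)$ for the matrix in (\ref{equsubgroup1}), so that it acts on $\Lambda_S[2]=\Lambda_\Sigma[2]\oplus W\oplus\Lambda_\Sigma[2]$ by $g(A,B)(a,b,c)=(a+Ab+Bc,\,b+A^tc,\,c)$. A direct computation gives $g(A_1,B_1)g(A_2,B_2)=g(A_1+A_2,\,B_1+B_2+A_1A_2^t)$, and since $(A_1+A_2)^t=A_1^t+A_2^t$ the off-diagonal blocks stay adjoint; hence $K$ is a subgroup of $GL(\Lambda_S[2])$ in which the ``pure-$B$'' elements $g(0,B)$ form a central subgroup. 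By (\ref{equaijx}) each generator of $H$ is $A_{ij}^x=g(L_{ij}^x,S^x)\in K$, so $H\subseteq K$ for free.

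For the ``easy'' direction I would argue as follows. Each $A_{ij}^x=s_{b_{ij}}s_{b_{ij}+x}$ is a product of two Picard--Lefschetz transvections and therefore preserves $\langle\,,\,\rangle$, so every element of $H$ preserves $\langle\,,\,\rangle$; when $l$ is even, Lemma \ref{lemrefine} moreover gives that $H$ preserves $q$. It then remains to check that, for a matrix $g(A,B)\in K$, preservation of $\langle\,,\,\rangle$ (resp.\ of $q$) is equivalent to the stated relation. This is a short $\mathbb{Z}_2$-linear computation: using (\ref{equintpairing}) and the adjoint property $((b,A^tc))=\langle Ab,c\rangle$, most terms cancel and one is left with $\langle g(A,B)v,g(A,B)w\rangle=\langle v,w\rangle+\bigl(\langle Bc,c'\rangle+\langle Bc',c\rangle+((A^tc,A^tc'))\bigr)$ for $v=(a,b,c)$, $w=(a',b',c')$; and for $l$ even, using $q(a,b,c)=\langle a,c\rangle+q_W(b)$ and the refinement property of $q_W$, one gets $q(g(A,B)v)=q(v)+\bigl(\langle Bc,c\rangle+q_W(A^tc)\bigr)$. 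The upshot I would highlight for later use: two elements $g(A,B),g(A,B')$ of $K$ with the same $A$-block both satisfy the relation if and only if $B-B'$ is symmetric (resp.\ even, hence symmetric over $\mathbb{Z}_2$).

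For the reverse inclusion, take $M=g(A,B)\in K$ satisfying the appropriate relation. First I would realise the $A$-block by a product of generators: since $((\,,\,))$ is non-degenerate on $W$ and the $b_{ij}$ span $W$, the functionals $((b_{ij},-))$ span $W^*$, and as $L_{ij}^x=x\otimes((b_{ij},-))$ we may write $A=\sum_k L_{i_kj_k}^{x_k}$; then $P:=A_{i_1j_1}^{x_1}\cdots A_{i_mj_m}^{x_m}\in H$ equals $g(A,B')$ for some $B'$. Since $P\in H$ it satisfies the same relation as $M$, so by the remark above $B-B'$ is symmetric (and even when $l$ is even); hence Lemma \ref{lemb1} gives $g(0,B-B')\in H$. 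Finally, centrality of the pure-$B$ elements gives $M=g(0,B-B')\,P=g(A,B)$, so $M\in H$, completing the argument.

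The hard part is this reverse inclusion, and within it the one genuinely non-formal point: once the top-middle block $A$ has been matched by a product of the $A_{ij}^x$, the leftover discrepancy in the top-right block $B$ is \emph{forced} to be symmetric (even, in the $l$ even case), which is exactly the class of endomorphisms that Lemma \ref{lemb1} produces. Keeping the odd and even cases aligned — in particular matching ``preserves $q$'' with ``$B-B'$ even'' so as to invoke the evenness hypothesis of Lemma \ref{lemb1} — is where care is needed; everything else is routine linear algebra over $\mathbb{Z}_2$.
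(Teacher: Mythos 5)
Your proof is correct and follows essentially the same route as the paper: the forward inclusion via preservation of $\langle\,,\,\rangle$ (and $q$ when $l$ is even), and the reverse inclusion by first matching the $A$-block with a product of the $A_{ij}^x$ (using that the $L_{ij}^x$ span $Hom(W,\Lambda_\Sigma[2])$) and then absorbing the remaining symmetric (resp.\ even) discrepancy in the $B$-block via Lemma \ref{lemb1}. The only difference is that you make explicit the group law on $K$, the centrality of the pure-$B$ elements, and the fact that the leftover $B$-discrepancy is forced to be symmetric/even — details the paper leaves implicit.
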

\begin{proof}
First, note that $H$ is clearly a subgroup of $K$ preserving the intersection form $\langle \; , \; \rangle$, as well as the quadratic refinement $q$ if $l$ is even. Thus it only remains to show that every such element of $K$ is in $H$. By Lemma \ref{lemb1} it is enough to show that for any endomorphism $A : W \to \Lambda_\Sigma[2]$, there is an endomorphism $B : \Lambda_\Sigma[2] \to \Lambda_\Sigma[2]$ for which the corresponding element of $K$ belongs to $H$. But it is easy to see that any such $A$ can be written as a sum of terms of the form $L_{ij}^x$, as in (\ref{equaijx}). Taking the corresponding product of $A_{ij}^x$ terms, we obtain the desired element of $H$.
\end{proof}

\begin{remark}
The structure of the group $H$ generated by the $A_{ij}^x$ may be described as follows. As in Lemma \ref{lemb1}, we have relations: 
\begin{equation*}
\begin{aligned}
A_{ij}^x A_{ij}^y A_{ij}^{x+y} &= M^{x,y}, & & A_{12}^x A_{34}^x \dots A_{2l-1, 2l}^x = \begin{cases} I & \text{if } l \text{ is even}, \\  
N^x & \text{if } l \text{ is odd}. \end{cases}
\end{aligned}
\end{equation*}
In addition, we have commutation relations:
\begin{equation*}
[ A_{ij}^x , A_{kl}^y ] = \begin{cases} I & \text{if } ((b_{ij} , b_{kl} )) = 0, \\  
M^{x,y} & \text{if } ((b_{ij},b_{kl})) = 1. \end{cases}
\end{equation*}
In particular, this shows that $H$ is a central extension of $( Hom( W , \Lambda_\Sigma[2] ) , + )$ by $S^{2,ev}(\Lambda_\Sigma[2])$ when $l$ is even and by $S^2(\Lambda_\Sigma[2])$ when $l$ is odd.
\end{remark}

\begin{corollary}\label{cormono1}
Let $G_P \subseteq GL( \Lambda_P[2] )$ be the group generated by the monodromy action of $\check{\rho}$ on $\Lambda_P[2]$. Then $G_P$ is the set of matrices of the form
\begin{equation}\label{equmonoprym}
M = \left[ \begin{matrix} I_{2g} & A \\ 0 & \omega \end{matrix} \right],
\end{equation}
where $\omega \in S_{2l}$ is any permutation and $A$ is any endomorphism $A : W \to \Lambda_\Sigma[2]$.
\end{corollary}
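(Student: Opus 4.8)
The plan is to read this off from Theorem \ref{thmmonogp} together with part (4) of Theorem \ref{thmaffine2}. The latter, tensored with $\mathbb{Z}_2$, says that $\Lambda_P[2]\subset\Lambda_S[2]$ is preserved by the monodromy $\rho$ and that $\check\rho$ is the restriction of $\rho$ to $\Lambda_P[2]$; hence $G_P$ is the subgroup of $GL(\Lambda_P[2])$ generated by the restrictions to $\Lambda_P[2]$ of the generators $\sigma$, $s_{ij}$ and $A_{ij}^x$ of $G$.

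First I would observe that, under the splitting fixed in Proposition \ref{propsplit1}, the subspace $\Lambda_P[2]=\ker(\pi_*)$ is exactly the span $\Lambda_\Sigma[2]\oplus W\oplus 0$ of the first two coordinate blocks of $\Lambda_S[2]=\Lambda_\Sigma[2]\oplus W\oplus\Lambda_\Sigma[2]$. Each of $\sigma$, $s_{ij}$ and $A_{ij}^x$ is block upper-triangular in this decomposition --- compare the matrices (\ref{equtransposition}), (\ref{equaijx}) and the formula $\sigma(a,b,c)=(a+c,b,c)$ --- so each preserves $\Lambda_P[2]$, and one reads off the restrictions: $\sigma$ acts as the identity on $\Lambda_P[2]$, $s_{ij}$ acts as $\bigl[\begin{smallmatrix}I_{2g}&0\\0&\sigma_{ij}\end{smallmatrix}\bigr]$, and $A_{ij}^x$ acts as $\bigl[\begin{smallmatrix}I_{2g}&L_{ij}^x\\0&I\end{smallmatrix}\bigr]$. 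In particular every generator of $G_P$ is of the form (\ref{equmonoprym}).

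Next I would note that the set $\mathcal{K}$ of all matrices of the form (\ref{equmonoprym}) is a subgroup of $GL(\Lambda_P[2])$: it is closed under multiplication, since $\bigl[\begin{smallmatrix}I&A\\0&\omega\end{smallmatrix}\bigr]\bigl[\begin{smallmatrix}I&A'\\0&\omega'\end{smallmatrix}\bigr]=\bigl[\begin{smallmatrix}I&A'+A\omega'\\0&\omega\omega'\end{smallmatrix}\bigr]$, so $\mathcal{K}\cong\mathrm{Hom}(W,\Lambda_\Sigma[2])\rtimes S_{2l}$ with $S_{2l}$ acting on $W$ through permutations of $B$. Thus $G_P\subseteq\mathcal{K}$ by the previous paragraph. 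For the reverse inclusion: the transpositions $\sigma_{ij}$ generate $S_{2l}$, so products of the restricted $s_{ij}$ realise $\bigl[\begin{smallmatrix}I_{2g}&0\\0&\omega\end{smallmatrix}\bigr]$ for every permutation $\omega$; the restricted elements $\bigl[\begin{smallmatrix}I&L_{ij}^x\\0&I\end{smallmatrix}\bigr]$ multiply by adding the maps $L_{ij}^x$, and --- as already noted in the proof of Theorem \ref{thmmonogp}, using that $((\,,\,))$ is non-degenerate on $W$ --- the $L_{ij}^x$ span $\mathrm{Hom}(W,\Lambda_\Sigma[2])$, so $\bigl[\begin{smallmatrix}I&A\\0&I\end{smallmatrix}\bigr]\in G_P$ for every endomorphism $A$; finally, multiplying a general unipotent element by a general permutation element produces every element of $\mathcal{K}$. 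Hence $G_P=\mathcal{K}$.

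There is no serious obstacle here --- the content is entirely bookkeeping with the already-established structure of $G$. The only points requiring care are confirming that the generators of $G$ restrict cleanly to $\Lambda_P[2]$ (immediate from block-triangularity in the basis of Proposition \ref{propsplit1}) and reusing the fact, established inside the proof of Theorem \ref{thmmonogp}, that the maps $L_{ij}^x$ span $\mathrm{Hom}(W,\Lambda_\Sigma[2])$.
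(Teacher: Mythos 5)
Your proposal is correct and is exactly the derivation the paper intends: the corollary is presented as an immediate consequence of Theorem \ref{thmmonogp} via restriction of the generators $\sigma$, $s_{ij}$, $A_{ij}^x$ to $\Lambda_P[2]=\Lambda_\Sigma[2]\oplus W\oplus 0$, and your bookkeeping (block-triangularity, the product formula for matrices of the form (\ref{equmonoprym}), and the fact that the $L_{ij}^x$ span $\mathrm{Hom}(W,\Lambda_\Sigma[2])$) fills in precisely the steps the paper leaves implicit.
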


Corollary \ref{cormono1} was originally proven in \cite{sch1}. We can likewise describe the monodromy representation $\hat{\rho}$ on the dual $(\Lambda_P[2])^* \simeq \Lambda_S[2]/ \pi^*( \Lambda_\Sigma[2]) \simeq W \oplus \Lambda_\Sigma[2]$ as follows:

\begin{corollary}
Let $G_{P^*} \subseteq GL( (\Lambda_P[2])^* )$ be the group generated by the monodromy action of $\hat{\rho}$ on $(\Lambda_P[2])^*$. Then $G_{P^*}$ is the set of matrices of the form
\begin{equation}\label{equmonoprym}
M = \left[ \begin{matrix} \omega & C \\ 0 & I_{2g} \end{matrix} \right],
\end{equation}
where $\omega \in S_{2l}$ is any permutation and $C$ is any endomorphism $C : \Lambda_\Sigma[2] \to W$.
\end{corollary}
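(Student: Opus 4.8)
\emph{Proof proposal.} The plan is to obtain this statement as a quotient of the structure theorem Theorem~\ref{thmmonogp}. First I would record the following: $\rho$ fixes the subspace $\pi^*(\Lambda_\Sigma[2])$ (it is fixed by $\sigma$ since $\pi\sigma=\pi$, and by each Picard--Lefschetz transformation $s_c$ with $c\in\Lambda_P[2]$ since $\langle c,\pi^*a\rangle=\langle\pi_*c,a\rangle=0$), so $\rho$ descends to a representation on $\Lambda_S[2]/\pi^*(\Lambda_\Sigma[2])$. Using the splitting of Proposition~\ref{propsplit1}, in which $\pi^*(a)=(a,0,0)$, one checks that $\pi^*(\Lambda_\Sigma[2])^\perp=\Lambda_P[2]$, so $\langle\;,\;\rangle$ descends to a perfect pairing identifying $\Lambda_S[2]/\pi^*(\Lambda_\Sigma[2])=W\oplus\Lambda_\Sigma[2]$ with $(\Lambda_P[2])^*$ equivariantly; since $\rho|_{\Lambda_P[2]}=\check\rho$, the quotient representation is exactly $\hat\rho$ by Theorem~\ref{thmaffine2}. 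Hence $G_{P^*}$ is the image of $G\subseteq GL(\Lambda_S[2])$ under the homomorphism to $GL(W\oplus\Lambda_\Sigma[2])$ induced by passing to this quotient.

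Next I would compute the images of the generators of $G$ listed in Theorem~\ref{thmmonogp} and the Proposition preceding it. Writing vectors of $\Lambda_S[2]$ as $(a,b,c)$ with $\pi^*(\Lambda_\Sigma[2])$ the first $\Lambda_\Sigma[2]$ summand, the matrix $s_{ij}$ of (\ref{equtransposition}) is block diagonal and descends to $\left[\begin{smallmatrix}\sigma_{ij}&0\\0&I_{2g}\end{smallmatrix}\right]$ on $W\oplus\Lambda_\Sigma[2]$, so the $s_{ij}$ generate exactly the permutation block $\{\left[\begin{smallmatrix}\omega&0\\0&I_{2g}\end{smallmatrix}\right]:\omega\in S_{2l}\}$. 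The matrix $A_{ij}^x$ of (\ref{equaijx}) has its $L_{ij}^x$ and $S^x$ entries valued in the discarded first summand, so it descends to $\left[\begin{smallmatrix}I&(L_{ij}^x)^t\\0&I_{2g}\end{smallmatrix}\right]$, with $(L_{ij}^x)^t\colon\Lambda_\Sigma[2]\to W$, $a\mapsto\langle x,a\rangle b_{ij}$. (The element $\sigma$ descends to the identity, consistently with its redundancy in $G$.) Since the $b_{ij}$ span $W$ and, by non-degeneracy of $\langle\;,\;\rangle$ on $\Lambda_\Sigma[2]$, the functionals $\langle x,\cdot\rangle$ span $Hom(\Lambda_\Sigma[2],\mathbb{Z}_2)$, every $C\colon\Lambda_\Sigma[2]\to W$ is a $\mathbb{Z}_2$-sum of maps $(L_{ij}^x)^t$; taking the corresponding product of the $A_{ij}^x$ realises $\left[\begin{smallmatrix}I&C\\0&I_{2g}\end{smallmatrix}\right]$ for arbitrary $C$. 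Composing with the permutation block, $G_{P^*}$ contains every matrix $\left[\begin{smallmatrix}\omega&C\\0&I_{2g}\end{smallmatrix}\right]$; conversely every generator, and hence all of $G_{P^*}$, lies in this set, which is visibly closed under multiplication, so equality holds.

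As a cross-check, the same answer follows by dualising Corollary~\ref{cormono1}: $\hat\rho$ is the contragredient of $\check\rho$ (Remark after Proposition~\ref{propprymcohom}), so $G_{P^*}=\{(M^{-1})^t:M\in G_P\}=\{M^t:M\in G_P\}$ as $G_P$ is a group, where the transpose is taken with respect to the self-dualities of $\Lambda_\Sigma[2]$ (via $\langle\;,\;\rangle$) and $W$ (via $((\;,\;))$). Transposing $\left[\begin{smallmatrix}I_{2g}&A\\0&\omega\end{smallmatrix}\right]$, using that $\omega$ preserves $((\;,\;))$ so $\omega^t=\omega^{-1}$, and reordering the two summands to match $(\Lambda_P[2])^*\simeq W\oplus\Lambda_\Sigma[2]$, gives precisely $\left[\begin{smallmatrix}\omega&A^t\\0&I_{2g}\end{smallmatrix}\right]$ with $A^t$ arbitrary.

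I do not expect a genuine obstacle here, as this is a formal consequence of the structure theorem. The only points needing care are the equivariant identification $\Lambda_S[2]/\pi^*(\Lambda_\Sigma[2])\cong(\Lambda_P[2])^*$ carrying the correct $\hat\rho$-action, and the bookkeeping of which block of $A_{ij}^x$ survives in the quotient; I would also double-check that the two identifications of $(\Lambda_P[2])^*$ used in the direct argument and in the cross-check agree up to the harmless reordering of summands.
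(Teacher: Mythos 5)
Your proposal is correct and takes essentially the route the paper intends: the paper states this corollary without proof as an immediate consequence of Theorem \ref{thmmonogp} and Corollary \ref{cormono1} via the identification $(\Lambda_P[2])^* \simeq \Lambda_S[2]/\pi^*(\Lambda_\Sigma[2]) \simeq W \oplus \Lambda_\Sigma[2]$, which is exactly the quotient computation (and the dual cross-check) you carry out. Both your identification of the surviving block of $A_{ij}^x$ and the spanning argument for arbitrary $C$ are sound.
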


\subsection{Monodromy action on $\widetilde{\Lambda}_P[2]$}\label{secmonoact2}

For later applications we need to consider a certain $\mathbb{Z}_2$-extension of $\Lambda_P^k[2]$ and the corresponding lift $\check{\tilde{\beta}}$ of $\check{\beta}$. Let $A$ be a degree $k$ line bundle on $\Sigma$. We define $\widetilde{\Lambda}_P^k[2]$ to be the covering space of $\mathcal{A}_{\rm reg}^0$ whose fibre over the spectral curve $S$ is the set of pairs $(M,\tilde{\sigma})$, where $M \in Jac_k(S)$ satisfies $M^2 = \pi^*(A)$ and $\tilde{\sigma} : M \to M$ is an involution covering $\sigma$, so in particular $M \simeq \sigma(M)$. Then $\widetilde{\Lambda}_P[2]$ is a bundle of groups which is a $\mathbb{Z}_2$-extension of $\Lambda_P[2]$ and $\widetilde{\Lambda}_P^k[2]$ is a bundle of $\widetilde{\Lambda}_P[2]$-torsors.\\

We now determine the monodromy action on $\widetilde{\Lambda}_P[2]$. Recall as in Section \ref{secmonoact}, we have a natural map $\epsilon : \widetilde{\Lambda}_P[2] \to (\mathbb{Z}_2 B)^{\rm ev}$ which sends an equivariant line bundle $(M , \tilde{\sigma})$ to $\epsilon_1 b_1 + \dots + \epsilon_{2l} b_{2l}$, where $\tilde{\sigma}$ acts on $M_{u_i}$ by $(-1)^{\epsilon_i}$. Similar to Proposition \ref{propprymexact1}, we have a short exact sequence:
\begin{equation}\label{equshortexactprym4}
\xymatrix{
0 \ar[r] & \Lambda_\Sigma[2] \ar[r]^-{\pi^*} & \widetilde{\Lambda}_P[2] \ar[r]^-{\epsilon} &  (\mathbb{Z}_2 B)^{\rm ev} \ar[r] & 0.
}
\end{equation}
Choose a splitting of (\ref{equshortexactprym4}) which we may assume is compatible with our previously chosen splitting of (\ref{equshortexactprym1}). Thus we have an identification $\widetilde{\Lambda}_P[2] = \Lambda_\Sigma[2] \oplus (\mathbb{Z}_2 B)^{\rm ev}$. This allows us to identify $\Lambda_P[2]$ with the quotient $\widetilde{\Lambda}_P[2] / (b_o)$. The natural action of $S_{2l}$ on the set of branch points $B$ extends by linearity to $\mathbb{Z}_2 B$. Then:

\begin{proposition}
The image of the monodromy group in $GL( \widetilde{\Lambda}_P[2])$ is the set of matrices of the form
\begin{equation}
M = \left[ \begin{matrix} I_{2g} & A \\ 0 & \omega \end{matrix} \right],
\end{equation}
where $\omega \in S_{2l}$ is any permutation and $A$ is any endomorphism $A : (\mathbb{Z}_2 B)^{\rm ev} \to \Lambda_\Sigma[2]$ for which $A(b_o) = 0$.
\end{proposition}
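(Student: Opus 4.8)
The plan is to prove the two inclusions separately. First I would check that every transformation in the image of the monodromy has the displayed block form, using only the naturality of the exact sequence (\ref{equshortexactprym4}). Then I would show conversely that every such matrix is realised, reducing the problem to the description of the monodromy on $\Lambda_P[2]$ in Corollary \ref{cormono1} via the lifted swaps constructed in \textsection\ref{secfgc}.

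For the containment there are three points. The subspace $\pi^*(\Lambda_\Sigma[2]) \subset \widetilde{\Lambda}_P[2]$ is the image of a morphism of local systems whose source $\Lambda_\Sigma[2]$ is trivial, so the monodromy fixes it pointwise; in the chosen splitting $\widetilde{\Lambda}_P[2] = \Lambda_\Sigma[2] \oplus (\mathbb{Z}_2 B)^{\rm ev}$ this forces the upper-left block of any monodromy matrix to be $I_{2g}$ and the lower-left block to vanish. Next, the map $\epsilon$ of (\ref{equshortexactprym4}) is equivariant for the action of $\pi_1(\mathcal{A}^0_{\rm reg}, a_0)$ on the set $B$ of branch points, an action which factors through $S_{2l}$ (it is the composite of the braid-group homomorphism $\pi_1(\mathcal{A}^0_{\rm reg}) \to Br_{2l}(\Sigma, b_o)$ with the natural projection onto $S_{2l}$), so the induced transformation of the quotient $(\mathbb{Z}_2 B)^{\rm ev}$ is a permutation $\omega \in S_{2l}$, which is the lower-right block. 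Finally, the kernel of the projection $\widetilde{\Lambda}_P[2] \to \Lambda_P[2]$ is the two-element subgroup $\{0, b_o\}$; being monodromy-invariant with only the trivial automorphism, its nonzero element $b_o = (0, b_o)$ is fixed, and since a matrix of the above shape sends it to $(Ab_o, \omega b_o) = (Ab_o, b_o)$ we conclude $A(b_o) = 0$.

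For the reverse inclusion I would exhibit two families of monodromy elements. For each $i \neq j$ choose an embedded path $\gamma$ from $b_i$ to $b_j$ avoiding the other branch points; by the containment just proved, the lifted swap $\tilde{s}_\gamma$ (Proposition \ref{propgenerate}) acts as a matrix whose lower-right block is the branch-point transposition $\sigma_{ij}$, since a swap of $b_i, b_j$ literally transposes these two elements of $B$. These transformations thus realise every permutation $\omega \in S_{2l}$, up to a shear which will be corrected using the second family. To obtain all pure shears, take two embedded paths $\gamma, \gamma'$ joining the same pair $b_i, b_j$: the product $\tilde{s}_\gamma \tilde{s}_{\gamma'}$ has lower-right block $\sigma_{ij}^2 = I$, hence is a shear $A : (\mathbb{Z}_2 B)^{\rm ev} \to \Lambda_\Sigma[2]$ with $A(b_o) = 0$. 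Reducing modulo the subgroup $(b_o)$ and using Theorem \ref{thmswapmono} together with the computation (\ref{equaijx}), this reduced shear is $L_{ij}^x : W \to \Lambda_\Sigma[2]$, where $x \in \Lambda_\Sigma[2]$ is the $\Lambda_\Sigma[2]$-component of $c_\gamma + c_{\gamma'}$; here $\epsilon(c_\gamma) = \epsilon(c_{\gamma'}) = b_i + b_j$ by Proposition \ref{propjoin}, so indeed $c_\gamma + c_{\gamma'} \in \Lambda_\Sigma[2]$, and by the converse half of Proposition \ref{propjoin} the class $x$ may be taken to be any element of $\Lambda_\Sigma[2]$. As $i, j$ and $x$ vary the $L_{ij}^x$ span $\mathrm{Hom}(W, \Lambda_\Sigma[2])$, and precomposition with $(\mathbb{Z}_2 B)^{\rm ev} \to W$ identifies the set of $A$ with $A(b_o) = 0$ bijectively with $\mathrm{Hom}(W, \Lambda_\Sigma[2])$; hence the shears arising in this way exhaust $\{A : A(b_o) = 0\}$. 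Composing them with the transpositions produces every matrix of the stated form.

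The remaining generator, the loop $\tau$ of Definition \ref{deftau}, needs no separate treatment: it lies in the monodromy group, hence already has the displayed form, and the swaps alone generate the entire target group; moreover the mechanism behind the identity $\check{\beta}(\tau) = 0$ in Theorem \ref{thmbeta} shows that $\sigma$ in fact acts trivially on $\widetilde{\Lambda}_P[2]$. I expect the main obstacle to be the middle step of the reverse inclusion: one must match the action of a swap on $\widetilde{\Lambda}_P[2]$ with its action on $\Lambda_P[2]$ carefully enough to invoke the spanning property of the $L_{ij}^x$, while keeping track that the lower-right blocks occurring here are honest transpositions of $(\mathbb{Z}_2 B)^{\rm ev}$ and not merely their (possibly non-faithful) images in $GL(W)$ — a distinction that matters when $2l$ is small.
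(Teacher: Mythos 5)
Your proposal is correct and follows essentially the same route as the paper: it is the "straightforward extension of Corollary \ref{cormono1}" made explicit, with the permutation part pinned down by the equivariance of $\epsilon$ (the paper's observation that a swap of $b_i,b_j$ acts on $(\mathbb{Z}_2 B)^{\rm ev}$ as the transposition, seen via $\sigma$-equivariant line bundles) and the condition $A(b_o)=0$ coming from the monodromy-invariance of the kernel $\{0,b_o\}$ of $\widetilde{\Lambda}_P[2]\to\Lambda_P[2]$. The extra details you supply — realising all shears via products $\tilde{s}_\gamma\tilde{s}_{\gamma'}$ and identifying $\{A : A(b_o)=0\}$ with $\mathrm{Hom}(W,\Lambda_\Sigma[2])$ — are exactly what the paper leaves implicit.
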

\begin{proof}
This is a straightforward extension of Corollary \ref{cormono1}. All that needs to be checked is that the monodromy action arising from a swap of branch points $b_i$, $b_j$ acts on $(\mathbb{Z}_2 B)^{\rm ev}$ as the transposition of $i$ and $j$. But this is clearly seen to be the case by thinking of elements of $\widetilde{\Lambda}_P[2]$ as $\sigma$-equivariant line bundles on $S$.
\end{proof}

\subsection{Affine monodromy representations}\label{secmonoact3}

Having determined the monodromy actions on $\Lambda_P[2],$ $\Lambda_S[2],$ $\widetilde{\Lambda}_P[2]$, we now turn to their affine counterparts. Let $(\mathbb{Z}_2 B)^{\rm odd}$ be the complement $\mathbb{Z}_2 B - (\mathbb{Z}_B)^{\rm ev}$ and for any $k \in \mathbb{Z}$, let $(\mathbb{Z}_2 B)^k$ equal $(\mathbb{Z}_2 B)^{\rm ev}$ or $(\mathbb{Z}_2 B)^{\rm odd}$ according to whether $k$ is even or odd. Elements of $\Lambda_P^k[2]$ are line bundles $M$ which satisfy $\sigma(M) \simeq M$. Then, as in Section \ref{secmonoact}, there is a map $\epsilon : \Lambda_P^k[2] \to (\mathbb{Z}_2 B)/(b_o)$ defined as follows: choose an involutive lift $\tilde{\sigma} : M \to M$ of $\sigma$ and let $\epsilon(M) = \epsilon_1 b_1 + \dots \epsilon_{2l} b_{2l}$, where $\tilde{\sigma}$ acts on $M_{u_i}$ by $(-1)^{\epsilon_i}$. 

\begin{lemma}\label{lemlift1}
Let $M = \mathcal{O}(u_j)$. Then $\sigma(M) \simeq M$, since $\sigma(u_j) = u_j$. Let $\tilde{\sigma} : M \to M$ be the unique involutive isomorphism covering $\sigma$ and such that $\tilde{\sigma}$ acts as on $M_{u_j}$ as $-1$. Then for any $i \neq j$, we have that $\tilde{\sigma}$ acts on $M_{u_i}$ as the identity. Thus $\epsilon( (M , \tilde{\sigma} ) ) = b_j$.
\end{lemma}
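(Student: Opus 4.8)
The plan is to analyze the $\sigma$-equivariant structure of the line bundle $M = \mathcal{O}(u_j)$ directly, by examining what a lift $\tilde{\sigma}$ of $\sigma$ does to the fibres over the ramification points. First I would fix the canonical defining section: $M = \mathcal{O}(u_j)$ carries a tautological meromorphic section $\mathbf{1}$ with divisor $(\mathbf{1}) = u_j$, and conversely any isomorphism $\phi : \sigma^*M \to M$ is determined by how it relates the two sections $\mathbf{1}$ and $\sigma^*\mathbf{1}$; since $\sigma(u_j) = u_j$, the section $\sigma^*\mathbf{1}$ also has divisor $u_j$, so $\sigma^*\mathbf{1} = c \cdot \mathbf{1}$ for a nonzero constant $c$. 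Composing $\phi$ with its $\sigma$-pullback gives an automorphism of $M$, i.e. a scalar, and the involutivity condition $\tilde{\sigma}^2 = \mathrm{id}$ pins down the normalization up to the overall sign $\tilde{\sigma} \mapsto -\tilde{\sigma}$ already noted in Section~\ref{secmonoact}.

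The key computational step is to read off, for each ramification point $u_i$, the scalar by which $\tilde{\sigma}$ acts on the fibre $M_{u_i}$. For $i \neq j$, the section $\mathbf{1}$ is a local frame for $M$ near $u_i$ (it is nonvanishing there), and since $u_i$ is fixed by $\sigma$ with $\sigma^*\mathbf{1} = \mathbf{1}$ near $u_i$ (as $\mathbf{1}$ is the constant section $1$ away from $u_j$), the lift $\tilde{\sigma}$ must send this frame to itself, hence acts on $M_{u_i}$ as $+1$; this forces $\epsilon_i = 0$ for all $i \neq j$. At the point $u_j$ itself, by the normalization stipulated in the statement, $\tilde{\sigma}$ acts as $-1$, so $\epsilon_j = 1$. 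Therefore $\epsilon((M,\tilde{\sigma})) = \epsilon_1 b_1 + \dots + \epsilon_{2l} b_{2l} = b_j$, as claimed. One should also note for consistency that $\sum_i \epsilon_i = 1$, which is the expected parity: indeed $\deg M = 1$ is odd, matching the appearance of $(\mathbb{Z}_2 B)^{\mathrm{odd}}$ as the target of $\epsilon$ on $\Lambda_P^1[2]$ in Section~\ref{secmonoact3}, consistent with the earlier remark that $\epsilon(M,-\tilde\sigma) = \epsilon(M,\tilde\sigma) + b_o$.

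The only genuinely delicate point is the existence and uniqueness of the involutive lift $\tilde{\sigma}$ with the prescribed value at $u_j$: a priori $\sigma^*M \simeq M$ gives a lift only up to scalar, and one must check that the involutivity constraint, together with fixing the sign at $u_j$, determines $\tilde{\sigma}$ uniquely. This is the step I expect to be the main obstacle, though it is essentially the same bookkeeping used in Section~\ref{secmonoact} when defining $\epsilon$ on $\Lambda_P[2]$: among the two candidate involutions $\pm\tilde{\sigma}$ on the line bundle $M$ with $\sigma^*M \cong M$, exactly one acts as $-1$ on $M_{u_j}$, and this is the $\tilde\sigma$ of the statement. The rest of the argument is a local-frame computation at the ramification points and requires no further input beyond the fact that $\mathbf{1}$ trivializes $M$ away from $u_j$ and is $\sigma$-invariant there.
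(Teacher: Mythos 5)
Your overall strategy is the same as the paper's: both arguments hinge on the (up to scale unique) holomorphic section of $M=\mathcal{O}(u_j)$ and read off the action of $\tilde{\sigma}$ on the fibres $M_{u_i}$ from its action on that section. However, as written there is a genuine gap at exactly the point where the content of the lemma lies. You compute the signs at the two kinds of points using two \emph{a priori different} normalisations of the lift: for $i\neq j$ you use the lift that fixes the tautological section $\mathbf{1}$ (the ``canonical'' identification $\sigma^*\mathcal{O}(u_j)\simeq\mathcal{O}(u_j)$), and at $u_j$ you simply invoke the normalisation ``acts as $-1$'' from the statement. Since the two involutive lifts differ by a global sign, you must show these are the \emph{same} lift; otherwise the lift of the statement could be $-1$ times the one fixing $\mathbf{1}$, in which case it would act as $-1$ on every $M_{u_i}$ and you would get $\epsilon=b_o$ rather than $b_j$. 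Nothing in your write-up rules this out — note that the parity check $\sum_i\epsilon_i=1$ cannot be used here, since in the paper that parity statement for degree-$1$ bundles is \emph{deduced from} this lemma (see the remark following it).

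The missing step is a local computation at the ramification point $u_j$, which is how the paper closes the argument. In a local coordinate $z$ centred at $u_j$ one has $\sigma(z)=-z$, so $\sigma$ acts as $-1$ on $T_{u_j}S$. The section $s=\mathbf{1}$ vanishes to first order at $u_j$, so its $1$-jet there is a nonzero element of $M_{u_j}\otimes T^*_{u_j}S$. If $\tilde{\sigma}$ is the lift acting as $-1$ on $M_{u_j}$, it acts on $M_{u_j}\otimes T^*_{u_j}S$ as $(-1)\cdot(-1)=+1$, so $\tilde{\sigma}(s)$ and $s$ have the same $1$-jet at $u_j$; since $\tilde{\sigma}(s)=\pm s$ (the section space being one-dimensional), this forces $\tilde{\sigma}(s)=s$. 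Only now may you conclude that at each $u_i$ with $i\neq j$, where $s$ is a frame, $\tilde{\sigma}$ acts as $+1$. With this one paragraph inserted, your argument is complete and coincides with the paper's proof.
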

\begin{proof}
Let $s$ be a non-trivial holomorphic section of $M$. The space of holomorphic sections of $M$ is spanned by $s$, so $\tilde{\sigma}(s) = \pm s$. Since $s$ vanishes to first order at $u_j$ it is easy to see that in fact we must have $\tilde{\sigma}(s) = s$. For any $i \neq j$, we have that $s$ is non-vanishing at $u_i$, but $\tilde{\sigma}( s(u_i) ) = s(u_i)$. Thus $\tilde{\sigma}$ must act as the identity on $M_{u_i}$.
\end{proof}

\begin{remark}
Lemma \ref{lemlift1} implies that the map $\epsilon : \Lambda_P^k[2] \to (\mathbb{Z}_2 B)/(b_o)$ actually takes values in $(\mathbb{Z}_2 B)^k/(b_o)$.
\end{remark}

Let $W^1 = (\mathbb{Z}_2 B)^{\rm odd}/(b_o)$, which is an affine space modelled on $(\mathbb{Z}_2 B)^{\rm ev}/(b_o)$. Now choose splittings as in Proposition \ref{propsplit1}, so that we have identifications $\Lambda_P[2] = \Lambda_\Sigma[2] \oplus W$ and $\Lambda_S[2] = \Lambda_\Sigma[2] \oplus W \oplus \Lambda_\Sigma[2]$. By Lemma \ref{lemlift1}, we have $\epsilon(\Theta) = b_1$. We will identify $\Theta$ with the point $(0,b_1,0)$ in $\Lambda_\Sigma[2] \oplus W^1 \oplus \Lambda_\Sigma[2]$ and if $(a,b,c) \in \Lambda_\Sigma[2] \oplus W \oplus \Lambda_\Sigma[2]$ then we identify $(a,b,c)+\Theta$ with $(a,b+b_1,c) \in \Lambda_\Sigma[2] \oplus W^1 \oplus \Lambda_\Sigma[2]$. In this way, we have obtained identifications 
\begin{equation*}
\begin{aligned}
\Lambda_P^1[2] &= \Lambda_\Sigma[2] \oplus W^1, \\
\Lambda_S^1[2] &= \Lambda_\Sigma[2] \oplus W^1 \oplus \Lambda_\Sigma[2].
\end{aligned}
\end{equation*}

Let $b \in (\mathbb{Z}_2 B)^{\rm ev}$ and $b' \in (\mathbb{Z}_2 B)^{\rm odd}/(b_o) = W^1$. Then the pairing $(( b , b' ))$ is well-defined because $b_o$ is orthogonal to $(\mathbb{Z}_2 B)^{\rm ev}$. Similarly if $(a,b,c) \in \Lambda_\Sigma[2] \oplus (\mathbb{Z}_2 B)^{\rm ev} \oplus \Lambda_\Sigma[2]$ and $(a',b',c') \in \Lambda_\Sigma[2] \oplus W^1 \oplus \Lambda_\Sigma[2]$, we set:
\begin{equation*}
\langle (a,b,c) , (a',b',c') \rangle = \langle a , c' \rangle + ((b,b')) + \langle a, c' \rangle.
\end{equation*}

We now turn to the computation of the monodromy action on $\Lambda_S^1[2]$. First recall that $\Lambda_S^1[2] = \{ M \in Jac_1(S) \; | \; M^2 = \pi^*(A) \; \}$, where $A$ is a degree $1$ line bundle on $\Sigma$. As before, we take $A = \mathcal{O}(b_1)$ and set $N = \mathcal{O}(u_1) \in \Lambda_S^1[2]$. Recall from Section \ref{sectcc} that the cocycle $\check{\beta}$ is given by $\check{\beta}(g) = g.\Theta - \Theta$. Any $x \in \Lambda_S^1[2]$ can be written uniquely as $x = a + \Theta$, where $a \in \Lambda_S[2]$. Then the monodromy action of $g$ on $x$ has the form
\begin{equation*}
\begin{aligned}
g.x &= g.a + g.\Theta \\
&= g.a + \check{\beta}(g) + \Theta.
\end{aligned}
\end{equation*}
This can be viewed as an affine action $a \mapsto g.a + \check{\beta}(g)$. We determine this action.

\begin{proposition}\label{propaffmono1}
The monodromy action of $\tau$ as in Definition \ref{deftau} acts on $\Lambda_S^1[2]$ by $(a,b,c) \mapsto (a+c,b,c)$. Let $\gamma$ be a path in $\Sigma$ joining $b_i$ to $b_j$. Then the monodromy action of a lift of the swap along $\gamma$ acts on $\Lambda_S^1[2]$ as a Picard-Lefschetz transformation $s_{c_\gamma}$, that is:
\begin{equation*}
s_{c_\gamma}(x) = x + \langle c_\gamma , x \rangle c_\gamma.
\end{equation*}
\end{proposition}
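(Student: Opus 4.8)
The plan is to realise the monodromy action on $\Lambda_S^1[2]$, via the already-established identification $x = a + \Theta$ with $a \in \Lambda_S[2]$, as the composition of a \emph{linear} part (the monodromy on $\Lambda_S[2]$) and a \emph{translation} by the cocycle value $\check{\beta}(g)$, namely $g \cdot x = g\cdot a + \check{\beta}(g) + \Theta$. Both ingredients have been computed: the linear parts are given by Proposition \ref{proploopmono} (for $\tau$) and Theorem \ref{thmswapmono} (for a lift $\tilde{s}_\gamma$ of a swap), and the translations by Theorem \ref{thmbeta}. So the entire proof is an assembly of these three results, together with the explicit description of $\Theta$ as $(0,b_1,0) \in \Lambda_\Sigma[2] \oplus W^1 \oplus \Lambda_\Sigma[2]$ coming from Lemma \ref{lemlift1}.

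First I would treat $\tau$. By Proposition \ref{proploopmono} its linear action on $\Lambda_S[2]$ is $\sigma^*$, which by \eqref{equnorm1} (as computed in the proof of Lemma \ref{lemrefine}) is $(a,b,c) \mapsto (a+c,b,c)$, while Theorem \ref{thmbeta} gives $\check{\beta}(\tau) = 0$ (equivalently, $\sigma$ fixes $\Theta = \mathcal{O}(u_1)$ since $u_1$ is a ramification point). Hence the affine action of $\tau$ on $\Lambda_S^1[2] = \Lambda_\Sigma[2] \oplus W^1 \oplus \Lambda_\Sigma[2]$ is $(a,b,c) \mapsto (a+c,b,c)$, as stated.

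Next, for a lift $\tilde{s}_\gamma$ of a swap of $b_i,b_j$ along $\gamma$, Theorem \ref{thmswapmono} says the linear part on $\Lambda_S[2]$ is the Picard--Lefschetz transformation $a \mapsto a + \langle c_\gamma , a\rangle c_\gamma$, so the affine action is
\begin{equation*}
x = a + \Theta \;\longmapsto\; a + \langle c_\gamma , a \rangle c_\gamma + \check{\beta}(\tilde{s}_\gamma) + \Theta .
\end{equation*}
I must show this equals $s_{c_\gamma}(x) = x + \langle c_\gamma , x \rangle c_\gamma = a + \Theta + \big(\langle c_\gamma , a\rangle + \langle c_\gamma , \Theta \rangle\big)c_\gamma$, using that the pairing $\langle\,,\,\rangle$ between $\Lambda_S[2]$ and $\Lambda_S^1[2]$ is additive in the torsor variable. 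Comparing, it suffices to check that $\langle c_\gamma , \Theta \rangle\, c_\gamma = \check{\beta}(\tilde{s}_\gamma)$. Here the key computation is short: $c_\gamma \in \Lambda_P[2]$ has $\epsilon(c_\gamma) = b_i + b_j$ by Proposition \ref{propjoin} and $\epsilon(\Theta) = b_1$ by Lemma \ref{lemlift1}, so the pairing formula stated just before the proposition gives $\langle c_\gamma , \Theta\rangle = ((\,b_i + b_j\, ,\, b_1\,)) = (( b_i , b_1)) + (( b_j , b_1))$, which is $1$ exactly when $1 \in \{i,j\}$ and $0$ otherwise. This is precisely the case distinction for $\check{\beta}(\tilde{s}_\gamma)$ in Theorem \ref{thmbeta}, so $\langle c_\gamma,\Theta\rangle c_\gamma = \check{\beta}(\tilde{s}_\gamma)$ in all cases and the two formulas agree.

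The only subtleties, and the points I would be careful about, are purely bookkeeping: verifying that $s_{c_\gamma}(x) := x + \langle c_\gamma , x\rangle c_\gamma$ is a well-defined \emph{affine} map of the torsor $\Lambda_S^1[2]$ (equivalently, that it is equivariant for the linear Picard--Lefschetz transformation on $\Lambda_S[2]$, which is immediate from bi-additivity of the pairing), and consistently using the identification $\Lambda_S^1[2] = \Lambda_\Sigma[2] \oplus W^1 \oplus \Lambda_\Sigma[2]$ with $\Theta \leftrightarrow (0,b_1,0)$, so that the translation by $\check{\beta}(\tilde{s}_\gamma)$ is correctly interpreted. There is no substantial obstacle; once these identifications are pinned down the proposition follows by direct combination of Propositions \ref{proploopmono}, \ref{propjoin}, Lemma \ref{lemlift1} and Theorems \ref{thmswapmono}, \ref{thmbeta}.
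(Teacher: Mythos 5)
Your proposal is correct and follows essentially the same route as the paper: decompose the action on $\Lambda_S^1[2]$ as the linear monodromy on $\Lambda_S[2]$ plus translation by $\check{\beta}$, and observe that the case distinction in Theorem \ref{thmbeta} is exactly the term $\langle c_\gamma , \Theta \rangle c_\gamma = ((b_{ij},b_1))\,c_\gamma$ needed to turn the linear Picard--Lefschetz transformation of $a$ into the affine one of $a+\Theta$. The paper's proof is just a more condensed version of the same computation.
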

\begin{proof}
Let $\tau$ be the loop generated by the $\mathbb{C}^*$-action. The first statement of the proposition holds because $\check{\beta}(\tau) = 0$. Using Theorems \ref{thmswapmono} and \ref{thmbeta}, the monodromy action associated by a path $\gamma$ is given by:
\begin{equation*}
\begin{aligned}
\gamma( a +\Theta) = \gamma(a) + \check{\beta}(\gamma) + \Theta &= \begin{cases} a + \langle c_\gamma , a \rangle c_\gamma + \Theta & \text{if } ((b_{ij} , b_{1} )) = 0 \\  
a + \langle c_\gamma , a \rangle c_\gamma +c_\gamma + \Theta & \text{if } ((b_{ij},b_{1})) = 1 \end{cases}\\
& = a + \langle a + \Theta , c_\gamma \rangle c_\gamma + \Theta \\
&= s_{c_\gamma}( a+\Theta).
\end{aligned}
\end{equation*}
This shows that the affine monodromy action can be expressed as a Picard-Lefschetz transformation as claimed.
\end{proof}

Recall that we have defined the bundle of groups $\widetilde{\Lambda}_P[2]$ and the $\widetilde{\Lambda}_P[2]$-torsor $\widetilde{\Lambda}_P^1[2]$. We now determine the affine monodromy action on this space. Let $\check{\tilde{\beta}} \in H^1( \mathcal{A}_{\rm reg}^0 , \widetilde{\Lambda}_P[2] )$ be the class corresponding to the torsor $\widetilde{\Lambda}_P^1[2]$. Let $\gamma$ be a path in $\Sigma$ between branch points $b_i,b_j$ and $c_\gamma$ the corresponding class in $H^1(S , \mathbb{Z}_2)$. From Proposition \ref{propjoin}, it follows that we can uniquely lift $c_\gamma$ to an element of $\widetilde{\Lambda}_P[2]$ by requiring $\epsilon( c_\gamma ) = b_i + b_j \in (\mathbb{Z}_2 B)^{\rm ev}$.

\begin{proposition}\label{propbeta2}
Let $\tau$ be the loop in $\mathcal{A}_{\rm reg}^0$ generated by the $\mathbb{C}^*$-action. Then $\check{\tilde{\beta}}(\tau) = 0$. Let $\tilde{s}_\gamma \in \pi_1( \mathcal{A}_{\rm reg}^0 , a_0)$ be a lift of a swap of $b_i,b_j$ along the path $\gamma$. 
\begin{equation*}
\check{\tilde{\beta}}( \tilde{s}_\gamma ) = \begin{cases} 0 & \text{if } 1 \notin \{i,j\}, \\  
c_\gamma & \text{if } 1 \in \{i,j\}. \end{cases}
\end{equation*}
\end{proposition}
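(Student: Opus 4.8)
The plan is to mimic the proof of Theorem~\ref{thmbeta}, carrying along the extra datum that distinguishes $\widetilde{\Lambda}_P[2]$ from $\Lambda_P[2]$ — namely the involutive lift $\tilde\sigma$, equivalently the value of the homomorphism $\epsilon : \widetilde{\Lambda}_P[2] \to (\mathbb{Z}_2 B)^{\rm ev}$. As in Section~\ref{sectcc} I would first fix the origin $\Theta \in \widetilde{\Lambda}_P^1[2]$ given by $(\mathcal{O}(u_1),\tilde\sigma)$, where $\tilde\sigma$ is the unique involutive lift of $\sigma$ acting as $-1$ on the fibre over $u_1$; by Lemma~\ref{lemlift1} this has $\epsilon(\Theta)=b_1$. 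This choice of basepoint produces the cocycle representative $\check{\tilde{\beta}}(g) = g\cdot\Theta - \Theta$ taking values in the group $\widetilde{\Lambda}_P[2]$, and under the quotient $\widetilde{\Lambda}_P[2] \to \widetilde{\Lambda}_P[2]/(b_o) = \Lambda_P[2]$ it maps to the cocycle $\check{\beta}$ of Theorem~\ref{thmbeta}. Hence the image of $\check{\tilde{\beta}}(g)$ in $\Lambda_P[2]$ is already known, and it remains only to compute the extra $\mathbb{Z}_2$-information. For this I would use that $\epsilon$ detects the kernel of $\widetilde{\Lambda}_P[2] \to \Lambda_P[2]$: the non-trivial element of this kernel has $\epsilon$-value $b_o \neq 0$, so an element of $\widetilde{\Lambda}_P[2]$ whose image in $\Lambda_P[2]$ is a given class is pinned down uniquely by its $\epsilon$-value, exactly as in the normalisation $\epsilon(c_\gamma)=b_i+b_j$ chosen before the statement.

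For $\tau$: by Proposition~\ref{proploopmono} the monodromy is pullback by $\sigma$, which fixes $u_1$ and therefore carries $(\mathcal{O}(u_1),\tilde\sigma)$ to itself, so $\check{\tilde{\beta}}(\tau) = \sigma^*\Theta - \Theta = 0$; equivalently its image in $\Lambda_P[2]$ is $0$ by Theorem~\ref{thmbeta} and its $\epsilon$-value is $b_1 - b_1 = 0 \neq b_o$, which forces the trivial element.

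For $\tilde s_\gamma$ with $\gamma$ joining $b_i$ to $b_j$ I would run the same three-case analysis as in the proof of Theorem~\ref{thmbeta}: (i) $1 \notin \{i,j\}$, (ii) $i=1$, (iii) $j=1$. Lifting the loop $\tilde s_\gamma$ to a path $q(t)$ in $\widetilde{\Lambda}_P^1[2]$ with $q(0)=\Theta$, one has $\check{\tilde{\beta}}(\tilde s_\gamma) = q(1)-q(0)$, whose underlying class in $\Lambda_P[2]$ is the one computed in Theorem~\ref{thmbeta} (namely $0$ in case (i) and $c_\gamma$ in cases (ii), (iii)). To identify the lift I would track $\epsilon(q(t))$: along $q(t)$ the involutive structure varies continuously, so by Lemma~\ref{lemlift1} applied fibrewise it acts as $-1$ precisely on the fibre over the ramification point lying above the branch point which starts at $b_1$, and trivially on all other ramification fibres — the pulled-back corrections $\pi^*(\Gamma(t)^*)$ from the proof of Theorem~\ref{thmbeta} contribute $0$ to $\epsilon$. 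In case (i) this branch point is held fixed, so $u_1(1)=u_1$, $\epsilon(q(1))=b_1$, hence $\epsilon(\check{\tilde{\beta}}(\tilde s_\gamma))=0$ and, since its underlying class vanishes, $\check{\tilde{\beta}}(\tilde s_\gamma)=0$. In cases (ii) and (iii) the swap drags this branch point to the opposite endpoint of $\gamma$, so $u_1(1)$ is the ramification point over $b_j$ (resp.\ $b_i$), giving $\epsilon(q(1))=b_j$ (resp.\ $b_i$) and hence $\epsilon(\check{\tilde{\beta}}(\tilde s_\gamma))=b_i+b_j$; combined with the fact that the underlying class is $c_\gamma$, this is exactly the distinguished lift of $c_\gamma$, so $\check{\tilde{\beta}}(\tilde s_\gamma)=c_\gamma$.

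The only step that is not pure bookkeeping is this continuity argument: one must check that along the lifted path $q(t)$ the involutive lift indeed follows the moving ramification point — i.e.\ that its sign on that fibre is locally constant, hence stays $-1$ — and that it picks up no unexpected sign on the fixed ramification fibres or on the pulled-back factors. Granting this, everything else reduces to the additivity of $\epsilon$ together with Theorem~\ref{thmbeta} and Lemma~\ref{lemlift1}.
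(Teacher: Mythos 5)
Your proposal is correct and follows essentially the same route as the paper's proof: fix the origin $\tilde{\Theta}=(\mathcal{O}(u_1),\tilde\sigma)$ with $\epsilon(\tilde\Theta)=b_1$, reduce to the computation of Theorem \ref{thmbeta} modulo the kernel of $\widetilde{\Lambda}_P[2]\to\Lambda_P[2]$, and pin down the lift by tracking $\epsilon$ along the lifted path, using that pulled-back line bundles carry the canonical lift acting trivially over the ramification points. The paper makes the continuity step you flag explicit by identifying $\tilde q(1)$ as $\tilde{\mathcal{O}}(u_j)\otimes\tilde{\mathcal{O}}(u_1)^*\otimes\pi^*(\Gamma(1)^*)$, which is exactly your bookkeeping.
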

\begin{proof}
This is a straightforward refinement of Theorem \ref{thmbeta}. Recall that we have taken $\Theta = \mathcal{O}(u_1)$ as an origin in $\Lambda_P^1[2]$. We lift this to an origin $\tilde{\Theta} = (\Theta , \tilde{\sigma} ) \in \widetilde{\Lambda}_P^1[2]$ by letting $\tilde{\sigma}$ be the lift of $\sigma$ acting as $-1$ on $\Theta_{u_1}$. Then by Lemma \ref{lemlift1}, we find $\epsilon( \tilde{\Theta} ) = b_1$. We then have $\check{\tilde{\beta}}(\tau) = 0$, because $\tilde{\sigma}$ is an involution covering $\sigma$, so $\sigma^*( \Theta , \tilde{\sigma}) \simeq (\Theta , \tilde{\sigma})$.\\

Let $\tilde{s}_\gamma \in \pi_1( \mathcal{A}_{\rm reg}^0 , a_0)$ be the lift of a swap along the path $\gamma$. Consider $\tilde{s}_\gamma$ as a loop in $\mathcal{A}_{\rm reg}^0$ based at $a_0$. Recall that we had defined $q : [0,1] \to \Lambda_P^1[2]$ as the unique lift of $\tilde{s}_\gamma$ to a path in $\Lambda_P^1[2]$ with $q(0) = \Theta$, so $q(1) = \check{\beta}(\gamma) q(0)$. Similarly let $\tilde{q}(t)$ be the unique lift of $q(t)$ to a path in $\widetilde{\Lambda}_P^1[2]$ starting at $\tilde{\Theta}$. Suppose that $\gamma$ is a path from $b_i$ to $b_j$ and recall there were three cases: (i) $1 \notin \{ i,j \}$, (ii) $i=1$ and (iii) $j=1$.\\

In case (i), we had $q(t) = q(0)$, hence we also have $\tilde{q}(t) = \tilde{q}(0)$ and $\check{\tilde{\beta}}(\gamma) = 0$. In case $(ii)$ we had
\begin{equation*}
\check{\beta}(\gamma) = q(1) \otimes q(0)^* = \mathcal{O}( u_j ) \otimes \mathcal{O}(u_1)^* \otimes \pi^*( \Gamma(1)^* ).
\end{equation*}
Correspondingly, we obtain
\begin{equation*}
\check{\tilde{\beta}}(\gamma) = \tilde{q}(1) \otimes \tilde{q}(0)^* = \tilde{\mathcal{O}}( u_j ) \otimes \tilde{\mathcal{O}}(u_1)^* \otimes \pi^*( \Gamma(1)^* ),
\end{equation*}
where $\tilde{\mathcal{O}}(u_j)$ denotes $\mathcal{O}(u_j)$ together with the involutive lift of $\sigma$ which acts as $-1$ over $u_j$. Thus $\epsilon( \tilde{\mathcal{O}}(u_j) ) = b_j$. Note also that the pullback of any line bundle on $\Sigma$ comes with a canonical involutive lift of $\sigma$ (which acts trivially over the fixed points). Therefore $\epsilon( \check{\tilde{\beta}}(\gamma) ) = b_1 + b_j = \epsilon( c_\gamma )$, proving the proposition in this case. Case (iii) is similar.
\end{proof}

\begin{proposition}
The monodromy action of $\tau$ as in Definition \ref{deftau} acts on $\widetilde{\Lambda}_P^1[2]$ trivially. Let $\gamma$ be a path in $\Sigma$ joining $b_i$ to $b_j$. Then the monodromy action of a lift of the swap along $\gamma$ acts on $\widetilde{\Lambda}_P^1[2]$ as a Picard-Lefschetz transformation $s_{c_\gamma}$, that is:
\begin{equation*}
s_{c_\gamma}(x) = x + \langle c_\gamma , x \rangle c_\gamma.
\end{equation*}
\end{proposition}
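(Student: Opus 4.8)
The plan is to deduce the monodromy action on $\widetilde{\Lambda}_P^1[2]$ from the already-established actions on its two pieces of data: the action on the underlying torsor $\Lambda_P^1[2]$ (computed in Proposition \ref{propaffmono1}, at least in its $\Lambda_S^1[2]$-incarnation, with the restriction to the Prym part inherited from Theorem \ref{thmaffine2}), and the action on the extra $\mathbb{Z}_2$ recording the sign of the involutive lift $\tilde\sigma$, which is governed by the cocycle $\check{\tilde\beta}$ computed in Proposition \ref{propbeta2}. Concretely, one fixes the origin $\tilde\Theta = (\Theta,\tilde\sigma)$ with $\Theta = \mathcal{O}(u_1)$ as in the proof of Proposition \ref{propbeta2}, writes any $x \in \widetilde{\Lambda}_P^1[2]$ uniquely as $x = a + \tilde\Theta$ with $a \in \widetilde{\Lambda}_P[2]$, and computes $g\cdot x = g\cdot a + \check{\tilde\beta}(g) + \tilde\Theta$, exactly as in Proposition \ref{propaffmono1}. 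This reduces the statement to a bookkeeping computation with the formulas of Proposition \ref{propbeta2} and the linear monodromy action on $\widetilde{\Lambda}_P[2]$ described in Section \ref{secmonoact2}.

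First I would handle $\tau$: since $\check{\tilde\beta}(\tau) = 0$ by Proposition \ref{propbeta2}, the affine action of $\tau$ on $\widetilde{\Lambda}_P^1[2]$ is just the linear action of $\tau$ on $\widetilde{\Lambda}_P[2]$, and by Proposition \ref{proploopmono} together with (\ref{equnorm1}) this is $\sigma^* = $ identity on the Prym part (indeed $\sigma^*$ fixes $\Lambda_P$ pointwise, since for $M$ with $\sigma^*(M) = M^*$ and $M^2 = \mathcal{O}$ we get $\sigma^*(M) = M$); hence $\tau$ acts trivially on $\widetilde{\Lambda}_P^1[2]$. Next I would treat a lift $\tilde s_\gamma$ of a swap of $b_i,b_j$ along $\gamma$. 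The linear monodromy of $\tilde s_\gamma$ on $\widetilde{\Lambda}_P[2]$ is the Picard–Lefschetz transvection $s_{c_\gamma}$ refined by Theorem \ref{thmswapmono} (the Dehn twist around $l_\gamma$ still makes sense on $S$ and its action on the $\sigma$-equivariant structure is the evident one, so on $\widetilde{\Lambda}_P[2]$ it is again $x \mapsto x + \langle c_\gamma, x\rangle c_\gamma$ with $c_\gamma$ the canonical lift having $\epsilon(c_\gamma) = b_i + b_j$). Then, just as in the display in the proof of Proposition \ref{propaffmono1}, I compute
\begin{equation*}
\tilde s_\gamma(a + \tilde\Theta) = s_{c_\gamma}(a) + \check{\tilde\beta}(\gamma) + \tilde\Theta
= a + \langle c_\gamma, a\rangle c_\gamma + \langle c_\gamma, \tilde\Theta\rangle c_\gamma + \tilde\Theta
= s_{c_\gamma}(a + \tilde\Theta),
\end{equation*}
where the crucial point is that $\langle c_\gamma, \tilde\Theta \rangle = ((\,\epsilon(c_\gamma), \epsilon(\tilde\Theta)\,)) = ((b_i + b_j, b_1)) = 1$ iff $1 \in \{i,j\}$, which matches exactly the case distinction of Proposition \ref{propbeta2} for $\check{\tilde\beta}(\gamma)$ (namely $0$ if $1 \notin \{i,j\}$ and $c_\gamma$ if $1 \in \{i,j\}$). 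So the two correction terms coincide and the affine action collapses to the transvection $s_{c_\gamma}$ on $\widetilde{\Lambda}_P^1[2]$.

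The only genuinely delicate point — and where I would be most careful — is verifying that the pairing $\langle c_\gamma, \tilde\Theta\rangle$ is computed by the formula $\langle(a,b,c),(a',b',c')\rangle = \langle a,c'\rangle + ((b,b')) + \langle c,a'\rangle$ transported to the affine/extended setting $\Lambda_\Sigma[2] \oplus W^1 \oplus \Lambda_\Sigma[2]$ and $\Lambda_\Sigma[2] \oplus (\mathbb{Z}_2 B)^{\rm ev} \oplus \Lambda_\Sigma[2]$, i.e. that the well-definedness arguments of Section \ref{secmonoact3} (where it was noted $b_o \perp (\mathbb{Z}_2 B)^{\rm ev}$, so $((b,b'))$ makes sense for $b \in (\mathbb{Z}_2 B)^{\rm ev}$, $b' \in W^1$) indeed give the value $((b_i+b_j, b_1))$ used above, and that $c_\gamma$ lies in the $W$-summand (which it does, being in $\Lambda_P[2]$, with zero $\Lambda_\Sigma[2]$-component after the chosen splitting when $\gamma$ is taken inside a disc as in Theorem \ref{thmswapmono}). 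Once that identification is in place the proof is a one-line matching of cases against Proposition \ref{propbeta2}, and I would write it in essentially the three-line display above, closing with the remark that the result is the natural lift of Proposition \ref{propaffmono1} along the $\mathbb{Z}_2$-extension $\widetilde{\Lambda}_P[2] \to \Lambda_P[2]$.
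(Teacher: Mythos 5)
Your proposal is correct and follows exactly the route the paper takes: the paper's proof of this proposition is literally the one-line remark that it is proved in the same way as Proposition \ref{propaffmono1}, i.e.\ by writing $x = a + \tilde\Theta$, computing $g\cdot x = g\cdot a + \check{\tilde\beta}(g) + \tilde\Theta$, and matching the correction term $\check{\tilde\beta}(\tilde s_\gamma)$ from Proposition \ref{propbeta2} against $\langle c_\gamma,\tilde\Theta\rangle c_\gamma$ via $\epsilon(\tilde\Theta) = b_1$ and $\epsilon(c_\gamma) = b_i + b_j$. Your write-up just makes explicit the details the paper leaves implicit, including the well-definedness of the pairing between $(\mathbb{Z}_2 B)^{\rm ev}$ and $W^1$.
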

\begin{proof}
This is proved in exactly the same way as Proposition \ref{propaffmono1}.
\end{proof}

\section{Real twisted Higgs bundles and monodromy}\label{secrthbm}

\subsection{Real twisted Higgs bundles}

In Section \ref{secthb}, we defined twisted Higgs bundles moduli spaces $\mathcal{M}(r,d,L)$, $\check{\mathcal{M}}(r,D,L)$, $\hat{\mathcal{M}}(r,d,L)$ corresponding to the complex groups $GL(2,\mathbb{C}), SL(2,\mathbb{C})$ and $PGL(2,\mathbb{C}) = PSL(2,\mathbb{C})$. We now consider real analogues of these moduli spaces. In general, for any real reductive Lie group $G$, one may define $L$-twisted $G$-Higgs bundles and construct a moduli space of polystable $G$-Higgs bundles \cite{ggm0}. Here we recall the definitions in the cases $G = GL(2,\mathbb{R}), SL(2,\mathbb{R}), PGL(2,\mathbb{R})$ and $PSL(2,\mathbb{R})$.

\begin{definition}
\begin{itemize} We have:
\item[(1)]{An {\em $L$-twisted $GL(2,\mathbb{R})$-Higgs bundle} is a pair $(E,\Phi)$, where $E$ is a rank $2$ holomorphic vector bundle with orthogonal structure $\langle \; , \; \rangle : E \otimes E \to \mathbb{C}$ and $\Phi$ is a holomorphic section of $L \otimes End(E)$ which is symmetric, i.e. $\langle \Phi u , v \rangle = \langle u , \Phi v \rangle$.}
\item[(2)]{An {\em $L$-twisted $SL(2,\mathbb{R})$-Higgs bundle} is a triple $(N,\beta,\gamma)$, where $N$ is a holomorphic line bundle, $\beta \in H^0(\Sigma , N^2L)$ and $\gamma \in H^0(\Sigma , N^{-2}L)$.}
\item[(3)]{An {\em $L$-twisted $PGL(2,\mathbb{R})$-Higgs bundle} is an equivalence class of triple $(E,\Phi , A)$, where $E$ is a rank $2$ holomorphic vector bundle equipped with a symmetric, non-degenerate bilinear pairing $\langle \; , \; \rangle : E \otimes E \to A$ valued in a line bundle $A$ and $\Phi$ is a holomorphic section of $L \otimes End(E)$ which is trace-free and symmetric, i.e. $\langle \Phi u , v \rangle = \langle u , \Phi v \rangle$. Two triples $(E,\Phi,A),(E',\Phi',A')$ are considered equivalent if there is a holomorphic line bundle $B$ such that $(E',\Phi',A') = (E \otimes B , \Phi' \otimes Id , A \otimes B^2 )$ with the induced pairing $(E\otimes B) \otimes (E \otimes B) \to A \otimes B^2$.}
\item[(4)]{An {\em $L$-twisted $PSL(2,\mathbb{R})$-Higgs bundle} is an equivalence class of quadruple $(N_1,N_2,\beta,\gamma)$, where $N_1,N_2$ are holomorphic line bundles, $\beta \in H^0(\Sigma , N_1 N_2^* L )$ and $\gamma \in H^0( \Sigma , N_2 N_1^* L)$. Two quadruples $(N_1,N_2,\beta,\gamma),(N_1',N_2',\beta',\gamma')$ are considered equivalent if there is a holomorphic line bundle $B$ such that $(N_1',N_2',\beta',\gamma') = (N_1B , N_2B , \beta , \gamma)$.}
\end{itemize}
\end{definition}

\begin{remark}
We have the following relations between Higgs bundles for various real and complex groups:
\begin{itemize}
\item[(1)]{A $GL(2,\mathbb{R})$-Higgs bundle $(E,\Phi)$ is in a natural way a $GL(2,\mathbb{C})$-Higgs bundle.}
\item[(2)]{An $SL(2,\mathbb{R})$-Higgs bundle $(N,\beta,\gamma)$ determines a $GL(2,\mathbb{R})$-Higgs bundle $( E , \Phi )$, where $E = N \oplus N^*$ equipped with the natural pairing of $N$ and $N^*$, and $\Phi = \left[ \begin{matrix} 0 & \beta \\ \gamma & 0 \end{matrix} \right]$. Note that $(E,\Phi)$ constructed in this manner is trace-free of trivial determinant so can also be thought of as an $SL(2,\mathbb{C})$-Higgs bundle.}
\item[(3)]{Note that $(N,\beta,\gamma)$ and $(N^* , \gamma , \beta)$ define the same underlying $GL(2,\mathbb{R})$-Higgs bundle, but are generally distinct as $SL(2,\mathbb{R})$-Higgs bundles.}
\item[(4)]{A $PGL(2,\mathbb{R})$-Higgs bundle $(E,\Phi, A)$ can be considered as a $PGL(2,\mathbb{C})$-Higgs bundle $(E,\Phi)$.}
\item[(5)]{A $PSL(2,\mathbb{R})$-Higgs bundle $(N_1,N_2,\beta,\gamma)$ determines a $PGL(2,\mathbb{R})$-Higgs bundle $(E , \Phi , A)$, where $A = N_1N_2$, $E = N_1 \oplus N_2$ equipped with the natural $A$-valued pairing of $N_1$ and $N_2$, and $\Phi = \left[ \begin{matrix} 0 & \beta \\ \gamma & 0 \end{matrix} \right]$.}
\item[(6)]{Note that $(N_1,N_2,\beta,\gamma)$ and $(N_2,N_1,\gamma,\beta)$ define the same underlying $PGL(2,\mathbb{R})$-Higgs bundle.}
\end{itemize}
\end{remark}

As in \cite{ggm}, one may introduce notions of stability, semistability and polystability and construct moduli spaces of polystable $L$-twisted Higgs bundles for real reductive groups. We recall these definitions for the relevant groups.
\begin{definition} We have the following definitions:
\begin{itemize}
\item[(1)]{An $L$-twisted $GL(2,\mathbb{R})$-Higgs bundle $(E,\Phi)$ is {\em stable} (resp. {\em semistable}) if for any $\Phi$-invariant isotropic line subbundle $N \subset E$ we have $deg(N) < 0$ (resp. $deg(N) \le 0$). We say $(E,\Phi)$ is {\em polystable} if either (i) $(E,\Phi)$ is stable, or (ii) $\Phi = \alpha . Id$, for some $\alpha \in H^0(\Sigma , K)$ and $E = N \oplus N^*$ for a degree $0$ line bundle $N$, where the orthogonal structure on $E$ is the dual pairing of $N$ and $N^*$.}
\item[(2)]{An $L$-twisted $SL(2,\mathbb{R})$-Higgs bundle $(N,\beta,\gamma)$ is {\em stable} (resp. {\em semistable, polystable}) if the associated $GL(2,\mathbb{R})$-Higgs bundle is stable (resp. semistable, polystable).}
\item[(3)]{An $L$-twisted $PGL(2,\mathbb{R})$-Higgs bundle represented by $(E,\Phi , A)$ is {\em stable} (resp. {\em semistable}) if for any $\Phi$-invariant isotropic line subbundle $N \subset E$ we have $deg(N) < deg(A)/2$ (resp. $deg(N) \le deg(A)/2$). We say $(E,\Phi,A)$ is {\em polystable} if either (i) $(E,\Phi , A)$ is stable, or (ii) $\Phi = 0$ and $E = N_1 \oplus N_2$, where $deg(N_1) = deg(N_2) = deg(A)/2$, $A = N_1N_2$ and the orthogonal structure on $E$ is the $A$-valued pairing of $N_1$ and $N_2$.}
\item[(4)]{An $L$-twisted $PSL(2,\mathbb{R})$-Higgs bundle is {\em stable} (resp. {\em semistable, polystable}) if the associated $PGL(2,\mathbb{R})$-Higgs bundle is stable (resp. semistable, polystable).}
\end{itemize}
\end{definition}

According to these definitions, we can associate to any semistable Higgs bundle an associated polystable Higgs bundle. This defines a notion of $S$-equivalence and allows us to define moduli spaces of $S$-equivalence classes of semistable real Higgs bundles. Equivalently, these may be defined as moduli spaces of polystable real Higgs bundles:
\begin{definition}
We define the following moduli spaces:
\begin{enumerate}
\item{Let ${}^{\mathbb{R}} \mathcal{M}(L)$ denote the moduli space of polystable $L$-twisted $GL(2,\mathbb{R})$-Higgs bundles. We futher let ${}^{\mathbb{R}} \mathcal{M}^0(L)$ denote the moduli space of trace-free polystable $L$-twisted $GL(2,\mathbb{R})$-Higgs bundles.}
\item{Let ${}^{\mathbb{R}} \check{\mathcal{M}}(L)$ denote the moduli space of polystable $L$-twisted $SL(2,\mathbb{R})$-Higgs bundles.}
\item{Let ${}^{\mathbb{R}} \hat{\mathcal{M}}(d,L)$ denote the moduli space of polystable $L$-twisted $PGL(2,\mathbb{R})$-Higgs bundles with fixed value of $d$, where $d \in \mathbb{Z}_2$ is the mod $2$ degree of the associated $PGL(2,\mathbb{C})$-Higgs bundle.}
\item{Let ${}^{\mathbb{R}} \widetilde{\mathcal{M}}(d,L)$ denote the moduli space of polystable $L$-twisted $PSL(2,\mathbb{R})$-Higgs bundles with fixed value of $d$, where $d \in \mathbb{Z}_2$ is the mod $2$ degree of the associated $PGL(2,\mathbb{C})$-Higgs bundle.}
\end{enumerate}
\end{definition}

Under the natural map taking a twisted Higgs bundle for a real group to the corresponding complex group, we see that the conditions of semistability and polystability are preserved. Therefore we have natural maps from the moduli spaces of real Higgs bundles to the corresponding moduli spaces of complex Higgs bundles, namely:
\begin{itemize}
\item[(1)]{${}^{\mathbb{R}} \mathcal{M}(L) \to \mathcal{M}(0,L)$, corresponding to $GL(2,\mathbb{R}) \to GL(2,\mathbb{C})$,}
\item[(2)]{${}^{\mathbb{R}} \mathcal{M}^0(L) \to \mathcal{M}^0(0,L)$, corresponding to $GL(2,\mathbb{R}) \to GL(2,\mathbb{C})$ for trace-free Higgs bundles,}
\item[(3)]{${}^{\mathbb{R}} \check{\mathcal{M}}(L) \to \check{\mathcal{M}}(\mathcal{O} , L)$, corresponding to $SL(2,\mathbb{R}) \to SL(2,\mathbb{C})$,}
\item[(4)]{${}^{\mathbb{R}} \hat{\mathcal{M}}(d,L) \to \hat{\mathcal{M}}(d,L)$, corresponding to $PGL(2,\mathbb{R}) \to PGL(2,\mathbb{C})$,}
\item[(5)]{${}^{\mathbb{R}} \widetilde{\mathcal{M}}(d,L) \to \hat{\mathcal{M}}(d,L)$, corresponding to $PSL(2,\mathbb{R}) \to PGL(2,\mathbb{C})$.}
\end{itemize}
We then define the regular loci ${}^{\mathbb{R}} \mathcal{M}_{\rm reg}(L), {}^{\mathbb{R}} \mathcal{M}^0_{\rm reg}(L), {}^{\mathbb{R}} \check{\mathcal{M}}_{\rm reg}(L), {}^{\mathbb{R}} \hat{\mathcal{M}}_{\rm reg}(d,L)$ and ${}^{\mathbb{R}} \widetilde{\mathcal{M}}_{\rm reg}(d,L)$ to be the open subsets in the real moduli spaces whose underlying complex Higgs bundle maps to $\mathcal{A}_{\rm reg}$ under the Hitchin map.

\subsection{Spectral data and monodromy for real Higgs bundles}\label{secsdmr}

In what follows we will assume that $l = deg(L)$ is even. We then fix a choice of a line bundle $L^{1/2}$ on $\Sigma$ whose square is $L$. Let $a_0 \in \mathcal{A}_{\rm reg}^0(L)$ and $\pi : S \to \Sigma$ the corresponding spectral curve. Given a line bundle $M \in Jac_k(S)$, we write $M = M_0 \otimes \pi^*(L^{1/2})$, where $M_0 \in Jac_{k-l}(S)$. As usual the Higgs bundle $(E,\Phi)$ associated to $M$ is given by $E = \pi_*(M) = \pi_*( M_0 \otimes \pi^*(L^{1/2}))$ and $\Phi$ is obtained from the tautological section $\lambda : M \to M \otimes \pi^*(L)$.

\begin{proposition}\label{proprsd}
Under the spectral data construction sending $M_0 \in Pic(S)$ to $E = \pi_*(M \otimes \pi^*(L^{1/2}))$, we have that real Higgs bundles lying over $a_0$ correspond to the following data:
\begin{itemize}
\item[(1)]{For $GL(2,\mathbb{R})$, these are line bundles $M_0 \in Jac(S)$ such that $M_0^2 = \mathcal{O}$, i.e. the space $\Lambda_S[2]$.}
\item[(2)]{For $SL(2,\mathbb{R})$, these are line bundles $M_0 \in Jac(S)$ such that $M_0^2 = \mathcal{O}$, together with an involutive automorphism $\tilde{\sigma} : M_0 \to M_0$ covering $\sigma$, i.e. the space $\widetilde{\Lambda}_P[2]$.}
\item[(3)]{For $PGL(2,\mathbb{R})$, these are line bundles $M_0 \in Pic(S)$ such that $M_0^2 = \pi^*(A)$, for some $A \in Pic(\Sigma)$ modulo $M_0 \mapsto M_0 \otimes \pi^*(B)$, $B \in Pic(\Sigma)$. This space is isomorphic to $\left( \Lambda_S^0[2] \oplus \Lambda_S^1[2] \right) / \pi^*\Lambda_\Sigma[2]$.}
\item[(4)]{For $PSL(2,\mathbb{R})$, these are line bundles $M_0 \in Pic(S)$, together with an involutive automorphism $\tilde{\sigma} : M_0 \to M_0$ covering $\sigma$, modulo $M_0 \mapsto M_0 \otimes \pi^*(B)$, $B \in Pic(\Sigma)$. This space is isomorphic to $\left( \widetilde\Lambda_P^0[2] \oplus \widetilde{\Lambda}_P^1[2] \right) / \pi^* \Lambda_\Sigma[2]$.}
\end{itemize}
\end{proposition}
\begin{proof}
Let $(E,\Phi)$ be a $GL(2,\mathbb{C})$-Higgs bundle associated to the line bundle $M_0$. Thus $E = \pi_*( M_0 \otimes \pi^*(L^{1/2}))$ and $\Phi$ is obtained from $\lambda : M_0 \otimes \pi^*(L^{1/2}) \to M_0\otimes \pi^*( L^{3/2} )$. If $M_0^2  = \mathcal{O}$ then $M_0$ has an orthogonal structure. As in \cite{sch0}, it follows by relative duality that $E$ has an orthogonal structure. Moreover, $\Phi$ is clearly symmetric with this orthogonal structure, so we have obtained a $GL(2,\mathbb{R})$-Higgs bundle. Conversely, if $(E,\Phi)$ is a $GL(2,\mathbb{R})$-Higgs bundle, then the orthogonal structure on $E$ gives an isomorphism $(E , \Phi) \simeq (E^* , \Phi^t)$. In turn this implies an isomorphism $M_0\simeq M_0^*$ of the associated line bundle, since $M_0$ is the line bundle associated to $(E^*,\Phi^t)$.\\

Let $(E,\Phi)$ be an $SL(2,\mathbb{C})$-Higgs bundle associated to the line bundle $M_0$. Thus $\sigma^*(M_0) \simeq M_0^*$. If $M_0^2 = \mathcal{O}$ then we have $\sigma^*(M_0) \simeq M_0$. Let $\tilde{\sigma} : M_0 \to M_0$ be an involution covering $\sigma$. Then $\tilde{\sigma}$ induces an involution $\tilde{\sigma}$ on $E$. Let $E = L_+ \oplus L_-$ be the decomposition of $E$ into $+1$ and $-1$ eigenspaces of $\tilde{\sigma}$. It is easy to see that $L_+,L_-$ are line bundles on $\Sigma$. Moreover, $M_0^2 = \mathcal{O}$, so as in the $GL(2,\mathbb{R})$ case this determines an orthogonal structure on $E$. Now $\Phi$ is symmetric but $\sigma^*(\lambda) = -\lambda$, so it must be that $\tilde{\sigma}$ is skew-symmetric. Hence $L_+,L_-$ are isotropic subbundles and the orthogonal structure on $E$ gives a dual pairing. We set $N = L_+$, then $L_- = N^*$ and $E = N \oplus N^*$. Further, since $\sigma^*(\lambda) = -\lambda$, it follows that $\tilde{\sigma}$ and $\Phi$ anti-commute so that $\Phi$ has the form $\Phi = \left[ \begin{matrix} 0 & \beta \\ \gamma & 0 \end{matrix} \right]$, for sections $\beta,\gamma$ of $N^2L,N^{-2}L$. So the condition $M_0^2 = \mathcal{O}$ together with a choice of involutive lift $\tilde{\sigma}$ of $\sigma$ determines an $SL(2,\mathbb{R})$-Higgs bundle $(N,\beta , \gamma)$. Conversely, given $(N,\beta,\gamma)$ we construct the $SL(2,\mathbb{C})$-Higgs bundle $(E,\Phi)$. Let $M_0$ be the associated line bundle. The orthogonal structure on $E$ gives $M_0^2 = \mathcal{O}$. Let $\tilde{\sigma}$ be the involution on $E = N \oplus N^*$ which acts as $1$ on $N$ and $-1$ on $N^*$. Then $\tilde{\sigma}$ determines an involutive lift of $\tilde{\sigma} : M_0 \to M_0$ of $\sigma$ and hence a pair $(M_0,\tilde{\sigma})$.\\

This completes the proof in the $GL(2,\mathbb{R})$ and $SL(2,\mathbb{R})$ cases. The $PGL(2,\mathbb{R})$ and $PSL(2,\mathbb{R})$ cases are very similar so we omit the details.
\end{proof}

\begin{remark}\label{remfibres}
Choosing splittings of the local systems as in Sections \ref{secmonoact}, \ref{secmonoact2}, \ref{secmonoact3}, we can identify the regular fibres of the various moduli spaces of real Higgs bundles as the following monodromy representations:
\begin{itemize}
\item[(1)]{For ${}^{\mathbb{R}} \mathcal{M}^0(L)$, the representation is $\Lambda_S[2] = \Lambda_\Sigma[2] \oplus (\mathbb{Z}_2 B)^{\rm ev}/(b_o) \oplus \Lambda_\Sigma[2]$.}
\item[(2)]{For ${}^{\mathbb{R}} \check{\mathcal{M}}(L)$, the representation is $\widetilde{\Lambda}_P[2] = \Lambda_\Sigma[2] \oplus (\mathbb{Z}_2 B)^{\rm ev}$.}
\item[(3)]{For ${}^{\mathbb{R}} \hat{\mathcal{M}}(d,L)$, the representation is $\Lambda_S^d[2]/\Lambda_\Sigma[2] = (\mathbb{Z}_2 B)^d/(b_o) \oplus \Lambda_\Sigma[2]$.}
\item[(4)]{For ${}^{\mathbb{R}} \widetilde{\mathcal{M}}(d,L)$, the representation is $\widetilde{\Lambda}_P^d[2]/\Lambda_\Sigma[2] = (\mathbb{Z}_2 B)^d$.}
\end{itemize}
\end{remark}

\subsection{Topological invariants}\label{secti}

In this section we continue to assume that the degree of $L$ is even.

\begin{definition}\label{deftopinv}
We define the following topological invariants associated to real Higgs bundles:
\begin{itemize}
\item[(1)]{For a $GL(2,\mathbb{R})$-Higgs bundle $(E,\Phi)$, the orthogonal structure gives $E$ the structure group $O(2,\mathbb{C})$. Reducing to the maximal compact $O(2)$ defines a real rank $2$ orthogonal vector bundle $V$ such that $E = V \otimes \mathbb{C}$. The Stiefel-Whitney classes of $V$ defined invariants $w_1 = w_1(V) \in H^1( \Sigma , \mathbb{Z}_2)$ and $w_2 = w_2(V) \in H^2(\Sigma , \mathbb{Z}_2) \simeq \mathbb{Z}_2$.}
\item[(2)]{For an $SL(2,\mathbb{R})$-Higgs bundle $(N,\beta,\gamma)$, we have an integer-valued invariant $\delta = deg(N)$.}
\item[(3)]{For a $PGL(2,\mathbb{R})$-Higgs bundle represented by $(E,\Phi , A)$ we have two topological invariants $\hat{w}_1,\hat{w}_2$ defined as follows. First note that the line bundle $U = \wedge^2 E \otimes A^*$ is independent of the choice of representative $(E,\Phi , A)$ and that the pairing $E \otimes E \to A$ implies that $U^2 = \mathcal{O}$. Thus $U$ is a well-defined line bundle of order $2$ and defines a class $\hat{w}_1 \in H^1(\Sigma , \mathbb{Z}_2)$. We define $\hat{w}_2 \in \mathbb{Z}_2$ to be the mod $2$ degree of $E$. This is also independent of the choice of representative $(E,\Phi , A)$.}
\item[(4)]{For a $PSL(2,\mathbb{R})$-Higgs bundle represented by $(N_1,N_2,\beta,\gamma)$, we define an integer invariant $\check{\delta} = deg(N_1) - deg(N_2)$. Clearly $\check{\delta}$ is independent of the choice of representative $(N_1,N_2,\beta,\gamma)$.}
\end{itemize}
\end{definition}

The characteristic classes $w_1,w_2$ in the $GL(2,\mathbb{R})$ have a $KO$-theoretical interpretation, as we recall from \cite{hit3}. Suppose that $E$ is a rank $m$ holomorphic vector bundle with orthogonal structure. Choosing a reduction to the maximal compact subgroup $O(m) \subset O(m,\mathbb{C})$ determines a real orthogonal bundle $V$ such that $E = V \otimes \mathbb{C}$. The isomorphism class of $V$ as a real vector bundle is independent of the choice of reduction, so gives a well-defined class $[V] \in KO(\Sigma)$, the real $K$-theory of $\Sigma$. We will abuse notation and write $[E] \in KO(\Sigma)$ for this class.\\

Recall that $\pi : S \to \Sigma$ is the spectral curve corresponding to $a_0 \in \mathcal{A}_{\rm reg}^0(L)$. Let $K_S$ be the canonical bundle of $S$. Suppose that $U$ is a square root of $K_S \otimes \pi^*(K^*)$ on $S$. This can be thought of as a relative spin structure and hence a relative $KO$-orientation for the map $\pi$. It is then possible to define the push-forward map $\pi_! : KO(S) \to KO(\Sigma)$. The map $\pi_!$ has a holomorphic interpretation which is as follows: suppose that $F$ is a holomorphic vector bundle on $S$ with orthogonal structure, so $F$ defines a class $[F] \in KO(S)$. Set $E = \pi_*( F \otimes U)$. As explained in \cite{hit3}, relative duality determines a natural orthogonal structure on $E$, hence we obtain a class $[E] \in KO(\Sigma)$ and we have $[E] = \pi_! [F]$.\\

Suppose $M_0$ is a holomorphic line bundle $S$ of order $2$. Then $M_0$ can be thought of as a rank $1$ holomorphic vector bundle with orthogonal structure. If $(E,\Phi)$ is the associated $GL(2,\mathbb{R})$-Higgs bundle then $E = \pi_*( M_0 \otimes \pi^*(L^{1/2} ) )$. Recall from Section \ref{secschf} that $K_S \pi^*(K^*) = \pi^*(L)$ and hence $\pi^*(L^{1/2})$ gives a relative $KO$-orientation. By the discussion above, we have $[E] = \pi_! [M_0]$. We now consider how the Stiefel-Whitney classes of $E$ are related to the line bundle $M_0$. The case of $w_1(E)$ is straightforward, since as elements of $Jac(\Sigma)[2]$, we have:
\begin{equation*}
w_1(E) = det(E) = Nm(M_0).
\end{equation*}
For $w_2(E)$, we make use of the relation $[E] = \pi_! [M_0]$. Choose a spin structure $K^{1/2}$ on $\Sigma$. Then $\pi^*( K^{1/2} L^{1/2} )$ is a spin structure on $S$ and our choices are compatible with the relative spin structure $\pi^*(L^{1/2})$. The spin structures on $\Sigma$ and $S$ define index maps $\varphi_\Sigma : KO(\Sigma) \to KO^{-2}(pt) = \mathbb{Z}_2$ and $\varphi_S : KO(S) \to KO^{-2}(pt) = \mathbb{Z}_2$. Since we have chosen our spin structures compatibly, we get a commutative diagram:
\begin{equation*}\xymatrix{
KO(S) \ar[drr]^-{\varphi_S} \ar[d]^-{\pi_!} & & \\
KO(\Sigma) \ar[rr]^-{\varphi_\Sigma} & & \mathbb{Z}_2
}
\end{equation*}
We recall from \cite{ati} that the index maps $\varphi_\Sigma,\varphi_S$ have the following holomorphic interpretation. Let $E$ be a holomorphic vector bundle on $\Sigma$ with orthogonal structure. Then $\varphi_\Sigma( [E])$ is the mod $2$ index:
\begin{equation*}
\varphi_\Sigma( [E] ) = dim \left( H^0( \Sigma , E \otimes K^{1/2} ) \right) \; ( {\rm mod} \; 2 )
\end{equation*}
and similarly for $\varphi_S$. As shown in \cite{ati}, the restriction of $\varphi_\Sigma $ to the space $H^1(\Sigma , \mathbb{Z}_2)$ of holomorphic line bundles with orthogonal structure is a quadratic refinement of the Weil pairing $\langle \; , \; \rangle$, that is:
\begin{equation*}
\varphi_\Sigma( N_1 \otimes N_2) = \varphi_\Sigma(N_1) + \varphi_\Sigma(N_2) + \langle N_1 , N_2 \rangle + \varphi_\Sigma(0).
\end{equation*}
Similarly $\varphi_S$ gives a quadratic refinement of the Weil pairing on $H^1(S,\mathbb{Z}_2)$.

\begin{lemma}[\cite{hit3}]\label{lemw2}
Let $E$ be a rank $m$ vector bundle on $\Sigma$ with orthogonal structure. Then
\begin{equation}\label{equw2}
w_2(E) = \varphi_\Sigma([E]) + \varphi_\Sigma( [det(E)] ) +(m-1)\varphi_\Sigma(0).
\end{equation}
\end{lemma}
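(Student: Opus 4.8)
The plan is to reduce the general rank $m$ case to the rank $1$ case, where the formula follows directly from the fact that $\varphi_\Sigma$ restricts to a quadratic refinement of the Weil pairing on $H^1(\Sigma,\mathbb{Z}_2)$. First I would fix an isomorphism $\det(E) \cong U^{\otimes 2}$ is \emph{not} available in general, so instead I note the key structural fact: any rank $m$ real orthogonal bundle $V$ with $E = V\otimes\mathbb{C}$ can be deformed, through orthogonal bundles, to a direct sum $V \cong L_1 \oplus L_2 \oplus \dots \oplus L_{m-1} \oplus (L_1\otimes\dots\otimes L_{m-1})$ of real line bundles (equivalently, every $O(m)$-bundle on a surface reduces to the maximal torus up to the determinant constraint, since obstruction theory on a $2$-complex only sees $w_1$ and $w_2$). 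One must be slightly careful: the relevant statement is that $[V] \in KO(\Sigma)$ can be represented by such a split bundle with the same $w_1$ and $w_2$; since both sides of \eqref{equw2} depend only on $[E] \in KO(\Sigma)$, $[\det E] \in H^1(\Sigma,\mathbb{Z}_2)$ (a $KO$-class via rank $1$ bundles) and $w_2(E)$, and all three are controlled by $KO(\Sigma)$ and its relation to $H^*(\Sigma,\mathbb{Z}_2)$, it suffices to verify the identity on such split representatives.

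So assume $E = N_1 \oplus \dots \oplus N_m$ with each $N_i \in H^1(\Sigma,\mathbb{Z}_2)$ (a holomorphic line bundle of order $2$, i.e.\ with orthogonal structure) and $\det(E) = N_1 \otimes \dots \otimes N_m$. Then $[E] = [N_1] + \dots + [N_m]$ in $KO(\Sigma)$, so additivity of the index gives $\varphi_\Sigma([E]) = \sum_i \varphi_\Sigma([N_i])$. Meanwhile $w_2(E) = w_2\big(\bigoplus N_i\big) = \sum_{i<j} w_1(N_i) w_1(N_j) = \sum_{i<j}\langle N_i, N_j\rangle$ by the Whitney sum formula (using $H^2(\Sigma,\mathbb{Z}_2)\cong\mathbb{Z}_2$ and that the cup product of two degree-one classes is the Weil pairing). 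Now I apply the quadratic-refinement identity
\begin{equation*}
\varphi_\Sigma(N_i \otimes N_j) = \varphi_\Sigma(N_i) + \varphi_\Sigma(N_j) + \langle N_i, N_j\rangle + \varphi_\Sigma(0)
\end{equation*}
repeatedly to expand $\varphi_\Sigma([\det E]) = \varphi_\Sigma(N_1\otimes\dots\otimes N_m)$. An induction on $m$ shows
\begin{equation*}
\varphi_\Sigma(N_1\otimes\dots\otimes N_m) = \sum_{i=1}^m \varphi_\Sigma(N_i) + \sum_{i<j}\langle N_i,N_j\rangle + (m-1)\varphi_\Sigma(0).
\end{equation*}
Substituting $\varphi_\Sigma([E]) = \sum_i\varphi_\Sigma(N_i)$ and $w_2(E) = \sum_{i<j}\langle N_i,N_j\rangle$ and rearranging yields exactly
\begin{equation*}
w_2(E) = \varphi_\Sigma([E]) + \varphi_\Sigma([\det E]) + (m-1)\varphi_\Sigma(0),
\end{equation*}
as claimed.

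The main obstacle is the reduction step: justifying that it suffices to check the identity on split orthogonal bundles, or more precisely, that both sides factor through invariants ($[E]\in KO(\Sigma)$, $[\det E]$, and $w_2(E)$) that are unchanged when $V$ is replaced by a split bundle with the same Stiefel--Whitney classes. This is really a statement about the structure of $KO$ of a surface and the fact that on a $2$-dimensional CW complex an $O(m)$-bundle is determined up to the relevant equivalence by $w_1$ and $w_2$; once this is granted, everything else is the routine $KO$-additivity and quadratic-refinement bookkeeping above. Since the result is quoted from \cite{hit3}, I would, if space is tight, simply cite it and present the split-bundle computation as the verification.
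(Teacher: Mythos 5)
Your computational core (additivity of the mod~$2$ index, the Whitney formula for $w_2$, and the iterated quadratic-refinement identity for $\varphi_\Sigma(N_1\otimes\cdots\otimes N_m)$) is correct and is essentially the same bookkeeping the paper performs. The gap is in the reduction step. The claim that every rank~$m$ orthogonal bundle on $\Sigma$ splits as a direct sum of real line bundles is false precisely in the case the paper needs, $m=2$: the bundle $E = N\oplus N^*$ with $\deg N$ odd has $w_1(E)=0$ and $w_2(E)=1$, and a sum of two real line bundles with $w_1=0$ forces the two summands to be isomorphic and hence $w_2=0$. (Your displayed splitting $L_1\oplus\cdots\oplus L_{m-1}\oplus(L_1\otimes\cdots\otimes L_{m-1})$ also forces $\det V$ trivial, which already rules out general $w_1$.) Your hedged $KO$-theoretic reformulation does not repair this on its own: a split representative in the \emph{same} $KO$-class exists only after stabilising, i.e.\ after adding trivial summands and raising the rank, and since the right-hand side of \eqref{equw2} carries the rank-dependent term $(m-1)\varphi_\Sigma(0)$ you must additionally check that the identity is invariant under $E\mapsto E\oplus\mathcal{O}$ (it is: both $\varphi_\Sigma([E])$ and $(m-1)\varphi_\Sigma(0)$ change by $\varphi_\Sigma(0)$, so the changes cancel mod~$2$, and one must then verify that all stable classes of sufficiently large rank are split, which your $m=3$ construction essentially does). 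Without these two explicit steps the argument does not go through.

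For comparison, the paper avoids the splitting question entirely: it assembles both sides into a single quantity $\delta(E)$, checks that $\delta(E\oplus F)=\delta(E)+\delta(F)$ (the cross-term $\langle \det E,\det F\rangle$ from the Whitney formula cancels against the cross-term from the quadratic refinement, and the $\varphi_\Sigma(0)$ terms cancel mod~$2$), so that $\delta$ descends to a homomorphism $KO(\Sigma)\to\mathbb{Z}_2$, and then evaluates $\delta$ on the additive generators of $KO(\Sigma)$, namely orthogonal line bundles and hyperbolic bundles $N\oplus N^*$ of arbitrary degree $n$, where Riemann--Roch gives $\varphi_\Sigma([N\oplus N^*])=n=w_2(N\oplus N^*)$. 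If you repair your reduction by adding the stabilisation check, your route becomes a valid alternative; but as written the key step fails for rank~$2$.
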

\begin{proof}
For any such vector bundle $E$ we wish to show that $\delta(E) = 0$, where
\begin{equation*}
\delta(E) = w_2(E) + \varphi_\Sigma([E]) + \varphi_\Sigma( [det(E)] ) +(m-1)\varphi_\Sigma(0).
\end{equation*}
Using the fact that $w_2(E \oplus F) = w_2(E) + w_2(F) + \langle det(E) , det(F) \rangle$ and that fact that $\varphi_\Sigma$ is a quadratic refinement of $\langle \; , \; \rangle$, we see that $\delta(E \oplus F) = \delta(E) + \delta(F)$. Thus $\delta$ descends to a homomorphism $\delta : KO(\Sigma) \to \mathbb{Z}_2$.\\

The additive group of $KO(\Sigma)$ is generated by line bundles and bundles of the form $E = N + N^*$, where $N$ is a complex line bundle and the orthogonal structure on $E$ is the dual pairing. To prove Equation (\ref{equw2}), we just need to check that $\delta(E) = 0$ on these generators. If $E$ is a line bundle then it is trivial to see that $\delta(E) = 0$. Now suppose that $E = N \oplus N^*$, where $n = deg(N)$. Then $w_2(E) = n$ and
\begin{equation*}
\begin{aligned}
\varphi_\Sigma( [E] ) & = \varphi_\Sigma( [N] ) + \varphi_\Sigma( [N^*] ) \\
&=  dim( H^0(\Sigma , N \otimes K^{1/2}) ) + dim( H^0(\Sigma , N^* \otimes K^{1/2}) ) \; ({\rm mod} \; 2) \\
&= n,
\end{aligned}
\end{equation*}
where we have used Riemann-Roch in the last step. Lastly, since $det(E) = N \otimes N^* = \mathcal{O}$, we see that $\delta(N\oplus N^*) = 0$ as required.
\end{proof}

\begin{lemma}\label{lemphi0}
We have $\varphi_S(0) = l/2 \; ( {\rm mod} \; 2)$.
\end{lemma}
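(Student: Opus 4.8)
The plan is to evaluate $\varphi_S(0)$ directly as a mod $2$ dimension of a space of holomorphic sections and then reduce that dimension to a computation on $\Sigma$ via Riemann--Roch. Recall that $\varphi_S$ is the mod $2$ index map attached to the spin structure $\pi^*(K^{1/2}L^{1/2})$ on $S$; indeed, using $K_S \cong \pi^*(K \otimes L)$ from \textsection\ref{secschf} one checks that $\bigl(\pi^*(K^{1/2}L^{1/2})\bigr)^{\otimes 2} = \pi^*(K L) \cong K_S$, so this is a theta characteristic. Consequently $\varphi_S(0)$ is the mod $2$ reduction of $h^0\!\bigl(S, \pi^*(K^{1/2}L^{1/2})\bigr)$, the dimension of the space of holomorphic sections of the chosen spin bundle on $S$ (here, as on $\Sigma$, the Dirac operator on a Riemann surface is $\bar\partial$ twisted by the theta characteristic, whose kernel has the same dimension as its cokernel by Serre duality).

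First I would push the computation down to $\Sigma$. Since $\pi : S \to \Sigma$ is the branched double cover cut out by $\lambda^2 + a_0 = 0$ inside the total space of $L$, one has $\pi_* \mathcal{O}_S \cong \mathcal{O}_\Sigma \oplus L^{-1}$ as in \cite{bnr}. By the projection formula,
\begin{equation*}
\pi_*\bigl(\pi^*(K^{1/2}L^{1/2})\bigr) \cong K^{1/2}L^{1/2} \oplus K^{1/2}L^{-1/2},
\end{equation*}
and hence $H^0\!\bigl(S, \pi^*(K^{1/2}L^{1/2})\bigr) \cong H^0(\Sigma, K^{1/2}L^{1/2}) \oplus H^0(\Sigma, K^{1/2}L^{-1/2})$. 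Note that since $l$ is even, $K^{1/2}L^{1/2}$ and $K^{1/2}L^{-1/2}$ are genuine line bundles, of degrees $g-1+l/2$ and $g-1-l/2$ respectively, so all the Riemann--Roch input below is legitimate.

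Next I would evaluate the two summands using our standing hypothesis $L = K$ or $l > 2g-2$. If $l > 2g-2$, then $\deg(K^{1/2}L^{-1/2}) = g-1-l/2 < 0$, so $h^0(K^{1/2}L^{-1/2}) = 0$, while $\deg(K^{1/2}L^{1/2}) = g-1+l/2 > 2g-2$, so Riemann--Roch gives $h^0(K^{1/2}L^{1/2}) = (g-1+l/2) - g + 1 = l/2$; thus $\varphi_S(0) = l/2 \pmod 2$. If instead $L = K$, then $K^{1/2}L^{-1/2} = \mathcal{O}$ and $K^{1/2}L^{1/2} = K$, so $h^0 = 1$ and $h^0 = g$ respectively, giving $\varphi_S(0) \equiv g+1 \equiv g-1 = l/2 \pmod 2$. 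In both cases the lemma follows. I do not anticipate any serious obstacle here: the content is really just the identification of $\pi_*\mathcal{O}_S$ together with a degree count, and the only subtlety worth flagging is the parity of $l$, which is exactly what makes the chosen square roots, and hence the displayed splitting, available.
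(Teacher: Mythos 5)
Your argument is essentially the paper's: the paper likewise pushes the computation down to $\Sigma$ via $\pi_*\mathcal{O}_S = \mathcal{O}_\Sigma \oplus L^*$, writing $\varphi_S(0) = \varphi_\Sigma(L^{1/2} \oplus L^{-1/2})$ and evaluating this by the Riemann--Roch computation $\varphi_\Sigma(N\oplus N^*) \equiv \deg N \pmod 2$ already carried out in the proof of Lemma \ref{lemw2}. One small correction to your case analysis: when $L = K$ you assert $K^{1/2}L^{-1/2} = \mathcal{O}$ and $K^{1/2}L^{1/2} = K$, which presumes that the chosen square root $L^{1/2}$ of $L = K$ coincides with the chosen spin structure $K^{1/2}$; these are independent choices, and in general $K^{1/2}L^{-1/2}$ is only a $2$-torsion line bundle, possibly nontrivial, in which case $h^0(K^{1/2}L^{-1/2}) = 0$ and $h^0(K^{1/2}L^{1/2}) = g-1$ rather than $1$ and $g$. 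The conclusion is unaffected, and in fact a single uniform argument covers both of your cases and all choices of square roots: Serre duality gives $h^1(K^{1/2}L^{1/2}) = h^0(K^{1/2}L^{-1/2})$, hence $h^0(K^{1/2}L^{1/2}) + h^0(K^{1/2}L^{-1/2}) \equiv \chi(K^{1/2}L^{1/2}) = l/2 \pmod 2$. This is exactly the content of the $\varphi_\Sigma(N\oplus N^*) \equiv \deg N$ step the paper quotes, which is why its proof needs no case distinction.
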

\begin{proof}
Since $\pi_* \mathcal{O}_S = \mathcal{O}_\Sigma \oplus L^*$ \cite{bnr}, we have $\pi_!( \mathcal{O}_S) = L^{1/2} \oplus L^{-1/2}$. Then
\begin{equation*}
\begin{aligned}
\varphi_S(0) &= \varphi_\Sigma(  \pi_! \mathcal{O}_S ) \\
&= \varphi_\Sigma( L^{1/2} \oplus L^{-1/2} ) \\
&= \frac{l}{2} \; ({\rm mod} \; 2).
\end{aligned}
\end{equation*}
\end{proof}

\begin{proposition}\label{propqref}
Suppose that square roots $K^{1/2}$ and $L^{1/2}$ have been chosen. There exists a splitting of (\ref{equshortexactprym2}) such that the statement of Proposition \ref{propsplit1} holds and in addition we have:
\begin{equation*}
\varphi_S(x) = q(x) + \varphi_S(0),
\end{equation*}
for all $x \in \Lambda_S[2] \simeq H^1(S , \mathbb{Z}_2)$, where $q$ is the quadratic refinement on $\Lambda_S[2]$ introduced in Section \ref{secmonoact}.
\end{proposition}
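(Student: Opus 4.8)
The plan is to use that $\varphi_S$ differs from $\varphi_S(0)$ by a quadratic refinement of the Weil pairing, and to compare this refinement with $q$. Write $\tilde\varphi_S:=\varphi_S-\varphi_S(0)$. By the refinement property of the index map recalled before Lemma~\ref{lemw2} (due to Atiyah), $\tilde\varphi_S$ is a quadratic refinement of $\langle\,,\,\rangle$ on $\Lambda_S[2]$ with $\tilde\varphi_S(0)=0$; by its definition $q$ is one as well, with $q(0)=0$. Fixing any splitting as in Proposition~\ref{propsplit1}, the difference $\tilde\varphi_S-q$ is then additive, so by nondegeneracy of $\langle\,,\,\rangle$ there is a unique $v\in\Lambda_S[2]$ with $\tilde\varphi_S(x)=q(x)+\langle v,x\rangle$. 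The goal is to show that, after a suitable adjustment of the splitting, $v$ can be taken to be $0$.

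First I would show that $\tilde\varphi_S$ and $q$ already agree on $\Lambda_P[2]=\Lambda_\Sigma[2]\oplus W$. By Propositions~\ref{propprymexact1} and~\ref{propjoin}, $\Lambda_P[2]$ is spanned by $\pi^*\Lambda_\Sigma[2]$ together with the classes $c_\gamma$, and since the restriction of $\tilde\varphi_S-q$ to $\Lambda_P[2]$ is linear it suffices to check these generators. On $\pi^*\Lambda_\Sigma[2]=\Lambda_\Sigma[2]\oplus 0\oplus 0$ one has $q=0$, while by the projection formula $\pi_!(\pi^*N)=[\,NL^{1/2}\oplus NL^{-1/2}\,]$, and evaluating $\varphi_\Sigma$ on this orthogonal bundle by the same Riemann--Roch and Serre duality argument as in the proof of Lemma~\ref{lemphi0} gives $\tilde\varphi_S(\pi^*N)=\varphi_\Sigma(\pi_!\pi^*N)-\varphi_S(0)=l/2-l/2=0$. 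On a class $c_\gamma$ coming from a path $\gamma$ between $b_i$ and $b_j$ one has $q(c_\gamma)=q_W(\epsilon(c_\gamma))=q_W(b_i+b_j)=1$, and I claim $\tilde\varphi_S(c_\gamma)=1$ too. For this, let $E_\gamma=\pi_*(c_\gamma\otimes\pi^*(L^{1/2}))$ be the $GL(2,\mathbb{R})$-Higgs bundle with spectral datum $c_\gamma$ as in Proposition~\ref{proprsd}; then $\varphi_S(c_\gamma)=\varphi_\Sigma(\pi_!c_\gamma)=\varphi_\Sigma([E_\gamma])$, and since $\det E_\gamma=Nm(c_\gamma\otimes\pi^*L^{1/2})\otimes L^{-1}=\mathcal{O}$ (determinant formula of \textsection\ref{secschf} together with $c_\gamma\in\Lambda_P[2]$), Lemma~\ref{lemw2} yields $\varphi_\Sigma([E_\gamma])=w_2(E_\gamma)+\varphi_\Sigma(0)+\varphi_\Sigma(0)=w_2(E_\gamma)$. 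It then remains to show $w_2(E_\gamma)\equiv l/2+1\pmod 2$: choosing a $\sigma$-equivariant structure on $c_\gamma$ realizes $E_\gamma$ holomorphically as $N\oplus N^*$, and since $\epsilon(c_\gamma)=b_i+b_j$ has exactly two nonzero coordinates one computes $\deg N\equiv 1+l/2\pmod 2$, so $w_2(E_\gamma)=\deg N\bmod 2=l/2+1$ and $\tilde\varphi_S(c_\gamma)=w_2(E_\gamma)-l/2=1=q(c_\gamma)$. Hence $\langle v,\,\cdot\,\rangle$ vanishes on $\Lambda_P[2]$, and since $(\Lambda_P[2])^{\perp}=\pi^*\Lambda_\Sigma[2]$ with respect to~(\ref{equintpairing}), this forces $v=(v_1,0,0)\in\pi^*\Lambda_\Sigma[2]$.

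Finally I would absorb $v$ into a change of splitting. For symmetric $B:\Lambda_\Sigma[2]\to\Lambda_\Sigma[2]$ the automorphism $g=\left[\begin{smallmatrix}I&0&B\\0&I&0\\0&0&I\end{smallmatrix}\right]$ lies in $K$, preserves the form~(\ref{equintpairing}) and the filtration $\pi^*\Lambda_\Sigma[2]\subset\Lambda_P[2]\subset\Lambda_S[2]$ with trivial induced action on the graded pieces, hence corresponds to replacing the splitting of~(\ref{equshortexactprym2}) by another one still satisfying Proposition~\ref{propsplit1} (with the same induced splitting of~(\ref{equshortexactprym1})). Under this change the quadratic refinement $q$ is replaced by $x=(a,b,c)\mapsto q(x)+\langle Bc,c\rangle$, and as $B$ runs over symmetric endomorphisms $c\mapsto\langle Bc,c\rangle$ runs over all linear forms on $\Lambda_\Sigma[2]$; choosing $B$ with $\langle Bc,c\rangle=\langle v_1,c\rangle$ makes the new $q$ equal to $\tilde\varphi_S$, which is the assertion. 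The hard part will be the identity $\tilde\varphi_S(c_\gamma)=1$, equivalently $w_2(E_\gamma)\equiv l/2+1\pmod 2$: this is the one place where the geometry of the branched double cover $\pi:S\to\Sigma$ and the specific spin structure $\pi^*(K^{1/2}L^{1/2})$ genuinely enter, and I expect it to require either an explicit local computation near the two branch points joined by $\gamma$ or a careful bookkeeping of $\deg N$ in the associated $SL(2,\mathbb{R})$ spectral datum; one may reduce to a single such $\gamma$ since $\pi^*(K^{1/2}L^{1/2})$, and hence $\tilde\varphi_S$, is invariant under the monodromy.
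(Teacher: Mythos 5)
Your proposal is correct, and its overall skeleton matches the paper's: show the discrepancy between $\varphi_S-\varphi_S(0)$ and $q$ is a linear functional, kill the part supported on $\pi^*\Lambda_\Sigma[2]$ (automatic, via $\pi_!\pi^*C=CL^{1/2}\oplus CL^{-1/2}$, exactly as in the paper) and the part supported on the last summand by modifying the splitting with a symmetric $F$ (again the same move, including the check that the new splitting is still isotropic so Proposition \ref{propsplit1} survives). The genuine difference is the key step on $W$. The paper avoids any direct computation: it observes that $\psi=\varphi_S+\varphi_S(0)$ is monodromy-invariant because $K^{1/2}$ and $L^{1/2}$ are, so $\psi(b_{ij})$ is independent of $i<j$, and then the quadratic relation $\psi(b_{13})=\psi(b_{12})+\psi(b_{23})+((b_{12},b_{23}))$ forces $\psi(b_{ij})=1$. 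You instead compute $\varphi_S(c_\gamma)=\varphi_\Sigma(\pi_!c_\gamma)$ head-on: realise $\pi_*(c_\gamma\otimes\pi^*L^{1/2})$ as $N\oplus N^*$ via a $\sigma$-equivariant structure on $c_\gamma$, use $w_2(N\oplus N^*)=\deg N\bmod 2$ and Lemma \ref{lemw2} with $\det=\mathcal{O}$, and read off $\deg N=(l-2)/2$ from the Lefschetz fixed-point degree formula (the $m=2$ case of the $\delta$-computation in Proposition \ref{propspecinv}, which is independent of the present proposition, so there is no circularity). Both routes are valid; the paper's is shorter and self-contained, while yours makes the value $\psi(b_{ij})=1$ geometrically transparent and ties it to the spectral-data degree formula. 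Two small points worth making explicit if you write this up: the final splitting change replaces $q(x)$ by $q(x)+\langle B\pi_*x,\pi_*x\rangle$, which vanishes on $\Lambda_P[2]=\ker\pi_*$, so it does not disturb the agreement you established there; and the parity $\deg N\equiv l/2+1$ is independent of the sign ambiguity $\tilde\sigma\mapsto-\tilde\sigma$ (which replaces $\deg N$ by its negative), so $w_2$ is well defined.
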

\begin{proof}
First suppose that $c \in \Lambda_\Sigma[2]$ and let $C$ be the corresponding line bundle of order $2$. Then $\pi_! ( \pi^*(C) ) = CL^{1/2} \oplus CL^{-1/2}$. Hence $\varphi_S( c,0,0 ) = \varphi_\Sigma( CL^{1/2} \oplus CL^{-1/2} ) = l/2 = \varphi_S( 0,0,0)$.\\

Choose any splittings satisfying Proposition \ref{propsplit1}, so that $\Lambda_S[2] = \Lambda_\Sigma[2] \oplus W \oplus \Lambda_\Sigma[2]$ with the Weil pairing given by Equation (\ref{equintpairing}). Let $\psi : \Lambda_S[2] \to \mathbb{Z}_2$ be given by $\psi(x) = \varphi_S(x) + \varphi_S(0)$. Then $\psi$ is also a quadratic refinement of the Weil paring and clearly satisfies $\psi(0) = 0$. The above calculation also shows that $\psi$ vanishes on $\pi^*(\Lambda_\Sigma[2])$. Next, since $0 \oplus 0 \oplus \Lambda_\Sigma[2]$ is an isotropic subspace, we see that $\psi( (0,0,c))$ is a linear function on $\Lambda_\Sigma[2]$. We will eliminate this linear function using a change of splitting.\\

Let $\iota : \Lambda_\Sigma[2] \to \Lambda_S[2]$ be the given splitting of $(\ref{equshortexactprym2})$. Let $F : \Lambda_\Sigma[2] \to \Lambda_\Sigma[2]$ be a symmetric endomorphism, i.e. $\langle Fx,y\rangle = \langle x , Fy \rangle$. Then we consider a new splitting $c \mapsto \iota(c) + \pi^*(Fc)$. Since $F$ is symmetric we have that the Weil pairing still has the form (\ref{equintpairing}) in the new splitting. However, 
\begin{equation*}
\begin{aligned}
\psi( \iota(c) + \pi^*(Fc) ) &= \psi( (0,0,c) + (Fc,0,0) ) \\
&= \psi(0,0,c) + \psi(Fc,0,0) + \langle Fc,c \rangle \\
&= \psi(0,0,c) + \langle Fc , c \rangle.
\end{aligned}
\end{equation*}
One can easily show that given any linear function $\alpha : \Lambda_\Sigma[2] \to \mathbb{Z}_2$, there is a symmetric endomorphism $F$ such that $\alpha(c) = \langle Fc , c \rangle$. Applying this to $\alpha(c) = \phi(0,0,c)$, we see that we can choose $F$ and hence a splitting, such that $\psi$ vanishes on the image of the splitting.\\

So far we have shown that $\psi(a,0,0) = \psi(0,0,c) = 0$ for all $a,c \in \Lambda_\Sigma[2]$. Then since $\psi$ is a quadratic refinement of the Weil pairing, we have
\begin{equation*}
\psi(a,b,c) = \langle a , c \rangle + \psi(0,b,0).
\end{equation*}
To complete the proposition it remains to show that $\psi(0,b,0) = q_W(b)$ for all $b \in W$. However, we know that $\varphi_S$ and hence $\psi$ are monodromy invariant functions, because the square roots $L^{1/2}$, $K^{1/2}$ are also monodromy invariant. In particular $\psi( 0 , b_{ij} , 0 )$ takes the same value for all $1 \le i < j \le 2l$. However,
\begin{equation*}
\begin{aligned}
\psi( b_{13} ) &= \psi( b_{12} + b_{23} )\\
&= \psi( b_{12} ) + \psi( b_{23} ) + (( b_{12} , b_{23} )) \\
&= \psi( b_{12} ) + \psi( b_{23} ) + 1.
\end{aligned}
\end{equation*}
Therefore we must have $\psi( b_{ij} ) = 1 = q_W( b_{ij} )$ for all $i < j$. Then using the quadratic property we see that $\psi( 0 , b ,0 ) = q_W(b)$ for all $b \in W$.
\end{proof}

\begin{proposition}\label{propspecinv}
Identify the regular fibres of the moduli spaces ${}^{\mathbb{R}} \mathcal{M}^0(L)$, ${}^{\mathbb{R}} \check{\mathcal{M}}(L)$, ${}^{\mathbb{R}} \hat{\mathcal{M}}(d,L)$, ${}^{\mathbb{R}} \widetilde{\mathcal{M}}(d,L)$, with the monodromy representations $\Lambda_S[2]$, $\widetilde{\Lambda}_P[2]$, $\Lambda_S^d[2]/\Lambda_\Sigma[2]$, $\widetilde{\Lambda}_P^d[2]/\Lambda_\Sigma[2]$ as in Remark \ref{remfibres}. Then the topological invariants given in Definition \ref{deftopinv} are as follows:
\begin{itemize}
\item[(1)]{For $GL(2,\mathbb{R})$, we suppose that we have chosen splittings satisfying Proposition \ref{propqref}. Then we have:
\begin{equation*}
\begin{aligned}
w_1(a,b,c) &= c, \\
w_2(a,b,c) &= \varphi_\Sigma(c) + \varphi_S(a,b,c) + \varphi_\Sigma(0) = \varphi_\Sigma(c) + \frac{l}{2} + q(a,b,c) + \varphi_\Sigma(0).
\end{aligned}
\end{equation*}
}
\item[(2)]{For $SL(2,\mathbb{R})$, we have:
\begin{equation*}
\delta(a,b) = \frac{l-m}{2},
\end{equation*}
where $b = b_{i_1} + b_{i_2} + \dots + b_{i_m}$, with $i_1,i_2,\dots,i_m$ distinct.}
\item[(3)]{For $PGL(2,\mathbb{R})$, we have: 
\begin{equation*}
\begin{aligned}
\hat{w}_1(b,c) &= c, \\
\hat{w}_2(b,c) &= ((b,b_o)) = d.
\end{aligned}
\end{equation*}
}
\item[(4)]{For $PSL(2,\mathbb{R})$, we have:
\begin{equation*}
\check{\delta}(b) = (l-m),
\end{equation*}
where $b = b_{i_1} + b_{i_2} + \dots + b_{i_m}$, with $i_1,i_2,\dots,i_m$ distinct.}
\end{itemize}
\end{proposition}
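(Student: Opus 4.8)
The plan is to handle the four groups one at a time, in each case reducing the invariant to the explicit shape of the spectral correspondence in Proposition \ref{proprsd} together with the index-theoretic input of \textsection\ref{secti}. For $GL(2,\mathbb{R})$ the first Stiefel--Whitney class is the determinant, and the $r=2$ case of the formula $\det(\pi_*M)=Nm(M)\otimes L^{-1}$ from \textsection\ref{secschf}, applied to $M=M_0\otimes\pi^*(L^{1/2})$, gives $w_1(E)=\det(E)=Nm(M_0)$; since $Nm$ induces $\pi_*$ on $H^1(\,\cdot\,,\mathbb{Z}_2)$, and $\pi_*(a,b,c)=c$ for the splitting of Proposition \ref{propsplit1}, this yields $w_1(a,b,c)=c$. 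For $w_2$ I would apply Lemma \ref{lemw2} with $m=2$, so $w_2(E)=\varphi_\Sigma([E])+\varphi_\Sigma([\det E])+\varphi_\Sigma(0)$, then use the $KO$-theoretic description of the spectral construction from \textsection\ref{secti}, namely $[E]=\pi_![M_0]$ with relative $KO$-orientation $\pi^*(L^{1/2})$, together with the commuting index square $\varphi_\Sigma\circ\pi_!=\varphi_S$, to rewrite $\varphi_\Sigma([E])=\varphi_S(a,b,c)$; substituting $\varphi_S=q+\varphi_S(0)$ (Proposition \ref{propqref}) and $\varphi_S(0)=l/2$ (Lemma \ref{lemphi0}) then gives the stated expression.

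For $SL(2,\mathbb{R})$ and $PSL(2,\mathbb{R})$ I would argue directly from the triples and quadruples. For $(N,\beta,\gamma)$ the characteristic polynomial of $\Phi=\bigl(\begin{smallmatrix}0&\beta\\\gamma&0\end{smallmatrix}\bigr)$ is $\lambda^2-\beta\gamma$, so the $2l$ branch points are the zeros of $\beta\gamma$, and regularity forces $\beta$ and $\gamma$ to have $2\delta+l$ and $l-2\delta$ distinct simple zeros respectively, with disjoint support, where $\delta=\deg N$. At a branch point $b_k$ the Higgs field $\Phi(b_k)$ is rank-one nilpotent, and in the spectral correspondence the line $(M_0)_{u_k}$ is naturally identified with $\operatorname{coker}\Phi(b_k)$ (up to the factor $L_{b_k}$, on which $\tilde\sigma$ acts trivially): this cokernel is $N_{b_k}$ when $b_k$ is a zero of $\beta$ and $N^*_{b_k}$ when $b_k$ is a zero of $\gamma$. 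Since the distinguished lift $\tilde\sigma$ of Proposition \ref{proprsd} acts as $+1$ on $N$ and $-1$ on $N^*$, we get $\epsilon(M_0,\tilde\sigma)=\sum_{\gamma(b_k)=0}b_k$, hence $m=\#\{k:\gamma(b_k)=0\}=l-2\delta$ and $\delta=(l-m)/2$. The $PSL$ case is word-for-word the same with $(N_1,N_2)$ replacing $(N,N^*)$: here $\gamma\in H^0(N_2N_1^*L)$ has $l-\check\delta$ zeros, so $m=l-\check\delta$.

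For $PGL(2,\mathbb{R})$ we again have $\det E=Nm(M_0)$. To pin down the twisting line bundle, relative duality gives $E^\vee\cong\pi_*(M_0^{-1}\otimes\pi^*(L^{1/2}))$, and $M_0^2=\pi^*(A_0)$ forces $M_0^{-1}=M_0\otimes\pi^*(A_0^{-1})$, whence $E^\vee\cong E\otimes A_0^{-1}$, i.e. $A=A_0$; thus $U=\det E\otimes A^*=Nm(M_0)\otimes A_0^{-1}$, which, using the origin $\Theta=\mathcal{O}(u_1)$ (so $A_0=\mathcal{O}(b_1)$ and $\Theta^2=\pi^*\mathcal{O}(b_1)$) and the chosen splitting, reduces precisely to the $\Lambda_\Sigma[2]$-component $c$, giving $\hat w_1(b,c)=c$. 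Finally $\hat w_2$ is by definition the mod $2$ degree of $E$, which is the label $d$ of the component $\Lambda_S^d[2]$; and under the identification $W^d=(\mathbb{Z}_2B)^d/(b_o)$ the quantity $((b,b_o))$ is the parity of the number of branch points appearing in $b$, which is exactly $d$, so $\hat w_2(b,c)=((b,b_o))=d$.

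The main obstacle I anticipate is the bookkeeping in the $PGL(2,\mathbb{R})$ and, secondarily, the $GL(2,\mathbb{R})$ cases: one must keep consistent track of the twisting bundle $A$, the shift by the origin $\Theta$, the quotient by $\pi^*\Lambda_\Sigma[2]$, and the non-canonical splitting of $\Lambda_S[2]$ so that the final answers come out purely in terms of the splitting components; and in the $GL$ case one must ensure that the relative spin structure $\pi^*(L^{1/2})$ is compatible with the chosen spin structures on $\Sigma$ and $S$, so that the index diagram $\varphi_\Sigma\circ\pi_!=\varphi_S$ genuinely commutes. By contrast, the $SL$ and $PSL$ statements are essentially elementary once the zeros of $\beta$ and $\gamma$ are matched up with the $\epsilon$-invariant as above.
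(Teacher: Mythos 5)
Your proposal is correct, and for the $GL(2,\mathbb{R})$ and $PGL(2,\mathbb{R})$ invariants it essentially reproduces the paper's argument: $w_1=\hat w_1=Nm(M_0)$ (up to the shift by $\Theta$), and $w_2$ via Lemma \ref{lemw2}, the push-forward identity $[E]=\pi_![M_0]$, the compatibility $\varphi_\Sigma\circ\pi_!=\varphi_S$, and Lemma \ref{lemphi0} / Proposition \ref{propqref}. (Your use of relative duality to identify the pairing line bundle $A$ with $A_0$ in one stroke is a mild streamlining of the paper's case split $d=0,1$.) Where you genuinely diverge is in the $SL(2,\mathbb{R})$ and $PSL(2,\mathbb{R})$ cases: the paper determines $\delta=\deg L_+-\deg L_-$ from the eigenvalue data of $\tilde\sigma$ at the $2l$ ramification points via the holomorphic Lefschetz fixed point formula of Atiyah--Bott (as in \cite{sch0}), whereas you identify $(M_0)_{u_k}$ with $\operatorname{coker}\Phi(b_k)$, read off that $\tilde\sigma$ acts by $+1$ over zeros of $\beta$ and $-1$ over zeros of $\gamma$, and then simply count $m=\deg(N^{-2}L)=l-2\delta$. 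Your route is more elementary (no index theorem) and has the added virtue of making the correspondence between $\epsilon(M_0,\tilde\sigma)$ and the divisor of $\gamma$ explicit; its only cost is that you must verify the $\tilde\sigma$-equivariance of the BNR identification $M_{u_k}\cong\operatorname{coker}\Phi(b_k)$ (which holds because $\tilde\sigma$ anticommutes with $\Phi$ and the evaluation surjection $\pi^*\pi_*M\to M$ is equivariant), a point you flag but should spell out in a full write-up. The sanity checks agree at the extremes ($m=0$ gives $\delta=l/2$, $m=2l$ gives $\delta=-l/2$), so the two methods produce the same formulas.
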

\begin{proof}
For $GL(2,\mathbb{R})$, let $(E,\Phi)$ correspond to $M_0 = (a,b,c) \in \Lambda_S[2]$. Then $w_1(E) = Nm(M_0) = \pi_*(a,b,c) = c$. Next, using Lemma \ref{lemw2}, Lemma \ref{lemphi0} and Proposition \ref{propqref}, we have:
\begin{equation*}
\begin{aligned}
w_2(E) &= \varphi_\Sigma([E]) + \varphi_\Sigma( [det(E)] ) + \varphi_\Sigma(0) \\
&= \varphi_S( [M_0]) + \varphi_\Sigma( c ) + \varphi_\Sigma(0) \\
&= \varphi_S( a,b,c) + \varphi_\Sigma( c ) + \varphi_\Sigma(0) \\
&= q(a,b,c) + \frac{l}{2} + \varphi_\Sigma( c ) + \varphi_\Sigma(0).
\end{aligned}
\end{equation*}
For $SL(2,\mathbb{R})$, let $(M_0 , \tilde{\sigma})$ be the line bundle and lift of $\sigma$ and let $(a,b)$ be the corresponding point in $\widetilde{\Lambda}_P[2]$. The underlying $GL(2,\mathbb{R})$-Higgs bundle is $(E,\Phi)$ where $E = \pi_*( M_0 \otimes \pi^*(L^{1/2}))$. Then $\tilde{\sigma}$ determines an involution on $E$ and we obtain a decomposition $E = L \oplus L^*$, where $L$ is the $+1$-eigenspace and $L^*$ is the $-1$-eigenspace. If $b = b_{i_1} + \dots + b_{i_m}$ with $i_1,\dots , i_m$ distinct, then from the discussion in Section \ref{secmonoact}, it follows that $\tilde{\sigma}$ acts as $-1$ over $m$ ramification points and acts as $+1$ over the remaining $p = 2l-m$ points. As shown in \cite{sch0}, the Lefschetz index theorem \cite{atbo} gives $2\delta(a,b) = (p-m)/2 = l-m$, hence $\delta(a,b) = (l-m)/2$.\\

For a $PGL(2,\mathbb{R})$-Higgs bundle represented by $(E,\Phi,A)$, we have defined $d = deg(E) \; ({\rm mod} \; 2)$. We then clearly have $\hat{w}_2(b,c) = ((b,b_o)) = d$. For the invariant $\hat{w}_1$, we consider separately the cases $d=0$ and $d=1$. When $d=0$ our $PGL(2,\mathbb{R})$-Higgs bundle is represented by a $GL(2,\mathbb{R})$ corresponding to $(a,b,c) \in \Lambda_S[2]$ and in this case it is clear that $\hat{w}_1 = c$. When $d=1$ we can find a representative of the form $(E,\Phi,A)$, where $A = \mathcal{O}(b_1)$. Let $M_0\in Jac_1(S)$ be the corresponding line bundle on $S$, so $E = \pi_*( M_0 \otimes \pi^*( L^{1/2}))$. Let $N = \mathcal{O}(u_1)$, so that $M_0$ can be written in the form $M_0 = (a,b,c) + N$, where $(a,b,c) \in \Lambda_S[2]$. Then $\wedge^2 E = det(E) = Nm( a,b,c) + Nm(N) = c  + \mathcal{O}(b_1)$, hence $\wedge^2 E \otimes A^* = c \in H^1(\Sigma , \mathbb{Z}_2)$. So again $\hat{w}_1 = c$.\\

For $PSL(2,\mathbb{R})$, we again use the Lefschetz index theorem as we did in the $SL(2,\mathbb{R})$ case to obtain $\check{\delta} = deg(N_1) - deg(N_2) = (p-m)/2 = l-m$.
\end{proof}
\begin{remark}\label{remineq}
From Proposition \ref{propspecinv}, we have inequalities
\begin{equation*}
-\frac{l}{2} \le \delta  \le \frac{l}{2}, \; \; \; \; \; \; \; \; -l \le  \check{\delta}  \le l.
\end{equation*}
\end{remark}

\section{Components of real character varieties}\label{seccrcv}

\subsection{Real character varieties}\label{secrcv}

Let $G$ be a real reductive Lie group. A representation $\theta : \pi_1(\Sigma) \to G$ is said to be {\em reductive} if the representation of $\pi_1(\Sigma)$ on the Lie algebra of $G$ obtained by composing $\theta$ with the adjoint representation decomposes into a sum of irreducible representations. Let $Hom^{\rm red}(\pi_1(\Sigma) , G)$ be the space of reductive representations given the compact-open topology. The group $G$ acts on $Hom^{\rm red}(\pi_1(\Sigma) , G)$ by conjugation and it is known that quotient
\begin{equation*}
Rep(G) = Hom^{\rm red}(\pi_1(\Sigma) , G)/G
\end{equation*}
is Hausdorff \cite{ric}. We call $Rep(G)$ the {\em character variety} of reductive representations of $\pi_1(\Sigma)$ in $G$. It can furthermore be shown that $Rep(G)$ has the structure of a real analytic variety which is algebraic if $G$ is algebraic \cite{goldm}.\\

Let $\widetilde{\Sigma}$ be the universal cover of $\Sigma$. Given a representation $\theta : \pi_1(\Sigma) \to G$, we obtain a principal $G$-bundle $P_\theta = \widetilde{\Sigma} \times_\theta G$. In this way we can associate topological invariants to $\theta$ by taking various topological invariants of the associated bundle $P_\theta$. When $\theta$ is a representation into $PSL(2,\mathbb{C})$, we obtain a class $d \in H^2(\Sigma , \mathbb{Z}_2) \simeq \mathbb{Z}_2$ which is the obstruction to lifting $P_\theta$ to a principal $SL(2,\mathbb{C})$-bundle. We write $Rep_d( PSL(2,\mathbb{C}) )$ for the subvariety of $Rep( PSL(2,\mathbb{C}))$ consisting of those representations with fixed value of the invariant $d$. Similarly, we obtain $Rep_d( PGL(2,\mathbb{R}) )$ (resp. $Rep_d( PSL(2,\mathbb{R} ) ) )$ where $d \in \mathbb{Z}_2$ is the obstruction to lifting $P_\theta$ to $GL(2,\mathbb{R})$ (resp. $SL(2,\mathbb{R})$).\\

The non-abelian Hodge theory established by Hitchin \cite{hit1}, Simpson \cite{sim1,sim2}, Donaldson \cite{don} and Corlette \cite{cor} gives a homeomorphism between the moduli space of polystable $G$-Higgs bundles (where $L = K$ is the canonical bundle) and the character variety $Rep(G)$, when $G$ is a complex semisimple Lie group. There is a similar correspondence in the complex reductive case. The particular cases of relevance to us are:

\begin{proposition}
There exists homeomorphisms:
\begin{itemize}
\item[(1)]{$\mathcal{M}(0,K) \simeq Rep( GL(2,\mathbb{C}) )$}
\item[(2)]{$\check{\mathcal{M}}(\mathcal{O},K) \simeq Rep( SL(2,\mathbb{C}) )$}
\item[(3)]{$\hat{\mathcal{M}}(d,K) \simeq Rep_d( PSL(2,\mathbb{C}) )$}
\end{itemize}
\end{proposition}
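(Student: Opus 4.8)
The plan is to derive all three homeomorphisms from the non-abelian Hodge correspondence: statement (2) is the classical semisimple case, (3) is again semisimple once one keeps track of a $\mathbb{Z}_2$-valued invariant, and (1) is the reductive (non-semisimple) extension. Starting with (2): an $SL(2,\mathbb{C})$-Higgs bundle is precisely a pair $(E,\Phi)$ with $E$ of rank $2$, $det(E) \cong \mathcal{O}$ and $tr(\Phi) = 0$, so $\check{\mathcal{M}}(\mathcal{O},K)$ is the moduli space of polystable $SL(2,\mathbb{C})$-Higgs bundles. By Hitchin \cite{hit1} and Simpson \cite{sim1,sim2}, a Higgs bundle is polystable if and only if it carries a Hermitian metric solving Hitchin's self-duality equations, unique up to unitary gauge, and by Donaldson \cite{don} and Corlette \cite{cor} such solutions are in bijection with conjugacy classes of reductive representations $\theta : \pi_1(\Sigma) \to SL(2,\mathbb{C})$ via the associated flat bundle $P_\theta = \widetilde{\Sigma} \times_\theta SL(2,\mathbb{C})$. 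One then checks that $S$-equivalence on the Higgs side corresponds to the equivalence relation defining $Rep(SL(2,\mathbb{C}))$, and that the quasi-projective topology agrees with the quotient of the compact-open topology; this yields (2).

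For (1), the group $GL(2,\mathbb{C})$ is reductive but not semisimple, so I would invoke Simpson's correspondence in the reductive setting \cite{sim2}: a rank $2$ Higgs bundle $(E,\Phi)$ with $deg(E) = 0$ is polystable if and only if it arises from a reductive representation $\pi_1(\Sigma) \to GL(2,\mathbb{C})$, the vanishing of $deg(E)$ being exactly the topological condition allowing an associated flat bundle. Since $\mathcal{M}(0,K)$ is by definition the moduli space of such objects, (1) follows, with the same bookkeeping of topologies and equivalence relations as in (2). Alternatively one could use the isogeny $SL(2,\mathbb{C}) \times \mathbb{C}^* \to GL(2,\mathbb{C})$ to reduce to (2) together with the abelian Hodge correspondence for $\mathbb{C}^*$, but the direct route is cleaner.

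For (3), note that $PSL(2,\mathbb{C}) = PGL(2,\mathbb{C})$ is semisimple, so the correspondence of \cite{hit1,sim1,sim2,don,cor}, in the form worked out for $G$-Higgs bundles in \cite{ggm0}, gives a homeomorphism between the moduli space of polystable $PGL(2,\mathbb{C})$-Higgs bundles and $Rep(PSL(2,\mathbb{C}))$. It then remains to match the two decompositions into pieces indexed by $\mathbb{Z}_2$. On the Higgs side, $\hat{\mathcal{M}}(d,K)$ is cut out by fixing the mod $2$ degree $d$ of a representative $(E,\Phi)$; since $Jac(\Sigma)$ is $2$-divisible, $d$ is precisely the obstruction in $H^2(\Sigma,\mathbb{Z}_2) \simeq \mathbb{Z}_2$ to finding a representative with trivial determinant, that is, to lifting the $PGL(2,\mathbb{C})$-Higgs bundle through $SL(2,\mathbb{C}) \to PSL(2,\mathbb{C})$. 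On the representation side, $Rep_d(PSL(2,\mathbb{C}))$ is cut out by the obstruction $d \in H^2(\Sigma,\mathbb{Z}_2) \simeq \mathbb{Z}_2$ to lifting $P_\theta$ to a principal $SL(2,\mathbb{C})$-bundle. Naturality of the correspondence with respect to the central extension $1 \to \mathbb{Z}_2 \to SL(2,\mathbb{C}) \to PSL(2,\mathbb{C}) \to 1$ --- a lift of the Higgs bundle to $\check{\mathcal{M}}(D,K)$ induces a lift of $P_\theta$ and conversely --- forces these two $\mathbb{Z}_2$-invariants to coincide, which gives $\hat{\mathcal{M}}(d,K) \simeq Rep_d(PSL(2,\mathbb{C}))$.

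The main obstacle is not the existence of the analytic correspondences, which are quoted wholesale, but the moduli-theoretic bookkeeping around them: verifying that polystability and $S$-equivalence match reductivity and the conjugation relation so that no points are wrongly identified or separated, that the algebraic and compact-open topologies actually coincide, and --- the point most specific to this paper --- that the $\mathbb{Z}_2$-valued invariants in (3) genuinely correspond rather than merely both ranging over two values. Each of these is available in the literature cited above, so the proof is in the end a matter of assembling known ingredients.
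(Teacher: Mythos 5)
Your proposal is correct and follows the same route as the paper, which states this proposition without proof as a direct citation of the non-abelian Hodge correspondence of Hitchin, Simpson, Donaldson and Corlette (with the reductive case for $GL(2,\mathbb{C})$ and the matching of the $\mathbb{Z}_2$ lifting obstruction for $PSL(2,\mathbb{C})$ left implicit). Your additional bookkeeping --- matching $S$-equivalence with conjugation, the topologies, and the two $\mathbb{Z}_2$-invariants in (3) --- is exactly the right set of checks and is consistent with the paper's intent.
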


The non-abelian Hodge correspondence has also been extended to real reductive groups \cite{bgm}, \cite{ggm0}. In particular this gives the following:

\begin{proposition}\label{proprealnah}
There exists homeomorphisms:
\begin{itemize}
\item[(1)]{${}^{\mathbb{R}} \mathcal{M}(K) \simeq Rep( GL(2,\mathbb{R}) )$}
\item[(2)]{${}^{\mathbb{R}} \check{\mathcal{M}}(K) \simeq Rep( SL(2,\mathbb{R}) )$}
\item[(3)]{${}^{\mathbb{R}} \hat{\mathcal{M}}(d,K) \simeq Rep_d( PGL(2,\mathbb{R}))$}
\item[(4)]{${}^{\mathbb{R}} \widetilde{\mathcal{M}}(d,K) \simeq Rep_d( PSL(2,\mathbb{R}))$}
\end{itemize}
\end{proposition}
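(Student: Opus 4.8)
The plan is to deduce Proposition \ref{proprealnah} from the non-abelian Hodge correspondence for real reductive Lie groups established in \cite{bgm,ggm0}: for each real reductive $G$ that theory provides a homeomorphism between the moduli space of polystable $G$-Higgs bundles --- a holomorphic $H^{\mathbb C}$-bundle together with a Higgs field valued in the bundle associated to the isotropy representation $\mathfrak m^{\mathbb C}$, equipped with the general (parabolic reduction / antidominant character) notion of polystability --- and the character variety $Rep(G)$ of reductive representations, a homeomorphism which is functorial in $G$ and preserves all discrete topological invariants. So the work consists of matching, case by case for $G = GL(2,\mathbb R), SL(2,\mathbb R), PGL(2,\mathbb R), PSL(2,\mathbb R)$ and $L=K$, the ad hoc data and (poly)stability conditions of Section \ref{secrthbm} with the general ones, and identifying the $\mathbb Z_2$-valued topological types in the projective cases.

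First I would identify the objects. For $G = GL(2,\mathbb R)$ the maximal compact is $O(2)$, so $H^{\mathbb C} = O(2,\mathbb C)$, and the Cartan decomposition $\mathfrak{gl}(2,\mathbb R) = \mathfrak o(2) \oplus \mathfrak m$ has $\mathfrak m^{\mathbb C}$ equal to the symmetric complex $2 \times 2$ matrices; hence a general $GL(2,\mathbb R)$-Higgs bundle is exactly a rank $2$ holomorphic bundle with non-degenerate symmetric pairing together with a $K$-valued symmetric endomorphism $\Phi$, matching the definition used here. The remaining groups are unwound the same way: $SL(2,\mathbb R)$ has $H^{\mathbb C} = \mathbb C^*$ with $\mathfrak m^{\mathbb C}$ splitting into its two weight spaces, giving the triple $(N,\beta,\gamma)$; $PGL(2,\mathbb R)$ has $H^{\mathbb C} = PO(2,\mathbb C) = CO(2,\mathbb C)/\mathbb C^*$, so a $PGL(2,\mathbb R)$-Higgs bundle is a rank $2$ bundle with a line-bundle-valued conformal structure and a trace-free symmetric Higgs field, taken up to the stated twisting; and $PSL(2,\mathbb R)$ reduces to $PGL(2,\mathbb R)$ via $E = N_1 \oplus N_2$ just as $SL(2,\mathbb R)$ reduces to $GL(2,\mathbb R)$.

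Next I would match (poly)stability: the general definition, phrased in terms of reductions of the $H^{\mathbb C}$-bundle to parabolics and their antidominant characters, must be shown to coincide with the concrete subbundle inequalities in the relevant definitions. For these groups the only pertinent parabolics correspond to $\Phi$-invariant isotropic line subbundles $N \subset E$, and evaluating the degree inequality on the corresponding characters reproduces exactly ``$\deg N < 0$'' (respectively $\le 0$, $< \deg A /2$, $\le \deg A /2$), while the reducible polystable strata (e.g.\ $\Phi$ scalar with $E = N \oplus N^*$ for $GL(2,\mathbb R)$) correspond to the Higgs bundles reducing to a proper reductive subgroup. This makes the notions of $S$-equivalence agree, so the moduli spaces coincide as sets, and as topological spaces because the correspondence of \cite{ggm0} is constructed at the level of the full moduli spaces.

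Finally I would identify the topological types. Functoriality of non-abelian Hodge under $GL(2,\mathbb R) \to PGL(2,\mathbb R)$ (and $SL(2,\mathbb R) \to PSL(2,\mathbb R)$) carries the obstruction to lifting a $PGL(2,\mathbb R)$-Higgs bundle to a $GL(2,\mathbb R)$-Higgs bundle to the obstruction defining $Rep_d(PGL(2,\mathbb R))$; and on the Higgs side this obstruction is elementary --- for a representative $(E,\Phi,A)$ one has $\deg A \equiv \deg E \pmod 2$ and a $GL(2,\mathbb R)$-reduction exists iff $A$ has a square root iff $\deg A$ is even, so the obstruction is $\deg E \bmod 2$, which is the index $d$ of ${}^{\mathbb R}\hat{\mathcal M}(d,K)$ (identically for $PSL(2,\mathbb R)$ with $A = N_1 N_2$). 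I expect the main obstacle to be the second step: faithfully translating the general parabolic-reduction definition of polystability into the elementary line-subbundle conditions of Section \ref{secrthbm}, particularly handling the borderline reducible strata so that the two notions of $S$-equivalence --- and hence the two moduli spaces as topological spaces --- agree exactly. Everything else is routine unwinding on top of the cited correspondence.
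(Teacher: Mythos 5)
Your proposal is correct and follows essentially the same route as the paper, which gives no argument beyond citing the non-abelian Hodge correspondence for real reductive groups from \cite{bgm,ggm0} and asserting the proposition as a special case. Your case-by-case matching of the Higgs bundle data, the polystability conditions, and the $\mathbb{Z}_2$ topological types is exactly the routine verification the paper leaves implicit, so if anything you have supplied more detail than the source.
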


\subsection{Connected components of real character varieties}

We continue to assume that $L = K$ or $deg(L) > 2g-2$. We will also assume that $deg(L) = l$ is even.

\begin{proposition}\label{propconnreg}
Let $G = GL(2,\mathbb{R}), SL(2,\mathbb{R}), PGL(2,\mathbb{R})$ or $PSL(2,\mathbb{R})$. Then every connected component of the corresponding moduli spaces ${}^{\mathbb{R}} \mathcal{M}(L), {}^{\mathbb{R}} \check{\mathcal{M}}(L), {}^{\mathbb{R}} \hat{\mathcal{M}}(d,L)$ and ${}^{\mathbb{R}} \widetilde{\mathcal{M}}(d,L)$ meets the regular locus.
\end{proposition}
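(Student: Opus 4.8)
The plan is to show that for each of the four groups the regular locus meets every connected component. Since $\mathcal{A}_{\rm reg}$ is open in $\mathcal{A}$, each regular locus ${}^{\mathbb{R}}\mathcal{M}_{\rm reg}(L)$, etc., is open in the corresponding moduli space, and as these moduli spaces have only finitely many connected components, every component is open and closed. Hence it suffices, for each component $\mathcal{C}$, to produce a single real Higgs bundle in $\mathcal{C}$ whose spectral curve is smooth, i.e. (in rank $2$) whose Hitchin discriminant $(\mathrm{tr}\,\Phi)^2 - 4\det\Phi \in H^0(\Sigma , L^2)$ has only simple zeros. One first reduces to the stable locus, which is open and dense in every component while the strictly polystable locus is proper and closed; and then away from the locus where $\Phi$ is scalar (including $\Phi = 0$), which is likewise a proper closed subset of each component. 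Thus it is enough to find one stable point of $\mathcal{C}$ with $\Phi$ non-scalar lying over $\mathcal{A}_{\rm reg}$, where moreover the moduli spaces are smooth.

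For $SL(2,\mathbb{R})$ and $PSL(2,\mathbb{R})$ I would use the explicit parametrisations from the Definitions: a point is a triple $(N,\beta,\gamma)$ (resp. a quadruple $(N_1,N_2,\beta,\gamma)$), the Hitchin map sends it to $-\beta\gamma \in H^0(\Sigma , L^2)$, and the regular locus consists of those for which $\beta\gamma$ has only simple zeros. Fix the value of the topological invariant $\delta = \deg N$ (resp. $\check\delta = \deg N_1 - \deg N_2$), which by Remark \ref{remineq} lies in $\{-l/2,\dots,l/2\}$ (resp. $\{-l,\dots,l\}$). Using that the squaring map on Jacobians is surjective, one sees that for every admissible $\delta$ there exist $N$ with $H^0(\Sigma , N^2 L)$ and $H^0(\Sigma , N^{-2}L)$ both non-zero, whence a generic pair $(\beta,\gamma)$ gives a stable bundle with $\beta\gamma$ having simple zeros (at the extremal $\delta = \pm l/2$ one of $\beta,\gamma$ is forced to be a nowhere-vanishing section while the other ranges over a full $H^0$, still generically giving simple zeros). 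The locus of such $(N,\beta,\gamma)$ fibres over a connected Brill--Noether locus in $\mathrm{Pic}(\Sigma)$ with connected fibres, so it has finitely many connected components, each of which is open and closed in the $\delta$-part of the moduli space (since $\delta$ is locally constant by Proposition \ref{propspecinv}) and contains regular points. This disposes of ${}^{\mathbb{R}}\check{\mathcal{M}}(L)$ and ${}^{\mathbb{R}}\widetilde{\mathcal{M}}(d,L)$.

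For $GL(2,\mathbb{R})$ and $PGL(2,\mathbb{R})$ I would argue in the same spirit, the underlying data now being an orthogonal bundle $E = V \otimes \mathbb{C}$. For each admissible value of $(w_1,w_2)$ (resp. $(\hat w_1,\hat w_2)$) one exhibits an orthogonal bundle $E$ with those Stiefel--Whitney classes together with a symmetric $\Phi$ for which $\mathrm{disc}(\Phi)$ has simple zeros: when $w_1 \neq 0$ the bundle $E$ has no isotropic line subbundle, so $(E,\Phi)$ is automatically stable; when $w_1 = 0$ one takes $E = N \oplus N^*$ with $\Phi$ off-diagonal and generic entries, reducing the simple-zero condition to the $SL(2,\mathbb{R})$ computation. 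One then connects this point to a generic point of the relevant component, using that each component is invariant under the $\mathbb{C}^*$-action $(E,\Phi) \mapsto (E , t\Phi)$ and that the natural maps ${}^{\mathbb{R}}\check{\mathcal{M}}(L) \to {}^{\mathbb{R}}\mathcal{M}^0(L)$ and ${}^{\mathbb{R}}\widetilde{\mathcal{M}}(d,L) \to {}^{\mathbb{R}}\hat{\mathcal{M}}(d,L)$ already place the $SL$- and $PSL$-representatives constructed above into $GL$- and $PGL$-components.

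The hard part will be the bookkeeping: guaranteeing that the regular point produced lies in the \emph{prescribed} component rather than merely in some component. There are two points to verify. First, matching the topological invariants: this is exactly the content of Proposition \ref{propspecinv}, which shows that over a single regular base point $a_0$ every admissible value of the invariants is realised --- $w_1(a,b,c) = c$ ranges over all of $H^1(\Sigma , \mathbb{Z}_2)$, $\delta(a,b) = (l-m)/2$ over all of $\{-l/2,\dots,l/2\}$, and similarly for $w_2$, $\hat w_1$, $\hat w_2$, $\check\delta$. Second, at the extremal invariant values several maximal components share the same invariants; these are distinguished by extra discrete data --- a choice of square root, i.e. an element of $\mathrm{Jac}(\Sigma)[2]$ --- which is visible both in the explicit parametrisation (the line bundle $N$ with $N^2 \cong L^{\pm 1}$, responsible for the factors $2^{2g}$ in the component counts) and in the spectral-data description of Section \ref{secsdmr}, so that one matches them directly. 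Checking that these two descriptions of the discrete data agree is routine given the machinery of Sections \ref{secsdmr} and \ref{secti}, but it is precisely what makes the proposition a genuine step rather than a formality.
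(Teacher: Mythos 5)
There is a genuine gap, and it sits exactly where you flag it as ``the hard part.'' Your strategy is to exhibit one regular point for each value of the discrete invariants (plus the extra $2^{2g}$-worth of data at extremal values) and then to assert that this point lies in ``the prescribed component.'' But the correspondence between invariant classes and connected components is not available at this stage: it is established only in Theorem \ref{thmcomponents}, whose proof \emph{uses} Proposition \ref{propconnreg} to get the upper bound (number of components $\le$ number of monodromy orbits). A priori a component could be a proper subset of an invariant stratum, in which case producing a regular point elsewhere in that stratum proves nothing about the given component. The non-circular way out is the one the paper takes: start from an \emph{arbitrary} polystable real Higgs bundle and construct a path inside the moduli space from it to a point over $\mathcal{A}_{\rm reg}$. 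Concretely, for $GL(2,\mathbb{R})$ with $w_1(E)\neq 0$ one pulls back to the double cover $\Sigma'\to\Sigma$ determined by $w_1(E)$, writes $p^*(E)=N\oplus N^*$ with $p^*(\Phi)$ off-diagonal satisfying $\beta\, f^*(\beta)=p^*(a)$, chooses a path $a(t)$ from $a$ into $H^0(\Sigma,L^2)^{\rm simp}$, and lifts it to a path $\beta(t)$ with $\beta(t)f^*(\beta(t))=p^*(a(t))$; setting $\gamma(t)=f^*(\beta(t))$ keeps you in the moduli space throughout, so the endpoint is a regular point of the \emph{same} component. The $w_1=0$ and $SL(2,\mathbb{R})$/$PSL(2,\mathbb{R})$ cases are deformations of the same kind. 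Nothing in your write-up performs such a deformation of a general point; the sentence ``one then connects this point to a generic point of the relevant component'' is precisely the statement being proved.

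The one place where your approach could in principle avoid circularity is the $SL(2,\mathbb{R})$ paragraph, where you gesture at showing that each $\delta$-stratum is swept out by a connected (or explicitly decomposed) parameter space $\{(N,\beta,\gamma)\}$ fibring over $\mathrm{Jac}_\delta(\Sigma)$. If carried out, that would prove the stronger statement that the strata are connected (respectively, split into $2^{2g}$ pieces at $\delta=\pm l/2$), from which the proposition follows. But as written the argument does not close: ``connected base with connected fibres, so finitely many connected components, each open and closed'' is not a valid inference (a connected base with connected fibres gives a connected total space, and the Brill--Noether jumping means this is not a locally trivial fibration in any case), you do not justify density of the stable locus or of the simple-zero locus within each stratum, and you do not attempt the analogous parametrisation for $GL(2,\mathbb{R})$ or $PGL(2,\mathbb{R})$ at all, falling back instead on the circular step above. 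Either complete the stratum-connectivity argument in all four cases, or replace the whole scheme with the deformation of an arbitrary point.
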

\begin{proof}
In the case of a polystable twisted $SL(2,\mathbb{R})$-Higgs bundle, the result is a straightforward generalisation of \cite[Proposition 10.2]{sch1}. Next we consider a polystable $GL(2,\mathbb{R})$-Higgs bundle $(E,\Phi)$. Note that it is sufficient to consider the case that $\Phi$ is trace-free. We may assume that $w_1(E) \neq 0$, since otherwise $(E,\Phi)$ comes from a polystable $SL(2,\mathbb{R})$-Higgs bundle and the previous argument applies. Let $p : \Sigma' \to \Sigma$ be the double cover associated to the class $w_1(E) \in H^1(\Sigma , \mathbb{Z}_2)$ and let $\iota : \Sigma' \to \Sigma'$ be the involution swapping the two sheets of the covering $\Sigma' \to \Sigma$. Note that $\Sigma'$ is connected as $w_1(E) \neq 0$. Then $(p^*(E) , p^*(\Phi) )$ is an $SL(2,\mathbb{R})$-Higgs bundle on $\Sigma'$ in the sense that there exists a line bundle $N \to \Sigma'$ for which $p^*(E) = N \oplus N^*$ with orthogonal structure the dual pairing and $p^*(\Phi) = \left[ \begin{matrix} 0 & \beta \\ \gamma & 0 \end{matrix} \right]$. We also have $f^*(N) = N^*$ and $f^*(\beta) = \gamma$. Note that since $f$ is orientation preserving, the condition $f^*(N) = N^*$ implies that $deg(N) = 0$. We also have that $\beta \gamma = \beta f^*(\beta) = p^*( a )$ for some $a \in H^0( \Sigma , L^2)$. Let $a(t)$ be a path joining $a = a(0)$ to an element $a(1) \in H^0(\Sigma , L^2)^{\rm simp}$. It is easy to see that we can lift this to a path $\beta(t) \in H^0(\Sigma' , N^2 \pi^*(L) )$ such that $\beta = \beta(0)$ and $\beta(t) f^*(\beta(t) ) = p^*( a(t) )$. Setting $\gamma(t) = f^*(\beta(t))$ we obtain a path $(E , \Phi(t) )$ joining $(E,\Phi)$ to a point in the regular locus.\\

The $PGL(2,\mathbb{R})$ and $PSL(2,\mathbb{R})$ cases are proved in a manner similar to the $GL(2,\mathbb{R})$ and $SL(2,\mathbb{R})$ cases.
\end{proof}

\begin{remark}
Proposition \ref{propconnreg} implies that the inequalities of Remark \ref{remineq} are valid for all points of the corresponding moduli spaces ${}^{\mathbb{R}} \check{\mathcal{M}}(L)$ and ${}^{\mathbb{R}} \widetilde{\mathcal{M}}(d,L)$ (for either value of $d$). This generalises the Milnor-Wood inequality $-(g-1) \le \delta \le (g-1)$ for $Rep( SL(2,\mathbb{R})) \simeq {}^{\mathbb{R}} \check{\mathcal{M}}(K)$.
\end{remark}

Next, we define a notion of maximal Higgs bundle which corresponds to representation of maximal Toledo invariant:
\begin{definition}
We define {\em maximal} real Higgs bundles as follows:
\begin{itemize}
\item[(1)]{An $SL(2,\mathbb{R})$-Higgs bundle $(N,\beta,\gamma)$ is said to be {\em maximal} if it is polystable and $\delta = deg(N) = \pm l/2$.}
\item[(2)]{A $PSL(2,\mathbb{R})$-Higgs bundle $(N_1,N_2,\beta,\gamma)$ is said to be {\em maximal} if it is polystable and $\check{\delta} = deg(N_1) - deg(N_2) = \pm l$.}
\item[(3)]{We say that a trace-free $GL(2,\mathbb{R})$-Higgs bundle is {\em maximal} if it is the $GL(2,\mathbb{R})$-Higgs bundle associated to a maximal $SL(2,\mathbb{R})$-Higgs bundle. More generally, we say that a $GL(2,\mathbb{R})$-Higgs bundle $(E,\Phi)$ is maximal if the associated trace-free $GL(2,\mathbb{R})$-Higgs bundle $(E , \Phi - \frac{tr(\Phi)}{2} Id )$ is maximal.}
\item[(4)]{We say that a $PGL(2,\mathbb{R})$-Higgs bundle is {\em maximal} if it is the $PGL(2,\mathbb{R})$-Higgs bundle associated to a maximal $PSL(2,\mathbb{R})$-Higgs bundle.}
\end{itemize}
\end{definition}

\begin{proposition}
We have the following classification of maximal Higgs bundles:
\begin{enumerate}
\item{Up to isomorphism, maximal $SL(2,\mathbb{R})$-Higgs bundles are of the form $(N , \beta , 1)$ or $(N^*,1,\beta)$, where $N^2 = L$ and $\beta$ is a holomorphic section of $L^2$.}
\item{Up to isomorphism, maximal $GL(2,\mathbb{R})$-Higgs bundles are of the form $E = N \oplus N^*$, $\Phi = \left[ \begin{matrix} \alpha & \beta \\ 1 & \alpha \end{matrix} \right]$, where $N^2 = L$, $\alpha$ is a holomorphic section of $L$ and $\beta$ is a holomorphic section of $L^2$.}
\item{Up to isomorphism, maximal $PSL(2,\mathbb{R})$-Higgs bundles are of the form $(L,1,\beta,1)$ or \linebreak $(1,L,1,\beta)$, where $\beta$ is a holomorphic section of $L^2$.}
\item{Up to isomorphism, maximal $PGL(2,\mathbb{R})$-Higgs bundles are of the form $E = L \oplus 1$, $\Phi = \left[ \begin{matrix} 0 & \beta \\ 1 & 0 \end{matrix} \right]$, where $\beta$ is a holomorphic section of $L^2$.}
\end{enumerate}
\end{proposition}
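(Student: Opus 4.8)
The plan is to reduce everything to a direct analysis of the stability condition in the two extreme cases and then deduce the $GL(2,\mathbb{R})$ and $PGL(2,\mathbb{R})$ statements from the $SL(2,\mathbb{R})$ and $PSL(2,\mathbb{R})$ ones. First I would treat $SL(2,\mathbb{R})$. Let $(N,\beta,\gamma)$ be polystable with $deg(N) = l/2$; the case $deg(N) = -l/2$ follows by applying the symmetry $(N,\beta,\gamma)\mapsto (N^*,\gamma,\beta)$. Since $L=K$ or $l>2g-2$ we have $l/2>0$. Passing to the associated $GL(2,\mathbb{R})$-Higgs bundle $(E,\Phi)$ with $E=N\oplus N^*$ (dual pairing as orthogonal structure) and $\Phi=\left[\begin{matrix}0&\beta\\\gamma&0\end{matrix}\right]$, the isotropic line subbundles of $E$ are exactly $N$ and $N^*$ (the discriminant double cover is split, being $\det E=N\otimes N^*=\mathcal{O}$, and a line subbundle that is fibrewise isotropic must be one of the two components), with $N$ being $\Phi$-invariant iff $\gamma=0$ and $N^*$ iff $\beta=0$. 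Since $deg(N)\neq 0$, the strictly polystable alternative in the definition (where $\Phi$ is a multiple of the identity and $N$ has degree $0$) cannot occur, so $(E,\Phi)$ must in fact be stable.

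Applying stability to $N$, which has positive degree $l/2$, forces $N$ not to be $\Phi$-invariant, hence $\gamma\neq 0$. But $\gamma\in H^0(\Sigma,N^{-2}L)$ with $deg(N^{-2}L)=-l+l=0$, so a nonzero section forces $N^{-2}L\cong\mathcal{O}$, i.e.\ $N^2=L$, and then $\gamma$ is a nowhere-vanishing constant which we normalise to $\gamma=1$ using the automorphism of $E$ rescaling the $N^*$ factor. This leaves $\beta\in H^0(\Sigma,N^2L)=H^0(\Sigma,L^2)$ arbitrary, and conversely every $(N,\beta,1)$ with $N^2=L$ is immediately stable ($N$ is not $\Phi$-invariant, and the only other isotropic subbundle $N^*$ has negative degree, so the strict inequality holds) with $\delta=l/2$. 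This gives (1). Then (2) follows: the $GL(2,\mathbb{R})$-Higgs bundle underlying $(N,\beta,1)$ is $E=N\oplus N^*$, $\Phi_0=\left[\begin{matrix}0&\beta\\1&0\end{matrix}\right]$, which is the same as the one underlying $(N^*,1,\beta)$, so there is a single family; a general maximal $GL(2,\mathbb{R})$-Higgs bundle is recovered by adding back the trace, $\Phi=\Phi_0+\tfrac12 tr(\Phi)\,Id$, and writing $\alpha=\tfrac12 tr(\Phi)\in H^0(\Sigma,L)$ gives the asserted form.

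The $PSL(2,\mathbb{R})$ and $PGL(2,\mathbb{R})$ cases run in parallel. Given a polystable $PSL(2,\mathbb{R})$-Higgs bundle $(N_1,N_2,\beta,\gamma)$ with $\check\delta=l$, use the equivalence $(N_1,N_2,\beta,\gamma)\sim(N_1\otimes N_2^*,\mathcal{O},\beta,\gamma)$ to normalise $N_2=\mathcal{O}$, so $deg(N_1)=l$; the case $\check\delta=-l$ is handled by swapping $N_1\leftrightarrow N_2$. The associated $PGL(2,\mathbb{R})$-Higgs bundle has $E=N_1\oplus\mathcal{O}$, $A=N_1$, $\Phi=\left[\begin{matrix}0&\beta\\\gamma&0\end{matrix}\right]$; since $deg(N_1)=l\neq 0=deg(\mathcal{O})$ the strictly polystable alternative is again excluded, so $(E,\Phi,A)$ is stable, and applying stability to $N_1$, which has degree $l>deg(A)/2=l/2$, forces $\gamma\neq 0$. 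Then $\gamma\in H^0(\Sigma,N_1^*L)$ with $deg(N_1^*L)=0$ gives $N_1=L$ and $\gamma$ a constant normalisable to $1$, leaving $\beta\in H^0(\Sigma,N_1L)=H^0(\Sigma,L^2)$ free; this is (3), and (4) follows by passing to the underlying $PGL(2,\mathbb{R})$-Higgs bundle exactly as before (with no trace term, since $PGL$-Higgs fields are trace-free by definition).

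The part that needs the most care — and essentially the only place one could slip — is the bookkeeping around stability: verifying that the isotropic $\Phi$-invariant line subbundles are exhausted by $N,N^*$ (resp.\ $N_1,N_2$), and confirming that the strictly semistable/polystable alternatives in the definitions are genuinely excluded once $\delta\neq 0$ (resp.\ $\check\delta\neq 0$) and the relevant subbundle has too-large degree. Everything else is a degree count, and I do not expect any conceptual obstacle beyond this.
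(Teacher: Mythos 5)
Your argument is correct and is essentially the paper's proof: the key step in both is that polystability forces $\gamma$ to be a nonzero (hence nowhere-vanishing) section of the degree-zero bundle $N^{-2}L$, giving $N^2\cong L$ and $\gamma=1$ after normalisation. You simply spell out the details the paper leaves implicit (why the isotropic subbundles are exactly $N,N^*$, why the strictly polystable alternative is excluded, and the $PGL$/$PSL$ cases the paper declares ``similar''), all correctly.
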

\begin{proof}
We give the proof for the $SL(2,\mathbb{R})$ case, the other cases being similar. If $(N,\beta,\gamma)$ is maximal then $deg(N) = \pm l/2$. If $deg(N) = l/2$ then $\gamma$ is a section of $N^{-2}L$ which has degree $0$ and is non-vanishing by polystability. Thus $N^2 \simeq L$ and we can choose the isomorphism of $N^2$ and $L$ so that $\gamma = 1$. Similarly if $deg(N) = -l/2$ then $N^2 = L^{-1}$ and we can take $\beta = 1$.
\end{proof}

\begin{corollary}\label{cormax1}
The number of maximal connected components are as follows:
\begin{enumerate}
\item{$2^{2g}$ for $GL(2,\mathbb{R})$}
\item{$2^{2g+1}$ for $SL(2,\mathbb{R})$}
\item{$1$ for $PGL(2,\mathbb{R})$}
\item{$2$ for $PSL(2,\mathbb{R})$}
\end{enumerate}
\end{corollary}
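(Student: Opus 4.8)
The plan is to identify, in each of the four moduli spaces, the \emph{maximal locus} (the subset of maximal Higgs bundles) with an explicit, manifestly connected-up-to-components family coming from the normal forms in the preceding classification, and to check that this locus is open and closed, so that its connected components are exactly the maximal connected components of the moduli space.

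The first point is that maximality is detected by the locally constant topological invariants of Section \ref{secti}: a polystable $SL(2,\mathbb{R})$-Higgs bundle is maximal iff $\delta=\pm l/2$; a polystable $PSL(2,\mathbb{R})$-Higgs bundle iff $\check\delta=\pm l$; and a polystable $GL(2,\mathbb{R})$- (resp.\ $PGL(2,\mathbb{R})$-) Higgs bundle iff $w_1=0$ (resp.\ $\hat w_1=0$) and the two isotropic line subbundles of $E$ have degrees differing by $\pm l$. This uses the classification together with polystability: when $\delta=l/2$ the section $\gamma$ of the degree-$0$ bundle $N^{-2}L$ is forced to be nowhere vanishing (otherwise $N$ is a $\Phi$-invariant isotropic line subbundle of positive degree, contradicting semistability and excluding the strictly-semistable polystable case since $l>0$), hence it trivialises $N^{-2}L$, so $N^{2}\cong L$; symmetrically when $\delta=-l/2$. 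As these invariants are locally constant, the maximal locus is open and closed in each moduli space, hence a union of connected components, and the problem reduces to counting the connected components of the maximal locus.

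For $SL(2,\mathbb{R})$ the maximal locus is therefore the set of $(N,\beta,1)$ with $N^{2}=L$, $\delta=l/2$, together with the set of $(N^{*},1,\beta)$ with $N^{2}=L$, $\delta=-l/2$, where $\beta\in H^{0}(\Sigma,L^{2})$. Since $l$ is even, there are $2^{2g}$ square roots of $L$ (a torsor over $\Lambda_{\Sigma}[2]$); for each fixed $N$ the parameter space $H^{0}(\Sigma,L^{2})$ is connected, and the square root $N$ is itself a locally constant function on the maximal locus (it varies continuously with values in a discrete set), so it separates the maximal locus into $2^{2g}$ pieces for each sign of $\delta$, giving $2\cdot 2^{2g}=2^{2g+1}$ maximal components. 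For $GL(2,\mathbb{R})$ one reduces to the trace-free case via ${}^{\mathbb{R}}\mathcal{M}(L)\simeq{}^{\mathbb{R}}\mathcal{M}^{0}(L)\times H^{0}(\Sigma,L)$; now $(N,\beta,1)$ and $(N^{*},1,\beta)$ define the \emph{same} $GL(2,\mathbb{R})$-Higgs bundle $E=N\oplus N^{*}$, $\Phi=\left(\begin{smallmatrix}0&\beta\\1&0\end{smallmatrix}\right)$, so the two signs are identified; since $N^{2}=L\neq L^{*}=(N^{*})^{2}$, the summand $N$ is intrinsically characterised inside $E$, is locally constant on the maximal locus, and separates it into $2^{2g}$ components. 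For $PGL(2,\mathbb{R})$, every maximal bundle is represented by $E=L\oplus 1$, $\Phi=\left(\begin{smallmatrix}0&\beta\\1&0\end{smallmatrix}\right)$, $A=L$, $\beta\in H^{0}(\Sigma,L^{2})$, so the maximal locus is the continuous image of the connected space $H^{0}(\Sigma,L^{2})$ and is a single component. For $PSL(2,\mathbb{R})$, the maximal bundles are $(L,1,\beta,1)$ with $\check\delta=l$ and $(1,L,1,\beta)$ with $\check\delta=-l$, two connected families separated by the locally constant invariant $\check\delta$, giving $2$ components.

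The main obstacle is the second step above: verifying that the maximal locus coincides with the explicit family from the classification and is therefore open and closed. This rests on the polystability argument ruling out degenerations of the normal form, and---to separate the $2^{2g}$ pieces in the $SL(2,\mathbb{R})$ and $GL(2,\mathbb{R})$ cases---on upgrading the obvious locally constant invariant $\delta$ (or $w_1$) to the finer locally constant datum of a square root of $L$. Once this is in place the counts follow immediately, and one may cross-check them against Proposition \ref{propconnreg}: the maximal points of a regular fibre are exactly those whose spectral-data parameter satisfies $b=0$ or $b=b_{o}$ in the description of Proposition \ref{propspecinv}, and by the monodromy descriptions following Theorem \ref{thmmonogp} every such point is fixed by the whole monodromy group, so the numbers of monodromy orbits of maximal points are $2^{2g+1}$, $2^{2g}$, $1$ and $2$ respectively, in agreement with the stated counts.
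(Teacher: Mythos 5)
Your proposal is correct and follows essentially the same route as the paper: the corollary is read off from the preceding classification proposition, since each normal form is parametrised by the connected affine space $H^{0}(\Sigma,L^{2})$ and the residual discrete data (the square root $N$ of $L$, the sign of $\delta$ or $\check\delta$) is locally constant, giving $2^{2g}$, $2\cdot 2^{2g}$, $1$ and $2$ families respectively after the identification $(N,\beta,1)\sim(N^{*},1,\beta)$ in the $GL(2,\mathbb{R})$ case. Your additional verification that the maximal locus is open and closed, and the cross-check against the monodromy orbits $(a,0,0)$ and $(a,0),(a,b_{o})$ being fixed points, are consistent with how the paper uses these counts in the proof of Theorem \ref{thmcomponents}.
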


\begin{theorem}\label{thmcomponents}
The number of connected components of the $L$-twisted real Higgs bundle moduli spaces are as follows:
\begin{enumerate}
\item{$3.2^{2g} + (l-4)/2$ for ${}^{\mathbb{R}} \mathcal{M}(L)$ (and ${}^{\mathbb{R}} \mathcal{M}^0(L)$)}
\item{$2.2^{2g}+(l-1)$ for ${}^{\mathbb{R}} \check{\mathcal{M}}(L)$}
\item{$2^{2g}+l/2$ for ${}^{\mathbb{R}} \hat{\mathcal{M}}(0,L)$ and $2^{2g}+l/2-1$ for ${}^{\mathbb{R}} \hat{\mathcal{M}}(1,L)$}
\item{$l+1$ for ${}^{\mathbb{R}} \widetilde{\mathcal{M}}(0,L)$ and $l$ for ${}^{\mathbb{R}} \widetilde{\mathcal{M}}(1,L)$}
\end{enumerate}
\end{theorem}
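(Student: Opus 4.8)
The plan is to realise the number of connected components of each moduli space as the number of orbits of the monodromy group acting on the fibre of the Hitchin map, and then to pin this number down by squeezing it between an orbit count (upper bound) and a count of topological invariants (lower bound).

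\emph{Step 1: reduction to an orbit count.} By Proposition \ref{proprsd} and Remark \ref{remfibres} each regular real moduli space ${}^{\mathbb{R}}\mathcal{M}^0_{\rm reg}(L)$, ${}^{\mathbb{R}}\check{\mathcal{M}}_{\rm reg}(L)$, ${}^{\mathbb{R}}\hat{\mathcal{M}}_{\rm reg}(d,L)$, ${}^{\mathbb{R}}\widetilde{\mathcal{M}}_{\rm reg}(d,L)$ is a covering space of the connected base $\mathcal{A}^0_{\rm reg}(L)$ whose fibre is the finite set $\Lambda_S[2]$, $\widetilde{\Lambda}_P[2]$, $\Lambda_S^d[2]/\Lambda_\Sigma[2]$, $\widetilde{\Lambda}_P^d[2]/\Lambda_\Sigma[2]$ respectively (the affine structure of Theorem \ref{thmaffine1} degenerates to an honest covering here, since the fibres are discrete). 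Hence the connected components of each regular locus correspond bijectively to the orbits of the relevant monodromy representation, and by Proposition \ref{propconnreg} every connected component of the full moduli space meets its regular locus, so the number of components of the full moduli space is \emph{at most} the number of monodromy orbits.

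\emph{Step 2: the orbit counts.} Here I would use the explicit descriptions of the monodromy groups: Theorem \ref{thmmonogp} for $\Lambda_S[2]$, Corollary \ref{cormono1} and its dual counterpart for $\Lambda_P[2]$ and $(\Lambda_P[2])^*$, and the propositions of \textsection\ref{secmonoact2}, \textsection\ref{secmonoact3} for $\widetilde{\Lambda}_P[2]$ and the affine spaces $\Lambda_S^1[2],\widetilde{\Lambda}_P^1[2]$. For ${}^{\mathbb{R}}\mathcal{M}^0(L)$ and ${}^{\mathbb{R}}\hat{\mathcal{M}}(d,L)$ one stratifies the fibre by the $\Lambda_\Sigma[2]$-component $c$ on which the monodromy acts trivially (this is $w_1$, resp.\ $\hat{w}_1$): over $c\neq 0$ the $A$-, $A^t$- and $B$-blocks reduce every point to a standard form and, in the $GL$ case, the quadratic refinement $q$ of Lemma \ref{lemrefine} separates the remaining orbits into exactly two; over $c=0$ the action is through $S_{2l}$ acting on $W$ (resp.\ $W^1$) by subset size modulo $b_o=0$, together with the fixed points $(a,0,0)$ (which are the maximal ones). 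For ${}^{\mathbb{R}}\check{\mathcal{M}}(L)$ and ${}^{\mathbb{R}}\widetilde{\mathcal{M}}(d,L)$ no $w_1$-stratification is needed: the $A$-block makes the $\Lambda_\Sigma[2]$-direction transitive unless the $(\mathbb{Z}_2 B)$-component equals $0$ or $b_o$, in which cases one gets $2^{2g}$ fixed points, while $S_{2l}$ acts by subset size. Carrying out this bookkeeping yields exactly $3\cdot 2^{2g}+(l-4)/2$, $2\cdot 2^{2g}+(l-1)$, $2^{2g}+l/2$ and $2^{2g}+l/2-1$, $l+1$ and $l$.

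\emph{Step 3: matching lower bounds, and the main obstacle.} The topological invariants of Definition \ref{deftopinv} are locally constant and maximality is detected by $\delta$ (resp.\ $\check{\delta}$), so maximal components are disjoint from non-maximal ones and are counted by Corollary \ref{cormax1}. For ${}^{\mathbb{R}}\check{\mathcal{M}}(L)$ and ${}^{\mathbb{R}}\widetilde{\mathcal{M}}(d,L)$ every value of $\delta$ (resp.\ $\check{\delta}$) permitted by Remark \ref{remineq} is already realised over the regular locus through the spectral description, so the number of components is at least the number of maximal components plus the number of non-maximal values of $\delta$ (resp.\ $\check{\delta}$), matching the orbit count by Proposition \ref{propspecinv}. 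For ${}^{\mathbb{R}}\mathcal{M}^0(L)$ one instead splits by whether $w_1$ vanishes: on $w_1\neq 0$ the pair $(w_1,w_2)$ separates components and all pairs occur, while the locus $w_1=0$ consists precisely of the Higgs bundles $E=N\oplus N^*$ coming from $SL(2,\mathbb{R})$, hence is the quotient of ${}^{\mathbb{R}}\check{\mathcal{M}}(L)$ by the involution $(N,\beta,\gamma)\mapsto(N^*,\gamma,\beta)$, whose component count follows from the $SL(2,\mathbb{R})$ result together with a fixed-stratum analysis ($\delta\leftrightarrow-\delta$); the $PGL$ case is handled the same way via $PSL$. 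Summing the strata gives lower bounds equal to the upper bounds. I expect the hard part to be twofold: proving the transitivity statement in Step 2 — that $G=S_{2l}\ltimes H$ of Theorem \ref{thmmonogp} is transitive on each level set of $q$ inside a fixed nonzero‑$w_1$ slice, i.e.\ a transitivity result for this large subgroup of the orthogonal group of $(\Lambda_S[2],q)$ — and making the lower bound tight for $GL(2,\mathbb{R})$ and $PGL(2,\mathbb{R})$, which requires carefully identifying the $\mathbb{Z}_2$-quotient relating these moduli spaces to ${}^{\mathbb{R}}\check{\mathcal{M}}(L)$ and ${}^{\mathbb{R}}\widetilde{\mathcal{M}}(d,L)$ and counting the fixed loci of the corresponding involutions.
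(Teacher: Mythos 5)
Your proposal is correct and follows essentially the same route as the paper's proof: the upper bound comes from Proposition \ref{propconnreg} together with a count of monodromy orbits on the fibres described in Remark \ref{remfibres}, using the explicit group descriptions of \textsection\ref{secmonoact}--\textsection\ref{secmonoact3}, and the matching lower bound comes from the maximal-component count of Corollary \ref{cormax1} plus the topological invariants of Proposition \ref{propspecinv}. The one point where you are more explicit than the paper is in separating the non-maximal $w_1=0$ orbits for $GL(2,\mathbb{R})$ (and $\hat{w}_1=0$ for $PGL(2,\mathbb{R})$) via the relation to ${}^{\mathbb{R}}\check{\mathcal{M}}(L)$ and the invariant $|\delta|$, a step the paper dispatches with ``by inspection''; your treatment correctly fills in that detail.
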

\begin{proof}
Our strategy for counting components is as follows: Proposition \ref{propconnreg} ensures that every component meets the regular locus and thus every component meets any fixed choice of non-singular fibre. Next we determine the orbits of the monodromy action on the fibre. We say that an orbit is maximal if the corresponding Higgs bundles are maximal and we say an orbit is non-maximal otherwise. By inspection, we will find that any two distinct non-maximal orbits will have different topological invariants and thus correspond to distinct connected components of the moduli space. It follows that the number of connected components is the number of non-maximal orbits plus the number of maximal components (and this is just the total number of orbits of the monodromy).\\

{\bf Case (1): $GL(2,\mathbb{R})$.} The real points of a fibre is given by $\Lambda_S[2] = \Lambda_\Sigma[2] \oplus (\mathbb{Z}_2 B)^{\rm ev}/(b_o) \oplus \Lambda_\Sigma[2]$. The maximal orbits are those of the form $(a,0,0)$, $a \in \Lambda_\Sigma[2]$. Let $(a,b,c) \in \Lambda_S[2]$. If $c \neq 0$, we can use the monodromy action to eliminate $b$ leaving $(a,0,c)$. We claim that for each fixed $c \neq 0$ there are two orbits corresponding to whether $q(a,0,c) = \langle a , c \rangle $ is $0$ or $1$. Note that we have a monodromy action $(a,0,c) \mapsto (a+Fc,0,c)$ , where $F$ is any even symmetric endomorphism of $\Lambda_\Sigma[2]$.\\

If $\langle a , c \rangle = 0$, choose an element $c' \in \Lambda_\Sigma[2]$ with $\langle c , c' \rangle = 1$ and define $F$ by $Fx = \langle a , x \rangle c' + \langle c' , x \rangle a$. Then $Fc = a$ and $(a+Fc , 0 , c) = (0,0,c)$ and so there is just one such orbit for each $c \neq 0$.\\

If $\langle a , c \rangle = \langle a' , c \rangle = 1$, we will show that there is a symmetric even endomorphism $F$ such that $Fc = a+a'$. Then $(a+Fc,0,c) = (a',0,c)$ and so there is just one orbit of this type. In fact, we can take $F$ to be given by $Fx = \langle a , x \rangle a' + \langle a' , x \rangle a$.\\

Now consider non-maximal orbits of the form $(a,b,0)$. Since $b \neq 0$ we can use monodromy to set $a$ to zero, so we just need to consider elements $(0,b,0)$. Since the monodromy acts on such elements by permutations of $B$, we find that there are exactly $l/2$ such non-maximal orbits. In total we have found $2.(2^{2g}-1) +l/2$ non-maximal orbits and by inspection they are seen to be distinguished their topological invariants. Together with the $2^{2g}$ maximal components this gives a total of $3.2^{2g} +(l-4)/2$ components.\\

{\bf Case (2): $SL(2,\mathbb{R})$.} The real points of a fibre is given by $\widetilde{\Lambda}_P[2] = \Lambda_\Sigma[2] \oplus (\mathbb{Z}_2 B)^{\rm ev}$. The $2.2^{2g}$ maximal orbits are those of the form $(a,0)$ and $(a,b_o)$ for $a \in \Lambda_\Sigma[2]$. The non-maximal orbits have representatives of the form $(0,b)$ and we find there are $(l-1)$ such orbits. Again, we see by inspection that the non-maximal orbits have distinct topological invariants, so the total number of connected components is $2.2^{2g} + l-1$.\\

{\bf Case (3): $PGL(2,\mathbb{R})$.} The real points of a fibre is $W \oplus \Lambda_\Sigma[2]$ for $d=0$ and $W^1 \oplus \Lambda_\Sigma[2]$ for $d=1$. There is a single maximal orbit $(0,0)$. Consider an element of the form $(b,c)$ with $c \neq 0$. By the monodromy action we can replace $b$ by $b + b'$ for any $b' \in (\mathbb{Z}_2 B)^{\rm ev}/(b_o)$. Thus we can assume $b = 0$ (for $d=0$) or $b = b_1$ (for $d=1$). Thus for either value of $d$, there are $2^{2g}-1$ such orbits. The remaining orbits have the form $(b,0)$ for $b \neq 0,b_o$. We find there are $l/2$ such orbits for each value of $d$. Once again, the non-maximal orbits have distinct topological invariants and so the total number of components is $2^{2g}+l/2$ for $d=0$ and $2^{2g}+l/2-1$ for $d=1$.\\

{\bf Case (4): $PSL(2,\mathbb{R})$.} The real points of a fibre are $(\mathbb{Z}_2 B)^{\rm ev}$ for $d=0$ and $(\mathbb{Z}_2 B)^{\rm odd}$ for $d=1$. There are two maximal orbits $0$ and $b_o$. There are a further $l-1$ non-maximal orbits when $d=0$ and $l$ non-maximal orbits when $d=1$. Yet again, the non-maximal orbits are distinguished by topological invariants so the number of connected components is $l+1$ for $d=0$ and $l$ for $d=1$.
\end{proof}

\begin{corollary}
Setting $L = K$, we have the number of connected components of the following real character varieties:
\begin{enumerate}
\item{$3.2^{2g}+g-3$ for $Rep(GL(2,\mathbb{R}))$}
\item{$2.2^{2g}+2g-3$ for $Rep(SL(2,\mathbb{R}))$}
\item{$2^{2g}+g-1$ for $Rep_0(PGL(2,\mathbb{R}))$ and $2^{2g}+g-2$ for $Rep_1(PGL(2,\mathbb{R}))$}
\item{$2g-1$ for $Rep_0(PSL(2,\mathbb{R}))$ and $2g-2$ for $Rep_1(PSL(2,\mathbb{R}))$}
\end{enumerate}
\end{corollary}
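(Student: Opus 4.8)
The plan is to deduce this corollary directly from Theorem \ref{thmcomponents} together with the non-abelian Hodge correspondence of Proposition \ref{proprealnah}. The first step is to observe that setting $L = K$ gives $l = \deg(K) = 2g-2$, which is even for every genus $g$, so the standing hypothesis of the previous section --- that $L = K$ or $\deg(L) > 2g-2$, and that $l$ is even --- is met. Hence Theorem \ref{thmcomponents} applies verbatim with $l = 2g-2$, and so do all the constructions of the preceding sections which required $\deg(L)$ even.

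Next I would invoke Proposition \ref{proprealnah}, which provides homeomorphisms ${}^{\mathbb{R}} \mathcal{M}(K) \simeq Rep(GL(2,\mathbb{R}))$, ${}^{\mathbb{R}} \check{\mathcal{M}}(K) \simeq Rep(SL(2,\mathbb{R}))$, ${}^{\mathbb{R}} \hat{\mathcal{M}}(d,K) \simeq Rep_d(PGL(2,\mathbb{R}))$ and ${}^{\mathbb{R}} \widetilde{\mathcal{M}}(d,K) \simeq Rep_d(PSL(2,\mathbb{R}))$. Since a homeomorphism induces a bijection on the set of connected components, the component counts of Theorem \ref{thmcomponents} transfer unchanged to the corresponding character varieties.

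It then remains only to substitute $l = 2g-2$ into the four formulae of Theorem \ref{thmcomponents} and simplify: for $GL(2,\mathbb{R})$ one gets $3.2^{2g} + (2g-2-4)/2 = 3.2^{2g}+g-3$; for $SL(2,\mathbb{R})$ one gets $2.2^{2g} + (2g-2)-1 = 2.2^{2g}+2g-3$; for $PGL(2,\mathbb{R})$ one gets $2^{2g} + (2g-2)/2 = 2^{2g}+g-1$ in degree $0$ and $2^{2g}+g-2$ in degree $1$; and for $PSL(2,\mathbb{R})$ one gets $(2g-2)+1 = 2g-1$ in degree $0$ and $2g-2$ in degree $1$. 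There is essentially no obstacle in this argument; the only points warranting a moment's care are the verification that $2g-2$ is indeed even (so that every result of Sections \ref{secmonoact}--\ref{seccrcv} invoked via Theorem \ref{thmcomponents} remains valid at $L=K$) and the bookkeeping that matches each character variety to the correct moduli space under Proposition \ref{proprealnah}.
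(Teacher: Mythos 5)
Your proposal is correct and is exactly the argument the paper intends: the corollary follows immediately from Theorem \ref{thmcomponents} via the homeomorphisms of Proposition \ref{proprealnah}, after substituting $l = 2g-2$ (which is even, so the standing hypothesis holds) into each formula. The arithmetic in all four cases checks out.
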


\begin{remark}
The number of components $2.2^{2g}+2g-3$ for $Rep(SL(2,\mathbb{R}))$ and $4g-3$ for \linebreak $Rep(PSL(2,\mathbb{R}))$ were shown by Goldman in \cite{goldm2}. Xia \cite{xia1,xia2} showed that the number of components of the space of homomorphisms $Hom( \pi_1(\Sigma) , PSL(2,\mathbb{R}) )$ is $2.2^{2g} + 4g-5$. This number is different to the number $2.2^{2g} + 2g-3$ of components of $Rep( PGL(2,\mathbb{R}) )$ because upon taking the quotient of the conjugation action of $PGL(2,\mathbb{R})$, certain pairs of components are identified.
\end{remark}

\subsection{Components of maximal $Sp(4,\mathbb{R})$ representations}

Let $\theta$ be a representation of $\pi_1(\Sigma)$ into $Sp(4,\mathbb{R})$. Since the maximal compact subgroup of $Sp(4,\mathbb{R})$ is $U(2)$, we can associated to $\theta$ an integer invariant $d$ called the {\em Toledo invariant}, defined as the degree of the $U(2)$-bundle obtained by a reduction of structure of the flat $Sp(4,\mathbb{R})$-bundle associated to $\theta$. Turaev \cite{tur} showed that the Toledo invariant satisfies an inequality, often referred to as a Milnor-Wood inequality:
\begin{equation*}
| d | \le (2g-2).
\end{equation*}
We say that a representation $\theta$ of $\pi_1(\Sigma)$ into $Sp(4,\mathbb{R})$ is {\em maximal} if it satisfies $|d| = (2g-2)$ and we let $Rep_{max}(Sp(4,\mathbb{R}))$ denote the subspace of $Rep( Sp(4,\mathbb{R}))$ consisting of maximal representations. We also write $Rep_{d}( Sp(4,\mathbb{R}))$ for the representations with fixed value of the Toledo invariant. It can easily be shown that $Rep_{d}( Sp(4,\mathbb{R}))$ is homeomorphic to $Rep_{-d}( Sp(4,\mathbb{R}))$ and $Rep_{max}( Sp(4,\mathbb{R})) = Rep_{2g-2}( Sp(4,\mathbb{R})) \cup Rep_{-(2g-2)}( Sp(4,\mathbb{R}))$. Using the Cayley correspondence of \cite{ggm} it can be shown that there is a homeomorphism between $Rep_{2g-2}(Sp(4,\mathbb{R}))$ and ${}^{\mathbb{R}} \mathcal{M}(K^2)$, the moduli space of $K^2$-twisted $GL(2,\mathbb{R})$-Higgs bundles. From Theorem \ref{thmcomponents}, we immediately obtain:

\begin{corollary}\label{cormaxsp}
The number of components of $Rep_{2g-2}(Sp(4,\mathbb{R}))$ is given by $3.2^{2g}+2g-4$.
\end{corollary}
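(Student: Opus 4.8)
The plan is to deduce the count directly from Theorem \ref{thmcomponents}, using the Cayley correspondence as the only external input. Recall (as stated in the paragraph preceding the corollary, following \cite{ggm}) that there is a homeomorphism
\begin{equation*}
Rep_{2g-2}(Sp(4,\mathbb{R})) \;\cong\; {}^{\mathbb{R}} \mathcal{M}(K^2),
\end{equation*}
identifying the maximal Toledo component of the $Sp(4,\mathbb{R})$-character variety with the moduli space of $K^2$-twisted $GL(2,\mathbb{R})$-Higgs bundles. Since a homeomorphism preserves the set of connected components, it suffices to count the components of ${}^{\mathbb{R}} \mathcal{M}(K^2)$, and for this we invoke Theorem \ref{thmcomponents}(1).

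First I would verify that $L = K^2$ falls within the hypotheses of Theorem \ref{thmcomponents}, namely that $\deg(L)$ is even and that $L = K$ or $\deg(L) > 2g-2$. Here $l := \deg(K^2) = 2(2g-2) = 4g-4$, which is even; and since $g > 1$ we have $4g-4 > 2g-2$, so the degree hypothesis holds. Theorem \ref{thmcomponents}(1) then gives that the number of connected components of ${}^{\mathbb{R}} \mathcal{M}(K^2)$ equals
\begin{equation*}
3\cdot 2^{2g} + \frac{l-4}{2} \;=\; 3\cdot 2^{2g} + \frac{(4g-4)-4}{2} \;=\; 3\cdot 2^{2g} + 2g - 4.
\end{equation*}
Transporting this count along the homeomorphism above yields the stated number of components of $Rep_{2g-2}(Sp(4,\mathbb{R}))$.

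The one point carrying genuine content is the homeomorphism itself, i.e. that the Cayley correspondence of \cite{ggm} matches maximal $Sp(4,\mathbb{R})$-representations with $K^2$-twisted $GL(2,\mathbb{R})$-Higgs bundles; I expect this to be the main (indeed essentially the only) obstacle, and it is handled by appeal to \cite{ggm} rather than reproved here. Everything else is the elementary substitution $l = 4g-4$ into the formula of Theorem \ref{thmcomponents}, so the proof is short.
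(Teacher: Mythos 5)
Your proposal is correct and follows exactly the paper's route: the Cayley correspondence homeomorphism $Rep_{2g-2}(Sp(4,\mathbb{R})) \cong {}^{\mathbb{R}}\mathcal{M}(K^2)$ from \cite{ggm}, followed by substituting $l = 4g-4$ into Theorem \ref{thmcomponents}(1). Your explicit check that $K^2$ satisfies the hypotheses ($l$ even and $l > 2g-2$) is a small but welcome addition the paper leaves implicit.
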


\begin{remark}
Corollary \ref{cormaxsp} was shown by Gothen in \cite[Theorem 5.8]{got}.
\end{remark}

\section{Monodromy for $SO(2,2)$-Higgs bundles}\label{secso22}

In this section we will use our results on the monodromy of rank $2$ Higgs bundle moduli spaces to determine the monodromy for $SO(2,2)$-Higgs bundles. To begin, we let $\mathcal{M}^{SO(4,\mathbb{C})}$ denote the moduli space of semi-stable $SO(4,\mathbb{C})$-Higgs bundles and $h : \mathcal{M}^{SO(4,\mathbb{C})} \to \mathcal{A}^{SO(4,\mathbb{C})}$ the Hitchin fibration, where $\mathcal{A}^{SO(4,\mathbb{C})} := H^0(\Sigma , K^2) \oplus H^0(\Sigma , K^2)$. The moduli space has two connected components $\mathcal{M}^{SO(4,\mathbb{C})}(0)$, $\mathcal{M}^{SO(4,\mathbb{C})}(1)$ corresponding to the value of the second Stiefel-Whitney class $w_2 \in H^2(\Sigma , \mathbb{Z}_2) \simeq \mathbb{Z}_2$ of the underlying $SO(4,\mathbb{C})$-bundle.\\

As we are mainly concerned with the monodromy of the regular locus, we will omit discussion of semi-stability and pass directly to the spectral data description of $SO(4,\mathbb{C})$-Higgs bundles, as detailed in \cite{hit2}. Let $(a_2 , p ) \in \mathcal{A}^{SO(4,\mathbb{C})} = H^0(\Sigma , K^2) \oplus H^0(\Sigma , K^2)$ be a pair of quadratic differentials on $\Sigma$. Associated to the pair $(a_2,p)$ is a characteristic equation of the form:
\begin{equation}\label{equso4}
\lambda^4 + a_2 \lambda^2 + p^2 = 0.
\end{equation}
The curve $S \subset K$ defined by (\ref{equso4}) is always singular, but for generic pairs $(a_2,p)$, the singularities of $S$ are ordinary double points lying over the zeros of $p$. Let $\nu : S^\nu \to S$ be the normalisation of $S$. The involution $\sigma : S \to S$ given by $\lambda \mapsto -\lambda$ lifts to a free involution $\sigma^\nu : S^\nu \to S^\nu$. Let $\overline{S}$ be the quotient of $S^\nu$ by the action of $\sigma^\nu$ and let $\pi^{\nu} : S^\nu \to \overline{S}$ be the projection. Then $\overline{S}$ can also be identified with the quotient of $S$ by $\sigma$. Let $\overline{\pi} : K^2 \to \Sigma$ be the projection from the total space of $K^2$ and $y$ the tautological section of $\overline{\pi}^* K^2$. Then $\overline{S} \subset K^2$ is given by the equation
\begin{equation}\label{equsbar}
y^2 + a_2 y + p^2 = 0.
\end{equation}
In particular, $\overline{S}$ is smooth if and only if the discriminant $\Delta = a_2^2 - 4p^2$ has only simple zeros. The regular locus $\mathcal{A}^{SO(4,\mathbb{C})}_{\rm reg}$ of the base $\mathcal{A}$ is precisely the set of points where $\overline{S}$ is smooth and in this case the fibre of the Hitchin system lying over $(a_2,p) \in \mathcal{A}^{SO(4,\mathbb{C})}_{\rm reg}$ is given by the Prym variety of the cover $\pi^\nu : S^\nu \to \overline{S}$. To be more precise, let us define $Prym(S^\nu , \overline{S})$ by:
\begin{equation*}
Prym(S^\nu , \overline{S} ) = \{ M \in Jac(S^\nu) \; | \; Nm(M) = \mathcal{O} \}.
\end{equation*}
Since the double covering $S^\nu \to \overline{S}$ has no branch points, we have that $Prym(S^\nu , \overline{S} )$ is a complex group having two connected components $Prym_0(S^\nu , \overline{S})$ and $Prym_1(S^\nu , \overline{S})$. The identity component $Prym_0(S^\nu , \overline{S})$ is an abelian variety and $Prym_1(S^\nu , \overline{S})$ has the structure of a $Prym_0(S^\nu , \overline{S})$-torsor. If $(a_2,p) \in \mathcal{A}^{SO(4,\mathbb{C})}_{\rm reg}$ with corresponding smooth curves $\pi^\nu : S^\nu \to \overline{S}$, then the fibre of $\mathcal{M}^{SO(4,\mathbb{C})}(a)$ lying over $(a_2,p)$ can be identified with the component $Prym_a(S^\nu , \overline{S})$ of the Prym variety.\\

Given $SO(2,2)$-the split real form of $SO(4,\mathbb{C})$, we consider  the moduli space $\mathcal{M}^{SO(2,2)}$ of $SO(2,2)$-Higgs bundles. We have a naturally defined Hitchin map $\mathcal{M}^{SO(2,2)} \to \mathcal{A}^{SO(4,\mathbb{C})}$ given by the composition of the  map $\mathcal{M}^{SO(2,2)} \to \mathcal{M}^{SO(4,\mathbb{C})}$ with the Hitchin map $\mathcal{M}^{SO(4,\mathbb{C})} \to \mathcal{A}^{SO(4,\mathbb{C})}$. Let $\mathcal{M}^{SO(2,2)}_{\rm reg}$ be the points of $\mathcal{M}^{SO(2,2)}$ lying over $\mathcal{A}^{SO(4,\mathbb{C})}_{\rm reg}$. From \cite[Theorem 4.12]{sch0}, in the case of $SO(2,2)$, spectral data over a point $(a_2,p) \in \mathcal{A}^{SO(4,\mathbb{C})}_{\rm reg}$ consists of a pair $(M,\tilde{\sigma}^\nu)$, where $M$ is a line bundle of order $2$ and $\tilde{\sigma}^\nu$ is an involutive lift of $\sigma^\nu$. Now since $\sigma^\nu$ acts freely, we see that such pairs correspond simply to line bundles on $\overline{S}$ of order $2$, i.e. the space $\Lambda_{\overline{S}}[2]$. We have thus proven the following:
\begin{theorem}
The bundle of groups $\mathcal{M}^{SO(2,2)}_{\rm reg} \to \mathcal{A}^{SO(4,\mathbb{C})}_{\rm reg}$ is the pullback of ${}^{\mathbb{R}} \mathcal{M}(K)_{\rm reg} \to \mathcal{A}_{\rm reg}(K)$ under the map $j : \mathcal{A}^{SO(4,\mathbb{C})}_{\rm reg} \to \mathcal{A}_{\rm reg}(K)$ given by $j(a_2,p) = (a_2 , p^2)$. In particular, the monodromy $\rho^{SO(2,2)} : \pi_1( \mathcal{A}^{SO(4,\mathbb{C})}_{\rm reg} ) \to Aut( \Lambda_{\overline{S}}[2] )$ of the $SO(2,2)$-Hitchin system is determined by the following commutative diagram:
\begin{equation*}\xymatrix{
& & Aut( \Lambda_{\overline{S}}[2] ) \\
\pi_1( \mathcal{A}^{SO(4,\mathbb{C})}_{\rm reg} , (a_2,p) ) \ar[rr]^-{j_*} \ar[urr]^-{\rho^{SO(2,2)}} & & \pi_1( \mathcal{A}_{\rm reg}(K) , (a_2 , p^2) ) \ar[u]^-{\rho}
}
\end{equation*}
\end{theorem}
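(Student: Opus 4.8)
The goal is to identify the family of spectral curves $\overline{S}$ appearing in the $SO(2,2)$ story with the family of spectral curves arising in the $K$-twisted $GL(2,\mathbb{R})$ theory, and to deduce the statement about $\mathcal{M}^{SO(2,2)}_{\rm reg}$ and $\rho^{SO(2,2)}$ from this identification. First I would observe that, by Equation (\ref{equsbar}), the curve $\overline{S} \subset K^2$ over a point $(a_2,p) \in \mathcal{A}^{SO(4,\mathbb{C})}_{\rm reg}$ is cut out by $y^2 + a_2 y + p^2 = 0$. On the other hand, recall from \textsection\ref{secschf} that over a point $a \in \mathcal{A}_{\rm reg}(K) = H^0(\Sigma,K) \oplus H^0(\Sigma,K^2)$ the spectral curve in $K^2$ associated to the trace of $a$ being absorbed is cut out by an equation of the shape $y^2 + a_1' y + a_2' = 0$; for the trace-free representative this is $y^2 + a_2' = 0$, but the relevant point is that the whole family over $\mathcal{A}_{\rm reg}(K)$ (equivalently $\mathcal{A}^0_{\rm reg}(K) = H^0(\Sigma,K^2)^{\rm simp}$ after removing the trace) is classified by a single quadratic differential with simple zeros. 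The map $j(a_2,p) = (a_2, p^2)$ therefore has the property that the curve $\overline{S}$ over $(a_2,p)$ coincides with the spectral curve over $j(a_2,p)$, since the discriminant of $y^2 + a_2 y + p^2$ is $a_2^2 - 4p^2$, which has simple zeros exactly when the image point lies in $\mathcal{A}_{\rm reg}(K)$. So I would first make precise the statement that the family of curves $\{\overline{S}\}$ over $\mathcal{A}^{SO(4,\mathbb{C})}_{\rm reg}$ is the pullback along $j$ of the family of $GL(2,\mathbb{C})$ spectral curves over $\mathcal{A}_{\rm reg}(K)$.

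Next I would address the fibrewise identification of the bundles of groups. By \cite[Theorem 4.12]{sch0}, as quoted, the spectral data for $SO(2,2)$ over $(a_2,p) \in \mathcal{A}^{SO(4,\mathbb{C})}_{\rm reg}$ is a pair $(M,\tilde{\sigma}^\nu)$ with $M$ of order $2$ on $S^\nu$ and $\tilde{\sigma}^\nu$ an involutive lift of the free involution $\sigma^\nu$. Since $\sigma^\nu$ acts freely, giving such a pair is the same as giving a line bundle of order $2$ on the quotient $\overline{S}$, i.e.\ an element of $\Lambda_{\overline{S}}[2]$; this descent argument I would spell out in one or two lines (pullback along the étale double cover $\pi^\nu$ is injective on $2$-torsion with image the $\sigma^\nu$-invariant bundles, and choosing a lift $\tilde\sigma^\nu$ is precisely the descent datum). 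On the other side, Proposition \ref{proprsd}(1) identifies the fibre of ${}^{\mathbb{R}}\mathcal{M}(K)_{\rm reg}$ (equivalently ${}^{\mathbb{R}}\mathcal{M}^0(K)_{\rm reg}$, since the trace is irrelevant) over a point with smooth spectral curve $\pi : \overline{S} \to \Sigma$ with the space $\Lambda_{\overline{S}}[2]$ of order-$2$ line bundles on $\overline{S}$. Since both spectral-data descriptions produce $\Lambda_{\overline{S}}[2]$ for the same curve $\overline{S}$, and the constructions are natural in the spectral curve, the two bundles of groups over $\mathcal{A}^{SO(4,\mathbb{C})}_{\rm reg}$ agree — one is the $j$-pullback of the other.

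From the identification of bundles of groups, the statement about monodromy is formal: the monodromy of a bundle of groups (equivalently a local system, taking $\Lambda_{\overline{S}}[2] = H^1(\overline{S},\mathbb{Z}_2)$) pulls back along a continuous map by precomposition with the induced map on fundamental groups. Hence $\rho^{SO(2,2)} = \rho \circ j_*$, which is exactly the content of the commutative diagram. I would conclude by noting that $j$ restricts to a well-defined map $\mathcal{A}^{SO(4,\mathbb{C})}_{\rm reg} \to \mathcal{A}_{\rm reg}(K)$ because $\Delta = a_2^2 - 4p^2$ has simple zeros precisely on the regular locus, which we already used; and that the base-point $(a_2,p^2)$ must be chosen so that it lies in $\mathcal{A}_{\rm reg}(K)$, which holds whenever $(a_2,p) \in \mathcal{A}^{SO(4,\mathbb{C})}_{\rm reg}$.

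**Main obstacle.** The only genuinely delicate point is the compatibility of the two spectral-data dictionaries at the level of families rather than individual fibres: I must check that the isomorphism $\{(M,\tilde\sigma^\nu)\} \cong \Lambda_{\overline{S}}[2]$ from \cite{sch0} and the isomorphism ${}^{\mathbb{R}}\mathcal{M}(K)\text{-fibre} \cong \Lambda_{\overline{S}}[2]$ from Proposition \ref{proprsd} are not merely abstractly isomorphic but are the *same* identification, so that they glue to an isomorphism of bundles of groups (and hence of local systems) over $\mathcal{A}^{SO(4,\mathbb{C})}_{\rm reg}$, rather than differing by some automorphism that could twist the monodromy. This amounts to tracing through how a line bundle on $\overline{S}$ of order $2$ produces, on one hand, an $SO(2,2)$-Higgs bundle via $\pi^{\nu *}$ and pushforward to $S$, and on the other hand a $GL(2,\mathbb{R})$-Higgs bundle via $\pi_*(\,\cdot\otimes \pi^*K^{1/2})$ as in \textsection\ref{secsdmr}, and verifying these agree under the standard embedding $SO(2,2) \hookrightarrow GL(2,\mathbb{R}) \times GL(2,\mathbb{R})$ — or rather that the relevant $GL(2,\mathbb{R})$ factor is recovered. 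Once this compatibility is in hand, everything else is bookkeeping.
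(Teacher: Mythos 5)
Your argument is, in substance, the same as the paper's: identify $\overline{S}$ as the spectral curve over $j(a_2,p)$ via Equation (\ref{equsbar}), use freeness of $\sigma^\nu$ to trade pairs $(M,\tilde{\sigma}^\nu)$ on $S^\nu$ for order-two line bundles on the quotient $\overline{S}$, invoke Proposition \ref{proprsd}(1) for the $GL(2,\mathbb{R})$ side, and conclude that the monodromy of a pulled-back bundle of groups is precomposition with $j_*$. The compatibility issue you isolate at the end --- that the two identifications of the fibre with $\Lambda_{\overline{S}}[2]$ must agree as an isomorphism of bundles of groups over $\mathcal{A}^{SO(4,\mathbb{C})}_{\rm reg}$, not merely fibre by fibre up to automorphism --- is a genuine point that the paper passes over in silence, and addressing it is the right instinct.

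There is, however, one concrete error of bookkeeping that, as written, breaks your first step. The curve $\overline{S}$ sits in the total space of $K^2$ and is cut out by $y^2 + a_2 y + p^2 = 0$ with $a_2 \in H^0(\Sigma,K^2)$ and $p^2 \in H^0(\Sigma,K^4)$; it is therefore the spectral curve of a \emph{$K^2$-twisted} rank two Higgs bundle, branched over the $8g-8$ zeros of $\Delta = a_2^2 - 4p^2 \in H^0(\Sigma,K^4)$. The target of $j$ must accordingly be $H^0(\Sigma,K^2)\oplus H^0(\Sigma,K^4)$, i.e.\ the base of the $K^2$-twisted Hitchin system, and the relevant real moduli space is ${}^{\mathbb{R}}\mathcal{M}(K^2)$ with $l = \deg K^2 = 4g-4$ --- consistent with $|B| = 8g-8 = 2l$ in the remainder of the section and with the appearance of ${}^{\mathbb{R}}\mathcal{M}(K^2)$ in the $Sp(4,\mathbb{R})$ discussion. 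Your explicit identification of the target as $H^0(\Sigma,K)\oplus H^0(\Sigma,K^2)$, and the claim that after removing the trace the family is classified by a \emph{quadratic} differential with simple zeros, are incompatible with this: a $K$-twisted spectral curve is branched over only $4g-4$ points, so the family $\{\overline{S}\}$ is not a pullback of that family, and no map $j$ into $H^0(\Sigma,K)\oplus H^0(\Sigma,K^2)$ can do the job. The fix is to replace $K$ by $K^2$ throughout (the trace-free representative of $j(a_2,p)$ is the quartic differential $-\Delta/4$, whose divisor is $B$), after which your argument goes through. The slip originates in the notation of the theorem as printed, but your write-up commits to the reading that cannot be correct, so it needs to be repaired explicitly.
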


Recall that the double cover $\overline{\pi} : \overline{S} \to \Sigma$ defined by the pair $(a_2,p)$ is smooth if and only if the discriminant $\Delta = a_2^2 - 4p^2$ has only simple zeros. Let us define $c_1 := a_2 - 2p$, $c_2 := a_2 + 2p$, so that $\Delta = c_1c_2$. Then $a_2 = (c_1+c_2)/2$ and $p = (c_2-c_1)/4$, so the pair $(c_1,c_2) \in H^0(\Sigma , K^2) \oplus H^0(\Sigma , K^2)$ uniquely determines the pair $(a_2,p)$. Moreover, $\overline{S}$ is smooth if and only if $c_1$ and $c_2$ have simple zeros and no zeros in common. Let $B_1 = \{ b_1 , \dots , b_{4g-4} \}$ be the set of zeros of $c_1$ and $B_2 = \{ b_{4g-3} , \dots , b_{8g-8} \}$ the set of zeros of $c_2$. Then $B = B_1 \cup B_2$ is the set of branch points of $\overline{\pi} : \overline{S} \to \Sigma$.\\

We now look for loops in $\mathcal{A}_{\rm reg}(K)$ that can be realised as the image under $j$ of loops in $\mathcal{A}^{SO(4,\mathbb{C})}_{\rm reg}$. Consider a swap of $b_i$ and $b_j$ along a path $\gamma$. Using the lifting procedure as described in Section \ref{secfgc}, this gives a loop $\Delta(t)$ within $H^0(\Sigma , K^4 )^{\rm simp}$. In order for this loop to come from a loop in $\mathcal{A}^{SO(4,\mathbb{C})}_{\rm reg}$, we need to be able to find loops $c_1(t) , c_2(t) \in H^0(\Sigma , K^2)^{\rm simp}$ satisfying $\Delta(t) = c_1(t)c_2(t)$. To do this, it is clearly necessary that $b_i,b_j$ both belong to $B_1$ or both belong to $B_2$. Conversely, suppose that $b_i,b_j$ both belong to $B_1$ (the case of $B_2$ is similar). Then by considering $B_1$ alone, $\gamma$ defines a braid in $\Sigma$ with $4g-4$ strands, the swap of $b_i,b_j$ along $\gamma$. Using our lifting procedure, we obtain a loop $c_1(t) \in H^0(\Sigma , K^2)^{\rm simp}$. If we take $c_2(t)$ to be the constant loop and set $\Delta(t) = c_1(t)c_2(t)$, then we have the desired factorisation. In summary, if $\gamma$ is an embedded path from $b_i$ to $b_j$ and $b_i,b_j$ both belong to $B_1$ or $B_2$, then the lifted swap $\tilde{s}_\gamma$ may be realised as a loop in $\pi_1( \mathcal{A}^{SO(4,\mathbb{C})}_{\rm reg} , (a_2,p) )$.\\

Let us say that an embedded path $\gamma$ joining $b_i$ to $b_j$ is {\em admissible} if $b_i,b_j$ both belong to $B_1$ or to $B_2$. We now proceed to compute the monodromy action on $\Lambda_{\overline{S}}[2]$ exactly as in Section \ref{secmonoact}, except that we only allow for admissible loops. Thus we can choose splittings so that 
\begin{equation*}
\Lambda_{\overline{S}}[2] = \Lambda_\Sigma[2] \oplus W \oplus \Lambda_\Sigma[2],
\end{equation*}
with $W = (\mathbb{Z}_2 B)^{\rm ev}/(b_o)$, $b_o = b_1 + b_2 + \dots + b_{8g-8}$, and the monodromy is generated by transformations $s_{ij}$ and $A_{ij}^x$ as in Equations (\ref{equtransposition})-(\ref{equaijx}). The only difference now is that we must restrict the indices $i,j$ to satisfy $1 \le i < j \le 4g-4$ or $4g-3 \le i < j \le 8g-8$.\\

Recall from Section \ref{secmonoact} that $\Lambda_{\overline{S}}[2]$ is equipped with the Weil pairing $\langle \; , \; \rangle : \Lambda_{\overline{S}}[2] \otimes \Lambda_{\overline{S}}[2] \to \mathbb{Z}_2$ and quadratic refinement $q : \Lambda_{\overline{S}}[2] \to \mathbb{Z}_2$. Then since $K^2$ has even degree, the monodromy action of the $s_{ij}$ and $A_{ij}^x$ must preserve $q$. We are now ready to state the main theorem of this section:

\begin{theorem}
Let $G \subseteq GL( \Lambda_{\overline{S}}[2] )$ be the group generated by the monodromy action of $\rho^{SO(2,2)}$ on $\Lambda_{\overline{S}}[2]$. Then:
\begin{itemize}
\item[(1)]{$G$ is isomorphic to a semi-direct product $G = \left( S_{4g-4} \times S_{4g-4} \right) \ltimes H$ of the product of symmetric groups $S_{4g-4} \times S_{4g-4}$, generated by the elements $\{ s_{ij} \; | \; 1 \le i < j \le 4g-4, \; \text{or } 4g-3 \le i < j \le 8g-8 \; \}$ given in (\ref{equtransposition}) and the group $H$ generated by the transformations $\{ A_{ij}^x \; | \; 1 \le i < j \le 4g-4, \; \text{or } 4g-3 \le i < j \le 8g-8, \; x \in \Lambda_\Sigma[2] \; \}$ given in (\ref{equaijx}).}
\item[(2)]{Let $K$ be the subgroup of elements of $GL( \Lambda_{\overline{S}}[2])$ of the form:
\begin{equation}
\left[ \begin{matrix} I_{2g} & A & B \\ 0 & I & A^t \\ 0 & 0 & I_{2g} \end{matrix} \right],
\end{equation}
where $A : W \to \Lambda_\Sigma[2]$, $B : \Lambda_\Sigma[2] \to \Lambda_\Sigma[2]$,  and $A^t : \Lambda_\Sigma[2] \to W$ is the adjoint of $A$, so $\langle Ab , c \rangle = (( b , A^tc ))$. Then $H$ is the subgroup of $K$ such that $A(b_1 + b_2 + \dots + b_{4g-4}) = 0$ and such that the quadratic refinement $q$ is preserved, i.e:
\begin{equation*}
\langle Bc , c \rangle + q_W( A^t c ) = 0.
\end{equation*}
}
\end{itemize}
\end{theorem}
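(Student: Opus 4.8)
The plan is to follow the proof of Theorem~\ref{thmmonogp} almost verbatim, keeping track of the two modifications forced by admissibility: the symmetric group is cut down to $S_{4g-4}\times S_{4g-4}$, and the $A$-block acquires the extra constraint $A(b_1+\cdots+b_{4g-4})=0$. Throughout, write $B=B_1\sqcup B_2$ for the partition of the $8g-8$ branch points into the zeros of $c_1$ and the zeros of $c_2$, put $b_{o,1}=b_1+\cdots+b_{4g-4}$ and $b_{o,2}=b_{4g-3}+\cdots+b_{8g-8}$, so that $b_{o,1}+b_{o,2}=b_o$ and hence $b_{o,1}=b_{o,2}$ as a nonzero (since $g>1$) element of $W=(\mathbb{Z}_2B)^{\rm ev}/(b_o)$; note also that each block has at least four points. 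As recalled before the statement, $G$ is generated by the $s_{ij}$ of (\ref{equtransposition}) and the $A_{ij}^x$ of (\ref{equaijx}) with $(i,j)$ admissible.

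For part (1): the admissible transpositions are precisely the transpositions of $\{1,\dots,8g-8\}$ preserving the partition $B_1\sqcup B_2$, and these generate $S_{4g-4}\times S_{4g-4}\subseteq S_{8g-8}$; hence the $s_{ij}$ generate a copy of $S_{4g-4}\times S_{4g-4}$. The conjugation formula $s_{ij}A_{kl}^xs_{ij}^{-1}=A_{\sigma_{ij}(k)\sigma_{ij}(l)}^x$ from Section~\ref{secmonoact} produces an admissible index pair again, since $\sigma_{ij}$ preserves the partition when $(i,j)$ is admissible, so $H:=\langle A_{ij}^x \mid (i,j)\ \text{admissible}\rangle$ is normalised by $\langle s_{ij}\rangle$, with the action by permutation of indices. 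As every element of $H$ restricts to the identity on the middle summand $W$, whereas a nontrivial element of $\langle s_{ij}\rangle$ permutes $B$ nontrivially, the two subgroups intersect trivially and $G=(S_{4g-4}\times S_{4g-4})\ltimes H$.

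For part (2): From (\ref{equaijx}) one has $H\subseteq K$, and since $\deg(K^2)$ is even, Lemma~\ref{lemrefine} shows $H$ preserves $q$. Moreover each admissible $A_{ij}^x$ has $A$-block $L_{ij}^x$ and $L_{ij}^x(b_{o,1})=(( b_{ij} , b_{o,1} ))x=0$, because $b_i$ and $b_j$ lie in a common block; hence the $A$-block of any product of admissible $A_{ij}^x$ annihilates $b_{o,1}$, which gives the inclusion ``$\subseteq$''. For the reverse inclusion, Lemma~\ref{lemb1} (which applies because $4g-4$ is even and each block has a pair $i\neq j$) already puts every matrix $\left[\begin{smallmatrix}I&0&B'\\0&I&0\\0&0&I\end{smallmatrix}\right]$ with $B'$ symmetric and even into $H$, and any two $q$-preserving elements of $K$ with the same $A$-block differ by such a matrix; so it suffices to realise an arbitrary $A:W\to\Lambda_\Sigma[2]$ with $A(b_{o,1})=0$ as the $A$-block of an element of $H$. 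The admissible classes $b_{ij}$ lie in $b_{o,1}^\perp\subseteq W$, and a dimension count (using that the image in $W$ of $(\mathbb{Z}_2B_1)^{\rm ev}\oplus(\mathbb{Z}_2B_2)^{\rm ev}$ has codimension one and is contained in the codimension-one subspace $b_{o,1}^\perp$) shows that they span $b_{o,1}^\perp$; by non-degeneracy of $(( \; , \; ))$ the functionals $(( b_{ij} , \cdot ))$ then span the annihilator of $b_{o,1}$ in $W^*$, so, writing $Hom(W,\Lambda_\Sigma[2])=W^*\otimes\Lambda_\Sigma[2]$, any $A$ with $A(b_{o,1})=0$ is a sum of maps $L_{ij}^x$. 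The matching product of admissible $A_{ij}^x$ gives an element of $H$ with the prescribed $A$-block, and correcting its $B$-block by a suitable symmetric even matrix (from the previous sentence) yields the desired element.

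The step requiring genuine care is the linear algebra in part (2): checking that the admissible $b_{ij}$ span exactly $b_{o,1}^\perp$, so that $A(b_{o,1})=0$ is the only constraint surviving on the $A$-block, and that the residual freedom in the $B$-block left by Lemma~\ref{lemb1} matches precisely the solution set of $\langle Bc,c\rangle+q_W(A^tc)=0$. Everything else is a transcription of the arguments of Section~\ref{secmonoact} with the index set restricted to admissible pairs, so no new difficulty arises.
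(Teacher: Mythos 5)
Your computation of the group generated by the admissible $s_{ij}$ and $A_{ij}^x$ is correct, and in fact more detailed than the paper's: the observation that the admissible classes $b_{ij}$ span exactly the orthogonal complement of $b_{o,1}:=b_1+\dots+b_{4g-4}$ in $W$, so that the realizable $A$-blocks are precisely those annihilating $b_{o,1}$, and the reduction of the residual $B$-block ambiguity to Lemma \ref{lemb1} via $q$-preservation, are exactly the details the paper compresses into ``by an argument similar to the proof of Theorem \ref{thmmonogp}''. The genuine gap is at your very first step: you take as given that $G$ \emph{is} generated by the admissible $s_{ij}$ and $A_{ij}^x$. What the discussion preceding the theorem establishes is only the lower bound: each admissible lifted swap can be realised as a loop in $\pi_1( \mathcal{A}^{SO(4,\mathbb{C})}_{\rm reg} )$, hence these transformations lie in $G$. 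No generating set for $\pi_1( \mathcal{A}^{SO(4,\mathbb{C})}_{\rm reg} )$ is ever produced, and the map $j$ need not be surjective on $\pi_1$; a priori there could be loops in the $SO(2,2)$ base whose monodromy is not a word in the admissible generators. Your argument therefore proves only the containment of $\left( S_{4g-4} \times S_{4g-4} \right) \ltimes H$ in $G$, not equality.

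The missing half is an upper bound on $G$, and it does not follow from the block structure alone. The paper supplies it by showing that an arbitrary monodromy transformation $T$ (i) preserves $\overline{\pi}^*$ and $\overline{\pi}_*$, hence is block upper triangular; (ii) acts on $W$ through a permutation of $B$ preserving the partition into zeros of $c_1$ and zeros of $c_2$, so the middle block lies in $S_{4g-4}\times S_{4g-4}$; (iii) preserves the Weil pairing, forcing $T_{23}=T_{12}^t$ once the middle block is normalised to the identity; (iv) fixes the point $(0,b_{o,1},0)$, because it corresponds to the $SO(2,2)$-Higgs bundle obtained as the tensor product of two maximal $SL(2,\mathbb{R})$-Higgs bundles, which yields $T_{12}(b_{o,1})=0$; and (v) preserves $q$, which is identified with a characteristic class of the $SO(2,2)$-Higgs bundle as in Proposition \ref{propspecinv}. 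Steps (iv) and (v) are essential: a partition-preserving, pairing-preserving block matrix with $T_{12}(b_{o,1})\neq 0$, or failing to preserve $q$, is excluded by nothing in your argument. To close the proof you need either an argument of this kind or an actual set of generators of $\pi_1( \mathcal{A}^{SO(4,\mathbb{C})}_{\rm reg} )$ whose monodromies you can compute.
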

\begin{proof}
Let $H'$ be the subgroup of $K$ satisfying $A(b_1 + b_2 + \dots + b_{4g-4}) = 0$ and preserving the quadratic refinement $q$. By an argument similar to the proof of Theorem \ref{thmmonogp}, we can easily show that $H' = H$. It remains to show that any element of $GL( \Lambda_{\overline{S}}[2])$ obtained through monodromy belongs to the group $G = \left( S_{4g-4} \times S_{4g-4} \right) \ltimes H'$.\\

Let $T \in GL( \Lambda_{\overline{S}}[2])$ be in the image of the monodromy representation. Then $T$ preserves $\overline{\pi}_*,\overline{\pi}^*$, so must have the form
\begin{equation*}
T = \left[ \begin{matrix} I & T_{12} & T_{13} \\ 0 & T_{22} & T_{23} \\ 0 & 0 & I \end{matrix} \right].
\end{equation*}
From the discussion at the beginning of Section \ref{secmonoact} relating points of order $2$ in the Prym variety with the space $W = (\mathbb{Z}_2 B)^{\rm ev}/(b_o)$, we see that $T_{22}$ must act on $W$ through a permutation of $B$. Moreover this permutation must preserve the zero sets of $c_1,c_2$, so $T_{22}$ belongs to $S_{4g-4} \times S_{4g-4}$. After composing with a product of transpositions $s_{ij}$, we may assume $T_{22}$ is the identity. Moreover, $T$ preserves the Weil pairing, so $T_{23}$ is the adjoint of $T_{12}$. To complete the proof we just need to show that $T_{12}(b_1 + \dots + b_{4g-4} ) = 0$ and that $T$ preserves the quadratic refinement $q$. In fact, the point $( 0 , b_1 + \dots + b_{4g-4} , 0)$ can be shown to correspond to an $SO(2,2)$-Higgs bundle obtained as the tensor product $V_1 \otimes V_2$ of two maximal $SL(2,\mathbb{R})$-Higgs bundles and thus must be preserved by monodromy. To show that $T$ preserves $q$, one can show that the function $q$ is related to a characteristic class of the corresponding $SO(2,2)$-Higgs bundles. We omit the details, as they are very similar to the $GL(2,\mathbb{R})$ case of Proposition \ref{propspecinv}. Thus $T$ must preserve $q$.
\end{proof}

\begin{remark}
The character variety $Rep( SO(2,2) )$ can also be studied through low rank isogenies as done in \cite[Section 4]{steve}, where $SO(2,2)$-Higgs bundles are obtained through fibre product of spectral curves of two $SL(2,\mathbb{R})$-Higgs bundles. Hence, the monodromy representation for the rank $4$ Hitchin system can also be studied through the monodromy for the two rank $2$ Hitchin systems. In particular, one finds from this point of view that every component of $Rep( SO(2,2) )$ meets the regular locus.
\end{remark}

Using an argument similar to the proof of Theorem \ref{thmcomponents}, we can deduce the number of components of the $SO(2,2)$-character variety by counting the number of orbits of the monodromy action on $\Lambda_{\overline{S}}[2]$, giving:

\begin{corollary}
The character variety $Rep( SO(2,2) )$ has $6.2^{2g} + 4g^2 - 6g - 3$ components.
\end{corollary}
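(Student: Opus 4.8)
The plan is to reduce the statement, exactly as in the proof of Theorem~\ref{thmcomponents}, to a count of orbits of the monodromy group $G = \bigl(S_{4g-4}\times S_{4g-4}\bigr)\ltimes H$ acting on a fixed regular fibre $\Lambda_{\overline{S}}[2]$. Using the non-abelian Hodge correspondence for real reductive groups \cite{bgm,ggm0} one identifies $Rep(SO(2,2))$ with $\mathcal{M}^{SO(2,2)}$, and the preceding remark (the fibre-product description, cf.\ \cite{steve}, together with Proposition~\ref{propconnreg} applied to the two $SL(2,\mathbb{R})$-factors) shows that every connected component meets the regular locus; hence the number of components is at most the number of $G$-orbits on $\Lambda_{\overline{S}}[2]$. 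For the reverse inequality one must check, as in Theorem~\ref{thmcomponents}, that distinct orbits lie in distinct components, by exhibiting topological invariants of $SO(2,2)$-Higgs bundles that separate them --- via the isogeny $SO(2,2)\simeq\bigl(SL(2,\mathbb{R})\times SL(2,\mathbb{R})\bigr)/\mathbb{Z}_2$ these are a pair of integer Euler-type invariants coming from the two factors together with mod $2$ data, and matching them to the combinatorial orbit data is a computation in the spirit of Proposition~\ref{propspecinv}. I expect this last step to be the main obstacle.

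For the orbit count, write an element of $\Lambda_{\overline{S}}[2] = \Lambda_\Sigma[2]\oplus W\oplus\Lambda_\Sigma[2]$ as $(a,b,c)$, set $N = 4g-4$, and let $w_1 = b_1+\dots+b_N\in W$ be the class of the zero divisor of $c_1$. Every generator $s_{ij}$, $A_{ij}^x$ of $G$ fixes the last component, so $c$ is a monodromy invariant; so are $q(a,b,c) = \langle a,c\rangle + q_W(b)$, which is preserved because $\deg(K^2)$ is even, and $\epsilon := ((b,w_1))\in\mathbb{Z}_2$. First I would treat the slice $c = 0$, where $A_{ij}^x(a,b,0) = \bigl(a + ((b_{ij},b))\,x,\,b,\,0\bigr)$. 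This shows that $(a,0,0)$ and $(a,w_1,0)$ are fixed points of $G$, giving $2^{2g}+2^{2g}$ orbits; for $b\notin\{0,w_1\}$ some admissible $b_{ij}$ satisfies $((b_{ij},b))=1$, so $a$ may be set to $0$ and the orbit is then determined by the $(S_N\times S_N)$-orbit of $b$ (the $b$-component being moved only by $S_N\times S_N$ within this slice). Classifying the latter by $\bigl(|b\cap B_1|,|b\cap B_2|\bigr) = (p,q)$ with $p\equiv q\pmod 2$, $0\le p,q\le N$, modulo $(p,q)\sim(N-p,N-q)$, produces $\tfrac12\bigl[(2g-1)^2+(2g-2)^2-1\bigr]+1 = 4g^2-6g+3$ classes, of which $0$ and $w_1$ account for two; so the slice $c=0$ contributes $2\cdot 2^{2g} + 4g^2-6g+1$ orbits.

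Next I would fix $c\neq 0$. Using $A_{ij}^x$ with $\langle x,c\rangle = 1$ one translates $b$ freely inside its coset of $W' := \mathrm{span}\{b_{ij}:\{i,j\}\text{ admissible}\}$, a subspace of index $2$ in $W$ whose two cosets are distinguished precisely by $\epsilon$; thus $b$ is reduced to $0$ when $\epsilon=0$ and to $b_1+b_{4g-3}$ when $\epsilon=1$. A short computation with the admissible $A_{ij}^x$ of the two types then changes $a$ by an arbitrary element of $c^\perp = \{z:\langle z,c\rangle=0\}$, so the orbit is pinned down by the one remaining invariant $q$, which attains both of its values. Hence each pair $(\epsilon,c)$ with $c\neq0$ yields exactly two orbits, i.e.\ four orbits per nonzero $c$, and there are $2^{2g}-1$ such $c$. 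Adding everything up,
\begin{equation*}
\bigl(2\cdot 2^{2g}+4g^2-6g+1\bigr) + 4\bigl(2^{2g}-1\bigr) = 6\cdot 2^{2g} + 4g^2 - 6g - 3,
\end{equation*}
which, granted the lower bound supplied by the topological invariants, is the number of connected components of $Rep(SO(2,2))$.
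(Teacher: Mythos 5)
Your proposal is correct and follows exactly the paper's route: the paper likewise reduces the count to the number of orbits of the monodromy group $\bigl(S_{4g-4}\times S_{4g-4}\bigr)\ltimes H$ on $\Lambda_{\overline{S}}[2]$, invoking the fibre-product description to see that every component meets the regular locus and topological invariants (in the spirit of Proposition \ref{propspecinv}) to separate orbits, though it leaves both the invariant-matching and the orbit enumeration implicit. Your explicit count — the invariants $c$, $q$, $\epsilon=((b,w_1))$, the identification of $W'$ as the index-two kernel of $\epsilon$, the $(p,q)\sim(N-p,N-q)$ classification on the $c=0$ slice giving $4g^2-6g+1$ non-central orbits, and the four orbits per nonzero $c$ — checks out and supplies precisely the computation the paper omits.
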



\bibliographystyle{amsplain}

\end{document}